\newcounter{mtheorem}
\newtheorem{theorem}{Theorem}[section]
\newtheorem{lemma}[theorem]{Lemma}
\newtheorem{prop}[theorem]{Proposition}
\newtheorem{corollary}[theorem]{Corollary}
\newtheorem{definition}[theorem]{Definition}
\theoremstyle{remark}
\newtheorem{remark}[theorem]{Remark}
\numberwithin{equation}{section}
\newcommand{\Real}{\mathbb R}
\newcommand{\func}[1]{\ensuremath{\mathrm{#1} \:} }
\newcommand{\arccosh}[0]{\func{arccosh}}
\newcommand{\arcsinh}[0]{\func{arcsinh}}
\newcommand{\supp}[0]{\func{supp}}
\newcommand{\sech}[0]{\func{sech}}
\newcommand{\dist}[0]{\mathrm{dist}}
\newcommand{\xX}[0]{\mathbf{x}}
\newcommand{\yY}[0]{\mathbf{y}}
\newcommand{\Bv}[0]{\mathbf{v}}
\newcommand{\Be}[0]{\mathbf e}
\newcommand{\Bvp}[0]{\mathbf{v}}
\newcommand{\eL}[0]{\ell_\Lambda}
\newcommand{\taue}[0]{\tau_{d} (e)}
\newcommand{\outaupp}[0]{\outau''}
\newcommand{\Gd}[0]{\Gamma(d,0)}
\newcommand{\Ghd}[0]{\Gamma(\hat d,0)}
\newcommand{\Ghdl}[0]{\Gamma(\hat d,\ell)}
\newcommand{\hd}[0]{{\hat d}}
\newcommand{\oux}[0]{\overline{\underline x}}
\newcommand{\pe}[0]{[p,e]}
\newcommand{\ppe}[0]{[p^+[e],e]}
\newcommand{\pme}[0]{[p^-[e],e]}
\newcommand{\pen}[0]{[p,e,n]}
\newcommand{\Gudl}[0]{\Gamma(d,\ell)}
\newcommand{\ephdl}[0]{[e; \hat d, \ell]}
\newcommand{\uthte}[0]{{\tau_0(e)}}
\newcommand{\RRR}[0]{\mathsf R}
\newcommand{\TTT}[0]{\mathsf T}
\newcommand{\UUU}[0]{\mathsf U}
\newcommand{\hYtdz}[0]{ Y_{d,{\boldsymbol \zeta}}}
\newcommand{\Htdz}[0]{H_{d,{\boldsymbol \zeta}}}
\newcommand{\Mtdz}[0]{M_{d, {\boldsymbol \zeta}}}
\newcommand{\Ntdz}[0]{N_{d, {\boldsymbol \zeta}}}
\newcommand{\Ss}[0]{\mathbb S}
\newcommand{\delt}[0]{\epsilon}
\newcommand{\outau}[0]{{\tau}}
\newcommand{\sep}[0]{{\sigma_{e,p}}}
\newcommand{\penpp}[0]{[p,e,n'']}
\newcommand{\penp}[0]{[p,e,n']}
\newcommand{\RH}[0]{\mathbf P[e]}
\newcommand{\RHj}[0]{\mathbf P[e_j]}
\newcommand{\uphipp}[0]{\phi_{dislocation}'[p] }
\newcommand{\uphip}[0]{\phi_{dislocation}''[p]}
\newcommand{\cf}[0]{\check f}
\newcommand{\cfw}[0]{\frac{|A|^2}{2\rho^2}}
\newcommand{\cfE}[0]{\frac{2\rho^2}{|A|^2}}
\begin{document}

\title[CMC]{Embedded Constant Mean Curvature Surfaces in Euclidean three-space}
\author[C.~Breiner]{Christine~Breiner}

\address{Department of Mathematics, Columbia University, New York, NY 10027}
\email{cbreiner@math.columbia.edu  }

\author[N.~Kapouleas]{Nikolaos~Kapouleas}
\address{Department of Mathematics, Brown University, Providence,
RI 02912} \email{nicos@math.brown.edu}
\thanks{The authors were supported in part by NSF grants DMS-0902718 and DMS-1105371, respectively.}


\date{\today}


\keywords{Differential geometry, constant mean curvature surfaces,
partial differential equations, perturbation methods}

\begin{abstract}
In this paper we refine the construction and related estimates
for complete Constant Mean Curvature surfaces in Euclidean three-space
developed in \cite{KapAnn}
by adopting the more precise and powerful version of the methodology 
which was developed in \cite{KapWente}.
As a consequence we remove the severe restrictions in establishing
embeddedness for complete Constant Mean Curvature surfaces in \cite{KapAnn}
and we produce a very large class of new embedded examples of finite topology.
\end{abstract}

\maketitle

\nopagebreak

\section{Introduction}
Critical points to the area functional subject to an enclosed volume
constraint have constant mean curvature $H \equiv c \neq 0$.
We let ``CMC surface'' denote a complete, constant mean curvature
immersed smooth surface in $\Real^3$.
The only classically known CMC examples of finite topological type were the round spheres and cylinders
and more generally the rotationally invariant surfaces discovered by Delaunay in 1841 \cite{Delaunay}.
Hopf \cite{Hopf} proved that the only closed CMC surfaces of genus zero are the round spheres.
In 1986 Wente constructed genus one closed immersed examples \cite{Wente}.
Using a general gluing methodology developed in \cite{schoen,KapAnn},
and using the Delaunay surfaces as building blocks,
most of the possible finite topological types were realized as immersed CMC surfaces for the first time \cite{KapAnn,KapJDG}.
However, no genus two closed examples could be produced with the constructions in \cite{KapAnn,KapJDG}. 
In \cite{KapWente} a systematic and detailed refinement of the original gluing methodology
made it possible to construct genus two closed examples by using the Wente tori as building blocks.
Since then, many other gluing problems have been successfully resolved by using this refined approach
\cite{KapSurvey,Yang,HaskKap,HaskKap2,KapClay,KapJDGCMC,KapPNAS,KapJDG,KapWente}. 

We discuss now the case of embedded, or more generally Alexandrov embedded, CMC surfaces. 
Let $\mathcal M_{g,k}$ denote the moduli space of all Alexandrov embedded CMC surfaces
with $H \equiv 1$, finite genus $g$, and $k$ ends where two surfaces are considered
equivalent if they differ by a rigid motion of $\Real^3$.
Using a reflection technique, Alexandrov \cite{Alexandrov} proved the only embedded
$\Sigma \in \mathcal M_{g,0}$ is the round sphere. 
Meeks \cite{MeeksCMC} proved the space $\mathcal M_{g,1}$ is empty and that every end of $\Sigma \in \mathcal M_{g,k}$ is cylindrically bounded.
Motivated by \cite{MeeksCMC,KapAnn},
Korevaar, Kusner, and Solomon \cite{KKS} showed that each end converges
exponentially fast to a Delaunay surface and any $\Sigma \in \mathcal M_{g,2}$ is necessarily a Delaunay embedding.
Kusner, Mazzeo, and Pollack \cite{KuMaPo} proved that $\mathcal M_{g,k}$ is a real-analytic variety.
Moreover, for a non-degenerate $\Sigma \in\mathcal M_{g,k}$ there exists a neighborhood of $\Sigma$
in $\mathcal M_{g,k}$ that is a real-analytic manifold of dimension $\leq 3k-6$.
Here strict inequality occurs when one quotients by the finite isotropy subgroup of $\Sigma$
in the group of Euclidean motions.

Some further constructions of CMC surfaces have been carried out.
Gro{\ss}-Brauckmann \cite{GB} used a conjugate surface construction to construct the surfaces in $\mathcal M_{0,k}$
with maximal ($k$-fold dihedral) symmetry.
This includes those which have large neck size (the examples in \cite{KapAnn} all have small neck size).
Mazzeo and Pacard \cite{MaPa} extended the gluing construction to produce CMC surfaces
by attaching Delaunay ends to the ends of a
non-degenerate {Alexandrov} embedded minimal surface with finite total curvature, genus $g$, and $k$ catenoidal ends.
Mazzeo, Pacard, and Pollack \cite{MaPaPoClay} proved that a Delaunay end can be attached to a non-degenerate $\Sigma \in \mathcal M_{g,k}$
to produce a non-degenerate $\Sigma' \in \mathcal M_{g,k+1}$.

In this paper we return to the general construction in \cite{KapAnn}
and refine the construction and estimates
by applying the improved methodology developed in \cite{KapWente}.
In particular we can now produce a large class of
embedded examples of finite topological type.
In \cite{KapAnn} too much ``bending'' of the catenoidal regions was required to ensure
a successful construction and this destroyed embeddness (but not Alexandrov embeddeness).
It was possible to avoid the need for this bending only in a few cases of very high symmetry,
more precisely only when a few Delaunay ends were attached to a central sphere and the symmetries
of a Platonic solid were imposed.
In the current construction no such bending is needed; therefore we can ensure embeddedness for all reasonable candidates.
The examples we construct in $\mathcal M_{g,k}$ have $3k-6$ continuous parameters as expected.
Because no symmetries need to be imposed in our construction,
we hope that it will serve as a model for applying the general methodology in general settings
without symmetries.
The current construction can also be extended to higher dimensions \cite{Hig}.

We now provide a moderately detailed outline of the construction in the subsections that follow. 
 
\subsection*{The graph and the parameters}
We begin the construction by designating a finite graph $\Gamma$ that satisfies a few necessary geometric conditions and a flexibility condition. $\Gamma$ consists of a finite collection of vertices, edges, and rays and to each edge or ray we assign a parameter $\hat \tau(e)$. The geometric conditions arise as a result of certain properties of CMC surfaces as well as the asymptotic geometry of the building blocks we describe below. We require the flexibility condition to guarantee the existence of a family of graphs that are smooth in two modifications we denote $d,\ell$. The parameter $d$ will assign a vector to each vertex that is the sum of each unit direction of an edge or ray times its associated parameter $\hat \tau$ \eqref{unbalancingeq}. The parameter $\ell$ will vary the length of each edge in the graph $\Gamma$. Notice that changing lengths may result in a change of direction for the edges and thus $\ell$ can influence $d$. We will be interested in graphs $\Gamma$ that vary smoothly in $d,\ell$ for sufficiently small $d, \ell$.

We will immerse an initial surface in $\Real^3$ based on the parameters that come from the graph $\Gamma$ and from two additional parameters $d,\boldsymbol \zeta$. We call $d$ the \emph{unbalancing} parameter as it describes the deviation from balancing of a surface based on $\Gamma$ and $d$ (with $\boldsymbol \zeta = \boldsymbol 0$). An observation of Kusner regarding a homological invariant on CMC surfaces implies that CMC surfaces will be balanced, see \cite{KKS,Kus}, and thus we presume $\Gamma$ has $d \equiv 0$. We call $\boldsymbol \zeta$ the \emph{dislocation} parameter as it will induce a dislocation of the surface from the structure induced by the graph. $\Gamma$ and $\boldsymbol \zeta$ together determine $\ell$ \eqref{ellprimedefinition}.
\subsection*{Building blocks and the initial surface}
The building blocks will be one of two types. The first type corresponds to an $\Ss^2$ with geodesic disks removed and each block of this type will belong to a \emph{standard region} on the initial surface. The second type of building block corresponds to an immersion of a piece of a cylinder that is a Delaunay surface except near the boundary.  We designate the parameter of the family of Delaunay immersions by $\tau$ and presume $\tau\in (-\infty,0)\cup(0, 1/4]$. We let $Y_\tau: \Real \times \Ss^1 \to \Real^3$, see \eqref{DelEq}, denote the immersion of a Delaunay surface with parameter $\tau$. The asymptotic geometry of Delaunay surfaces is well understood; as $\tau \to 0$, the surface converges to a string of $\Ss^2$ joined at antipodal points. Two types of geometric limits exist, where the type of limit depends on the sign of the Gauss curvature $K$. On a symmetric region where $K>0$, as $\tau \to 0$ the region converges to the round sphere.
 Rescaling by $\tau^{-1}$ about the symmetric region where $K<0$ produces the standard catenoid -- a complete minimal annulus in $\Real^3$. We refer to both of these types of regions as \emph{standard} or \emph{almost spherical regions}. To justify this designation on a negatively curved region, consider the conformal metric $h=\frac{|A|^2}{2}g$, where $g$ is the induced metric. Notice that $h$ is invariant under dilations of the surface and recall that for a minimal surface $h$ is precisely $\nu^*g_{\Ss^2}$. In fact, in the metric $h$, the two geometric regions described are isometric. The convergence of the positively curved region to the round sphere thus justifies the name given to both regions.

We designate a neighborhood of the region $K=0$ as a \emph{transition} or \emph{neck region}. In a second conformal metric, $\chi$ (see \eqref{confmets}), each transition region is conformal to a flat $[0,\ell]\times \Ss^1$ where $\ell \approx -\log \tau$. In Figure \ref{Regions} we identify the standard and transition regions on a Delaunay immersion. An \emph{extended standard region} includes one standard region and the two adjacent transition regions.

\begin{figure}[h]\label{Regions}
\includegraphics[width=4in]{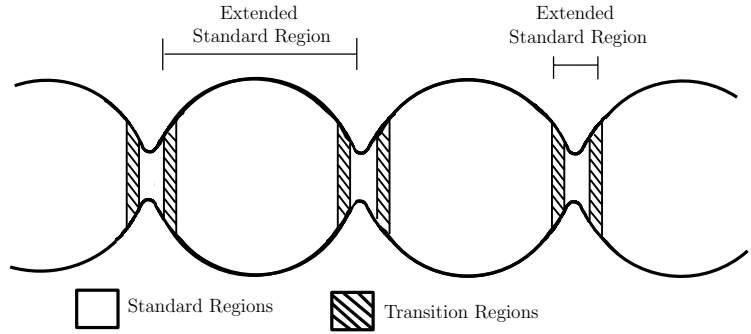}
            \caption{We label the regions of interest on a Delaunay piece. Notice both the positively curved and negatively curved standard regions look like $\Ss^2$ in the $h$ metric. Each shaded region becomes a long cylinder in the conformally flat $\chi$ metric.}
      \end{figure}

We incorporate into the second building block a dislocation, designated by the parameter $\boldsymbol \zeta$. If one wishes to transit smoothly between a sphere and a Delaunay surface, the asymptotic geometry of a Delaunay immersion implies there exists a natural positioning for the two surfaces. The dislocation repositions the center of the sphere so the transit is no longer natural. In fact, to understand precisely the error terms, we apply two separate transitions. The first transition is applied to a Delaunay piece and a sphere placed in their natural positions, the second transition is from the original sphere to an off center sphere placed in a position designated by the dislocation parameter (Definition \ref{DelEmbDef}).

The positioning of the pieces depends on the modified graph $\Gudl$ and the parameter $\boldsymbol \zeta$. At each vertex, we position a sphere with disks removed and at each edge or ray we position a Delaunay building block with axis depending on $\boldsymbol \zeta$. To guarantee the immersion is well defined, we must identify the neighborhood of each end of a Delaunay piece with the appropriate annular domain on an adjacent sphere. The initial surface has CMC identically one except on certain domains of each Delaunay building block, and the error is determined by the parameters $\tau,\boldsymbol \zeta$.

\subsection*{The PDE and linearized equation} 
 
For a surface $M \subset \Real^3$ and a sufficiently small smooth function $f:M \to \Real$, it is well known that for $M_f:=\{ \xX + f(\xX)N_M(\xX) | \xX \in M\}$ where $N_M$ is a section of the normal bundle to $M$, one has 
\[
H_f = H_M + (\Delta + |A|^2) f + Q_f.
\]Here $H_M, H_f$ represent the mean curvature of the surfaces $M,M_f$ respectively, $\Delta$ and $|A|^2$ are the Laplacian and norm squared of the second fundamental form in the induced metric, and $Q_f$ represents quadratic and higher terms in $f$ and its derivatives. To obtain a CMC surface with mean curvature $1$, we need to solve
\[
\mathcal L f:=(\Delta + |A|^2) f =1-H_M-Q_f.
\]
As in \cite{KapSurvey,Yang,HaskKap,HaskKap2,KapBAMS,KapClay,KapDrops,KapThes,KapJDGCMC,KapPNAS,KapJDG,KapWente}, we solve the linearized problem on various regions of the surface in one of the conformal metrics $h,\chi$. The metric $\chi$ sets a natural scale by behaving -- up to a small reparameterization -- like the flat metric on the cylinder for each Delaunay piece. In our particular setup, we determine $1-H_M$ is small in the $\chi$ metric and thus we expect $f$ should have small norm. The scale induced by $\chi$ implies that the term $Q_f$ should not dominate.

Our first goal is to prove Proposition \ref{LinearSectionProp}, which provides a global solution to a modification of the original linearized problem $\mathcal L f = 1-H_M:=E$. Obstructions to solving the linearized problem correspond to the existence of kernel for the operator $\mathcal L$. Additional technicalities arise because the process allows for small modifications to our surface which may induce small changes to the spectrum of the operator $\mathcal L$. For this reason, we must identify the space of eigenfunctions for the linearized operator with small eigenvalue, a space we identify as the \emph{approximate kernel}. On each standard region, we determine these potential obstructions to solving the linearized problem by taking advantage of the well understood geometric limit of each standard region in the $h$ metric. Following the strategy of \cite{HaskKap,KapWente,KapYang}, we solve the linearized problem modulo the \emph{extended substitute kernel} $\mathcal K$, a space of functions with properties outlined in Lemmas \ref{extsubslemma}, \ref{vlemma}. We introduce the extended substitute kernel to satisfy two criteria. On each extended standard region, we introduce a three dimensional space of functions in $\mathcal K$ so that for each $E$ there exists $w\in \mathcal K$ such that $E+w$ is $L^2$ orthogonal to the approximate kernel. This allows us to solve the modified semi-local problem on each extended standard region.
We proceed to solve the global linearized problem by applying a global partition of unity to the inhomogeneous term. Since each transition region, in the $\chi$ metric, is conformal to a long, flat cylinder, we solve the Dirichlet problem on each transition region with no modification to the inhomogeneous term. We then add these solutions to the semi-local solutions on their common regions of support to obtain a global one. To guarantee our global solution possesses good estimates, we must modify our semi-local solutions by prescribing the low harmonics on the boundary of an adjacent transition region to guarantee fast exponential decay on that transition region. This modification represents the second use of the extended substitute kernel. On each non-central standard region, the introduction of \emph{substitute kernel} for the first use is also sufficient for the second. On each central standard region, we define a three dimensional space of functions on the core for the first use. For second use, we define a three dimensional space of functions at each attachment that modifies the solution along each of the attached standard regions.

\subsection*{Prescribing extended substitute kernel and the geometric principle}
At this stage, we have obtained a global solution to a modification of the linearized problem, thus solving one problem but introducing another.
To correct this, we perturb the surface in such a way that we induce prescribed changes to the linearized equation.
These changes take the form either of perturbations of the solution or perturbations of the initial surface.
In both cases, our goal is to modify the mean curvature in such a way as to prescribe exactly
the extended substitute kernel that was introduced in the previous step.
Of course we cannot match the modification exactly, but we show the resulting error is small enough to close the argument.
Small perturbations are accomplished by modifying the solution; larger perturbations require we modify the surface itself.
At this step we rely on the flexibility of $\Gamma$ which allows us to define a smooth family of immersions
$\Mtdz \subset \Real^3$ which depend on $\Gamma$ as well as $d,\boldsymbol \zeta$. 

We prescribe extended substitute kernel via the Geometric Principle \cite{KapProc,KapWente,KapClay,KapYang,HaskKap}.
On each non-central standard region, we use a balancing formula to prescribe substitute kernel.
Balancing can be studied at the linearized level where it reduces to Green's second identity; thus the modification on these regions is done on the level of the function.
On each central standard region, the parameters $d,\boldsymbol \zeta$ modify the surface in fundamentally different ways,
although in both cases they control dislocations of the surface.
As in \cite{KapAnn} we introduce the parameter $d$ to prescribe an element of the substitute kernel.
Geometrically, it corresponds to an unbalancing of the graph $\Gamma$ that results from repositioning
the attached edges or rays at a vertex and changing each Delaunay parameter.
We introduce the parameter $\boldsymbol \zeta$ to prescribe the element of the extended substitute kernel
that corresponds to each attachment.
As mentioned previously, at each attachment $\boldsymbol \zeta$ provides a vector that repositions the central sphere relative to each attached Delaunay piece.
On the level of the function, the dislocation amounts to a linear combination of the Killing fields corresponding to the coordinates of the normal vector to the surface.
We describe this relationship precisely in Lemma \ref{quadflemma}.

Prescribing the extended substitute kernel on the central spheres has some novel features because of the lack of symmetry in the construction.
We prescribe the substitute kernel and extended substitute kernel in two separate steps and show that the error introduced
in each of these steps is small enough to be absorbed.
For more detail, see Proposition \ref{prescribequad}. 

Finally, using the geometric principle and careful estimates on the non-linear modification that we describe in Appendix \ref{quadapp}, we use Schauder's fixed point theorem to complete the proof of our main theorem (Theorem \ref{maintheorem}).

\subsection*{Outline of the Paper}The paper consists of seven sections and two appendices. 
In Section \ref{Graphs}, we describe finite graphs and their properties. We then provide examples of graphs that produce embedded surfaces. 
Section \ref{BuildingBlocks} includes a thorough description of the building blocks we use as well as estimates on the norm of the inhomogeneous term of the linearized equation. In Section \ref{InitialSurface} we describe the immersion of an initial surface $\Mtdz$ based on all of the parameters from $\Gamma$ as well as $d,\boldsymbol \zeta$. 
In Section \ref{LinearizedEq} we develop the linear theory we need to solve the linearized problem $\mathcal L_h \phi:= (\Delta_h +|A_h|^2)\phi =E$. 
Finally, in Section \ref{GeoP} we describe how to modify the function and the surface to prescribe extended substitute kernel. We prove the main theorem in Section \ref{MainTheorem}. Appendix \ref{quadapp} contains technical estimates corresponding to modifications of the initial surface. In Appendix \ref{appendixb} we define a function that describes the dislocation of the surface locally as a graph over the original surface. Additionally, we provide proofs for the estimates needed in the proof of the main theorem.

\subsection*{Notation and Conventions} We set the convention that the mean curvature $H:= \frac{\kappa_1 + \kappa_2}{2}$ where $\kappa_1,\kappa_2$ are the two principle curvatures for the surface. Thus, a unit sphere has mean curvature identically one.

Throughout this paper we make extensive use of cut-off functions, and we adopt the following notation:  Let $\Psi:\Real \to [0,1]$ be a smooth function such that
\begin{enumerate}
 \item $\Psi$ is non-decreasing
\item $\Psi \equiv 1$ on $[1,\infty)$ and $\Psi \equiv 0$ on $(-\infty, -1]$
\item $\Psi-1/2$ is an odd function.
\end{enumerate}
For $a,b \in \Real$ with $a \neq b$, let $\psi[a,b]:\Real \to [0,1]$ be defined by $\psi[a,b]=\Psi \circ L_{a,b}$ where $L_{a,b}:\Real \to \Real$ is a linear function with $L(a)=-3, L(b)=3$.
Then $\psi[a,b]$ has the following properties:
\begin{enumerate}
 \item $\psi[a,b]$ is weakly monotone.
\item $\psi[a,b]=1$ on a neighborhood of $b$ and $\psi[a,b]=0$ on a neighborhood of $a$.
\item $\psi[a,b]+\psi[b,a]=1$ on $\Real$.
\end{enumerate}
Finally, it will be convenient for us to use weighted H\"older norms and to that end define
\[
||\varphi: C^{k,\alpha}(\Omega,g,f)||:=\sup_{x \in \Omega}\frac{1}{f(x)} ||\varphi:C^{k,\alpha}(B_x\cap \Omega,g)||
\]where $\Omega$ is a domain in a Riemannian manifold $(M,g)$, $f$ is a weight function, and $B_x$ is a geodesic ball centered at $x$ with radius equal to the minimum of $1$ and half the injectivity radius in the metric $g$.

\section{Finite Graphs}\label{Graphs}
Our construction technique relies on properties of a finite graph that both determines the parameters of various building blocks and describes the relationship between the building blocks. We begin by defining a more general class of finite graphs and then proceed to describe specific properties we will need. The geometry of the building blocks will impose certain restrictions on lengths of edges in the graph, and the need for deformations will imply we are interested in graphs that have some flexibility. We impose additional conditions when we want to guarantee embeddedness of the surface in the main theorem.
\addtocounter{equation}{1}
\begin{definition}
Let $\Gamma$ denote a finite graph, by which we mean a collection 

$\{V(\Gamma),E(\Gamma), R(\Gamma), \hat \tau\}$ such that
\begin{enumerate}
\item $V(\Gamma)$ is a finite collection of vertices, placed in $\Real^3$.
\item $E(\Gamma)$ is a finite collection of edges in $\Real^3$, each with two endpoints in $V(\Gamma)$.
\item $R(\Gamma)$ is a finite collection of rays in $\Real^3$, each with one endpoint in $V(\Gamma)$.
\item $\hat \tau: E(\Gamma) \cup R(\Gamma) \to \Real\backslash \{0\}$ is a function.
\end{enumerate}
\end{definition}

We define two graphs as isomorphic if there exists a one-to-one correspondence between the vertices, edges, and rays such that corresponding rays and edges emanate from the corresponding vertices.
\addtocounter{equation}{1}
\begin{definition}\label{vedef}
Let $E_p$ denote the collection of edges and rays that have $p \in V(\Gamma)$ as an endpoint.  
We then have
\[
\cup_{p \in V(\Gamma)}E_p = E(\Gamma) \cup R(\Gamma).\]
Also, for ease of notation we define the set
\addtocounter{theorem}{1}
\begin{equation}
A(\Gamma) \subset V(\Gamma) \times \left(E(\Gamma) \cup R(\Gamma)\right)
\end{equation}such that $[p,e] \in A(\Gamma)$ if $e \in E_p$.

Note we choose the letter ``$A$" here so that one thinks naturally of an \emph{attachment}.
\end{definition}
\addtocounter{equation}{1}
\begin{definition}\label{Vpdefn}
For each $e \in E_p$, let $\mathbf{v}_{e,p}$ denote a unit vector pointing away from $p$ and parallel to $e$.  For any $p \in V(\Gamma)$, let 
\addtocounter{theorem}{1}
\begin{equation}\label{vectorset}
V_p=\{\Bv_{e,p}|e \in E_p\}.\end{equation}

We let $\hd_\Gamma:V(\Gamma) \to \Real^3$ such that
\addtocounter{theorem}{1}
\begin{equation}\label{unbalancingeq}
\hd_\Gamma(p) = \sum_{e \in E_p} \hat \tau(e)\mathbf{v}_{e,p}
\end{equation}
measures the deviation from balancing at the vertex $p$.
\end{definition}
\addtocounter{equation}{1}
\begin{definition}
If $\hd_\Gamma(p) = 0$ for all $p \in V(\Gamma)$, we say $\Gamma$ is a \emph{balanced} graph. 
\end{definition}

To each edge of the graph we will associate a Delaunay piece with parameter $\tau \hat \tau(e)$ where  $\tau>0$ is a global 
parameter that we choose close to zero so that $\tau \hat \tau(e)$ is small enough to meet all needs.  
(See Section \ref{DelSection} for a description of the Delaunay pieces.)  Since the period of Delaunay surfaces tends to $2$ as the parameter $\tau$ tends to zero, we are interested in graphs with edge lengths close to multiples of $2$. 
\addtocounter{equation}{1}
\begin{definition}\label{deltadefn}
 Let $\Gamma$ be a finite, balanced graph.  We say $\Gamma$ is a \emph{central graph} if each edge in $E(\Gamma)$ has even integer length. 
 Define $l:E(\Gamma) \to \mathbb{Z}^+$ such that for $e \in E(\Gamma)$, $2l(e)$ equals the length of $e$.  
 \end{definition}Finally, we note the conditions needed to guarantee embeddedness.
 \addtocounter{equation}{1}
 \begin{definition}
 We say $\Gamma$ is \emph{pre-embedded} if it is a central graph with $\hat \tau :E(\Gamma)\cup R(\Gamma) \to \Real^+$ and 
 \begin{enumerate}
  \item For all $p \in V(\Gamma)$ and all $e_i \neq e_j \in E_p$, $\angle(\Bv_{e_i,p} , \Bv_{e_j,p}) \geq \pi/3$, where $\angle(\xX,\yY)$ 
measures the angle between the two vectors $\xX,\yY$.
\item There exists $\epsilon>0$ such that for all $e,e' \in E(\Gamma) \cup R(\Gamma)$ that do not share any common endpoints, the Euclidean distance between $e,e'$ is greater than $2+\epsilon$.
\item For any two rays $e\in E_p, e' \in E_{p'}$, $1-\Bv_{e,p}\cdot \Bv_{e',p'} > \epsilon$. 
 \end{enumerate}
 
\end{definition}
The conditions imposed on a pre-embedded $\Gamma$ are the weakest we can currently allow to guarantee embeddedness of the CMC surface. The need for an $\epsilon>0$ in the second item comes from the fact that the maximum radius of an embedded Delaunay surface is on the order $1- \tau + O(\tau^2)$ but we allow for the edges to move with order $\underline C \tau$ where $\underline C$ can be quite large. The first angle condition does not require an added $\epsilon$ because the change in the period for $\tau$ small (on the order $-\tau \log \tau$) dominates both the radius change and the changes we allow via unbalancing and dislocation (on the order $\tau$). The final condition is necessary because two parallel rays pointing into the same half-plane may cross in the process of unbalancing.

\subsection*{Deforming the Graphs}
In the construction process, we deform the central graph $\Gamma$ via the parameters $d,\boldsymbol \zeta$. We need this deformation to be continuous and therefore are interested in graphs $\Gamma$ that allow such continuous deformations. We describe how to deform a central graph based on two functions $\hd, \ell$. The function $\hd$ will correspond to $d/\outau$ and the function $\ell$ will depend on $\boldsymbol \zeta, \hat \tau$ and $l(e)$. Finally, the order of $\ell$ ($-\tau \log \tau$) will be much smaller than the order of $\hd$ (order 1).
\addtocounter{equation}{1}
\begin{definition}
Let $L(\Gamma)$ be a space of functions from $E(\Gamma)$ to $\Real$ with norm defined such that for $\ell \in L(\Gamma)$,
\[
 ||\ell||_L= \max_{e \in E(\Gamma)} \frac{|\ell(e)|}{l(e)}.
\]
If $\Gamma, \widetilde \Gamma$ are isomorphic graphs with corresponding edges $e, e'$, where $\Gamma$ is a central graph, we define $\ell_\Gamma \in L(\Gamma)$ such that
\[
 2l(e) + 2\ell_\Gamma(e)
\]is the length of the edge $e'$.
\end{definition}
\addtocounter{equation}{1}
\begin{definition}
 For a central graph $\Gamma$ we let $D(\Gamma)$ denote a space of functions from $V(\Gamma)$ to $\Real^3$. Let $\Gamma, \widetilde \Gamma$ be isomorphic graphs.
Then $D(\Gamma), D(\widetilde \Gamma)$ can be identified in the natural way so that $\hd_{\widetilde \Gamma} \in D(\Gamma)$. Moreover we define, for any $\hd \in D(\Gamma)$,
\[
 ||\hd||_D= \max_{p \in V(\Gamma)}|\hd(p)|.
\]
\end{definition}
We now define the types of graphs we will use.
\addtocounter{equation}{1}
\begin{definition}\label{flexdef}
Let $\Gamma$ be a central graph. We say $\Gamma$ is \emph{flexible} if there exists $\varepsilon>0$ such that for any $\ell:E(\Gamma) \to [-\varepsilon, \varepsilon]$ and $\hd:V(\Gamma) \to \{ \Bv \in \Real^3 | |\Bv|\leq \varepsilon\}$, there exists a graph $\Ghdl$ isomorphic to $\Gamma$ and smoothly dependent on $(\hd,\ell)$ such that:
\begin{enumerate}
\item If $\tilde \Gamma = \Ghd$ then $\hd_{\tilde \Gamma} =d$.
\item If $\tilde \Gamma = \Ghdl$ then $\ell_{\tilde \Gamma}=\ell$. 
\end{enumerate}

\end{definition}
Any flexible $\Gamma$ will have an associated family of graphs $\mathcal F(\Gamma)$. The existence of this family follows immediately from the definitions.
\addtocounter{equation}{1}
\begin{definition}\label{FamilyDefinition}
 We define a {\it family of graphs}, $\mathcal{F}(\Gamma)$, to be a collection of graphs containing a central graph $\Gamma$, parameterized by a ball  
 $B(\Gamma) \subset D(\Gamma) \times L(\Gamma)$, in a small neighborhood of the origin:
\begin{enumerate}
 \item $\Gamma=\Gamma(0,0)$.
\item $\Ghdl$ is isomorphic to $\Gamma(0,0)$ and depends smoothly on $(\hd, \ell)$.
\item For $\widetilde \Gamma = \Ghd$, $\hd_{\widetilde \Gamma} = \hd$.
\item For $\widetilde \Gamma = \Ghdl$, $\ell_{\widetilde \Gamma} = \ell$.

\item If $e, e'$ are corresponding edges or rays for $\Ghd, \Ghdl$ then
\addtocounter{theorem}{1}
\begin{equation}\label{tauunchanged}
 \hat \tau_{\Ghd} (e) = \hat \tau_{\Ghdl} (e').
\end{equation}
\end{enumerate}
\end{definition}We remark that while \eqref{tauunchanged} implies that the function $\hat \tau_{\Ghd}$ equals the function $\hat \tau_{\Ghdl}$ on corresponding edges, the two graphs $\Ghd$ and $\Ghdl$ may have different balancing at each vertex. That is, we do not expect that $\hd_{\Ghd}(p) = \hd_{\Ghdl}(p')$ for corresponding $p,p'$. The change in unbalancing must be allowed as $\ell$ will change the lengths of the edges and thus possibly the position of edges and rays about a vertex. The next lemma makes precise by how much the two unbalancing parameters can disagree.

\addtocounter{equation}{1}
\begin{lemma}\label{hunbalancing}
 Consider $\Ghdl \in \mathcal F(\Gamma)$ such that $(\hd,\ell)$ are in a small neighborhood of the origin. Then there exists $C=C(\Gamma)>0$ depending only on the graph $\Gamma$ such that
\addtocounter{theorem}{1}
\begin{equation}
 ||\hd_{\Ghd}-\hd_{\Ghdl}||_D < C(\Gamma) ||\ell||_L.
\end{equation}
\end{lemma}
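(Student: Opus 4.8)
The plan is to estimate the difference $\hd_{\Ghd}(p) - \hd_{\Ghdl}(p')$ at each vertex $p$ by tracking how the edge directions move when we pass from $\Ghd$ to $\Ghdl$, i.e.\ when we change the edge lengths by $2\ell$. Recall that by \eqref{unbalancingeq},
\[
\hd_{\Ghd}(p) = \sum_{e \in E_p} \hat\tau_{\Ghd}(e)\,\Bv_{e,p}, \qquad
\hd_{\Ghdl}(p') = \sum_{e' \in E_{p'}} \hat\tau_{\Ghdl}(e')\,\Bv_{e',p'},
\]
and by \eqref{tauunchanged} the coefficients $\hat\tau$ agree on corresponding edges. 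Hence the difference is entirely due to the change in the unit direction vectors $\Bv_{e,p} \mapsto \Bv_{e',p'}$, and we get
\[
\hd_{\Ghd}(p) - \hd_{\Ghdl}(p') = \sum_{e \in E_p} \hat\tau_{\Ghd}(e)\,\bigl(\Bv_{e,p} - \Bv_{e',p'}\bigr).
\]
So it suffices to bound $|\Bv_{e,p} - \Bv_{e',p'}|$ by $C(\Gamma)\,\|\ell\|_L$ for each attachment $[p,e]$, and then sum over the finitely many edges at $p$ and maximize over the finitely many vertices, absorbing the finitely many factors $|\hat\tau_{\Ghd}(e)|$ and the cardinalities $|E_p|$ into the constant.

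First I would set up the smooth dependence. By Definition \ref{FamilyDefinition}(2), the graph $\Ghdl$ — and in particular the position in $\Real^3$ of each vertex and hence the direction vector $\Bv_{e',p'}$ of each edge — depends smoothly on $(\hd,\ell)$ on the ball $B(\Gamma)$. Fixing $\hd$ and varying only $\ell$, the map $\ell \mapsto \Bv_{e',p'}$ is $C^1$ on a neighborhood of $0$ in $L(\Gamma)$ (a finite-dimensional normed space), and at $\ell = 0$ it takes the value $\Bv_{e,p}$ since $\Gamma(\hd,0) = \Ghd$. The mean value inequality then gives
\[
\bigl|\Bv_{e,p} - \Bv_{e',p'}\bigr| \;\le\; \Bigl(\sup_{\|\ell'\|_L \le \varepsilon}\bigl\|D_\ell \Bv_{e',p'}(\hd,\ell')\bigr\|\Bigr)\,\|\ell\|_L \;=:\; C_{p,e}(\Gamma)\,\|\ell\|_L,
\]
where the supremum is finite by compactness of the closed ball and continuity of the derivative. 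Taking $C(\Gamma)$ to be a suitable maximum of $\sum_{e\in E_p}|\hat\tau_{\Ghd}(e)|\,C_{p,e}(\Gamma)$ over $p \in V(\Gamma)$ (with a harmless enlargement to make the inequality strict) yields the claim. Note the constant may be taken uniform in $\hd$ for $(\hd,\ell)$ in a small enough neighborhood of the origin, by shrinking the ball $B(\Gamma)$ if necessary.

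The only genuinely substantive point — and hence the main obstacle, though it is really a matter of bookkeeping rather than a deep difficulty — is confirming that the change in the $\hat\tau$-coefficients contributes nothing, which is exactly the content of \eqref{tauunchanged}, and that the smoothness in Definition \ref{FamilyDefinition}(2) is strong enough to furnish a locally Lipschitz (indeed $C^1$) dependence of the \emph{direction vectors} on $\ell$. The latter is immediate once one observes that $\Bv_{e',p'} = (q' - p')/|q' - p'|$ where $p', q'$ are the (smoothly varying) endpoints of $e'$, and that $|q'-p'| = 2l(e) + 2\ell(e)$ stays bounded away from $0$ for $\|\ell\|_L$ small since $l(e) \ge 1$; so the normalization does not destroy smoothness. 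There is no analytic subtlety beyond this; the estimate is a first-order Taylor expansion of a smooth finite-dimensional map, with the linear term in $\ell$ producing the bound and everything else (the edge count, the finitely many vertices, the bounded $\hat\tau$ values, the operator norm of the derivative on a compact set) folded into $C(\Gamma)$.
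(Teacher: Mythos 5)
Your proposal is correct and follows essentially the same route as the paper's proof: decompose the difference of the unbalancing vectors at corresponding vertices using \eqref{tauunchanged} to cancel the $\hat\tau$-coefficients, bound $|\Bv_{e_\hd,p_\hd}-\Bv_{e_\ell,p_\ell}|$ by $C\|\ell\|_L$ via the smooth dependence of $\Ghdl$ on $\ell$, and absorb the finitely many $\hat\tau$ values and edge counts into $C(\Gamma)$. The extra detail you supply (mean value inequality on a compact ball, smoothness of the normalized direction vectors) is just a fuller justification of the paper's one-line appeal to smooth dependence.
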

\begin{proof}
Notice that for a fixed $p \in V(\Gamma)$,
\[|\hd_{\Ghd}(p_\hd)-\hd_{\Ghdl}(p_\ell)|\leq \sum_{e_\hd \in E_p} \hat \tau(e_\hd)|\Bv_{e_\hd,p_\hd}-\Bv_{e_\ell,p_\ell}|\]
where $p_\hd \in V(\Ghd), e_\hd \in E_{p_\hd}$ and $p_\ell \in V(\Ghdl)$, $e_\ell \in E_{p_\ell}$ are corresponding vertices and edges or rays. Then the smooth dependence on $\ell$ implies that in a small neighborhood of the origin one immediately has
\[
|\Bv_{e_\hd,p_\hd}-\Bv_{e_\ell,p_\ell}| \leq C||\ell||_L.
\]As the graph is finite, the value $\hat \tau(e_d)$ is absorbed into $C(\Gamma)$ and the result follows.
\end{proof}

We now choose a frame associated to each edge in the graph $\Gamma$ and use this frame to determine a frame on each edge for any graph in $\mathcal F(\Gamma)$.
\addtocounter{equation}{1}
\begin{definition}\label{gammaframe}
For $e \in E(\Gamma) \cup R(\Gamma)$ we choose once and for all an ordered orthonormal frame 
$F_\Gamma [e]=\{\mathbf{v}_{1}[e], \mathbf{v}_{2}[e], \mathbf{v}_{3}[e]\}$ such that $\mathbf v_1[e]$ is parallel to $e$. Moreover, for $e \in R(\Gamma)$,
 $\Bv_1[e]=\Bv_{e,p}$.  
\end{definition}
Because every edge is associated with two vertices, the choice of frame determines a direction along $e \in E(\Gamma)$ that allows is to distinguish between the two vertices. For convenience, we set the following notation. 
\addtocounter{equation}{1}
\begin{definition}\label{pmvertexdef}
 For $e \in E(\Gamma)$ let $p^\pm[e]$ be the  endpoints of $e$ such that
\[\Bv_{e,p^+[e]} = \Bv_1[e]=-\Bv_{e,p^-[e]}.\]
For $e \in R(\Gamma)$, let $p^+[e]$ be the endpoint of $e$.

Also, let $\sep\in \{-1,1\}$ be defined by
\[
\sep:= \Bv_{e,p} \cdot \Bv_1[e].
\]
\end{definition}

We define a general rotation that will be used repeatedly throughout the paper. 
\addtocounter{equation}{1}
\begin{definition}\label{rotationdefn}
Given two unit vectors $\xX, \yY \in \Real^3$ such that $\angle(\xX,\yY)<\pi$,  let $\RRR[\xX, \yY]$ denote the unique rotation such that
\begin{itemize}
\item If $\xX \times \yY \neq 0$, 
let $\RRR[\xX, \yY]$ be the rotation that takes $\xX$ to $\yY$ by choosing the smallest rotation from $\xX$ to $\yY$ about the axis made by $\xX \times \yY$. 
\item 
 If $\xX = \yY$, take $\RRR[\xX,\yY]$ to be the identity.
 \end{itemize}
\end{definition}

Finally, we use the rotation defined above to describe an orthonormal frame on any graph in the family $\mathcal F(\Gamma)$. Notice that as $\angle(\Bv_{e,p} ,\Bv_{e',p'}) <\pi$ for any $[p,e] \in A(\Gamma)$ and corresponding $[p',e'] \in A(\Ghdl)$, the rotation can be defined. 
\addtocounter{equation}{1}
\begin{definition}\label{FrameLemma}\label{FrameDefn}
 For $(\hd,\ell)\in B(\Gamma)$ and $e \in E(\Gamma) \cup R(\Gamma)$ we define an orthonormal frame $F_{\Ghdl}[e]=\{\Bvp_1\ephdl, \Bvp_2\ephdl, \Bvp_3\ephdl\}$
uniquely by requiring the following:
\begin{enumerate}
\item $\Bvp_1\ephdl=\Bv_{e', p'}$ where $e', p'$ are the edge (or ray) and vertex in the graph $\Ghdl$ corresponding to $e,p^+[e]$ on $\Gamma$.
\item $\Bvp_i\ephdl=\RRR[\Bv_1[e], \Bvp_1\ephdl](\Bv_i[e])$ for $i=2,3$.
\end{enumerate}
\end{definition}
\addtocounter{equation}{1}
\begin{remark}
 $F_{\Ghdl}[e]$ depends smoothly on $\hd, \ell$.
\end{remark}

\subsection*{Examples of Graphs}
As we wish to highlight the novel surfaces produced by our construction, we specifically mention graphs that produce embedded  CMC surfaces.

Our first examples are graphs that produce surfaces in $\mathcal M_{0,k}$ where $k \in \{3,\dots 12\}$; we do not need to presume any symmetry (though when $k=12$ embeddedness is only guaranteed when the rays are placed symmetrically). We begin with $\Gamma$ having one vertex and $k$ rays emanating from that central vertex. The vertex position provides three free parameters and there are three free parameters for each of $k-1$ rays. The balancing condition implies that this will entirely fix the $k$-th ray both in magnitude and direction. Modulo rigid motions, there are $3k-6$ continuous parameters for each of these constructions. 
\begin{figure}[h]\label{GraphExamples}
\includegraphics[width=2in]{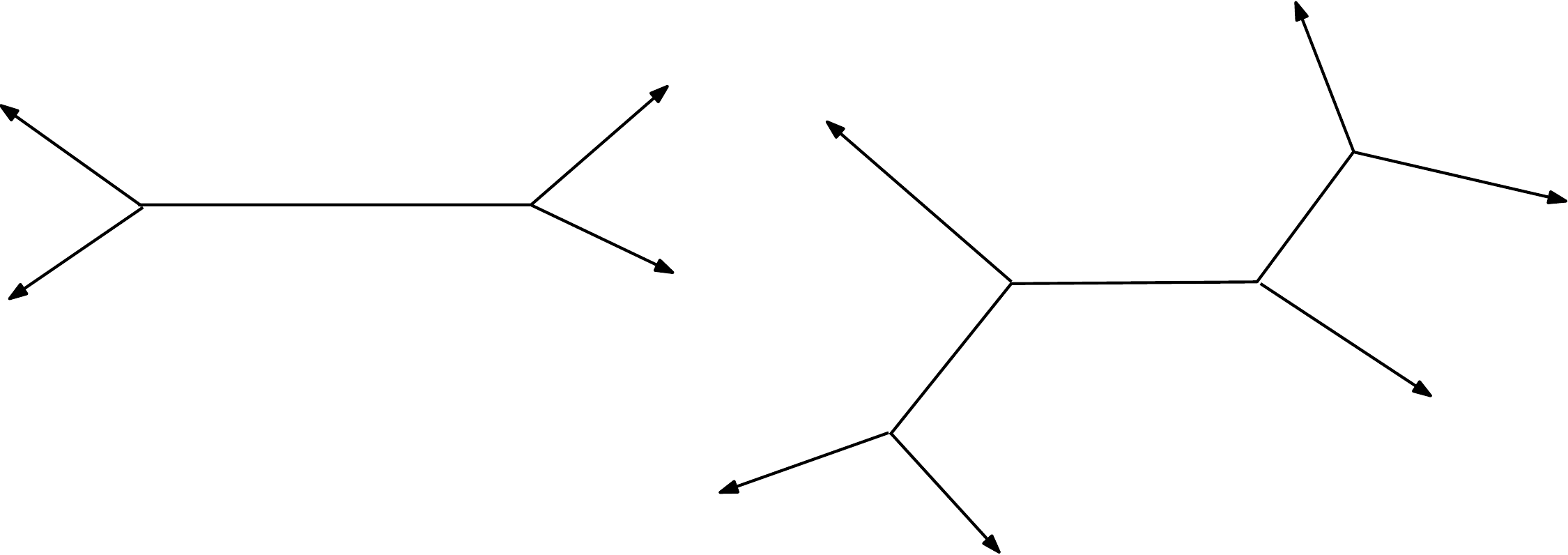}
            \caption{Examples of pre-embedded $\Gamma$ that produce surfaces in $\mathcal M_{0,4}$, $\mathcal M_{0,6}$.}
      \end{figure}
      
We also consider graphs with a fixed number of edges and rays in $\mathcal M_{0,k}$ with restrictions on $k$ coming from the number of edges in the graph. See Figure \ref{GraphExamples} for a few examples. In the first setup, we consider two vertices, one edge, and two rays emanating from each endpoint of the edge. The graph is not presumed to be planar, though we draw it as such for convenience. Choosing the position of the first vertex allows for three free parameters; with a fixed edge length, the position of the remaining vertex has two free parameters. We are free to choose $\hat \tau$ on the edge. Moreover, at each vertex we are free to choose one ray direction and $\hat \tau$, resulting in six more free parameters. These choices and the balancing condition entirely fix the direction and weights of the remaining two rays. Modulo rigid motion, there are $6=3\cdot 4-6$ free parameters and one discrete parameter. We provide another genus zero example in the second drawing in this same figure. We prescribe three edge lengths and position four vertices to accommodate these edges. The positioning of the vertices gives us $9$ free parameters. Notice the edges need not be coplanar. To the interior vertices we add one ray and to the outer vertices we add two rays. If we prescribe $\hat \tau$ on each edge, the remaining $6$ degrees of freedom come from the position and weight of two rays -- one ray at each of the vertices that contain two rays. Modulo rigid motions we have $12=3\cdot 6-6$ degrees of freedom. We note that each of these examples obviously demonstrate the required flexibility for performing a construction. We also note the pre-embedded conditions can be easily satisfied in each case.

We can also construct an example of a finite, balanced, flexible, pre-embedded $\Gamma$ with one edge and up to 22 rays -- up to 11 rays at each vertex. With 11 rays at each vertex, embeddedness is guaranteed by positioning the rays and edge symmetrically (pointing to the center of the faces of a regular dodecahedron) and assigning the same parameter $\hat \tau$ to each edge and ray. We are free to rotate one of the ray configurations about the edge and thus can guarantee no two rays are both parallel and pointing into the same half-plane. Moreover, for an edge of sufficient length, we can presume all the rays emanating from one vertex are of distance greater than 2 from rays emanating from the other vertex. 

By this method, we can easily produce infinitely many pre-embedded, flexible, finite central graphs that allow us to construct an embedded surface in $\mathcal M_{0,k}$. Generically, we may choose $e$ edges and position $e+1$ vertices to accommodate these edges. At each interior vertex, we can position between one and nine rays and at each final vertex we can position between two and ten rays. The limitation on the number of rays allows us to easily determine $\hat \tau$ on each edge and on the free rays so that the graph is balanced and pre-embedded. If we presume the necessary symmetry in the positioning of edges and rays, we can position 11 rays at the outer vertices and 10 rays at the interior vertices. We may have to increase the edge lengths in certain cases to guarantee pre-embeddedness.

We can prescribe genus in the simplest way by introducing an equilateral triangle into the graph. For embedded surfaces, we require the equilateral condition. In Figure 3 we provide examples of central graphs that produce embedded CMC surfaces with genus 1, 2, and 3. For genus 1, we begin with an equilateral triangle which assigns to each edge the same length $l(e_1)$. Designate the parameter $\hat\tau_i>0$ at $e_i$. Then the position and weight of each ray is predetermined and we see the graph has three continuous and one discrete parameter. For genus 2, we begin with two equilateral triangles, sharing a common edge. We will position the minimum number of rays needed to satisfy the balancing condition presuming all weights are positive. We determine the edge length $l(e_1)$ and choose a weight $\hat\tau_i>0$ for each $e_i$. Notice this provides us with five continuous parameters, but in fact we have one more. In this case, $\Gamma$ need not be a planar graph. Thus, the angle between the planes containing the two triangles provides the final continuous parameter. With these six parameters, the balancing condition determines the position and weight of each of the four rays. We can construct an embedded genus 3 CMC surface in two ways. We use either a central graph $\Gamma$ containing three equilateral triangles and at least five rays, or we can use a tetrahedral structure made out of equilateral triangles and at least four rays. We can also add more rays at each vertex which will increase the number of continuous parameters by three. In the case of three triangles and five rays, the graph $\Gamma$ has one discrete parameter (edge length) and nine continuous parameters. Seven continuous parameters come from assigning a weight to each edge and the remaining two come by assigning an angle between the adjacent triangles. For the tetrahedron example, there exist six continuous parameters by assigning $\hat\tau_i>0$ for each edge $e_i$. When we position one ray at each vertex, this completely determines the surface. When we position additional rays at each vertex, the number of continuous parameters goes up by three.
\begin{figure}[h]\label{GenusFigs}
\includegraphics[width=3in]{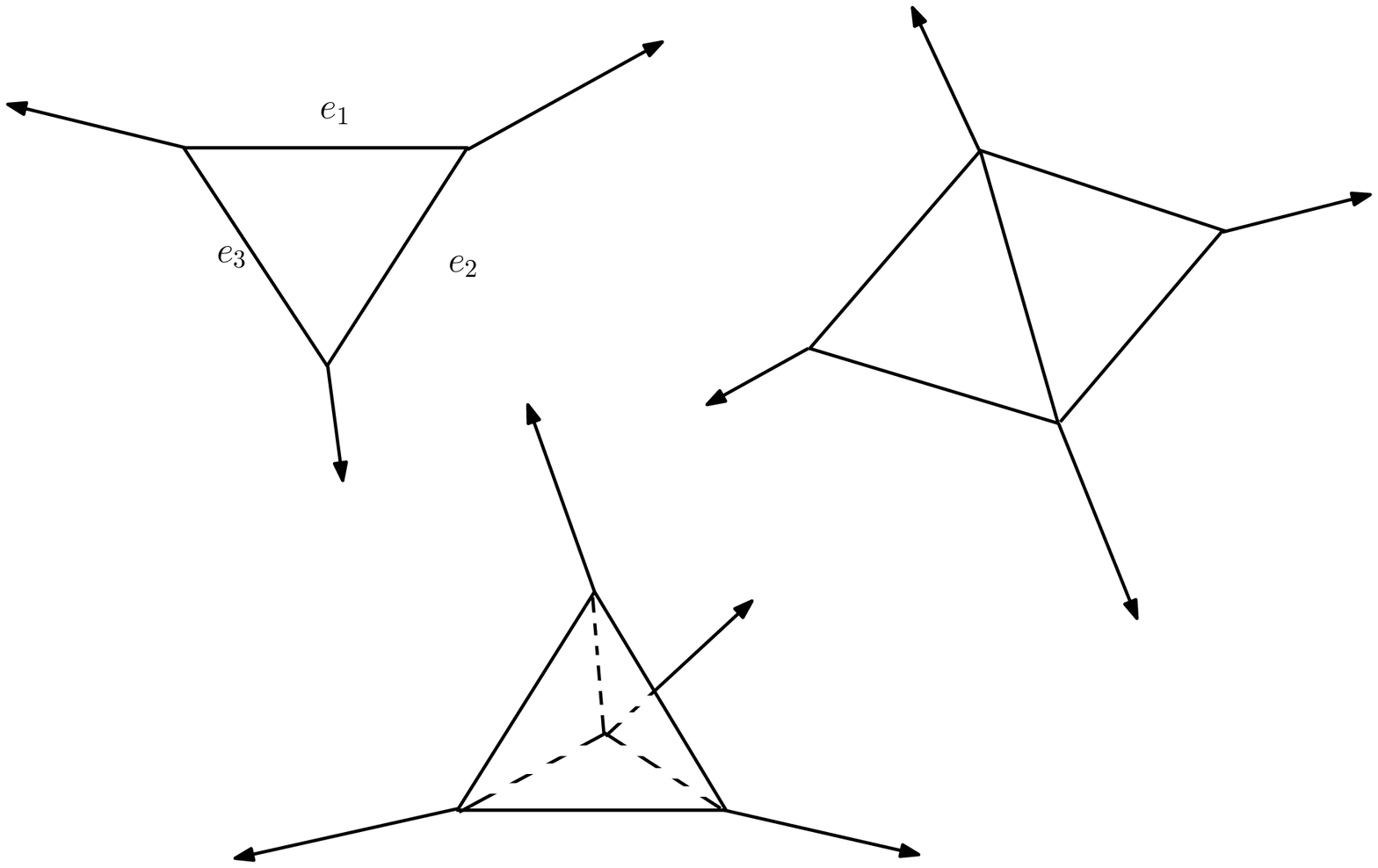}
            \caption{Graphs that produce surfaces in $\mathcal M_{1,3}, \mathcal M_{2,4}, \mathcal M_{3,4}$.}
      \end{figure}
Notice we can use the tetrahedral structure to build more non-planar finite graphs with $v$ vertices and genus $2v-5$. From these graphs, we construct embedded surfaces in $\mathcal M_{2v-5,v}$. 
\addtocounter{equation}{1}
\begin{lemma}
Given $v\geq 4$, there exists an {\bf embedded} $\Sigma \in \mathcal M_{2v-5,v}$.
\end{lemma}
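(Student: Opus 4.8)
The plan is to construct, for each $v \geq 4$, a finite central graph $\Gamma$ that is balanced, flexible, and pre-embedded, whose topological realization (via the main construction) yields a surface of genus $2v-5$ with $v$ ends; the lemma then follows by invoking the main theorem. The combinatorial backbone will be an iterated tetrahedral structure: start with a single tetrahedron on $4$ vertices (the $v=4$ case depicted in Figure~\ref{GenusFigs}, which gives genus $3 = 2\cdot 4 - 5$), and inductively glue on a new vertex by attaching it to three existing vertices with three new equilateral edges, forming a new tetrahedral cell. Each such step adds one vertex and three edges. Since a graph with $V$ vertices, $E$ edges, and one connected component has first Betti number $E - V + 1$, and here $E = 3(V-1)$ (three edges per tetrahedron step beyond the initial vertex count, checked: $v=4$ has $6 = 3\cdot 4 - 6$ edges, but the induction gives $E = 6 + 3(v-4) = 3v - 6$), we get $b_1 = (3v-6) - v + 1 = 2v - 5$, matching the desired genus. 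To produce $v$ ends we place exactly one ray at each vertex (additional rays would only raise the count, so one suffices), and we must choose the ray directions and the weights $\hat\tau$ on all edges and rays so that the balancing condition $\hd_\Gamma(p) = 0$ holds at every vertex.

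The key steps, in order, are: (1) Fix the abstract graph as above and realize it geometrically in $\Real^3$ with all edges of a common even integer length $2l$ (taking $l$ large), placing the tetrahedral cells so that they are ``spread out'' and non-coplanar; this is where I would verify the pre-embeddedness distance condition (item 2 of the pre-embedded definition), since distinct non-adjacent edges are kept far apart by choosing $l$ large and positioning the cells generically. (2) Verify the angle condition (item 1): at each vertex the three or four incident edge directions, together with the one ray direction, must pairwise make angles $\geq \pi/3$; since a vertex of the iterated tetrahedron has bounded valence ($\leq$ roughly $6$), one can always place the edges and the single ray so that all pairwise angles are comfortably above $\pi/3$ — this is an open condition satisfied on a nonempty set of configurations. (3) Solve the balancing equations: at each vertex $p$, $\sum_{e \in E_p}\hat\tau(e)\Bv_{e,p} = 0$. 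Treating the edge weights and the ray data (direction plus weight, i.e. a vector $\hat\tau(r)\Bv_{r,p}$) as unknowns, this is a linear system; the ray vector at each vertex can be chosen to absorb the residual $\sum_{e\in E_p, e \text{ edge}}\hat\tau(e)\Bv_{e,p}$, provided that residual is nonzero (so $\hat\tau(r) > 0$ can be arranged) and points in a direction making angle $\geq \pi/3$ with the incident edges. A generic choice of positive edge weights guarantees the residuals are nonzero; one then checks the resulting ray directions can be perturbed (by perturbing the edge weights or the vertex positions) into the allowed angular cone. (4) Verify flexibility: the geometric realization with all these generic, open conditions manifestly admits the smooth deformations required by Definition~\ref{flexdef}, essentially because the vertex positions and edge lengths can be varied freely in a neighborhood while keeping the (open) pre-embeddedness conditions. (5) Check item 3 of pre-embeddedness: any two rays from distinct vertices satisfy $1 - \Bv_{e,p}\cdot\Bv_{e',p'} > \epsilon$, i.e. no two rays are parallel and co-oriented; a generic choice of ray directions avoids this, and we have the freedom to perturb. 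Finally, apply the main theorem to $\Gamma$ to obtain the embedded $\Sigma \in \mathcal{M}_{2v-5, v}$.

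The main obstacle I anticipate is step (3) combined with (2): simultaneously satisfying the balancing condition and the angle condition $\angle(\Bv_{e_i,p}, \Bv_{e_j,p}) \geq \pi/3$ at every vertex, for a graph of arbitrary size. The tension is that balancing forces the ray direction at each vertex to be determined by the incident edge directions and weights, while the angle condition constrains where that ray may point; one must show there is enough freedom (in choosing vertex positions, edge weights, and — when available — placing extra rays) to satisfy both. I expect this to be handled by a counting/genericity argument: the tetrahedral building scheme keeps vertex valences uniformly bounded, so the angular constraints at each vertex are ``local'' and of fixed complexity, and a generic perturbation of the positive edge weights and vertex positions moves the balancing-forced ray directions into the interior of the allowed angular region. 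If a particular vertex is problematic, one has the option (mentioned in the text) of adding additional rays there, which adds three parameters and relaxes the constraint — at the cost of raising the end count, so one must be careful to use exactly one ray per vertex in the base construction. Making this genericity argument clean — rather than checking a growing list of cases — is the real content of the proof.
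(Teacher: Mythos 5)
Your proposal follows essentially the same route as the paper: the iterated tetrahedral graph (one new vertex and three new edges per step, so $3v-6$ edges and first Betti number $2v-5$), one ray per vertex to produce $v$ ends, rays used to absorb the balancing residual at each vertex, verification of the pre-embeddedness conditions, flexibility, and then an appeal to the main theorem. The one place you diverge — and the step you yourself flag as unresolved — is how to verify balancing together with the angle, distance and non-parallelism conditions for a graph of arbitrary size: you propose a global genericity/perturbation argument over all vertex positions and weights, whereas the paper dispatches this by induction, which is both simpler and avoids the ``growing list of cases'' you worry about. In the base case the regular tetrahedron with equal edge weights is fully symmetric, so the balancing rays point radially outward from the barycenter and all angle conditions hold comfortably; in the inductive step the weights on all old edges are \emph{frozen}, only the three new edge weights $\hat\tau_i>0$ are free, and balancing is restored purely by re-aiming and re-weighting the rays at the four affected vertices (the new vertex and the three attachment vertices) — so balancing is never an obstruction, it simply determines those four rays as functions of the $\hat\tau_i$. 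The pre-embeddedness requirements then reduce to finitely many open conditions on the directions of these four rays, which an open set of choices of $(\hat\tau_1,\hat\tau_2,\hat\tau_3)$ satisfies; flexibility is likewise inherited because small $\hd$ is absorbed by ray adjustments and small $\ell$ by sliding vertices along edges. In short, your outline is right, but the ``real content'' you anticipate is handled in the paper not by a genericity argument over the whole configuration but by localizing all new constraints to one tetrahedron per step; no generic repositioning or large common edge length is invoked.
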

\begin{proof}
We proceed by induction. For $v=4$, we choose a single tetrahedron made of equilateral triangles with edge length $l$. We put the same weight $\hat\tau>0$ on each edge; the symmetry of the construction guarantees the rays point out radially in the direction connecting the barycenter (of the solid tetrahedron) with the endpoint of the ray. Therefore, no two rays are parallel and the graph is pre-embedded. Flexibility is easily see as any small $\hd$ can be achieved by varying the ray weight and direction and changes to $\ell$ can be accomplished by sliding vertices along the various edges or extensions of edges.

At each stage of the process, we add one vertex and three edges, increasing the genus by two. Suppose for $v$ vertices we have a flexible, pre-embedded, central graph $\Gamma_v$. A quick calculation shows the genus of the embedded surface we construct is $2v-5$. We add one vertex $v_1$ and three edges $e_1,e_2,e_3$ to $\Gamma_v$. We presume that each of the weights on the edges of $\Gamma_v$ remain fixed; the addition of the three edges provides three free parameters at this step. Notice that each edge $e_i$ has an endpoint at a vertex on $\Gamma_v$ and an endpoint on $v_1$. Thus, the choice of $\hat\tau_i>0$ at each $e_i$ will influence the direction of four total rays. The pre-embedded conditions the rays must satisfy are the non-parallel condition and the distance condition. Thus, we have an open set of choices for the three $\hat\tau_i$ and only finitely many conditions to satisfy. Therefore, we can easily determine $\hat\tau_i>0$ such that $\Gamma_{v+1}$ is pre-embedded. The flexibility also follows easily as again we can prescribe any small $\hd$ by varying the ray direction and weight. Moreover, by adding a single tetrahedron to an already flexible graph, we are still free to smoothly vary $\ell$ by sliding vertices along the particular edge of interest.

Thus, for any $v$, we produce a pre-embedded, flexible, central graph $\Gamma$ with $v$ vertices and $3v-6$ edges. The graph has $3v-6$ free parameters, coming from prescribing the $\hat \tau$ parameter to each edge. Moreover, the resulting embedded CMC surface has genus $2v-5$.
\end{proof}

\section{The Building Blocks}\label{BuildingBlocks}
The surface we construct is built out of appropriately glued pieces of spheres and perturbed Delaunay surfaces. The positioning of these pieces and
the parameter that describes the Delaunay immersion associated to each edge or ray rely on the graph $\Gamma$ and the two parameters $d,{\boldsymbol \zeta}$. However, one can determine appropriate building blocks without any
initial reference to the graph $\Gamma$. To highlight this fact, we first develop immersions of the building blocks that depend upon more general parameters but not on any graph. In Section \ref{InitialSurface} we explain how to use these general immersions, given a flexible, central graph $\Gamma$ and parameters $d,\boldsymbol \zeta$ to construct a complete surface in $\Real^3$ on which we prove the main theorem.

\subsection*{Spherical Pieces}We first describe immersions of spheres with geodesic disks removed. In the initial immersion, these spherical pieces will be positioned at the vertices of the perturbed graph. Moreover, the centers of the geodesic disks will correspond to directions from which edges and rays of this graph emanate. As we are currently avoiding any reference to the graph, we consider immersions where geodesic disks are removed based solely on a collection of vectors.

For a fixed, small $\delta>0$, choose $a$ such that
\addtocounter{theorem}{1}
\begin{equation}\label{aeq}
\tanh (a+1) =  \cos \delta/8.
\end{equation}
Then for the sphere embedding we describe in Definition \ref{sphere}, $Y_0(\{a+1\}\times \mathbb{S}^1) \subset \mathbb{S}^2$ is the boundary of a geodesic disk of radius $\delta/8$ centered at $(1,0,0)$. 

Consider a collection of unit vectors $V=\{\xX_1, \dots, \xX_n\}$ such that $\angle( \xX_i , \xX_j) > 3\delta$ for $i\neq j$. For such collections of vectors, we will be particularly interested in the set
\addtocounter{theorem}{1}
\begin{equation}\label{centralspheq}\mathbb S^2_V = \mathbb{S}^2 \backslash \coprod_{i=1,\dots, n} D_{\delta/8}^{\Ss^2}(\xX_i)
\end{equation}
where $D_{r}^{\Ss^2}(\xX)$ denotes a geodesic disk on the unit sphere with radius $r$, centered at $\xX$. Notice the condition 
$\angle( \xX_i , \xX_j) >  3\delta$ for $i\neq j$ implies the distance between the removed geodesic disks is greater than $2\delta$.

\addtocounter{equation}{1}
\begin{prop}
Let $V=\{\xX_1, \dots, \xX_n\}$ be a collection of unit vectors in $\Real^3$ 
such that $\angle(\xX_i , \xX_j)>3\delta$ for $i \neq j$. If
$V' = \{\yY_1, \dots \yY_n\}$  such that $\angle(\xX_i , \yY_i) < \delta/16$ for $i=1, \dots, n$, then there exists a family of
diffeomorphisms $\hat Y[V,V']: \Ss^2_V \to \Ss^2_{V'}\subset \Real^3$, smoothly dependent on $V'$,   with the following property:

 \noindent For $i =1, \dots, n$ and $x \in D^{\Ss^2}_{\delta/4}(\xX_i)\backslash D^{\Ss^2}_{\delta/8}(\xX_i)$, 
\[
 \RRR[\xX_i,\yY_i](x)=\hat Y[V,V'](x).
\]
\end{prop}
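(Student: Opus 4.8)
The plan is to construct $\hat Y[V,V']$ by patching together three prescribed behaviors: near each removed disk it should agree with the rotation $\RRR[\xX_i,\yY_i]$ (as demanded), far from all the disks it should be the identity, and on the annular transition regions it should interpolate smoothly between the two. Concretely, I would first fix, for each $i$, the rotation $\RRR_i := \RRR[\xX_i,\yY_i]$, which is well-defined since $\angle(\xX_i,\yY_i)<\delta/16<\pi$. The angle of rotation $\theta_i := \angle(\xX_i,\yY_i)$ is small, so $\RRR_i$ is close to the identity, and it fixes the axis $\xX_i\times\yY_i$ while moving $\xX_i$ to $\yY_i$. The key geometric observation is that $\RRR_i$ maps $D^{\Ss^2}_{\delta/4}(\xX_i)$ into $D^{\Ss^2}_{\delta/4+\delta/16}(\yY_i)$, which is still disjoint from all the other removed disks $D^{\Ss^2}_{\delta/8}(\yY_j)$ because $\angle(\xX_i,\xX_j)>3\delta$ and the $\yY$'s are within $\delta/16$ of the $\xX$'s; so the local modifications do not interfere with one another.

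Next I would build the map. On the "outer" region $\Omega_0 := \Ss^2 \setminus \coprod_i D^{\Ss^2}_{\delta/4}(\xX_i)$, set $\hat Y = \mathrm{id}$. On each annulus $A_i := D^{\Ss^2}_{\delta/4}(\xX_i)\setminus D^{\Ss^2}_{\delta/8}(\xX_i)$, set $\hat Y = \RRR_i$; this is consistent with the prescribed boundary condition in the statement and is the behavior we need. The remaining piece is the innermost collar $D^{\Ss^2}_{\delta/8}(\xX_i)\setminus D^{\Ss^2}_{3\delta/32}(\xX_i)$ --- wait, here it is cleaner to interpolate on a collar just \emph{inside} radius $\delta/8$: on $D^{\Ss^2}_{\delta/8}(\xX_i)\setminus \{\xX_i\}$ I would use a cut-off $\psi$ (of the type fixed in the Notation section, rescaled to the interval $[\delta/16,\delta/8]$ say) that equals $1$ near radius $\delta/8$ and $0$ near $\xX_i$, and define $\hat Y$ in geodesic polar coordinates about $\xX_i$ by rotating through the \emph{interpolated} angle $\psi(r)\,\theta_i$ about the fixed axis $\xX_i\times\yY_i$, extended by the identity (i.e. the trivial rotation through angle $0$) at $r=0$. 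This is smooth at $\xX_i$ because the rotation through angle $0$ is the identity and $\psi\equiv 0$ near $r=0$. The resulting map is then a diffeomorphism from $D^{\Ss^2}_{\delta/8}(\xX_i)$ onto its image, and since on the seam $\partial D^{\Ss^2}_{\delta/8}(\xX_i)$ it agrees with $\RRR_i$ (and all derivatives match, because $\psi$ is locally constant there), the pieces glue to a global smooth map $\hat Y : \Ss^2_V \to \Real^3$. Actually, to be careful: the \emph{domain} is $\Ss^2_V$, so the disks $D^{\Ss^2}_{\delta/8}(\xX_i)$ are already removed --- so the last interpolation step is unnecessary, and the map is simply $\mathrm{id}$ on $\Omega_0$ and $\RRR_i$ on $A_i$, glued across $\partial D^{\Ss^2}_{\delta/4}(\xX_i)$. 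To make \emph{that} gluing smooth I instead interpolate on $A_i' := D^{\Ss^2}_{3\delta/16}(\xX_i)\setminus D^{\Ss^2}_{\delta/8}(\xX_i)$: use $\psi$ rescaled to $[\delta/8, 3\delta/16]$ and define $\hat Y$ by rotating about axis $\xX_i\times\yY_i$ through angle $(1-\psi(r))\,\theta_i$, so that $\hat Y=\RRR_i$ near radius $\delta/8$ and $\hat Y=\mathrm{id}$ near radius $3\delta/16$, matching the identity on $\Omega_0$. On the shell $D^{\Ss^2}_{\delta/4}(\xX_i)\setminus D^{\Ss^2}_{3\delta/16}(\xX_i)$ the map is the identity, so in particular on the prescribed region $D^{\Ss^2}_{\delta/4}(\xX_i)\setminus D^{\Ss^2}_{\delta/8}(\xX_i)$ the map is \emph{not} everywhere equal to $\RRR_i$ --- contradicting the statement. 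So I must keep $\hat Y = \RRR_i$ on the whole of $D^{\Ss^2}_{\delta/4}(\xX_i)\setminus D^{\Ss^2}_{\delta/8}(\xX_i)$ and do the interpolation on a \emph{larger} annulus $D^{\Ss^2}_{3\delta/8}(\xX_i)\setminus D^{\Ss^2}_{\delta/4}(\xX_i)$: here $\psi$ is rescaled to $[\delta/4,3\delta/8]$, $\hat Y$ rotates about $\xX_i\times\yY_i$ through angle $(1-\psi(r))\,\theta_i$, so $\hat Y=\RRR_i$ near radius $\delta/4$ and $\hat Y=\mathrm{id}$ near radius $3\delta/8$; and since $\angle(\xX_i,\xX_j)>3\delta$ these enlarged annuli of radius $3\delta/8<\tfrac{\delta}{2}\cdot 3$ are still pairwise disjoint, so the definition is consistent.

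Then I would check the three required properties. \textbf{Well-definedness and smoothness:} the pieces agree on overlaps (each is a rotation about $\xX_i\times\yY_i$ through a radially-defined angle, and the cut-off is locally constant near the seams), so $\hat Y$ is a smooth map $\Ss^2_V\to\Real^3$. \textbf{Image and diffeomorphism property:} on $\Omega_0$-shells $\hat Y=\mathrm{id}$; on each modified annulus $\hat Y$ is a composition of a rotation with a radial angle-twist, whose differential is everywhere nonsingular provided $\theta_i$ and $\theta_i/r\cdot$(diameter factor) stay small --- which holds because $\theta_i<\delta/16$ and the annulus has inner radius $\delta/4$, so the twist rate $\psi'(r)\theta_i$ is bounded by $C\delta/16$, uniformly small. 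Hence $\hat Y$ is a local diffeo; it is injective because it maps each $D^{\Ss^2}_{3\delta/8}(\xX_i)$ into the disjoint neighborhood $D^{\Ss^2}_{3\delta/8+\delta/16}(\yY_i)$ of $\yY_i$ (again using $\angle(\xX_i,\xX_j)>3\delta$) and is injective on each such disk since it is a small perturbation of a rotation; and it is a bijection onto $\Ss^2_{V'}$ because on the boundary circles $\partial D^{\Ss^2}_{\delta/8}(\xX_i)$ it equals $\RRR_i$, which sends them exactly to $\partial D^{\Ss^2}_{\delta/8}(\yY_i)$, so the removed disks correspond. \textbf{Smooth dependence on $V'$:} $\RRR_i=\RRR[\xX_i,\yY_i]$ depends smoothly on $\yY_i$ by Definition \ref{rotationdefn} (as long as $\yY_i\neq -\xX_i$, which is forced by $\angle(\xX_i,\yY_i)<\delta/16$), and the interpolated angle $(1-\psi(r))\theta_i$ and axis $\xX_i\times\yY_i$ are smooth functions of $\yY_i$; composing with the fixed cut-off preserves smoothness. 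Finally the prescribed identity $\RRR[\xX_i,\yY_i](x)=\hat Y[V,V'](x)$ for $x\in D^{\Ss^2}_{\delta/4}(\xX_i)\setminus D^{\Ss^2}_{\delta/8}(\xX_i)$ holds by construction, since we defined $\hat Y=\RRR_i$ exactly there.

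The main obstacle --- really the only non-formal point --- is choosing the radii of the interpolation annuli so that three competing constraints are met simultaneously: (i) $\hat Y$ must equal $\RRR_i$ on all of $D^{\Ss^2}_{\delta/4}(\xX_i)\setminus D^{\Ss^2}_{\delta/8}(\xX_i)$, forcing the transition to happen at radius $\geq\delta/4$; (ii) the enlarged transition annuli around distinct vertices must stay disjoint, which is where the hypothesis $\angle(\xX_i,\xX_j)>3\delta$ is used with room to spare; and (iii) the angle-twist must keep $\hat Y$ an immersion, which needs $\theta_i<\delta/16$ small relative to the annulus width, again true with room to spare. Once the bookkeeping of radii is fixed, the smoothness, injectivity, and smooth-dependence claims are all routine verifications of the kind handled uniformly throughout \cite{KapAnn,KapWente}.
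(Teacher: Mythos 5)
Your construction is correct and structurally parallel to the paper's: both proofs keep the map equal to $\RRR[\xX_i,\yY_i]$ on the annuli $D^{\Ss^2}_{\delta/4}(\xX_i)\backslash D^{\Ss^2}_{\delta/8}(\xX_i)$, equal to the identity away from all the marked directions, and bridge the two on intermediate annuli using the fixed cutoff, with smooth dependence on $V'$ coming from the smooth dependence of $\RRR[\xX_i,\yY_i]$ on $\xX_i\cdot\yY_i$ and $\xX_i\times\yY_i$, and with the hypothesis $\angle(\xX_i,\xX_j)>3\delta$ guaranteeing that the modified annuli around distinct $\xX_i$ are disjoint. The one genuine difference is the interpolation device: the paper blends the two maps pointwise, setting $\hat Y=\psi_V\,\hat Y[V,V]+(1-\psi_V)\RRR[\xX_i,\yY_i]$ on $D^{\Ss^2}_{\delta/2}(\xX_i)\backslash D^{\Ss^2}_{\delta/4}(\xX_i)$, whereas you interpolate the rotation \emph{angle} about the fixed axis $\xX_i\times\yY_i$, rotating through $(1-\psi(r))\,\theta_i$ on $D^{\Ss^2}_{3\delta/8}(\xX_i)\backslash D^{\Ss^2}_{\delta/4}(\xX_i)$. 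Your version has the advantage that the image stays exactly on $\Ss^2$ (the paper's convex combination leaves the sphere slightly on the blending annulus, so its formula only lands in $\Ss^2_{V'}$ up to a small radial correction), at the cost of slightly more work to see that the radially twisted map is an immersion and injective; the paper's formula is algebraically simpler and makes smooth dependence on $V'$ immediate.

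Two small cleanups you should make: the written argument contains your false starts about where to place the transition annulus (interpolating inside radius $\delta/8$ or on $[\delta/8,3\delta/16]$ both violate the required identity on $D^{\Ss^2}_{\delta/4}\backslash D^{\Ss^2}_{\delta/8}$); only the final choice $[\delta/4,3\delta/8]$ belongs in the proof. Also the twist-rate bound should read $|\psi'(r)|\,\theta_i\leq C\theta_i/\delta\leq C/16$ (the cutoff's derivative scales like $\delta^{-1}$), not $C\delta/16$; and since the image of each twisted disk overlaps the region where $\hat Y$ is the identity, global injectivity is cleanest obtained by extending $\hat Y$ by $\RRR[\xX_i,\yY_i]$ over the removed disks to a degree-one local diffeomorphism of $\Ss^2$, then restricting, which also shows the image is exactly $\Ss^2_{V'}$.
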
For ease of notation, let $\hat Y[V,V]$ denote the identity map on $\Ss^2_V$.
\begin{proof}
For $V, V'$ as described, consider a smooth cutoff function $\psi_V:\Ss^2_V \to \Real$ defined so that $\psi_V(x)=1$ for all $x \in \Ss^2 \backslash (\cup_i D^{\Ss^2}_{\delta/3}(\xX_i))$,
$\psi_V(x)=1$ in a neighborhood of each $\partial D^{\Ss^2}_{\delta/3}(\xX_i)$, $\psi_V(x)=0$ in a neighborhood of each $D^{\Ss^2}_{\delta/4}(\xX_i)$ and smoothly transits between these values on each annulus centered at $\xX_i$. Define
\begin{equation*}
\hat Y[V,V'](x):=\left\{ \begin{array}{ll} 
 \hat Y[V,V](x) & \text{if } x \in \Ss^2 \backslash \cup_{\xX_i \in V}D^{\Ss^2}_{\delta/2}(\xX_i)\\
\psi_V(x)\hat Y[V,V](x) + (1-\psi_V(x)) \RRR[\xX_i,\yY_i](x)& \text{if } x \in D^{\Ss^2}_{\delta/2}(\xX_i)\backslash D^{\Ss^2}_{\delta/4}(\xX_i),\\
\RRR[\xX_i,\yY_i](x)& \text{if } x \in D^{\Ss^2}_{\delta/4}(\xX_i)\backslash D^{\Ss^2}_{\delta/8}(\xX_i)
\end{array}
\right.\end{equation*} 
The main observation on smoothness comes by noting that the matrix $\RRR[\xX_i, \yY_i]$ is determined completely from smooth functions on $\xX_i \cdot \yY_i$ and from the components of $\xX_i \times \yY_i$. Since these values are smooth in $\yY_i$, the proposition holds.
\end{proof}
In order to deal with a small annoyance that will manifest when we identify pieces of the abstract surface $M$, we must introduce a second diffeomorphism. The process involves post-composing the previous map with a diffeomorphism that slightly rotates the image of the annuli under $\hat Y[V,V']$. In this way we guarantee that the diffeomorphism sends one specified orthonormal frame to a second specified orthonormal frame.
\addtocounter{equation}{1}
\begin{prop}
 Let $V_1 \times V_2=\{(\xX_1^1,\xX_1^2), \dots, (\xX_n^1, \xX_n^2)\}$ be a collection of pairwise associated unit vectors in $\Real^3$ such that $\xX_i^1 \cdot \xX_i^2 =0$ for 
$i=1, \dots, n$ and $\angle(\xX_i^1, \xX_j^1)>3\delta$ for $i \neq j$. Let $V_1' \times V_2'=\{(\yY_1^1, \yY_1^2), \dots, (\yY_n^1, \yY_n^2)\}$ be a second collection of pairwise
associated unit vectors with $\yY_i^1\cdot \yY_i^2=0$ for $i=1, \dots, n$ such that $\angle(\xX_i^j , \yY_i^j) < \delta/16$ for $i=1, \dots, n$ and $j=1,2$.
There exists a family of diffeomorphisms, $\underline Y[V_1 \times V_2, V_1'\times V_2']:\Ss^2_{V_1} \to \Ss^2_{V_1'} \subset \Real^3$, smoothly dependent on
$V_1', V_2'$, with the following property:

\noindent For $i =1, \dots, n$ and $x \in D^{\Ss^2}_{\delta/4}(\xX_i^1)\backslash D^{\Ss^2}_{\delta/8}(\xX_i^1)$, 
\[
 \underline Y[V_1 \times V_2, V_1'\times V_2'](x) = \RRR[\hat Y[V_1, V_1'] (\xX_i^2), \yY_i^2] \circ \RRR[\xX_i^1, \yY_i^1](x).
\]
\end{prop}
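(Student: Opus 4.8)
The plan is to reduce this statement to the previous proposition by a straightforward post-composition argument. First I would observe that by the previous proposition applied to $V_1$ and $V_1'$ (note $\angle(\xX_i^1,\yY_i^1)<\delta/16$ is exactly the hypothesis needed), we already have a family of diffeomorphisms $\hat Y[V_1,V_1']:\Ss^2_{V_1}\to\Ss^2_{V_1'}$ depending smoothly on $V_1'$, and on each annulus $D^{\Ss^2}_{\delta/4}(\xX_i^1)\setminus D^{\Ss^2}_{\delta/8}(\xX_i^1)$ it agrees with $\RRR[\xX_i^1,\yY_i^1]$. The task is to correct the ``frame'' behavior: the second components $\xX_i^2$ get moved by $\hat Y[V_1,V_1']$ to $\hat Y[V_1,V_1'](\xX_i^2)=\RRR[\xX_i^1,\yY_i^1](\xX_i^2)$ (since $\xX_i^2$, being orthogonal to $\xX_i^1$, lies on the great circle at distance $\pi/2$ from $\xX_i^1$, hence inside the annular region where $\hat Y$ acts by the rotation — or at least one checks it lands in a region where the formula is the pure rotation), whereas we want the prescribed target $\yY_i^2$.

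Next I would define, on each disk $D^{\Ss^2}_{\delta/2}(\xX_i^1)$, the correcting rotation $\RRR_i:=\RRR[\hat Y[V_1,V_1'](\xX_i^2),\yY_i^2]$; since $\xX_i^2$ is close to $\yY_i^2$ (within $\delta/16$) and $\RRR[\xX_i^1,\yY_i^1]$ is close to the identity, the vectors $\hat Y[V_1,V_1'](\xX_i^2)$ and $\yY_i^2$ are close, so $\RRR_i$ is a small rotation and in particular well-defined by Definition \ref{rotationdefn}. Then I would post-compose: set $\underline Y[V_1\times V_2,V_1'\times V_2']$ equal to $\hat Y[V_1,V_1']$ away from the disks $D^{\Ss^2}_{\delta/3}(\xX_i^1)$, equal to $\RRR_i\circ\RRR[\xX_i^1,\yY_i^1]$ on the inner annulus $D^{\Ss^2}_{\delta/4}(\xX_i^1)\setminus D^{\Ss^2}_{\delta/8}(\xX_i^1)$, and interpolate on the intermediate annulus $D^{\Ss^2}_{\delta/3}(\xX_i^1)\setminus D^{\Ss^2}_{\delta/4}(\xX_i^1)$ using a radial cutoff $\psi$ centered at $\xX_i^1$ to transit between $\hat Y[V_1,V_1']$ and $\RRR_i\circ\hat Y[V_1,V_1']$. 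One must check this interpolation actually lands in $\Ss^2_{V_1'}$ — i.e. the interpolated image stays near $\RRR[\xX_i^1,\yY_i^1](\xX_i^1)=\yY_i^1$ and avoids all removed disks — which follows because $\RRR_i$ is a small rotation fixing a neighborhood-scale that is controlled by $\delta/16\ll\delta$, while the removed disks have radius $\delta/8$ and the separation $\angle(\xX_i^1,\xX_j^1)>3\delta$ gives ample room.

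The required boundary property then holds by construction on the inner annulus. Smoothness in $V_1',V_2'$ follows from the same observation as in the previous proposition: $\RRR_i$ is built from the dot product $\hat Y[V_1,V_1'](\xX_i^2)\cdot\yY_i^2$ and the cross product $\hat Y[V_1,V_1'](\xX_i^2)\times\yY_i^2$, both smooth functions of $\yY_i^1,\yY_i^2$ (using that $\hat Y[V_1,V_1']$ depends smoothly on $V_1'$), together with the fact that a rotation depends smoothly on its axis and angle away from angle $\pi$, and here the angle is bounded away from $\pi$ since $\RRR_i$ is small. I expect the only genuinely delicate point to be the verification that the interpolated map on the middle annulus is still a diffeomorphism onto its image inside $\Ss^2_{V_1'}$ — this is where one uses that all the relevant rotations are $O(\delta)$-small so that the convex-combination formula $\psi\,\hat Y + (1-\psi)\,\RRR_i\hat Y$ is a small perturbation of a diffeomorphism and hence itself a diffeomorphism, and its image stays within the prescribed annular collar around $\yY_i^1$ and away from the removed disks. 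Everything else is routine bookkeeping parallel to the preceding proposition.
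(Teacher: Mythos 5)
Your overall route is the same as the paper's: the paper's (one-line) proof is precisely the observation that one reuses the cutoff construction of the preceding proposition, post-composing on each annulus with the small correcting rotation $\RRR[\hat Y[V_1,V_1'](\xX_i^2),\yY_i^2]$, and your gluing, smallness, and smooth-dependence discussion reproduces that logic.

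One step is wrong as stated, though repairable. You justify $\hat Y[V_1,V_1'](\xX_i^2)=\RRR[\xX_i^1,\yY_i^1](\xX_i^2)$ by saying that $\xX_i^2$, being orthogonal to $\xX_i^1$, lies inside the annular region where $\hat Y$ acts by that rotation. It does not: $\xX_i^2$ is at spherical distance $\pi/2$ from $\xX_i^1$, far outside $D^{\Ss^2}_{\delta/2}(\xX_i^1)$, so at the point $\xX_i^2$ the map $\hat Y[V_1,V_1']$ is the identity (or possibly the rotation attached to some \emph{other} disk $D^{\Ss^2}_{\delta/2}(\xX_j^1)$ containing $\xX_i^2$). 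The expression $\hat Y[V_1,V_1'](\xX_i^2)$ has to be read as the rotation used on the $i$-th annulus applied to $\xX_i^2$, i.e.\ as $\RRR[\xX_i^1,\yY_i^1](\xX_i^2)$; this is the reading forced by the observation following the proposition that this vector is orthogonal to $\yY_i^1$. The point is not cosmetic: since $\RRR[\xX_i^1,\yY_i^1](\xX_i^2)$ and $\yY_i^2$ are both orthogonal to $\yY_i^1$, the correcting rotation has axis parallel to $\yY_i^1$ and therefore fixes $\yY_i^1$, which is exactly what makes the composite rotation carry $D^{\Ss^2}_{\delta/4}(\xX_i^1)\setminus D^{\Ss^2}_{\delta/8}(\xX_i^1)$ isometrically onto the corresponding annulus centered at $\yY_i^1$, so that the inner boundary circles land exactly on $\partial D^{\Ss^2}_{\delta/8}(\yY_i^1)$ and the map is onto $\Ss^2_{V_1'}$. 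Under your literal reading, the smallness argument alone only places the image boundary circles within $O(\delta/16)$ of the required ones, and the map would fail to be a diffeomorphism onto $\Ss^2_{V_1'}$. With the identification stated correctly, the rest of your argument (cutoff interpolation, smoothness via dot and cross products, smallness of all rotations) goes through and coincides with the paper's.
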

\begin{proof}
The existence of such a diffeomorphism follows by using the same cutoff function and logic from the previous proposition.
\end{proof}
Observe that since $\hat Y[V_1, V_1'] (\xX_i^2)$ is orthogonal to $\yY_i^1$, the second rotation applied to each annulus fixes
the vector $\yY_i^1$ for each $i$.

\subsection*{Delaunay Surfaces}\label{DelSection}
While descriptions of Delaunay surfaces are readily available in the literature,
our construction will require an understanding of our choice of conformal immersion into $\Real^3$.  To that end, we present, albeit in an abbreviated form, the conformal map and fundamental
geometric quantities related to it. 


For $\tau \in (0,1/4]$ define the immersion $Y_\tau: \Real \times \mathbb{S}^1 \to \Real^3$ 
\addtocounter{theorem}{1}
\begin{equation}\label{DelEq}
Y_\tau(t,\theta) = \left(k(t), r(t) \cos \theta, r(t) \sin \theta\right)
\end{equation}
where
\begin{equation*}
 r(t)= \sqrt{\tau} e^{w(t)},
\end{equation*}
\begin{equation*}
\left\{
 \begin{array}{l}
 k'(t)=  2\sqrt{\tau}r(t) \cosh w(t)\\
 k(0)=0,
 \end{array}
\right . \end{equation*}
with 
\addtocounter{theorem}{1}
\begin{equation}\label{weq}
\left\{\begin{array}{l}
({w'})^2 +4\tau\cosh^2 w = 1 \\w(0)>0,\: w'(0)=0.\end{array}\right.
\end{equation}

As a result, for any $\tau \in  (0,1/4]$, we have Gauss map, metric, and second fundamental form
\addtocounter{theorem}{3}
\begin{align}\label{normalvector}
 \nu_\tau(t,\theta)&=\left( -w', 2\sqrt{\tau}\cosh w \cos \theta, 2\sqrt{\tau}\cosh w \sin \theta \right),\\
g_\tau &= \tau e^{2w}(dt^2 + d\theta^2),\\
 A_\tau &= -2\tau e^w(\sinh w\: dt^2 + \cosh w\: d\theta^2).
\end{align}Notice for $\tau<0$ we replace each $\cosh w$ with a $\sinh w$ and each $\tau$ with $|\tau|$.

With this definition, we see $H \equiv 1$ and
\addtocounter{theorem}{1}
\begin{equation}\label{curvatureeq}
|A_{\tau}|^2 = 2+2e^{-4w}, \qquad K_{\tau}=1-e^{-4w}.
\end{equation}Note the sign on the curvature of the surface corresponds precisely with the sign of $w$.

By the nature of the equation and based on its initial conditions, $w$ is periodic with period we designate $4P_\tau$. Moreover, $w$ has even symmetry about $t=0$ and odd symmetry about $t=P_\tau$.
  It has a maximum at $t=0$ and a
minimum at $t=2P_\tau$.
The periodicity of $w$ is preserved in the image surface, and we let $1+p_\tau=k(2P_\tau)$.  Thus, the period of the image surface is
$2+2p_\tau$.

\addtocounter{equation}{1}
\begin{lemma}
 A Delaunay surface with parameter $\tau$, as described above, is rotationally symmetric about the $x_1$ axis and is embedded for $\tau \in (0,\frac{1}{4}]$. Let 
$r_\tau^{max}, r_\tau^{min}$ denote the largest and smallest radii of the circles in the $x_2, x_3$ plane. Then 
\[
 r_\tau^{max} = 1+O(\tau); \: \: r_\tau^{min} = |\tau| + O(\tau^2).
\]
\end{lemma}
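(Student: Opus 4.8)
The plan is to extract all the required geometric information directly from the ODE system \eqref{weq} governing $w$, since the immersion $Y_\tau$ is defined explicitly in terms of $w$, $r$, and $k$. First I would recall from the stated properties that $w$ is periodic with period $4P_\tau$, has a maximum at $t=0$ and a minimum at $t=2P_\tau$, and satisfies $(w')^2 + 4\tau\cosh^2 w = 1$ with $w'(0)=0$. From the equation at the critical points, $\cosh^2 w_{max} = \cosh^2 w(0) = 1/(4\tau)$, so that $w_{max} = \arccosh\bigl(1/(2\sqrt\tau)\bigr)$, and similarly the minimum of $\cosh^2 w$ along the trajectory is constrained by requiring $(w')^2 \geq 0$ to hold everywhere with equality only at critical points --- but since $w$ oscillates symmetrically, the value $w(2P_\tau)$ is simply $-w(0)$ (by the odd symmetry about $t = P_\tau$ combined with even symmetry about $t=0$, forcing $\cosh w$ to return to its maximal value and $w$ to its negative). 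Thus $e^{w_{max}} = e^{w(0)}$ and $e^{w_{min}} = e^{-w(0)}$.

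Next I would compute the radii. Since $r(t) = \sqrt\tau\, e^{w(t)}$ and $r$ is monotone between consecutive critical points of $w$, we have $r_\tau^{max} = \sqrt\tau\, e^{w(0)}$ and $r_\tau^{min} = \sqrt\tau\, e^{-w(0)}$, so $r_\tau^{max} \cdot r_\tau^{min} = \tau$. It therefore suffices to expand $e^{w(0)}$ for small $\tau$. From $\cosh w(0) = 1/(2\sqrt\tau)$ we get $e^{w(0)} + e^{-w(0)} = 1/\sqrt\tau$; writing $x = e^{w(0)} \geq 1$, this is the quadratic $x^2 - x/\sqrt\tau + 1 = 0$, whose larger root is $x = \frac{1}{2\sqrt\tau}\bigl(1 + \sqrt{1-4\tau}\bigr)$. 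Expanding the square root gives $x = \frac{1}{\sqrt\tau}\bigl(1 - \tau + O(\tau^2)\bigr)$, hence $r_\tau^{max} = \sqrt\tau \cdot x = 1 - \tau + O(\tau^2) = 1 + O(\tau)$. Correspondingly $r_\tau^{min} = \tau / r_\tau^{max} = \tau\bigl(1 + \tau + O(\tau^2)\bigr) = \tau + O(\tau^2) = |\tau| + O(\tau^2)$. The $|\tau|$ rather than $\tau$ in the statement is just to cover the convention for negative $\tau$ mentioned after \eqref{normalvector}, where $\cosh w$ is replaced by $\sinh w$; the same computation with $\sinh^2 w(0) = 1/(4|\tau|)$ yields identical asymptotics.

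For rotational symmetry about the $x_1$-axis, I would simply observe from \eqref{DelEq} that $Y_\tau(t,\theta) = (k(t), r(t)\cos\theta, r(t)\sin\theta)$, so rotating $\theta \mapsto \theta + \theta_0$ corresponds exactly to the Euclidean rotation in the $(x_2,x_3)$-plane fixing the $x_1$-axis. For embeddedness on $\tau \in (0,\tfrac14]$, the key point is that $k'(t) = 2\sqrt\tau\, r(t)\cosh w(t) > 0$ everywhere (the right side is strictly positive since $r > 0$ and $\cosh \geq 1$), so $k$ is a strict diffeomorphism of $\Real$ onto $\Real$; therefore distinct values of $t$ give distinct $x_1$-coordinates, and on each $t$-slice the map $\theta \mapsto (r(t)\cos\theta, r(t)\sin\theta)$ is an embedding of $\Ss^1$ since $r(t) > 0$. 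Hence $Y_\tau$ is injective, and a check that $dY_\tau$ has full rank (immediate from the metric $g_\tau = \tau e^{2w}(dt^2 + d\theta^2)$ being nondegenerate) shows it is an embedding.

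The main obstacle is not conceptual but bookkeeping: one must justify carefully that $w(2P_\tau) = -w(0)$ rather than merely $|w(2P_\tau)| \le w(0)$, i.e.\ that the trajectory actually reaches the symmetric extreme value rather than being damped. This follows because the phase-plane system $(w,w')$ with the conserved quantity $(w')^2 + 4\tau\cosh^2 w = 1$ has closed level curves symmetric under $w \mapsto -w$, so the orbit through $(w(0), 0)$ must pass through $(-w(0), 0)$; the stated even/odd symmetries of $w$ then pin down that this occurs precisely at $t = 2P_\tau$. Once this is in hand, everything else is the elementary quadratic expansion above.
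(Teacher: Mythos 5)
Your proposal is correct and follows essentially the same route as the paper: rotational symmetry and embeddedness from the explicit form of $Y_\tau$ and $k'>0$, then $w(0)=\arccosh\bigl(1/(2\sqrt\tau)\bigr)$, $w(2P_\tau)=-w(0)$ from the ODE and its symmetries, and the expansion of $e^{\pm w(0)}$ (your quadratic for $x=e^{w(0)}$ is just the log formulation of $\arccosh$ used in the paper). The only cosmetic difference is that you spell out the phase-plane justification of $w(2P_\tau)=-w(0)$, which the paper absorbs into the previously stated even/odd symmetries of $w$.
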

\begin{proof}
 The rotational symmetry and embeddedness follow immediately from the definition of the immersion and fact that $k'>0$ for $\tau \in (0,1/4]$. The ODE for $w$ and initial conditions imply that for $\tau>0$ we have $w(0) = \arccosh\left(\frac{1}{2\sqrt {\tau}}\right)$ and $w(2P_\tau)=-\arccosh\left(\frac{1}{2\sqrt {\tau}}\right)$. Using the log formulation for $\arccosh$,
\[
 r_\tau(0) =\sqrt{\tau}e^{w(0)} = \sqrt{\tau} \frac{1}{2\sqrt{\tau}}\left(1+ \sqrt{1- 4\tau}\right)=\frac{1}{2}\left(1+ \sqrt{1- 4\tau}\right)=1+O(\tau),
\]
\[
 r_\tau(2P_\tau) = \sqrt{\tau}e^{w(2P_\tau)} = \sqrt{\tau}\left(2 \sqrt{\tau}\right) \left(1+\sqrt{1-4\tau}\right)^{-1} = 2\tau\left(2-2\tau + O(\tau^2)\right)^{-1} = \tau +O(\tau^2).
\]Note when $\tau <0$ we replace $\arccosh$ by $\arcsinh$ and each $\tau$ by $|\tau|$. Finally, any instance of $-4\tau$ becomes $+4|\tau|$. 
\end{proof}

We will need a reference for an embedding of $\Ss^2 \backslash \{(\pm 1, 0, 0)\}$ from the cylinder and so state that here.
\addtocounter{equation}{1}
\begin{definition}\label{sphere}
Let $Y_0: \Real\times \mathbb{S}^1 \to \Real^3$ where
\[Y_0(t, \theta) = \left( \tanh t, \sech t\cos \theta, \sech t \sin \theta\right).\]

Then, $g_0=\sech^2t(dt^2 + d\theta^2)$ and $|A_0|^2 = 2$.
\end{definition}
The construction relies on the fact that certain regions of Delaunay immersions possess well understood geometric limits as $\tau \to 0$. We solve the linearized problem with respect to a conformal metric $h$, which behaves on these regions much like the pull back of the Gauss map. In fact, $\nu_\tau^*g$ provides an isometry between the regions $[-b,b]\times \Ss^1$ and $[2P_\tau -b, 2P_\tau +b]\times \Ss^1$ via the map $Y_\tau(t,\theta) \to Y_\tau(2P_\tau -t,\theta)$. (Here $b$ is a large constant, fixed in \eqref{decaymain}.) Therefore, it suffices to understand the asymptotics of the immersion of $[-b,b]\times \Ss^1$.

In \cite{KapAnn}, the regions and their geometric limits are described in some detail. The next lemma will be stated without proof. The interested reader should consult Lemmas 2.1 and 2.2 in Appendix A of \cite{KapAnn} for the details.
\addtocounter{equation}{1}
\begin{lemma}\label{radiuslemma}
Let $r_\tau:[-x_\tau, x_\tau] \to \Real$ be the function whose graph, rotated about the $x_1$ axis, gives $Y_\tau\left([-2P_\tau, 2P_\tau]\times \Ss^2\right)$. 
Then as $\tau \to 0$
\[
p_\tau \to 0, \qquad x_\tau \to 1. 
\]
 Let $r_0(x_1):[-1,1] \to \Real$ be defined by $r_0(x_1) = \sqrt{1-x_1^2}$.
 Given $\epsilon >0$, there exists $\tau_\epsilon>0$ such that if $0<|\tau |< \tau_\epsilon$, then $r_\tau$ restricted to $[-1+\epsilon, 1-\epsilon]$ depends smoothly on $\tau$ and
 \[
 ||r_0-r_\tau:C^k([-1+ \delt, 1- \delt])|| \leq C(\delt,k)|\tau|.
\]
Moreover, we have the following period limits as $\tau \to 0$:
\addtocounter{theorem}{1}
\begin{equation}\label{periodlimits}
\lim_{\tau \to 0} \frac{1}{-\log \tau}\cdot \frac{dp_\tau}{d\tau}=1; \: \: \lim_{\tau \to 0} \frac{p_\tau}{-\tau \log \tau} = 1.
\end{equation}
\end{lemma}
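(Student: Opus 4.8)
The plan is to reduce the whole statement to the ODE \eqref{weq} for $w$ on one half-period $t\in[0,2P_\tau]$, on which $w$ decreases monotonically from $w_0:=\arccosh\!\big(1/(2\sqrt\tau)\big)$ to $-w_0$; since $w$ is even about $t=0$, the factor $k'(t)=2\sqrt\tau\,r(t)\cosh w(t)=\tau\big(e^{2w(t)}+1\big)$ is even, so $k$ is odd, $x_1=k(t)$ sweeps $[-(1+p_\tau),1+p_\tau]$, and $x_\tau=1+p_\tau$. The one non-routine step is the substitution $s:=\sinh w=s_0\sin\phi$, with $s_0:=\sinh w_0=\tfrac{\sqrt{1-4\tau}}{2\sqrt\tau}$ and $\phi\in[-\tfrac\pi2,\tfrac\pi2]$ (decreasing as $t$ increases). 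Using $1-4\tau\cosh^2 w=4\tau(s_0^2-s^2)=4\tau s_0^2\cos^2\phi$, $\ r=\sqrt\tau\,e^w=\sqrt\tau\big(s+\sqrt{1+s^2}\big)$, and $k'(t)=2\tau\sqrt{1+s^2}\big(s+\sqrt{1+s^2}\big)$, a direct computation gives $\dfrac{dx_1}{d\phi}=-r$ and the closed forms
\[
r(\phi,\tau)=\tfrac12\Big(\sqrt{\sin^2\phi+4\tau\cos^2\phi}+\sqrt{1-4\tau}\,\sin\phi\Big),\qquad x_1(\phi,\tau)=\int_{\phi}^{\pi/2}r(\phi',\tau)\,d\phi'.
\]
Evaluating $x_1$ at $\phi=-\tfrac\pi2$ and splitting the integrand into even and odd parts yields the exact period formula
\[
1+p_\tau=k(2P_\tau)=\int_{0}^{\pi/2}\sqrt{\sin^2\phi+4\tau\cos^2\phi}\,d\phi,
\]
i.e. $2+2p_\tau$ is half the perimeter of the ellipse with semi-axes $1$ and $2\sqrt\tau$ --- the rolling-ellipse description of a Delaunay unduloid. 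These formulas are the main computation; everything after them is elementary.

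For the period limits \eqref{periodlimits} (for $\tau>0$), write $p_\tau=\int_0^{\pi/2}\big(\sqrt{\sin^2\phi+4\tau\cos^2\phi}-\sin\phi\big)\,d\phi\ge0$ and split at a fixed small $\delta$: on $[\delta,\tfrac\pi2]$ the integrand is $\sin\phi\big(\sqrt{1+4\tau\cot^2\phi}-1\big)=O(\tau)$, while on $[0,\delta]$ one has $\sqrt{\sin^2\phi+4\tau\cos^2\phi}=\sqrt{\phi^2+4\tau}\,(1+O(\phi^2))$ and $\int_0^{\delta}\big(\sqrt{\phi^2+4\tau}-\phi\big)\,d\phi=-\tau\log\tau+O(\tau)$, the logarithm coming from $\int_0^{\delta}(\phi^2+4\tau)^{-1/2}\,d\phi=\tfrac12\log(1/\tau)+O(1)$. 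Hence $p_\tau=-\tau\log\tau+O(\tau)$, which gives $p_\tau\to0$, $x_\tau=1+p_\tau\to1$, and $\lim_{\tau\to0}p_\tau/(-\tau\log\tau)=1$. Differentiating the period formula under the integral (legitimate for $\tau>0$) gives $\dfrac{dp_\tau}{d\tau}=\int_0^{\pi/2}\dfrac{2\cos^2\phi}{\sqrt{\sin^2\phi+4\tau\cos^2\phi}}\,d\phi$, and the same near-$\phi=0$ analysis, now with $\int_0^{\delta}2(\phi^2+4\tau)^{-1/2}\,d\phi=\log(1/\tau)+O(1)$, yields $\dfrac{dp_\tau}{d\tau}=-\log\tau+O(1)$, hence $\lim_{\tau\to0}\tfrac{1}{-\log\tau}\tfrac{dp_\tau}{d\tau}=1$.

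For the $C^k$ estimate I would use the parametrization above. By the symmetry $r_\tau(-x_1)=r_\tau(x_1)$ (and likewise for $r_0$) it suffices to work on $[0,1-\epsilon]$. Since $x_1(0,\tau)=\tfrac12\big((1+p_\tau)+\sqrt{1-4\tau}\big)\to1$ as $\tau\to0$, for all small $\tau>0$ there is a fixed $\phi_\epsilon\in(0,\tfrac\pi2)$ with $[0,1-\epsilon]\subset x_1\big([\phi_\epsilon,\tfrac\pi2],\tau\big)$; on $[\phi_\epsilon,\tfrac\pi2]$ one has $\sin^2\phi+4\tau\cos^2\phi\ge c(\epsilon)>0$, so $r(\phi,\tau)$, and hence $x_1(\phi,\tau)$, is jointly smooth in $(\phi,\tau)\in[\phi_\epsilon,\tfrac\pi2]\times[0,\delta_\epsilon)$, with $\partial_\phi x_1=-r(\phi,\tau)\le-\tfrac12\sin\phi_\epsilon<0$ there. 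The only feature to notice is that the turning point of $r_\tau$ over $x_1$ at $x_1=0$ corresponds to the endpoint $\phi=\tfrac\pi2$, where $\partial_\phi x_1=-r_\tau^{max}\ne0$, so the inversion stays regular there as well. Inverting, $\phi(x_1,\tau)$ is jointly smooth, hence so is $\widetilde r(x_1,\tau):=r_\tau(x_1)=r\big(\phi(x_1,\tau),\tau\big)$ on $[0,1-\epsilon]\times[0,\delta_\epsilon)$, with $\widetilde r(x_1,0)=\sqrt{1-x_1^2}$ since at $\tau=0$ the parametrization is $(x_1,r)=(\cos\phi,\sin\phi)$. Thus $r_\tau$ depends smoothly on $\tau$, and by Taylor's theorem $r_\tau(x_1)-r_0(x_1)=\tau\int_0^1\partial_\tau\widetilde r(x_1,\sigma\tau)\,d\sigma$, so $||r_0-r_\tau:C^k([-1+\epsilon,1-\epsilon])||\le|\tau|\sup_{0\le\sigma\le|\tau|}||\partial_\tau\widetilde r(\cdot,\sigma):C^k([0,1-\epsilon])||=:C(\epsilon,k)|\tau|$, the supremum being finite on the compact parameter range. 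The nodoid case $\tau<0$, where $(w')^2+4|\tau|\sinh^2 w=1$, is handled identically via $s=\tfrac{\sin\phi}{2\sqrt{|\tau|}}$, which gives $r(\phi,\tau)=\tfrac12\big(\sin\phi+\sqrt{\sin^2\phi+4|\tau|}\big)$ on the sphere-like region and the same estimates with $|\tau|$ replacing $\tau$ (note that smoothness in $\tau$ is asserted only for $\tau\ne0$).

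The only genuinely non-mechanical point is discovering the substitution $s=s_0\sin\phi$: it is what simultaneously makes $1-4\tau\cosh^2 w$ a perfect square, converts the half-period of $k'$ into the quarter-ellipse integral, and turns the $x_1=0$ turning point of $r_\tau$ into a regular endpoint of the $\phi$-interval, so that both the $O(|\tau|)$ $C^k$-closeness of $r_\tau$ to the round profile and the $-\tau\log\tau$ period asymptotics fall out of one-variable estimates. Everything after that is routine book-keeping; the full details are carried out in \cite[Appendix A, Lemmas 2.1--2.2]{KapAnn}.
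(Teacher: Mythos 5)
The paper itself gives no argument for this lemma: it is explicitly ``stated without proof,'' with the reader sent to Lemmas 2.1 and 2.2 of Appendix A in \cite{KapAnn}. Your write-up is therefore necessarily a different, self-contained route, reducing everything to the substitution $\sinh w=s_0\sin\phi$ in \eqref{weq} and the resulting quarter-ellipse integral $1+p_\tau=\int_0^{\pi/2}\sqrt{\sin^2\phi+4\tau\cos^2\phi}\,d\phi$. I checked the computations for $\tau>0$: $k'=\tau(e^{2w}+1)$, $dx_1/d\phi=-r$, the closed form for $r(\phi,\tau)$, the identification $x_\tau=1+p_\tau$, and the inversion argument giving joint smoothness of $\widetilde r(x_1,\tau)$ on $[0,1-\epsilon]\times[0,\delta_\epsilon)$ (including regularity at the turning point $\phi=\pi/2$) are all correct, the Taylor-in-$\tau$ step does yield the $C^k$ estimate, and $dp_\tau/d\tau=-\log\tau+O(1)$ is right. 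This is an appealing alternative to chasing the estimates of \cite{KapAnn}: one identity carries the entire lemma.

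Two points need repair. First, your estimate of $p_\tau$ on $[0,\delta]$ is not justified as written: replacing $\sqrt{\sin^2\phi+4\tau\cos^2\phi}$ by $\sqrt{\phi^2+4\tau}\,(1+O(\phi^2))$ and (implicitly) $\sin\phi$ by $\phi$ introduces additive errors whose integrals are of size $O(\delta^4)$, a constant independent of $\tau$, hence not $o(-\tau\log\tau)$. The two errors do cancel, but to see this you must group before approximating, writing the integrand as $4\tau\cos^2\phi\big/\big(\sqrt{\sin^2\phi+4\tau\cos^2\phi}+\sin\phi\big)$ exactly as you did on $[\delta,\pi/2]$; alternatively, deduce $p_\tau=-\tau\log\tau+O(\tau)$ by integrating your own asymptotics for $dp_\tau/d\tau$, since $p_\tau\to0$. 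Second, the nodoid case is not ``identical'': with $(w')^2+4|\tau|\sinh^2w=1$ and $s=\sinh w=\frac{\sin\phi}{2\sqrt{|\tau|}}$ one gets $k'=|\tau|(e^{2w}-1)$, hence $dx_1/d\phi=-r\tanh w=-r\sin\phi\big/\sqrt{\sin^2\phi+4|\tau|}$, not $-r$; the period formula becomes $1+p_\tau=\int_0^{\pi/2}\sin^2\phi\,\big(\sin^2\phi+4|\tau|\big)^{-1/2}\,d\phi$, so that $p_\tau=|\tau|\log|\tau|+O(|\tau|)$ is negative, and \eqref{periodlimits} holds with $\log\tau$ read as $\log|\tau|$. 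The same smoothness and inversion argument then goes through with the modified integrand, so both slips are local and fixable rather than fatal.
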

We have the following corollary comparing the metric on the sphere and the Delaunay immersion. 
\addtocounter{equation}{1}
\begin{corollary}\label{taudeltaandb}\label{metriccompg}
For $\delt\in(0,1), k \in \mathbb Z^+$, there exists $\tau_\delt>0$ such that for all $0<|\tau|<\tau_\delt$,  
\addtocounter{theorem}{1}
\begin{equation}\label{gmetricrelation}||g_\tau - g_0:C^k([-1+\epsilon, 1-\epsilon]\times\mathbb S^1),g_\tau)||\leq C(\delt,k)|\tau|.
\end{equation}
\end{corollary}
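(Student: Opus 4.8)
The plan is to reduce the claim to the $C^k$ comparison of radius functions already provided by Lemma \ref{radiuslemma}. Both the round unit sphere $Y_0(\Real\times\Ss^1)$ and the positively curved standard region of $Y_\tau$ are surfaces of revolution about the $x_1$-axis, and by Lemma \ref{radiuslemma} each is represented, over the $x_1$-interval $[-1+\epsilon,1-\epsilon]$ (which lies in the domain of $r_\tau$ once $x_\tau>1-\epsilon$, i.e.\ for $\tau$ small), as the graph of a radius function: $r_0(x_1)=\sqrt{1-x_1^2}$ and $r_\tau$ with $\|r_\tau-r_0:C^m([-1+\epsilon,1-\epsilon])\|\le C(\epsilon,m)|\tau|$ for every $m$. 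In the $(x_1,\theta)$ coordinates coming from this graph representation, the induced metric of a surface of revolution with radius function $r$ is
\[
\bigl(1+r'(x_1)^2\bigr)\,dx_1^2+r(x_1)^2\,d\theta^2,
\]
so the corollary amounts to controlling $r_\tau'^2-r_0'^2$ and $r_\tau^2-r_0^2$ in $C^k$ on $[-1+\epsilon,1-\epsilon]$.

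First I would record the uniform bounds available on that interval: there $r_0\ge\sqrt{2\epsilon-\epsilon^2}>0$, and $r_0$ together with all its derivatives is bounded by a constant depending only on $\epsilon$ and the order. Invoking Lemma \ref{radiuslemma} with a sufficiently large order (to absorb the finitely many extra derivatives of $r_\tau$ that enter the $g_\tau$-weighted $C^k$ norm through the Christoffel symbols of $g_\tau$), for $\tau$ small $r_\tau$ and all the relevant derivatives of $r_\tau$ obey the same bounds as $r_0$, and $r_\tau\ge\tfrac12\sqrt{2\epsilon-\epsilon^2}>0$. Then, writing
\[
r_\tau^2-r_0^2=(r_\tau-r_0)(r_\tau+r_0),\qquad r_\tau'^2-r_0'^2=(r_\tau'-r_0')(r_\tau'+r_0'),
\]
the Leibniz rule together with the uniform bounds on $r_\tau+r_0$, $r_\tau'+r_0'$ and their derivatives and the $O(|\tau|)$ bounds on $r_\tau-r_0$, $r_\tau'-r_0'$ and their derivatives gives $\|r_\tau^2-r_0^2:C^k\|+\|r_\tau'^2-r_0'^2:C^k\|\le C(\epsilon,k)|\tau|$. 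Summing the two metric components yields the estimate in the $(x_1,\theta)$-coordinate $C^k$ norm; since on this compact region $g_\tau$, $g_0$ and the coordinate metric $dx_1^2+d\theta^2$ are uniformly equivalent with $\tau$-independent constants for $\tau$ small, passing to the $g_\tau$-weighted norm only costs a bounded factor (and the finitely many extra derivatives already accounted for), which is exactly \eqref{gmetricrelation}.

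If one instead reads $g_\tau,g_0$ in the cylinder coordinates $(t,\theta)$ in which they were originally written, I would additionally note that on the positively curved region $k'>0$, so $x_1=k(t)$ is a legitimate change of variables, and that by the ODE analysis behind Lemma \ref{radiuslemma} the first-coordinate function $k(\cdot)$ is $C^m$-close to the sphere's first-coordinate function $\tanh(\cdot)$ with $O(|\tau|)$ error and derivative bounded away from $0$ on the compact $t$-interval corresponding to $x_1\in[-1+\epsilon,1-\epsilon]$; hence this reparametrization is an $O(|\tau|)$-perturbation of the sphere's and pulls the above estimate back without loss. The case $\tau<0$ needs no separate argument, since Lemma \ref{radiuslemma} is stated for all $0<|\tau|<\tau_\epsilon$. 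The only genuine subtlety is that every constant degenerates as $\epsilon\to0$ (because $r_0'\to\infty$ and $r_0\to0$ at $x_1=\pm1$), which is precisely why the statement is restricted to $[-1+\epsilon,1-\epsilon]$ with $C=C(\epsilon,k)$; on that region there is no substantive obstacle, the estimate being essentially Leibniz-rule bookkeeping on top of Lemma \ref{radiuslemma}.
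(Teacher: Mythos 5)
Your proposal is correct and is essentially the argument the paper intends: the corollary is stated as an immediate consequence of Lemma \ref{radiuslemma} (itself imported from Appendix A of \cite{KapAnn}), and your surface-of-revolution/Leibniz-rule bookkeeping, together with the remarks on the $(t,\theta)$ reparametrization and the uniform equivalence of $g_\tau$, $g_0$ and the flat coordinate metric on the compact region, is exactly the routine verification the paper omits. The only step not literally contained in the stated lemma is the $C^m$ closeness of $k(\cdot)$ to $\tanh(\cdot)$, but you correctly attribute it to the same ODE analysis (it follows from $k'=r_\tau^2+\tau$ versus $\tanh'=\sech^2$), consistent with how the paper itself uses these comparisons later.
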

For a fixed large $b$ (choose for example the largest $b$ such that $\tanh b = 1-\epsilon$), Lemma \ref{radiuslemma} expresses the limit, as $\tau \to 0$, of the immersions of the regions $[4P_\tau n -b,4P_\tau n + b] \times \Ss^1$ for each $n \in \mathbb Z$. Regions of the form $[2(2n-1)P_\tau -b, 2(2n-1)P_\tau+b] \times \Ss^1$ are isometric to these regions in the metric $\nu_\tau^*g_\tau$ under the mapping $Y_\tau(t,\theta) \to Y_\tau(2P_\tau -t,\theta)$. On regions in between, one cannot appeal to natural geometric limiting behavior. Instead, we understand the behavior of these portions of the cylinder in the flat metric $dt^2 + d\theta^2$. We determine the limiting length of such a cylindrical piece in this metric.
\addtocounter{equation}{1}
\begin{lemma}\label{Plemma}There exists $b \gg 1$ such that 
\addtocounter{theorem}{1}
\begin{equation}\label{elllength1}
\lim_{\tau \to 0} \frac{2P_\tau-2b}{-\log \tau}=1.
\end{equation}
\end{lemma}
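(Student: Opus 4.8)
The plan is to reduce the claim to the asymptotics already recorded in Lemma \ref{radiuslemma}, in particular the period limit $\lim_{\tau\to 0} p_\tau/(-\tau\log\tau)=1$, together with an analysis of the ODE \eqref{weq} governing $w$. Recall that the half-period in the flat cylinder coordinate is $2P_\tau$ (so $w$ has full period $4P_\tau$), while the corresponding length of the Delaunay piece in $\Real^3$ is $1+p_\tau = k(2P_\tau)$. First I would extract from \eqref{weq} an explicit quadrature: since $(w')^2 = 1 - 4\tau\cosh^2 w$ with $w(0)=w(0)>0$ the maximum and $w(2P_\tau)$ the minimum (equal to $-w(0)$ by the odd symmetry about $t=P_\tau$), one has
\[
2P_\tau = \int_{-w(0)}^{w(0)} \frac{dw}{\sqrt{1-4\tau\cosh^2 w}},
\]
and from Lemma \ref{radiuslemma} we already know $P_\tau$ grows like $-\tfrac12\log\tau$, since $w(0)=\arccosh(1/(2\sqrt\tau))\approx -\tfrac12\log\tau$ and the dominant contribution to the integral comes from the region where $\cosh w$ is comparable to its extreme value.

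The key step is then a quantitative comparison: I want to show that the ``missing'' length $2b$ is asymptotically negligible compared to $-\log\tau$, i.e.\ that $2P_\tau - 2b = -\log\tau + o(-\log\tau)$. Since $b$ is a fixed constant (chosen, as in the statement of Lemma \ref{Plemma} and earlier, so that $\tanh b = 1-\epsilon$), and since $2P_\tau \to \infty$ as $\tau\to 0$, the term $2b$ drops out of the limit entirely once we establish $\lim_{\tau\to 0} 2P_\tau/(-\log\tau) = 1$. So the real content is the limit $\lim_{\tau\to 0} P_\tau/(-\log\tau) = 1/2$. I would obtain this by splitting the quadrature integral for $2P_\tau$ at a threshold $w = w(0) - A$ for a large fixed constant $A$: on $[-(w(0)-A), w(0)-A]$ the integrand is uniformly close to $1$ (because there $4\tau\cosh^2 w \leq 4\tau\cosh^2(w(0)-A) = e^{-2A}(1+o(1)) \to 0$ as $A\to\infty$, using $\cosh w(0) = 1/(2\sqrt\tau)$), contributing $2(w(0)-A)(1+o(1))$; on the two small end intervals of length $A$ near $\pm w(0)$, a substitution of the form $w = w(0) - s$ turns the integral into a bounded integral (uniformly in $\tau$) since $1 - 4\tau\cosh^2 w$ vanishes to first order at $w=\pm w(0)$, so those pieces contribute only $O(1)$. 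Dividing by $-\log\tau$ and using $w(0)/(-\log\tau) \to 1/2$, then letting $A\to\infty$, gives the limit $1/2$.

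Alternatively, and perhaps more cleanly, I could avoid re-deriving the period asymptotics from scratch by relating $2P_\tau$ directly to $p_\tau$ and invoking \eqref{periodlimits}: one has $k'(t) = 2\sqrt\tau\, r(t)\cosh w(t) = 2\tau e^{2w}\cosh w$, and $1+p_\tau = k(2P_\tau) = \int_0^{2P_\tau} k'(t)\,dt$, while $p_\tau \sim -\tau\log\tau$. A comparison of the two integral representations — one for the cylinder length $2P_\tau$ weighted by $(1-4\tau\cosh^2 w)^{-1/2}$, one for the Euclidean length $1+p_\tau$ weighted by $k'$ — should let me transfer the known asymptotic $p_\tau \sim -\tau\log\tau$ over to $P_\tau \sim -\tfrac12\log\tau$, since where $w$ is near its minimum both integrands are small and the bulk of $P_\tau$ comes from the region where $e^{-4w}$ is large, i.e.\ the negatively-curved ``neck'' region. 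I expect the main obstacle to be the bookkeeping in the end-interval estimates — controlling the integral $\int (1-4\tau\cosh^2 w)^{-1/2}dw$ near the turning points $w = \pm w(0)$ uniformly in $\tau$, which requires expanding $1 - 4\tau\cosh^2 w$ to first order around the turning point and checking the resulting improper integral converges — but this is a standard one-variable estimate and should present no conceptual difficulty. Once $\lim_{\tau\to 0} 2P_\tau/(-\log\tau)=1$ is in hand, \eqref{elllength1} follows immediately by subtracting the bounded quantity $2b$ and dividing, and we simply choose $b \gg 1$ as already specified (large enough that $\tanh b = 1-\epsilon$ and that all earlier uses of $b$ are valid).
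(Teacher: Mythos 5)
Your argument is correct, and it takes a somewhat different route from the paper's. You work from the first integral \eqref{weq} via the quadrature $2P_\tau=\int_{-w(0)}^{w(0)}\bigl(1-4\tau\cosh^2 w\bigr)^{-1/2}dw$, split off intervals of fixed width $A$ at the turning points $w=\pm w(0)$, bound the end pieces uniformly in $\tau$ using the first-order vanishing of $1-4\tau\cosh^2w$ there, and get the bulk $=2(w(0)-A)(1+O(e^{-2A}))$ with $w(0)=\arccosh\bigl(1/(2\sqrt\tau)\bigr)\sim-\tfrac12\log\tau$; letting $\tau\to0$ and then $A\to\infty$ gives $2P_\tau\sim-\log\tau$, after which the fixed $2b$ is irrelevant. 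The paper instead stays in the $t$-variable: it uses \eqref{gmetricrelation} to ensure $-w_\tau'\in(1-\epsilon',1]$ on $[b,P_\tau]$ for $b$ large, integrates to get $w_\tau(b)\approx P_\tau-b$, and then evaluates $w_\tau(b)$ from $r_\tau(b)=\sqrt\tau\,e^{w_\tau(b)}\approx\sech b$, so $2w_\tau(b)\approx-\log\tau+O(1)$. Your version is self-contained (no appeal to the metric comparison) and actually shows the limit is exactly $1$ for \emph{any} fixed $b$, since you handle the turning-point region directly rather than avoiding it by taking $b$ large; the paper's version is shorter, reuses estimates already in place, and ties the constant $b$ to the standard/transition decomposition where it is needed anyway. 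Two small caveats: Lemma \ref{radiuslemma} does not literally give $P_\tau\sim-\tfrac12\log\tau$ (it gives the asymptotics of $p_\tau$ and $x_\tau$), so your opening appeal to it is only heuristic — but your quadrature argument does not need it; and your proposed alternative of transferring $p_\tau\sim-\tau\log\tau$ from \eqref{periodlimits} to $P_\tau$ would be more delicate than you suggest, since the Euclidean length integrand $k'=2\tau e^{2w}\cosh w$ concentrates on the bulge ($w>0$) while the cylinder length comes from the whole range of $w$, so the two integrals are not directly comparable; it is best dropped in favor of your main argument.
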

\begin{proof} For $\epsilon' >0$, \eqref{gmetricrelation} provides $b$ sufficiently large (for any $0<|\tau|<\tau_{\epsilon'}$ small) such that $-w_\tau' \in (1-\epsilon', 1]$ on
$[b,P_\tau]\times \Ss^1$. Thus, we can determine, as $w(P_\tau)=0$,
\[
(P_\tau -b)(1-\epsilon') \leq - \int_b^{P_\tau}w_\tau' \: dt = w_\tau(b) \leq P_\tau -b.
\] 
For $b$ fixed, recall that $|r_0(b) - r_\tau(b)|=\left|\sqrt{1-\tanh^2b} -\sqrt{\tau} e^{w(b)}\right|<C(b)\tau.$ Thus,
\[
1+\frac{\log\left( \sqrt{1-\tanh^2b}-C(b)\tau\right)}{-\log \tau} < \frac{2 w_\tau(b)}{-\log \tau} < 1+ \frac{\log\left( \sqrt{1-\tanh^2b}+C(b)\tau\right)}{-\log \tau}.
\]Letting $\tau \to 0$ proves the result.
\end{proof}

\subsection*{Delaunay Building Blocks} We now describe a general immersion of an appropriately perturbed Delaunay piece. A verbal description of the immersion follows Definition \ref{DelEmbDef}. 
Throughout this subsection we presume a few technical conditions on parameters of interest, which will always be stated in the hypotheses. These conditions are satisfied for the setup in Section \ref{InitialSurface} and are necessary to attain the desired estimates. Throughout this subsection, let $0<\outaupp\ll 1$ be small enough to ensure the immersions described below are smooth, well-defined immersions and let $\underline C$ denote a large universal constant independent of $\outaupp$.

We first define four cutoff functions that will be useful.
\addtocounter{equation}{1}
\begin{definition}
Let
 $\psi_{dislocation^\pm}, \psi_{gluing^\pm}:( t,\theta) \in [a, 4P_\tau l -a] \times \mathbb S^1\to\Real^3$ be cutoff functions such that:
\begin{itemize}
\item $\psi_{dislocation^+}=\psi[a+2,a+1]$,
\item $\psi_{dislocation^-}=\psi[4P_\tau l-(a+2),4P_\tau l-(a+1)]$,
\item $\psi_{gluing^+}=\psi[a+3,a+4]$,
\item $\psi_{gluing^-}=\psi[4P_\tau l-(a+3),4P_\tau l-(a+4)]$.
\end{itemize}
\end{definition}
With these cutoff functions we define the immersion. Observe the smallness conditions on $\tau, |\boldsymbol \zeta^\pm|$ defined at the outset are to ensure the immersion is a smooth, well-defined immersion.
\addtocounter{equation}{1}
\begin{definition}\label{DelEmbDef}
Given $\tau\in \Real \backslash \{0\},a\in \Real^+,l \in \mathbb Z^+, \boldsymbol \zeta^\pm \in \Real^3$ with $0<|{\boldsymbol \zeta}^\pm| \leq \underline C\outaupp,0<|\tau| \leq \outaupp, 1\ll a$, we define two smooth immersions
$ Y_{edge}[\tau,l,{\boldsymbol \zeta}^+,{\boldsymbol \zeta}^-]:[a, 4P_\tau l -a] \times \mathbb S^1 \to \Real^3$ and $ Y_{ray}[\tau,{\boldsymbol \zeta}^+]:[a, \infty) \times \mathbb S^1 \to \Real^3$ such that, for $x=(t,\theta)$,
\begin{align*} Y_{edge}[\tau,l,{\boldsymbol \zeta}^+,{\boldsymbol \zeta}^-](x)=& \psi_{dislocation^+}(t) (Y_0 (x) + {\boldsymbol \zeta}^+)+(1-\psi_{dislocation^+}(t))(1-\psi_{gluing^+}(t))Y_0(x)\\ 
&+\psi_{gluing^+}(t) \cdot \psi_{gluing^-}(t) \cdot Y_\tau(x)\\
&+ (1-\psi_{dislocation^-}(t))(1-\psi_{gluing^-}(t))Y_0^-(x)+\psi_{dislocation^-}(t) (Y_0^- (x) + {\boldsymbol \zeta}^-)                            
\end{align*}
\begin{align*}
 Y_{ray}[\tau,{\boldsymbol \zeta}^+](x)= &\psi_{dislocation^+}(t) (Y_0(x)+ {\boldsymbol \zeta}^+ )+(1-\psi_{dislocation^+}(t))(1-\psi_{gluing^+}(t))Y_0(x)\\
 &+\psi_{gluing^+}(t) \cdot Y_\tau(x)
\end{align*}
where $Y_0^-(x)= Y_0(t-4P_\tau l ,\theta) + \left(2+2p_\tau\right)l \Be_1$.
\end{definition}To aid the reader, we describe the geometry of the $Y_{edge}$ immersion in some detail.  For $t \in [a,a+1]$, the image is a geodesic
annulus sitting on a unit sphere centered at ${\boldsymbol \zeta}^+$. The annulus is centered at ${\boldsymbol \zeta}^+ + \Be_1$ with inner radius $\delta/8$. When $t \in [a+1,a+2]$, the immersion 
smoothly transits between
an annular region  on the dislocated sphere and an annular region on a unit sphere centered at the origin. 
For $t \in [a+2,a+3]$, the immersion remains on the unit sphere centered at the origin, while for $t \in [a+3,a+4]$, the immersion smoothly transits between this sphere and
a Delaunay piece with parameter $\tau$. The same procedure happens toward the other end. First, the Delaunay piece transits back to a unit sphere centered
at $ \left(2+2p_\tau\right)l \Be_1$. This position represents the location of the end of
a Delaunay piece with parameter $\tau$ and $l$ periods, with initial end on the $\{x_1=0\}$ plane and axis on $\Be_1$. Finally,
this sphere transits to a unit sphere centered at ${\boldsymbol \zeta}^- + \left(2+2p_\tau\right)l \Be_1$, a dislocation of ${\boldsymbol \zeta}^-$ from the previously
described sphere.

Of course, the $Y_{ray}$ immersion has the same behavior as $Y_{edge}$ near the origin. The only difference is that the Delaunay immersion continues out to infinity and there is no transiting back to a sphere. 

In determining the initial immersion, the unbalancing parameter $d$ will induce changes in the $\tau$ parameter associated to each edge or ray. As we stress here the independence of the building blocks on the background graph, we consider a diffeomorphism of a piece of the cylinder that will account for length changes based on a change of parameter associated with unbalancing. Again, this diffeomorphism can be determined from parameters that are independent of any graph $\Gamma$. 
\addtocounter{equation}{1}
\begin{definition}Given $\tau, \tau'$ such that $|\tau| < \outaupp$ and 
\addtocounter{theorem}{1}
\begin{equation}\label{tauratio1}
 \frac{\tau}{\tau'}\in \left(1-C\outaupp, 1+C\outaupp\right)
\end{equation}
 for some $C >0$, we define two families of diffeomorphisms
\begin{equation*}
\hat Y_{edge}[\tau,\tau']:[a, 4P_{\tau}l -a]\times \mathbb S^1\to [a, 4P_{\tau'}l-a]\times \mathbb S^1
\end{equation*}and
\begin{equation*}
 \hat Y_{ray}[\tau,\tau']:[a, \infty)\times \mathbb S^1\to [a, \infty)\times \mathbb S^1
\end{equation*}
which depend smoothly on $\tau, \tau'$, 
such that 
\begin{align*}
 \hat Y_{edge}[\tau,\tau'](t,\theta)=&\psi[a+5,a+4](t) \cdot (t,\theta)\\&+ \psi[a+4, a+5](t)\cdot \psi [4P_{\tau}l-(a+4), 4P_{\tau}l-(a+5)](t)\cdot
 \left(  t\frac{P_{\tau'}}{P_{\tau}},\theta \right)\\&+
\psi [4P_{\tau}l-(a+5), 4P_{\tau}l-(a+4)](t)\cdot \left(
 t+4l(P_{\tau'}-P_{\tau}), \theta \right).
 \end{align*}
 and
\begin{equation*} 
 \hat Y_{ray}[\tau,\tau'](t,\theta)=\psi[a+5,a+4](t) \cdot (t,\theta)+ \psi[a+4, a+5](t) \left(  t\frac{P_{\tau'}}{P_{\tau}},\theta \right) \end{equation*}
\end{definition}
 Again, the conditions on $\tau, \tau'$ in the definition above will always hold for immersions of interest to us. Moreover, the following lemma makes clear that by presuming these conditions we can state $C^k$-norm bounds in Proposition \ref{geopropcentral} without involving $\frac{P_{\tau'}}{P_\tau}$. 
 \addtocounter{equation}{1}
\begin{lemma}
\addtocounter{theorem}{1}
\begin{equation}\label{diffeodifference1}
\left| 1 - \frac{P_{\tau'}}{P_{\tau}}\right| \leq \frac{C\outaupp}{-\log \tau}
\end{equation}
\end{lemma}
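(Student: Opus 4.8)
The plan is to estimate $|1 - P_{\tau'}/P_\tau|$ by comparing both $P_\tau$ and $P_{\tau'}$ to the common asymptotic $-\tfrac12\log\tau$ (equivalently $-\tfrac12\log\tau'$, since $\tau/\tau'$ is pinched near $1$). First I would invoke Lemma \ref{Plemma}, or rather its proof, which already contains the key quantitative input: there it is shown that $2w_\tau(b)/(-\log\tau) \to 1$ with an explicit error of order $1/(-\log\tau)$ coming from $|r_0(b) - r_\tau(b)| < C(b)\tau$, and that $(P_\tau - b)(1-\epsilon') \le w_\tau(b) \le P_\tau - b$. Combining these gives a two-sided bound
\[
P_\tau = -\tfrac12\log\tau + O\!\left(\log(-\log\tau)\right) \quad\text{or better}\quad P_\tau = -\tfrac12\log\tau + O(1),
\]
where the $O(1)$ (rather than merely $o(\log\tau)$) requires tracking the error in Lemma \ref{Plemma}'s proof a little more carefully: the term $\log(\sqrt{1-\tanh^2 b} \pm C(b)\tau)/(-\log\tau)$ is $O(1/(-\log\tau))$, so $2w_\tau(b) = -\log\tau + O(1)$, and hence $P_\tau - b = -\tfrac12\log\tau + O(1)$, i.e. $P_\tau = -\tfrac12\log\tau + O(1)$.

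Next I would write $P_{\tau'}$ the same way, $P_{\tau'} = -\tfrac12\log\tau' + O(1)$, and use the hypothesis \eqref{tauratio1} that $\tau/\tau' \in (1 - C\outaupp, 1 + C\outaupp)$, which gives $|\log\tau - \log\tau'| = |\log(\tau/\tau')| \le C'\outaupp$ for $\outaupp$ small. Therefore
\[
|P_\tau - P_{\tau'}| \le \tfrac12|\log\tau - \log\tau'| + O(1) = O(1).
\]
The final step is purely algebraic:
\[
\left|1 - \frac{P_{\tau'}}{P_\tau}\right| = \frac{|P_\tau - P_{\tau'}|}{P_\tau} \le \frac{O(1)}{P_\tau} = \frac{O(1)}{-\tfrac12\log\tau + O(1)} \le \frac{C\outaupp}{-\log\tau}
\]
for $|\tau| < \outaupp$ sufficiently small; note that one needs the $\outaupp$ in the numerator, which is supplied precisely by the bound $|P_\tau - P_{\tau'}| \le \tfrac12 C'\outaupp + O(1)$ — wait, that only gives $O(1)$ in the numerator, not $O(\outaupp)$. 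So the honest statement is $|P_\tau - P_{\tau'}| \le C\outaupp \cdot \tfrac12 \cdot(-\log\tau) \cdot$(something)? Let me reconsider: actually the numerator is genuinely $O(1)$ from the $O(1)$ error terms alone, independent of how close $\tau'$ is to $\tau$; the $\outaupp$ factor in the claimed bound must come from noticing that when $\tau/\tau'$ is within $C\outaupp$ of $1$, the difference $|\log\tau - \log\tau'|$ is itself $\le C\outaupp$, and the remaining $O(1)$ pieces must actually cancel. This is the delicate point and the main obstacle.

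The main obstacle, then, is showing the numerator is $O(\outaupp)$ rather than just $O(1)$: the $O(1)$ error terms in the individual expansions of $P_\tau$ and $P_{\tau'}$ must be shown to cancel up to $O(\outaupp)$, which is where one must use smooth (indeed $C^1$) dependence of $P_\tau$ on $\tau$ rather than merely the leading asymptotics. Concretely, I would instead argue via the mean value theorem applied to $\tau \mapsto P_\tau$: from \eqref{periodlimits} (the period limits in Lemma \ref{radiuslemma}, with $p_\tau = k(2P_\tau) - 1$ and a corresponding statement for $P_\tau$ itself, or by differentiating the relation defining $P_\tau$), one gets $dP_\tau/d\tau = O(1/\tau)$ — more precisely, $\tau \, dP_\tau/d\tau$ is bounded — so that
\[
|P_\tau - P_{\tau'}| \le \sup_{s \text{ between } \tau, \tau'} \left|\frac{dP_s}{ds}\right| \cdot |\tau - \tau'| \le \frac{C}{|\tau|} \cdot |\tau|\cdot\left|1 - \frac{\tau'}{\tau}\right| \le C \cdot C\outaupp = C'\outaupp,
\]
using \eqref{tauratio1} again for $|\tau - \tau'|/|\tau| = |1 - \tau'/\tau| \le C\outaupp$ and that $s$ between $\tau,\tau'$ satisfies $|s| \asymp |\tau|$. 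Dividing by $P_\tau \asymp -\tfrac12\log\tau$ then yields exactly $|1 - P_{\tau'}/P_\tau| \le C\outaupp/(-\log\tau)$. The work is in pinning down the bound $|\tau \, dP_\tau/d\tau| \le C$, which should follow from \eqref{periodlimits} together with the relation between $P_\tau$ and $p_\tau$ (or directly from the ODE \eqref{weq} for $w$ and the smooth dependence on $\tau$ asserted in Lemma \ref{radiuslemma}), and I would present that as the one nontrivial estimate, leaving the rest as the routine chain displayed above.
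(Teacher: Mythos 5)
Your final argument is correct, but it takes a genuinely different route from the paper: the paper's entire proof of \eqref{diffeodifference1} is the one-line assertion that it ``follows immediately from \eqref{tauratio1} and \eqref{elllength1}'', i.e.\ from $\tau/\tau'$ being $\outaupp$-close to $1$ together with the leading asymptotic $2P_\tau-2b\sim-\log\tau$. You correctly identify why that citation alone is not a complete argument: the limit \eqref{elllength1} only gives $P_\tau=-\tfrac12\log\tau+O(1)$ with an error that need not cancel between $\tau$ and $\tau'$, so it yields at best $|1-P_{\tau'}/P_\tau|\le C/(-\log\tau)$, missing the factor $\outaupp$ in the numerator. Your replacement -- the mean value theorem for $\tau\mapsto P_\tau$, the bound $\bigl|\tau\,dP_\tau/d\tau\bigr|\le C$, the estimate $|1-\tau'/\tau|\le C\outaupp$ from \eqref{tauratio1}, and then division by $P_\tau\asymp-\tfrac12\log\tau$ (which is where \eqref{elllength1} genuinely enters) -- is exactly the quantitative input needed to produce the numerator $\outaupp$, so your write-up is in effect more complete than the paper's. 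The first half of your proposal, comparing both periods to the common asymptotic $-\tfrac12\log\tau$, is rightly abandoned mid-stream for precisely the reason you state.

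One caveat on the remaining estimate you leave open. The bound $\bigl|\tau\,dP_\tau/d\tau\bigr|\le C$ does not follow from \eqref{periodlimits} in any direct way: \eqref{periodlimits} concerns $p_\tau$, the axial period defect of the image surface defined through $1+p_\tau=k(2P_\tau)$, and converting $dp_\tau/d\tau\sim-\log\tau$ into control of $dP_\tau/d\tau$ would require knowing $\partial_\tau k$ at fixed $t$, which is not recorded in the paper. The clean source is the ODE \eqref{weq} itself: since $4P_\tau$ is the period of $w$ and $(w')^2=1-4\tau\cosh^2 w$, the quarter period has the explicit representation $P_\tau=\int_0^{w_0(\tau)}\bigl(1-4\tau\cosh^2 w\bigr)^{-1/2}dw$ with $w_0(\tau)=\arccosh\bigl(\tfrac{1}{2\sqrt{\tau}}\bigr)$, which after the substitution $2\sqrt{\tau}\cosh w=\sin\phi$ is a complete elliptic integral of the first kind with complementary modulus $2\sqrt{\tau}$; its standard expansion gives $\tau\,dP_\tau/d\tau=-\tfrac12+O(\tau\log\tfrac1\tau)$, hence the bound you need (with $\cosh$ replaced by $\sinh$ and $\tau$ by $|\tau|$ when $\tau<0$). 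With that supplied, your chain yields \eqref{diffeodifference1} as stated.
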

\begin{proof}
The estimate follows immediately from \eqref{tauratio1} and \eqref{elllength1}.
\end{proof}

 \addtocounter{equation}{1}
\begin{definition}We define two new immersions, smooth in all parameters, 
$\underline Y_{edge}[\tau, \tau',l,{\boldsymbol \zeta}^+,{\boldsymbol \zeta}^-]: [a, 4P_{\tau}l -a]\times \mathbb S^1 \to \Real^3$ and 
$\underline Y_{ray}[\tau, \tau',{\boldsymbol \zeta}^+]: [a, \infty )\times \mathbb S^1 \to \Real^3$ such that
\addtocounter{theorem}{1}
\begin{equation}
\underline Y_{edge}[\tau, \tau',l,{\boldsymbol \zeta}^+,{\boldsymbol \zeta}^-](t,\theta) = Y_{edge}[\tau', l, {\boldsymbol \zeta}^+, {\boldsymbol \zeta}^-] \circ \hat Y_{edge}[\tau, \tau'](t,\theta)
\end{equation}
and
\addtocounter{theorem}{1}
\begin{equation}
\underline Y_{ray}[\tau, \tau',{\boldsymbol \zeta}^+](t,\theta) = Y_{ray}[\tau', {\boldsymbol \zeta}^+] \circ\hat Y_{ray}[\tau, \tau'](t,\theta).
\end{equation}
\end{definition}

\addtocounter{equation}{1}
\begin{prop}\label{geopropcentral}Let $g:= (\underline Y_{edge})^*g_{\Real^3}$ or $g:=( \underline Y_{ray})^*g_{\Real^3}$ as the situation dictates. For a fixed, large constant $b>a+5$,
\[
||\underline Y_{edge}[\tau,\tau',l,{\boldsymbol \zeta}^+,{\boldsymbol \zeta}^-]-Y_0:C^k((a, b) \times \Ss^1, g)|| \leq C(k,b)(|\boldsymbol \zeta^+| + |\tau|)
\]
\[
||\underline Y_{edge}[\tau,\tau',l,{\boldsymbol \zeta}^+,{\boldsymbol \zeta}^-]-Y_0^-:C^k((4P_\tau l -b, 4P_\tau l-a) \times \Ss^1, g)|| \leq C(k,b)(|\boldsymbol \zeta^-| + |\tau|)
\]and
\[
||\underline Y_{ray}[\tau,\tau',{\boldsymbol \zeta}^+]-Y_0:C^k((a, b) \times \Ss^1, g)|| \leq C(k,b)(|\boldsymbol \zeta^+|  + |\tau|).
\]

\end{prop}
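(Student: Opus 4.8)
The plan is to prove the three estimates by unwinding the definitions of $\underline Y_{edge}$ and $\underline Y_{ray}$ on the relevant regions and reducing everything to the behavior of $Y_{edge}$, $Y_{ray}$ and the reparameterizing diffeomorphisms $\hat Y_{edge}$, $\hat Y_{ray}$ near the ends. The key observation is that on $(a,b)\times\Ss^1$ (with $b>a+5$ fixed) the cutoff functions $\psi_{dislocation^-}$ and $\psi_{gluing^-}$, and the far-end factor $\psi[4P_\tau l-(a+4),4P_\tau l-(a+5)]$ in $\hat Y_{edge}$, are all identically equal to their values near the ``$+$'' end, so that on this region $Y_{edge}$ and $Y_{ray}$ agree and $\hat Y_{edge}$, $\hat Y_{ray}$ both reduce to the map $(t,\theta)\mapsto\psi[a+5,a+4](t)(t,\theta)+\psi[a+4,a+5](t)\bigl(t\tfrac{P_{\tau'}}{P_\tau},\theta\bigr)$ — in particular they are the identity on $(a,a+4)\times\Ss^1$. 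Hence it suffices to treat the single immersion $Y_{edge}[\tau',l,{\boldsymbol\zeta}^+,{\boldsymbol\zeta}^-]\circ\hat Y_{edge}[\tau,\tau']$ on $(a,b)\times\Ss^1$, and the third estimate for $\underline Y_{ray}$ is literally the same computation; the second estimate follows from the first after applying the isometry $Y_\tau(t,\theta)\mapsto Y_\tau(2P_\tau-t,\theta)$ (equivalently, the reflection symmetry built into $Y_0^-$) to transfer the ``$-$'' end to the ``$+$'' end.

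First I would split $(a,b)\times\Ss^1$ into the three sub-annuli governed by the cutoffs in $Y_{edge}$: on $(a,a+1)$, where $\psi_{dislocation^+}\equiv1$ and $\psi_{gluing^+}\equiv0$, the immersion is $Y_0(x)+{\boldsymbol\zeta}^+$; on $(a+1,a+3)$ it interpolates, via $\psi_{dislocation^+}$ and $(1-\psi_{gluing^+})$, between $Y_0(x)+{\boldsymbol\zeta}^+$ and $Y_0(x)$; on $(a+3,b)$, where $\psi_{dislocation^+}\equiv0$, it interpolates via $\psi_{gluing^+}$ between $Y_0(x)$ and $Y_\tau(x)$ — here one must remember to compose with $\hat Y_{edge}[\tau,\tau']$, which is a small, smooth reparameterization with $|1-P_{\tau'}/P_\tau|\le C\outaupp/(-\log\tau)$ by \eqref{diffeodifference1}. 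On each piece $\underline Y_{edge}-Y_0$ is a finite sum of terms, each of which is either a smooth cutoff times ${\boldsymbol\zeta}^+$ (contributing $O(|{\boldsymbol\zeta}^+|)$ in any $C^k$ norm, since the cutoffs and their derivatives are bounded on the fixed compact set $[a,b]$), or a smooth cutoff times $Y_\tau\circ\hat Y_{edge}[\tau,\tau']-Y_0$. For the latter I would use Lemma \ref{radiuslemma}/Corollary \ref{metriccompg}, which give $\|Y_\tau-Y_0:C^k([-1+\epsilon,1-\epsilon]\times\Ss^1)\|\le C(\epsilon,k)|\tau|$ after the obvious identification (and in fact, since we work on the fixed region $(a,b)$ with $\tanh b=1-\epsilon$, this is exactly the region covered by that lemma), together with the smoothness and smallness of $\hat Y_{edge}[\tau,\tau']$ in $\tau$ to absorb the reparameterization into the constant; the factor $P_{\tau'}/P_\tau$ disappears from the final bound precisely because of \eqref{diffeodifference1}. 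Finally I would note that the metric $g$ in which the $C^k$ norm is measured is itself $C^k$-close (on this fixed region) to $g_0$ by Corollary \ref{metriccompg}, so measuring in $g$ versus $g_0$ only changes constants; and the weighted-norm convention reduces to an ordinary $C^k$ norm here since the weight is bounded above and below on the fixed compact annulus.

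The main obstacle is bookkeeping rather than anything deep: one must verify carefully that all the ``far-end'' cutoffs really are constant on $(a,b)\times\Ss^1$ for $\tau$ small (this uses $4P_\tau l-a\to\infty$, i.e.\ $P_\tau\to\infty$ as $\tau\to0$ from Lemma \ref{Plemma}, so that $b<4P_\tau l-(a+5)$), and that the composition with $\hat Y_{edge}[\tau,\tau']$ does not move points out of the region $[-1+\epsilon,1-\epsilon]$ where Lemma \ref{radiuslemma} applies — this again follows from \eqref{diffeodifference1} once $b$ is fixed and $\outaupp$ is chosen small. The only mildly delicate point is controlling the $C^k$ norm of a composition $Y_\tau\circ\hat Y_{edge}[\tau,\tau']$: I would handle this by writing it as $(Y_\tau-Y_0)\circ\hat Y_{edge}+Y_0\circ\hat Y_{edge}-Y_0$, estimating the first term by the chain rule (norm of $Y_\tau-Y_0$ times powers of $\|\hat Y_{edge}\|_{C^k}$, all $O(|\tau|)$ and $O(1)$ respectively), and the second by Taylor-expanding $Y_0$ in the displacement $\hat Y_{edge}(t,\theta)-(t,\theta)$, which is $O(|1-P_{\tau'}/P_\tau|)=O(\outaupp/(-\log\tau))=O(|\tau|^{1/2})$ — in fact better — and is supported where $\psi_{gluing^+}\ne0$, so it too is absorbed into $C(k,b)|\tau|$ after possibly shrinking $\outaupp$. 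Collecting the three pieces gives $\|\underline Y_{edge}-Y_0:C^k((a,b)\times\Ss^1,g)\|\le C(k,b)(|{\boldsymbol\zeta}^+|+|\tau|)$, and the other two estimates are immediate variants.
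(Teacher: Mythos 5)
Your proposal is correct and follows essentially the same route as the paper, whose proof is just the one-line remark that the estimates follow from the definitions of the immersions, the comparison \eqref{gmetricrelation} (Lemma \ref{radiuslemma}/Corollary \ref{metriccompg}) between $Y_\tau$ and $Y_0$, and the reparameterization bound \eqref{diffeodifference1}; you have simply written out the cutoff bookkeeping that the paper leaves implicit. The only loose point, shared with the paper's intended regime, is your claim that the displacement $O(\outaupp/(-\log\tau))$ is $O(|\tau|^{1/2})$, which really uses that $\outaupp$ and $|\tau|$ are comparable (as they are in the application of Section \ref{InitialSurface}), not that $\outaupp$ is merely small.
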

\begin{proof}
The statement follows immediately from the definition of the immersions, \eqref{gmetricrelation}, and the conditions on $\tau, \tau'$ which allow us to invoke \eqref{diffeodifference1} as needed.
\end{proof}
\addtocounter{equation}{1}
\begin{prop}
The image surface $\underline Y_{edge}[\tau,\tau',l,{\boldsymbol \zeta}^+,{\boldsymbol \zeta}^-]([a, 4P_\tau l -a] \times \mathbb S^1)$ has mean curvature identically one except when $t\in[a+1,a+2]\cup[a+3,a+4]\cup[4P_\tau l-(a+2),4P_\tau l-(a+1)]\cup[4P_\tau l-(a+4),4P_\tau l-(a+3)]$.

The image surface $\underline Y_{ray}[\tau, \tau', {\boldsymbol \zeta}^+]([a, \infty) \times \Ss^1)$ has mean curvature identically one except when $t \in[a+1, a+2]\cup[a+3, a+4]$.
\end{prop}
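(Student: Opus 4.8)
The plan is to reduce the whole statement to one observation: mean curvature is a reparametrization invariant, so away from the interpolation regions the claim is just the fact that each ``pure'' building block --- a Euclidean translate of the unit sphere, or a Delaunay surface --- has $H\equiv 1$. Concretely, $\underline Y_{edge}[\tau,\tau',l,{\boldsymbol \zeta}^+,{\boldsymbol \zeta}^-]=Y_{edge}[\tau',l,{\boldsymbol \zeta}^+,{\boldsymbol \zeta}^-]\circ\hat Y_{edge}[\tau,\tau']$ is the precomposition of the immersion $Y_{edge}[\tau',\dots]$ with the diffeomorphism $\hat Y_{edge}[\tau,\tau']$, so $\underline Y_{edge}$ and $Y_{edge}[\tau',\dots]$ parametrize the same surface, and the mean curvature of $\underline Y_{edge}$ at $x$ equals that of $Y_{edge}[\tau',\dots]$ at $\hat Y_{edge}[\tau,\tau'](x)$. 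Hence it suffices to (i) find where $Y_{edge}[\tau',\dots]$ itself reduces to a single building block, and (ii) check that $\hat Y_{edge}[\tau,\tau']$ maps the complement of the four $\tau$-intervals in the statement into that set.

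For (i) I would unwind the cutoffs $\psi_{dislocation^\pm},\psi_{gluing^\pm}$, using that $\psi[a,b]$ is $0$ near $a$ and $1$ near $b$ with transition window of length $1/3$ strictly between $a$ and $b$. A short case check then shows that on each of $[a,a+1]$, $[a+2,a+3]$, $[a+4,\,4P_{\tau'}l-(a+4)]$, $[4P_{\tau'}l-(a+3),\,4P_{\tau'}l-(a+2)]$ and $[4P_{\tau'}l-(a+1),\,4P_{\tau'}l-a]$ (times $\mathbb S^1$) exactly one summand in Definition \ref{DelEmbDef} survives --- all remaining coefficients vanishing identically there --- namely $Y_0+{\boldsymbol \zeta}^+$, $Y_0$, $Y_{\tau'}$, $Y_0^-$ and $Y_0^-+{\boldsymbol \zeta}^-$, respectively. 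Each of these is a Euclidean translate of the unit sphere $Y_0$ of Definition \ref{sphere} or of the Delaunay immersion $Y_{\tau'}$ of \eqref{DelEq}, both of which have $H\equiv 1$ with the chosen orientation, and translations preserve mean curvature; so $Y_{edge}[\tau',\dots]$ has $H\equiv 1$ off $[a+1,a+2]\cup[a+3,a+4]\cup[4P_{\tau'}l-(a+2),4P_{\tau'}l-(a+1)]\cup[4P_{\tau'}l-(a+4),4P_{\tau'}l-(a+3)]$. Consistency of the normal orientations across the interpolation regions is irrelevant here, since off those regions the surface literally is one of the building blocks.

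For (ii): $\hat Y_{edge}[\tau,\tau']$ is the identity on $[a,a+4]\times\mathbb S^1$ and, on a neighborhood of $\{t\ge 4P_\tau l-(a+4)\}$, the translation $(t,\theta)\mapsto(t+4l(P_{\tau'}-P_\tau),\theta)$, which sends $4P_\tau l-(a+j)$ to $4P_{\tau'}l-(a+j)$; thus the first two and the last two $\tau$-intervals are carried exactly onto the corresponding $\tau'$-intervals, and --- using $|1-P_{\tau'}/P_\tau|\le C\outaupp/(-\log\tau)$ from \eqref{diffeodifference1} together with the standing hypothesis $1\ll a$ --- the intermediate scaling region $[a+4,\,4P_\tau l-(a+4)]$ is carried into $[a+4,\,4P_{\tau'}l-(a+4)]$, where by (i) $Y_{edge}[\tau',\dots]=Y_{\tau'}$. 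Transporting the conclusion of (i) back through $\hat Y_{edge}$ yields the assertion for $\underline Y_{edge}$. The $\underline Y_{ray}$ statement is the same argument but simpler: $\hat Y_{ray}$ is the identity on $[a,a+4]\times\mathbb S^1$, and off $[a+1,a+2]\cup[a+3,a+4]$ the immersion $Y_{ray}[\tau',{\boldsymbol \zeta}^+]$ reduces to $Y_0+{\boldsymbol \zeta}^+$ on $[a,a+1]$, to $Y_0$ on $[a+2,a+3]$, and to $Y_{\tau'}$ on $[a+4,\infty)$.

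The only real work is the bookkeeping in step (i) --- verifying that the five-term combination collapses to a single summand on each good interval --- which comes down to tracking exactly where each $\psi[a,b]$ equals $0$ or $1$ and observing that all four transition windows lie strictly inside the excised intervals; I expect this to be the main, though quite modest, obstacle. Everything else is immediate from the definitions and the reparametrization invariance of $H$.
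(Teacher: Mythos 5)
Your proposal is correct and follows essentially the same route as the paper's (much terser) proof: off the excised intervals $Y_{edge}$ and $Y_{ray}$ are literally a translated unit sphere or the Delaunay immersion, hence $H\equiv 1$, and the reparametrizations $\hat Y_{edge},\hat Y_{ray}$ do not change mean curvature. Your step (ii), checking that $\hat Y_{edge}[\tau,\tau']$ carries the complement of the $P_\tau$-intervals into the complement of the corresponding $P_{\tau'}$-intervals, is a detail the paper leaves implicit but which your argument correctly supplies.
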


\begin{proof}The statement is obviously true for the immersions $Y_{edge},Y_{ray}$ based on their definition. Moreover, $\hat Y_{edge}, \hat Y_{ray}$ do not change the mean curvature of the immersions $Y_{edge}, Y_{ray}$.
\end{proof}

\addtocounter{equation}{1}
\begin{definition}Let $H[edge]$ and $H[ray]$ represent the mean curvature of the surfaces immersed by $\underline Y_{edge}$ and $\underline Y_{ray}$ respectively. Let $H_{error}[edge] = H[edge]-1$ and $H_{error}[ray]=H[ray]-1$. 
Further, let 
\[H_{dislocation}[\cdot ]+H_{gluing}[\cdot] =H_{error}[\cdot]
\]
 where 
\addtocounter{theorem}{1}
 \begin{align*}
  &\supp(H_{dislocation}[edge])\subset\left([a+1,a+2] \cup [4P_\tau l-(a+2),4P_\tau l-(a+1)]\right)\times \Ss^1,\\
&\supp(H_{dislocation}[ray])\subset\left([a+1,a+2]\right)\times \Ss^1,\\
&\supp(H_{gluing}[edge])\subset\left([a+3,a+4] \cup [4P_\tau l-(a+4),4P_\tau l-(a+3)]\right)\times \Ss^1,\\
&\supp(H_{gluing}[ray])\subset[a+3,a+4]\times \Ss^1,
 \end{align*}
where $\supp(f)$ represents the support of the function $f$.
\end{definition}
With this definition, Proposition \ref{geopropcentral} provides the following corollary.
\addtocounter{equation}{1}
\begin{corollary}\label{Hbounds}Let $g:= (\underline Y_{edge})^*g_{\Real^3}$ or $g:=( \underline Y_{ray})^*g_{\Real^3}$ as the situation dictates. Then:
\begin{enumerate}
\item $||H_{gluing}[edge]:{C^k}( [a, 4P_\tau l -a] \times \mathbb S^1),g)|| \leq C(a,k)|\tau|.$
\item $||H_{gluing}[ray]:C^k([a, \infty)\times \Ss^1),g)|| \leq C(a, k)|\tau|$. 
\item $||H_{dislocation}[edge]:C^k( [a, 4P_\tau l -a] \times \mathbb S^1),g)|| \leq C(a,k)\left(|{\boldsymbol \zeta}^+|+ |{\boldsymbol \zeta}^-|\right)$.
 \item $||H_{dislocation}[ray]:{C^k}([a, \infty)\times \Ss^1,g)|| \leq C(a,k)|{\boldsymbol \zeta}^+|$.
\end{enumerate}
\end{corollary}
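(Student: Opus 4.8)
The plan is to derive Corollary \ref{Hbounds} directly from the estimates in Proposition \ref{geopropcentral} together with the observation that mean curvature is a second-order expression in the immersion, computed in a fixed conformal metric. The key point is that on each of the four indicated transition intervals the immersion $\underline Y_{edge}$ (resp. $\underline Y_{ray}$) is an interpolation between two pieces each of which has mean curvature identically one, so the ``error'' $H_{error}[\cdot]-0 = H[\cdot]-1$ vanishes whenever both interpolated pieces agree, and on the transition annulus it is controlled by how far the two pieces are from coinciding.

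First I would set up, for the gluing intervals, the comparison with the Delaunay immersion $Y_\tau$ on one side and the sphere immersion $Y_0$ on the other. On $[a+3,a+4]\times\Ss^1$ the immersion $\underline Y_{edge}$ is a $\psi$-interpolation between (a reparameterization of) $Y_0$ and $Y_\tau$; by Proposition \ref{geopropcentral} applied on $(a,b)\times\Ss^1$ we have $\|\underline Y_{edge}-Y_0:C^k\| \le C(k,b)(|\boldsymbol\zeta^+|+|\tau|)$, but on the gluing annulus $\psi_{dislocation^+}\equiv 0$ so the $\boldsymbol\zeta^+$ term drops out and the relevant bound is $\|\underline Y_{edge}-Y_0:C^k([a+3,a+4]\times\Ss^1,g)\|\le C(k)|\tau|$; combined with $\|Y_\tau - Y_0:C^k\|\le C(k)|\tau|$ from \eqref{gmetricrelation} this shows the interpolant stays within $C|\tau|$ of either endpoint surface. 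Since the mean curvature $H$ depends on the immersion through at most two derivatives in a locally $C^0$-controlled way, and since $H\equiv 1$ for both $Y_0$ and $Y_\tau$, the difference $H_{gluing}[edge]=H[edge]-1$ on this annulus is bounded in $C^k$ by a constant times the $C^{k+2}$ deviation of $\underline Y_{edge}$ from the common value $1$-mean-curvature family, hence by $C(a,k)|\tau|$. (One absorbs the two extra derivatives by taking $k$ large enough in Proposition \ref{geopropcentral}, which is permitted since $C(k,b)$ is allowed to depend on $k$.) The same argument on $[4P_\tau l-(a+4),4P_\tau l-(a+3)]\times\Ss^1$, using the second estimate of Proposition \ref{geopropcentral} with $Y_0^-$, gives item (1); the ray case is identical and gives item (2), with no second endpoint to worry about.

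For the dislocation intervals the argument is structurally the same but with the roles of the parameters swapped. On $[a+1,a+2]\times\Ss^1$ the immersion interpolates between $Y_0$ and $Y_0+\boldsymbol\zeta^+$, while $\psi_{gluing^+}\equiv 0$ there so $Y_\tau$ does not enter; since a rigid translation does not change mean curvature, both endpoint surfaces again have $H\equiv 1$, and the interpolant deviates from either by at most $C|\boldsymbol\zeta^+|$ in $C^k$. Hence $\|H_{dislocation}[edge]:C^k\|\le C(a,k)(|\boldsymbol\zeta^+|+|\boldsymbol\zeta^-|)$, using the analogous estimate near $t=4P_\tau l-(a+2)$ for the $\boldsymbol\zeta^-$ contribution; this is item (3), and item (4) is the ray specialization. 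In all cases the support statements are exactly those recorded in the preceding definition, so there is nothing further to check there.

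The only mild subtlety — and the step I expect to require the most care — is the claim that the $C^k$ norm of $H[\cdot]-1$ is controlled by the $C^k$ norm of the immersion's deviation from a one-mean-curvature family. This is because $H$ is not a \emph{linear} functional of the immersion: it is a rational expression in the first and second fundamental forms. However, both comparison surfaces ($Y_0$ and $Y_\tau$, or $Y_0$ and its translate) lie in a compact $C^{k+2}$-bounded family with uniformly nondegenerate induced metric, so on the small transition annulus the immersion $\underline Y_{edge}$ and all comparison immersions have first fundamental form bounded away from degeneracy uniformly in the parameters; on such a set $H$ is a smooth (hence locally Lipschitz in $C^k$) function of the $(k+2)$-jet of the immersion, and since it equals $1$ at both endpoints of the interpolation the mean value theorem along the interpolation path gives the stated linear bound. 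I would phrase this as a one-line remark rather than a computation, citing Proposition \ref{geopropcentral} and \eqref{gmetricrelation} for the underlying immersion estimates and noting that the constants $C(a,k)$, $C(k,b)$ are permitted to grow with $k$.
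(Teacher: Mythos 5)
Your argument is correct and follows the same route as the paper, which simply records that the bounds follow from Proposition \ref{geopropcentral} (the immersion on each transition annulus deviates from a reference surface of mean curvature one by $O(|\tau|)$ or $O(|\boldsymbol\zeta^\pm|)$, and $H$ depends Lipschitz-continuously on the $(k+2)$-jet there); you have merely spelled out the details the paper leaves implicit. The only cosmetic slip is attributing the $C^k$ closeness of $Y_\tau$ to $Y_0$ to \eqref{gmetricrelation} rather than to Lemma \ref{radiuslemma} / Proposition \ref{geopropcentral}, which is where the immersion-level estimate actually lives.
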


\section{Construction of the Initial Surface}\label{InitialSurface}
Given a flexible, central graph $\Gamma$ and parameters $d,\boldsymbol \zeta$, we determine an immersion into $\Real^3$ by appropriately positioning the building blocks described in Section \ref{BuildingBlocks}. Because our construction relies on the geometric limit of certain pieces of a Delaunay surface, we need the parameter associated to each $e \in E(\Gamma) \cup R(\Gamma)$ to be small. To that end, we choose a parameter $0<\tau\ll 1$ and define $e \in E(\Gamma) \cup R(\Gamma)$,
\[ \uthte:= \tau \hat \tau(e).\] The argument requires that all $|\uthte|$ be sufficiently small so that we may appeal to various geometric estimates. Thus $\outau$ will depend on $\max \hat \tau$, but not on $d,\boldsymbol \zeta$, or the structure of $\Gamma$. 
The finiteness of $\Gamma$ implies there exists a fixed constant $C$ such that $\tau/C \leq |\uthte| \leq C\tau$. We follow the convention that parameters with a ``hat'' are on scale one and those with no hat are on scale $\outau$.

\subsection*{The Abstract Surface $M$}
Given a flexible, central graph $\Gamma$ with rescaled functions $\uthte$, we define an abstract surface $M$ that will serve as the domain for the immersion into $\Real^3$. $M$ depends only on $\Gamma$ and is independent of the parameters $d, \boldsymbol \zeta$.  The construction of $M$ proceeds as follows. To each vertex $p \in V(\Gamma)$, we associate $M[p]$, a two-sphere with geodesic disks removed. The centers of these geodesic disks correspond exactly with the directions of the vectors $\Bv_{e,p}\in V_p$. The radius of the disks depends upon a $\delta$ we choose based on the graph. To each $e \in E(\Gamma)\cup R(\Gamma)$ we associate a piece of $\Real \times \Ss^1$ with left boundary at $\{a\} \times \Ss^1$ where $a$ depends on $\delta$ as in \eqref{aeq}. If $e$ is an edge, the length of the piece is determined based on the functions $\uthte$ and $l(e)$. Finally, we define the abstract surface $M$ by appropriately identifying a geodesic annulus on $M[p]$ centered at $\Bv_{e,p}$ with a neighborhood of the appropriate boundary of the cylindrical piece $M[e]$. 

 For the graph $\Gamma$, fix a small $0<\delta\ll 1$ such that for each $p \in V(\Gamma)$ and $e,e' \in E_p$, $\angle(e,e')>3\delta$. We now fix once and for all the constant $a$ so that 
 \addtocounter{theorem}{1}
 \begin{equation}
 \tanh(a+1)=\cos(\delta/8).
\end{equation}

\addtocounter{equation}{1}
\begin{definition}For $p \in V(\Gamma)$
define 
\addtocounter{theorem}{1}
\begin{equation}
M[p]= \mathbb S^2_{V_p}
\end{equation}where this notation follows that of \eqref{centralspheq} and $V_p$ is defined in \ref{Vpdefn}.
As the enumeration along a cylinder corresponding to an edge will vary depending on the direction, we simplify notation by defining
\addtocounter{theorem}{1}
\begin{equation}
\RH:= 4P_{\uthte}l(e).
\end{equation}

For $e \in E(\Gamma)$, let
\begin{equation*}
M[e] = [a,\RH-a]\times \mathbb S^1
\end{equation*}
while for $e \in R(\Gamma)$, let
\begin{equation*}
M[e] = [a,\infty)\times \mathbb S^1.
\end{equation*}

\end{definition}
To make the proper identification between $M[e]$ and $M[p]$, for $e \in E_p$, we define a rotation that takes the standard frame in $\Real^3$ to the frame $F_\Gamma[e]$ associated to $e \in E_p$.
\addtocounter{equation}{1}
\begin{definition}
For $e \in E(\Gamma) \cup R(\Gamma)$, let $\RRR[e]:\Real^3 \to \Real^3$ denote the rotation such that
\[
\RRR[e](\Be_i)=\Bv_i[e]
\]for $i=1,2,3$, where here the $\Bv_i[e]$ refer to the ordered orthonormal frame chosen in Definition \ref{gammaframe}.
\end{definition}
With this rotation in hand, we define the abstract surface $M$.
\addtocounter{equation}{1}
\begin{definition}
Let 
\[
M'= \left(\coprod_{p \in V(\Gamma)}M[p]\right) \coprod\left( \coprod_{e \in E(\Gamma) \cup R(\Gamma)} M[e]\right)\]
and let 
\addtocounter{theorem}{1}
\begin{equation}\label{mdef}
M=M'/\sim 
\end{equation}
where we make the following identifications:\\
For $[p,e] \in A(\Gamma)$ with $p=p^+[e]$ and $x \in M[e] \cap \left([a,a+1] \times \mathbb S^1\right)$,
\[
 x \sim \left(\RRR[e] \circ Y_0(x)\right) \cap M[p].
\]
For $[p,e] \in A(\Gamma)$ with $p=p^-[e]$ and $ x=(t,\theta)\in M[e] \cap \left([\RH-(a+1),\RH-a] \times \mathbb S^1\right)$,
\[
 x \sim \left( \RRR[e] \circ Y_0 (t-\RH,\theta )\right) \cap M[p].
\]

\end{definition}

\subsection*{Standard and Transition Regions}Following the strategy in \cite{HaskKap,KapBAMS,KapAnn,KapPNAS,KapWente,KapProc,KapJDG,KapJDGCMC,KapThes,KapClay,KapYang} we carefully define various regions on $M$. We require slightly more burdensome notation than in previously cited work as our setup lacks any symmetry. For each $p \in V(\Gamma)$, the region $S[p]$ is a \emph{central standard region} \cite{HaskKap,KapClay} or a \emph{central almost spherical region} in the terminology of \cite{KapBAMS,KapAnn,KapPNAS,KapWente,KapProc,KapJDG,KapJDGCMC,KapThes}. The geometric limits of certain regions of the immersion $Y_\tau:\Real \times \Ss^1 \to \Real^3$ motivates our identification of various regions on each $M[e]$. 
Recall that $2l(e)$ denotes the length
of each edge on the initial graph $\Gamma$. Thus, each $M[e]$ will have
$2l(e)-1$ geometrically well understood regions and $2l(e)$ regions connecting them. This leads us to define the following sets.
\addtocounter{equation}{1}
\begin{definition}\label{pendef}
We let $[p,e,n]\in S(\Gamma)$ where 
\begin{align*}
 S(\Gamma) := &\{[p,e,n] |e \in E(\Gamma),\ppe\in A(\Gamma), n \in \{1, 2, \dots, l(e)\}\}\\&\cup\{\pen | e \in E(\Gamma),\pme\in A(\Gamma), n \in\{1, 2, \dots, l(e)-1\} \}
 \\&\cup \{\pen | e \in R(\Gamma), [p,e] \in A(\Gamma), n \in \mathbb N\}.
\end{align*}Further, let $[p,e,n']\in N(\Gamma)$
where
\begin{align*}
N(\Gamma):=&\{ [p,e,n']|e \in E(\Gamma),[p^\pm[e],e] \in A(\Gamma), n' \in \{1, 2, \dots, l(e)\}\} \\&\cup \{[p,e,n'] | e \in R(\Gamma), [p,e] \in A(\Gamma), n' \in \mathbb N\}.
\end{align*}Finally, we let $[p,e,n''] \in N^+(\Gamma)$ where
\begin{align*}
N^+(\Gamma):=&\{ [p,e,n'']| e\in E(\Gamma),[p^\pm[e],e] \in A(\Gamma), n'' \in \{0, 1, \dots, l(e)-1\}\}\\&\cup \{[p,e,n'']| e \in R(\Gamma), [p,e] \in A(\Gamma), n'' \in \mathbb N\cup \{0\}\}.
\end{align*}
\end{definition}
With this notation, each $S\pen\subset M$ will correspond to a \emph{standard region} or \emph{almost spherical region}. Each $\Lambda[p,e,n']$ will correspond to a \emph{transition} or \emph{neck} region. 
Notice that $S(\Gamma) \subset N(\Gamma)$. In fact, $N(\Gamma) \backslash S(\Gamma)= \{ [p^-[e],e,l(e)] : e \in E(\Gamma)\}$. We choose this notation so that the set $S(\Gamma)$ enumerates every standard region exactly once. Moreover, the enumeration of the standard regions is such that it increases along $M[e]$ as one moves further away from the nearest boundary. For $e \in E(\Gamma)$, the middle standard region on $M[e]$ bears the label $S[p^+[e],e,l(e)]$. Each $\widetilde S\pen$ is an \emph{extended standard region} and contains both the standard region and the two adjacent transition regions. The $\widetilde S[p]$ are \emph{central extended standard regions} and contain all adjacent transition regions, where adjacency is determined by $e \in E_p$. The $C^+[p,e,n''], C^-[p,e,n']$ represent the meridian circles on the boundary between a standard and transition region. $\widetilde C[p,e,n']$ is the center meridian circle of $\Lambda[p,e,n']$.

Recall that $a$ is determined by $\delta$ in the definition of $M$ and controls the size of the removed geodesic disks. The constant $b$ determines the size of each standard and transition region. We use $x,y$ in subscripts to modify the size of the regions and the boundary circles. For example, $S[p] \subset S_x[p]$ while $\widetilde S_x[p] \subset \widetilde S[p]$.
\begin{figure}[h]\label{STRPthin}
\includegraphics[width=5in]{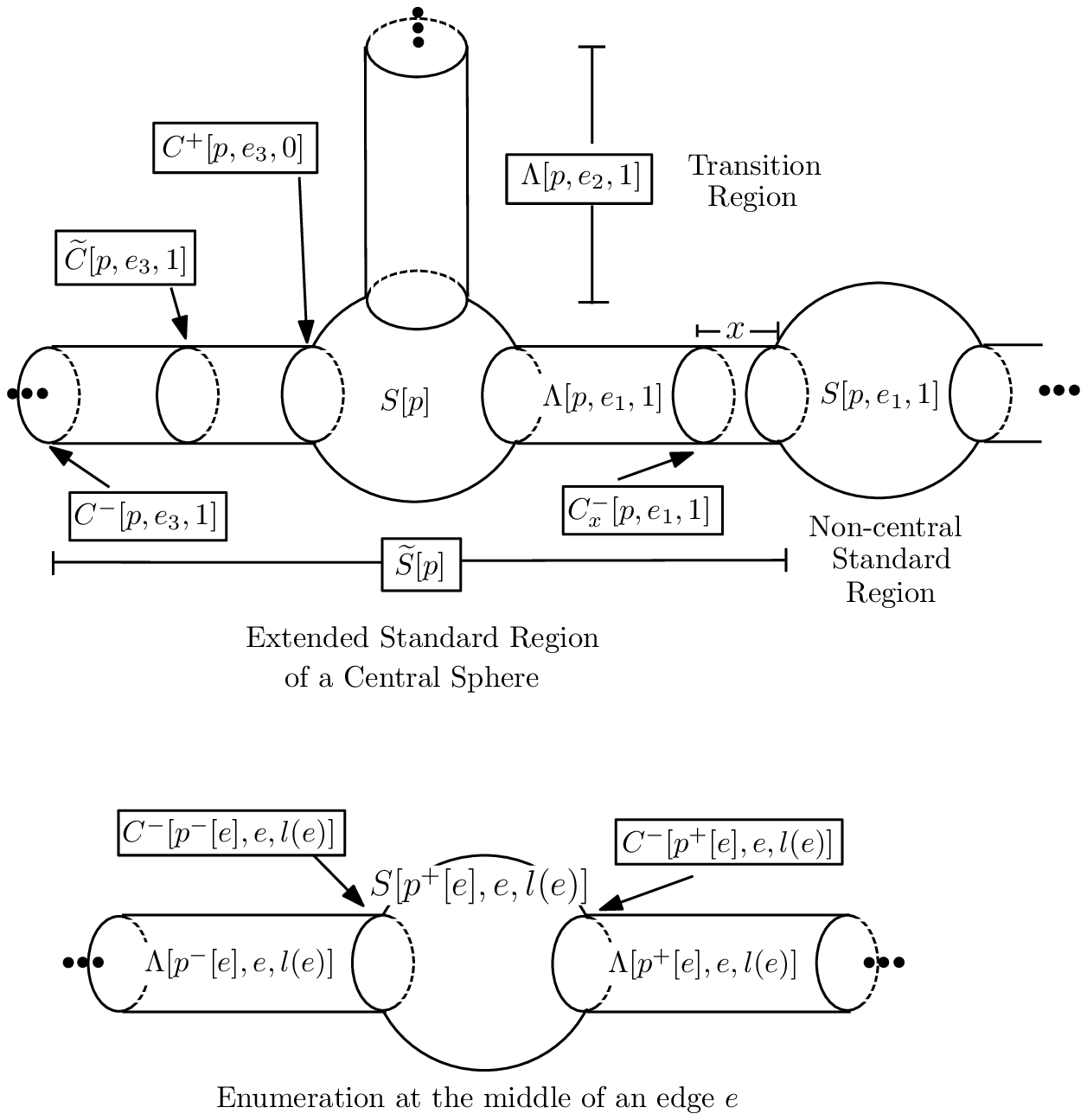}
            \caption{Two schematic renderings of $M$. The top one is near a vertex $p$ with $|E_p|=3$ and the bottom one at the standard region associated to center of an edge $e$. Note that standard regions appear spherical and transition regions appear cylindrical.}
      \end{figure}

\addtocounter{equation}{1}
\begin{definition}\label{regions} Let $p \in V(\Gamma)$. For $S(\Gamma), N(\Gamma), N^+(\Gamma)$ as outlined above, we define the following regions on $M$.
\begin{enumerate}
\item \label{centstand} $S_x[p]:=M[p] \cup_{\{e|p=p^+[e]\}}\left(M[e]\cap[a, b+x]\times \mathbb{S}^1\right)$\\
\indent \indent \indent \indent $\cup_{\{e|p=p^-[e]\}}\left(M[e] \cap [\RH-(b+x),\RH-a] \times \Ss^1\right)$
\item \label{centextstand}$\widetilde S_x[p]:=M[p]\cup_{\{e|p=p^+[e]\}}\left(M[e]\cap[a, 2P_{\uthte}-( b+x)]\times \mathbb{S}^1\right)$
\\ \indent \indent \indent\indent $\cup_{\{e|p=p^-[e]\}}\left(M[e] \cap [\RH-(2P_{\uthte}-( b+x)),\RH-a] \times \Ss^1\right)$
\item \label{stand1}$S_x[p^+[e],e,n]:= M[e]\cap [2nP_{\uthte}-(b+x),2nP_{\uthte }+(b+x)]\times \mathbb{S}^1$
\item \label{stand2}$S_x[p^-[e],e,n]:= M[e]\cap [\RH-(2nP_{\uthte}+(b+x)),\RH-(2nP_{\uthte }-(b+x))]\times \mathbb{S}^1$
\item \label{extstand1}$\widetilde S_x[p^+[e],e,n]:=  M[e]\cap [(2n-2)P_{\uthte}+(b+x),(2n+2)P_{\uthte }-(b+x)]\times \mathbb{S}^1$
\item \label{extstand2}$\widetilde S_x[p^-[e],e,n]:=  M[e]\cap [\RH-((2n+2)P_{\uthte}-(b+x)),\RH-((2n-2)P_{\uthte }+(b+x))]\times \mathbb{S}^1$
\item\label{neckregion1} $\Lambda_{x,y}[p^+[e],e,n']:=  M[e]\cap  [(2n'-2)P_{\uthte }+(b+x),2n'P_{\uthte }-(b+y)]\times \mathbb{S}^1$
\item\label{neckregion2} $\Lambda_{x,y}[p^-[e],e,n']:=  M[e]\cap  [\RH-(2n'P_{\uthte }-(b+y)),\RH-((2n'-2)P_{\uthte }+(b+x))]\times \mathbb{S}^1$
\item $C^+_x[p^+[e],e,n'']:=  M[e]\cap \{2n''P_{\uthte}+(b+x)\}\times \mathbb{S}^1$
\item $C^+_x[p^-[e],e,n'']:=  M[e]\cap \{\RH-(2n''P_{\uthte}+(b+x))\}\times \mathbb{S}^1$
\item $C^-_x[p^+[e],e,n']:=  M[e]\cap  \{(2n'P_{\uthte}-(b+x)\}\times \mathbb{S}^1$
\item $C^-_x[p^-[e],e,n']:=  M[e]\cap  \{\RH-((2n'P_{\uthte}-(b+x))\}\times \mathbb{S}^1$
\item $\widetilde C[p^+[e],e,n']:=  M[e]\cap  \{(2n'-1)P_{\uthte}\}\times \mathbb{S}^1$
\item $\widetilde C[p^-[e],e,n']:=  M[e]\cap  \{\RH-((2n'-1)P_{\uthte})\}\times \mathbb{S}^1$
\end{enumerate}
Here $b>a+5$ is chosen in \eqref{decaymain} and is independent of $\outau$ while $0<x,y<P_{\outau}-b$ where positivity of $P_{\outau}-b$ is guaranteed by the smallness of $\outau$.
We set the convention to drop the subscript $x$ when $x=0$; i.e. $S[p]=S_x[p]$.  Moreover, we denote $\Lambda_{x,x}=\Lambda_x$.
For the sake of clarity, we point out:
\begin{enumerate}
 \item $\widetilde S_x[p]=S_x[p]\bigcup_{e \in E_p} \Lambda_x[p,e,1]$ 
\item $\widetilde S_x[p,e,n]=\Lambda_x[p,e,n]\cup S_x[p,e,n] \cup \Lambda_x[p, e , n+1]$ for all $n \in \{1, \dots, l(e)-1\}$
\item $\widetilde S_x[p,e,l(e)] = \Lambda_x[p^+[e],e,l(e)]\cup S_x[p,e,l(e)]\cup \Lambda_x[p^-[e],e,l(e)]$
\item $C_x^+[p,e,0] = S_x[p] \cap \Lambda_x[p,e,1]$
\item $C_x^+[p,e,n'']= S_x[p,e,n''] \cap \Lambda_x[p,e,n''+1]$ for $n'' \in \{1, \dots, l(e)-1\}$
\item $C_x^-[p,e,n]= S_x[p,e,n] \cap \Lambda_x[p,e,n]$ except when $n=l(e)$ and $p=p^-[e]$ in which case\\
\indent \indent \indent $C_x^-[p^-[e],e,l(e)]= S_x[p^+[e],e,l(e)] \cap \Lambda_x[p^-[e],e,l(e)]$
\item $\partial S_x[p]=\cup_{e_j \in E_p} C_x^+[p,e_j,0]$
\item $\partial S_x[p,e,n]=C_x^+[p,e,n]\cup C_x^-[p,e,n]$ for $n \in \{1, \dots, l(e)-1\}$
\item $\partial S_x[p,e,l(e)]= C_x^-[p^+[e],e,l(e)] \cup C_x^-[p^-[e],e,l(e)]$
\item $\partial \Lambda_{x,y}[p,e,n']= C_x^+[p,e,n'-1]\cup C_y^-[p,e, n']$
\end{enumerate}
\end{definition}

\subsection*{The graph $\Gamma(d, \ell)$} In this subsection, we explain how to modify the initial graph $\Gamma$ by prescribed parameters $d,\boldsymbol \zeta$. The modification by $d$ is immediate while the modification by $\boldsymbol \zeta$ induces $\ell$. As $\Gamma$ is flexible, there exists a family $\mathcal F(\Gamma)$ such that for $(\hd,\ell)$ in an $\varepsilon$ neighborhood of the origin, $\Ghdl \in \mathcal F(\Gamma)$ varies smoothly in $\hd,\ell$.

Because we scaled the function $\hat \tau$ by $\outau$, we do the same for $\hd$ and thus we are interested in $d \in D(\Gamma)$ such that
\addtocounter{theorem}{1}
\begin{equation}\label{drestriction}
||d||_D \leq \varepsilon \outau.
\end{equation}Here $\varepsilon$ is taken from Definition \ref{flexdef}. For any such $\Gd$, let $\tau_d:E(\Gamma) \cup R(\Gamma) \to \Real$ denote the rescaled $\hat \tau_{\Ghd}$. 

We now determine the function $\ell:E(\Gamma) \to \Real$ that will rely -- for each $e$ -- on $l(e), \taue,$ and two vectors ${\boldsymbol \zeta}\ppe, \boldsymbol \zeta \pme \in \Real^3$.
The parameter ${\boldsymbol \zeta}$ describes a dislocation of each Delaunay piece from its central sphere and thus we describe the full parameter as a map from each attachment. 

\addtocounter{equation}{1}
\begin{definition}Let $\boldsymbol\zeta:A(\Gamma)\to \Real^3$ such that 
\addtocounter{theorem}{1}
\begin{equation}
\boldsymbol \zeta[p,e]=\sum_{i=1}^3 \zeta_i[p,e]\mathbf e_i.
\end{equation}
\label{zetadef} For the graph $\Gamma$, 
we define $\boldsymbol \zeta \in  \Real^{A}$, the finite dimensional vector space with values in $\Real$, indexed over $i=1,2,3$ and $\pe \in A(\Gamma)$.
Let
\[
 || \boldsymbol\zeta|| =  \max_{[p,e] \in A(\Gamma)} | \boldsymbol\zeta[p,e]|.
\] \end{definition}
As we will see, the norm of $\boldsymbol\zeta$ can be quite large compared to the norm of $d$. Throughout the paper, we allow 
\addtocounter{theorem}{1}
\begin{equation}\label{zetarestriction}
|| \boldsymbol\zeta||\leq \underline C   \outau
\end{equation}where $\underline C$ is a large, universal constant that is independent of $\outau$.

To each edge in the graph we associate a Delaunay building block with length determined by the parameter $\taue$ and the number of periods $l(e)$. 
Let $\widetilde l:E(\Gamma) \to \Real$ such that
\addtocounter{theorem}{1}
\begin{equation}
 \widetilde l(e):= \left(2+2p_{\taue}\right) l(e).
\end{equation}Thus, a Delaunay piece with $l(e)$ periods and parameter $\taue$ will have axial length equal to $\widetilde l(e)$.
\addtocounter{equation}{1}
\begin{definition}
For $e \in E(\Gamma)$ we define the function $\ell:E(\Gamma) \to \Real$ by
\addtocounter{theorem}{1}
\begin{equation}\label{ellprimedefinition}
2 \ell(e):=\left|{\boldsymbol \zeta}\ppe - \left( {\boldsymbol \zeta}\pme + (\widetilde l(e),0,0) \right) \right|-2l(e).
\end{equation}
\end{definition} 
\begin{figure}[h]\label{Dis}
\includegraphics[width=5in]{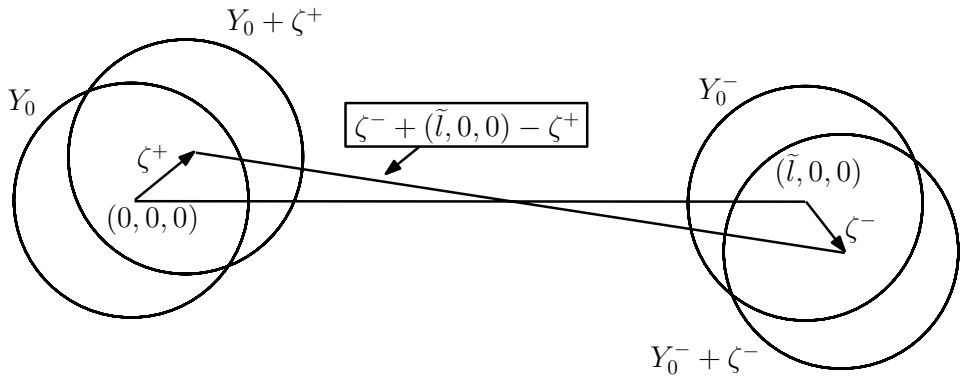}
            \caption{In the figure, we let $\zeta^+, \zeta^-$ correspond to $\boldsymbol \zeta[p^+[e],e], \boldsymbol \zeta[p^-[e],e]$ respectively. Also, notice that $Y_0^-$ is defined so that its center is at $(2+2p_{\taue}) l(e)$.}
      \end{figure}
The definition of $\ell(e)$ amounts to the following steps (see Figure 5). First, we position a segment of length $\widetilde l(e)$ so that it sits on
 the positive $x_1$-axis with one end fixed at the origin. Then we dislocate the two ends of this segment corresponding to ${\boldsymbol \zeta}\ppe$ and ${\boldsymbol \zeta}\pme$
where ${\boldsymbol \zeta}\ppe$ is the dislocation from the origin. We then measure the length of the segment connecting these two points. Finally,
we compare that length with the length of the edge $e$ in the graph $\Gamma$.
\addtocounter{equation}{1}
\begin{lemma}\label{lrestriction}
For $\ell \in L(\Gamma)$ with $\ell(e)$ as defined in \eqref{ellprimedefinition} and $\outau$ sufficiently small,
\[
 ||\ell(e)||_L:=\frac{|\ell(e)|}{l(e)} \leq C \underline C \outau-C\taue \log \taue \leq -C\outau \log \outau.
\]

\end{lemma}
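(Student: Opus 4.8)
The plan is to read the estimate directly off the defining formula \eqref{ellprimedefinition}, viewing $\ell(e)$ as a small perturbation of $\tfrac12\widetilde l(e)-l(e)=p_{\taue}\,l(e)$. First I would abbreviate $\mathbf a:=\boldsymbol\zeta\ppe-\boldsymbol\zeta\pme$, so that $|\mathbf a|\le|\boldsymbol\zeta\ppe|+|\boldsymbol\zeta\pme|\le 2\|\boldsymbol\zeta\|$, and rewrite the right-hand side of \eqref{ellprimedefinition} as $2\ell(e)+2l(e)=\bigl|\mathbf a-(\widetilde l(e),0,0)\bigr|$. Since $\widetilde l(e)=(2+2p_{\taue})l(e)>0$ for $\outau$ small, the reverse triangle inequality applied to $\bigl|\mathbf a-(\widetilde l(e),0,0)\bigr|$ and $\widetilde l(e)$ gives $\bigl|\,\bigl|\mathbf a-(\widetilde l(e),0,0)\bigr|-\widetilde l(e)\,\bigr|\le|\mathbf a|$, hence
\[
|2\ell(e)|\le |\widetilde l(e)-2l(e)|+|\mathbf a|=2|p_{\taue}|\,l(e)+2\|\boldsymbol\zeta\|.
\]

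Next I would divide by $2l(e)$, use $l(e)\ge 1$ (it is a positive integer) and the standing bound \eqref{zetarestriction} to get $|\ell(e)|/l(e)\le |p_{\taue}|+\|\boldsymbol\zeta\|/l(e)\le |p_{\taue}|+\underline C\,\outau$, and then control $|p_{\taue}|$ by the period asymptotics already recorded in Lemma \ref{radiuslemma} and \eqref{periodlimits}, which give $|p_{\taue}|\le -C\,\taue\log\taue$ for $\outau$ small. Finally I would pass to the global scale: since $\Gamma$ is finite and $d$ satisfies \eqref{drestriction}, the ratio $\taue/\outau$ lies in a fixed interval bounded away from $0$ and $\infty$, so $\underline C\,\outau\le C\underline C\,\taue$, which yields the first inequality of the lemma; replacing $-\taue\log\taue$ by $-C\,\outau\log\outau$ and absorbing the lower-order term $C\underline C\,\taue$ into it — legitimate because $\underline C$ is a fixed universal constant while $-\log\outau\to\infty$ — yields the second.

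The computation is short; the only points that need care, rather than presenting a genuine obstacle, are the correct use of the reverse triangle inequality (the single step where the geometry of the dislocation enters) and the order of quantifiers in the final absorption, namely that $\underline C$ is fixed first and only then is $\outau$ taken sufficiently small, consistent with the statement of the lemma.
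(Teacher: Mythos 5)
Your argument is correct and is essentially the paper's proof: the paper's opening "Notice" inequality $-C\|\boldsymbol\zeta\|+p_{\taue}\le \ell(e)/l(e)\le C\|\boldsymbol\zeta\|+p_{\taue}$ is exactly the sandwich you derive via the reverse triangle inequality, after which both arguments conclude by \eqref{periodlimits} and the uniform bound \eqref{zetarestriction} on $\|\boldsymbol\zeta\|$. You merely make explicit the triangle-inequality step and the final absorption of the $\underline C\outau$ term that the paper leaves implicit.
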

\begin{proof}
 Notice
\[
-C||\boldsymbol \zeta|| +{p_{\taue}} \leq \frac{\ell(e)}{l(e)} \leq C ||\boldsymbol \zeta|| +{p_{\taue}}. \]
Thus, applying \eqref{periodlimits} and the uniform bound on $||\boldsymbol \zeta||$ immediately implies the result.
\end{proof}
We now highlight a few flexibility and asymptotic results we will use throughout the proof
\addtocounter{equation}{1}
\begin{lemma}
For $\tau$ sufficiently small and $||d||_D \leq \varepsilon \tau, ||\boldsymbol \zeta || \leq \underline C \outau$ we have the following:
\begin{enumerate}\item If $e,e'$ are corresponding edges or rays for $\Gamma, \Gd$ then there exists some constant $C$ such that
\addtocounter{theorem}{1}
\begin{equation}\label{tauratio}
 \frac{\tau_d(e')}{\uthte}\in \left(1-C\outau, 1+C\outau\right), \text{ and }
\angle(e,e') \leq C\varepsilon \outau
\end{equation}
\item and
\addtocounter{theorem}{1}
\begin{equation}\label{diffeodifference}
\left| 1 - \frac{P_{\tau_d(e')}}{P_{\uthte}}\right| \leq \frac{C\outau}{-\log \outau}.
\end{equation}
\item For $\Gd, \Gudl$ and corresponding edges $e,e'$,
\addtocounter{theorem}{1}
\begin{equation}\label{ddifftau}
\angle(e,e') \leq -C \outau \log \outau,
\text{ and } ||d_{\Gd} -d_{\Gudl}||_D \leq -C\outau^2 \log \tau.
\end{equation}
\end{enumerate}
\end{lemma}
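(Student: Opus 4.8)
\emph{Proof proposal.} The three assertions are essentially independent, and I would prove them in the stated order, since the first feeds the second while the third is just a rescaled reformulation of Lemma \ref{hunbalancing}. For the first, the plan is to exploit the smoothness built into the family $\mathcal F(\Gamma)$ together with the restriction \eqref{drestriction}. Writing $\Gd=\Ghd$ with $\hd=d/\outau$, so that $\|\hd\|_D\le\varepsilon$, Definition \ref{flexdef} and item (2) of Definition \ref{FamilyDefinition} guarantee that the vertex positions, the weight function $\hat\tau_{\Ghd}$, and the unit directions $\Bv_{e',p'}$ all depend smoothly on $\hd$ on the (compact, since $\Gamma$ is finite) parameter ball $B(\Gamma)$ and agree with the corresponding data of $\Gamma=\Gamma(0,0)$ at $\hd=0$. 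A mean value estimate then gives $|\hat\tau_{\Ghd}(e')-\hat\tau_\Gamma(e)|\le C(\Gamma)\|\hd\|_D$ and $|\Bv_{e',p'}-\Bv_{e,p}|\le C(\Gamma)\|\hd\|_D$. Dividing the first inequality by $\hat\tau_\Gamma(e)$ (bounded below since $\Gamma$ is finite), and recalling that $\tau_d$ is $\outau$ times $\hat\tau_{\Ghd}$ while $\uthte=\outau\,\hat\tau_\Gamma(e)$, produces the ratio estimate; and $\angle(e,e')\le C\,|\Bv_{e',p'}-\Bv_{e,p}|$ produces the angle estimate. The one thing to watch is the scale bookkeeping, since $d$ and the $\hat\tau$'s live at scale $\outau$ whereas the abstract deformation parameter $\hd$ lives at scale one.

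For the second assertion I would run verbatim the argument that produced \eqref{diffeodifference1} from \eqref{tauratio1} and \eqref{elllength1}, now with $\tau=\uthte$ and $\tau'=\tau_d(e')$ and with the ratio control just obtained in place of \eqref{tauratio1}. Concretely, Lemma \ref{Plemma} gives $2P_\tau-2b\sim-\log\tau$; since $\tau_d(e')=\uthte(1+O(\outau))$ one has $-\log\tau_d(e')=-\log\uthte+O(\outau)$, hence $2P_{\tau_d(e')}-2P_{\uthte}=O(\outau)$, and dividing by $P_{\uthte}\sim\tfrac12(-\log\uthte)$ yields the bound of order $\outau/(-\log\outau)$.

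For the third assertion, the angle bound follows from smooth dependence on $\ell$ exactly as in the proof of Lemma \ref{hunbalancing}: passing from $\Gd=\Ghd$ to $\Gudl=\Ghdl$ moves each edge or ray direction by at most $C\,\|\ell\|_L$, so $\angle(e,e')\le C\,\|\ell\|_L$, and Lemma \ref{lrestriction} bounds $\|\ell\|_L$ by $-C\outau\log\outau$. For the unbalancing bound, Lemma \ref{hunbalancing} gives $\|\hd_{\Ghd}-\hd_{\Ghdl}\|_D\le C(\Gamma)\|\ell\|_L$; multiplying by $\outau$ to pass from the hatted scale-one parameters to the unhatted scale-$\outau$ parameters $d_{\Gd}=\outau\,\hd_{\Ghd}$, $d_{\Gudl}=\outau\,\hd_{\Ghdl}$, and invoking Lemma \ref{lrestriction} together with \eqref{periodlimits}, gives $\|d_{\Gd}-d_{\Gudl}\|_D\le\outau\cdot C\|\ell\|_L\le -C\outau^2\log\tau$.

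The only place genuine analysis enters is the second assertion, and even there it is packaged entirely in the period asymptotics of Lemma \ref{Plemma} and \eqref{periodlimits}; everything else is bookkeeping of parameter scales together with citations of Definitions \ref{flexdef}, \ref{FamilyDefinition} and Lemmas \ref{hunbalancing}, \ref{lrestriction}. The main obstacle, as is typical in these constructions, is simply keeping the scales cleanly separated — the fixed flexibility radius $\varepsilon$, the scale $\outau$ of $d$ and $\hat\tau$, the scale $-\outau\log\outau$ of $\ell$, and the scale $-\log\outau$ of $P_\tau$ — so that the smoothness constants coming from Definition \ref{FamilyDefinition} are uniform in $\outau$ (they are, because the $\hd$-ball has fixed radius $\varepsilon$ while the $\ell$-range shrinks with $\outau$) and the errors land at the advertised orders.
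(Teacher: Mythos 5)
Your third assertion is handled exactly along the paper's route: the angle change from $\Gd$ to $\Gudl$ is bounded by $C\|\ell\|_L$, Lemma \ref{hunbalancing} controls the hatted unbalancing difference by $C(\Gamma)\|\ell\|_L$, multiplying by $\outau$ passes to the unhatted quantities $d_{\Gd}=\outau\,\hd_{\Gd}$, and Lemma \ref{lrestriction} together with \eqref{periodlimits} gives the order $-\outau\log\outau$; likewise your derivation of \eqref{diffeodifference} is the same appeal to \eqref{diffeodifference1}/Lemma \ref{Plemma} that the paper makes, granted the ratio control of the first item.

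The gap is in your proof of \eqref{tauratio}, and it is precisely the scale issue you flag but do not resolve. You set $\hd=d/\outau$, so $\|\hd\|_D\le\varepsilon$, and then use smooth dependence of the family on $\hd$ to get $|\hat\tau_{\Ghd}(e')-\hat\tau_\Gamma(e)|\le C(\Gamma)\|\hd\|_D$ and $|\Bv_{e',p'}-\Bv_{e,p}|\le C(\Gamma)\|\hd\|_D$. These are bounds of size $C\varepsilon$, i.e.\ of order one: in the ratio $\tau_d(e')/\uthte=\hat\tau_{\Ghd}(e')/\hat\tau_\Gamma(e)$ the overall factor $\outau$ cancels, and it never enters the angle estimate at all, so your chain of inequalities yields only $\tau_d(e')/\uthte\in(1-C\varepsilon,1+C\varepsilon)$ and $\angle(e,e')\le C\varepsilon$, not the asserted $(1-C\outau,1+C\outau)$ and $C\varepsilon\outau$. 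To land at the advertised orders, the smoothness/mean-value estimate must be run with a deformation parameter of size $\|d\|_D\le\varepsilon\outau$, so that both the weight change and the direction change come out proportional to $\|d\|_D$ rather than to $\|\hd\|_D=\|d\|_D/\outau$; that is presumably how the paper's one-sentence proof (``flexibility of $\Gamma$ and an appropriate modification of Lemma \ref{hunbalancing}'') is to be read. The tension you noticed is genuine — the $d/\outau$ normalization is the one the construction relies on later when unbalancing is prescribed in Proposition \ref{subsprescribep} — but as written your argument establishes strictly weaker bounds than the lemma states, and since your \eqref{diffeodifference} is deduced from your ratio control it inherits the same loss (it would come out as $C\varepsilon/(-\log\outau)$). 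So the first two items are not established by the proposal as it stands.
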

\begin{proof}
The proof follows from the flexibility of $\Gamma$, an appropriate modification of Lemma \ref{hunbalancing}, and Lemma \ref{lrestriction}. The estimate \eqref{diffeodifference} follows immediately from \eqref{diffeodifference1} and the uniform bounds on the ratio of $|\taue|$ and $\outau$.
\end{proof}

\addtocounter{equation}{1}
\begin{remark}
Notice that the finiteness of the graph $\Gamma$, the definition of $\uthte$, and the condition \eqref{tauratio} implies that for any $||d||_D \leq \varepsilon \outau$, there exists $C$ such that $1/C \leq \frac{|\taue|}{\outau} \leq C$. This gives us the freedom to replace any bounds in $|\taue|^{\pm 1}$ by $(C\outau)^{\pm 1}$, reducing notation and bookkeeping.
\end{remark}

\subsection*{The Smooth Initial Surface}
We describe an immersion of the initial surface $M$ which depends upon the initial graph $\Gamma$ and the parameters $d, \boldsymbol \zeta$. The immersion amounts to an appropriate positioning of the building blocks described in Section \ref{BuildingBlocks}. As is evident from the notation, the building blocks themselves depend upon the parameters and the graph.

The positioning of the building blocks will depend in part upon a rotation associated to each edge that takes an orthonormal frame of the edge connecting $\widetilde l(e) + \boldsymbol \zeta[p^-[e],e]$ and $\boldsymbol \zeta[p^+[e],e]$ to the orthonormal frame of the edge $e'\in E(\Gudl)$ corresponding to $e\in E(\Gamma)$. 
\addtocounter{equation}{1}
\begin{prop}\label{zetaframe}
For ${\boldsymbol \zeta}$ from Definition \ref{zetadef} and each $e \in E(\Gamma)$ there exists a unique orthonormal frame $F_{\boldsymbol \zeta}[e]=\{\Be_1[e], \Be_2[e], \Be_3[e]\}$,
 depending smoothly on ${\boldsymbol \zeta}$,
 such that
\begin{enumerate}
\item  $\Be_1[e]$ is the unit vector parallel to  $\left({\boldsymbol \zeta}\pme + (\widetilde l(e),0,0) \right)-{\boldsymbol \zeta}\ppe$ such that $\Be_1[e] \cdot \mathbf e_1>0$.
\item For $i=2,3$, $\Be_i[e]=\RRR[\Be_1, \Be_1[e]](\Be_i)$.
\item For any $\Bv \in \Real^3$, $|\Bv - \RRR[\Be_1,\Be_1[e]](\Bv)|\leq C||\boldsymbol \zeta||\cdot |\Bv|$.
\end{enumerate}
\end{prop}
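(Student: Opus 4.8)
The plan is to write the frame down explicitly from $\boldsymbol\zeta$ and then verify the three requirements, each of which reduces to elementary linear algebra once the relevant denominator is seen to be bounded away from zero. First I would set $\mathbf w := \bigl({\boldsymbol \zeta}\pme + (\widetilde l(e),0,0)\bigr) - {\boldsymbol \zeta}\ppe$, which by Definition~\ref{zetadef} is an affine function of $\boldsymbol\zeta$, and write $\mathbf w = \widetilde l(e)\,\Be_1 + \mathbf h$ with $\mathbf h := {\boldsymbol\zeta}\pme - {\boldsymbol\zeta}\ppe$, so $|\mathbf h| \le 2\|\boldsymbol\zeta\|$. The bound $\widetilde l(e) = (2+2p_{\taue})\,l(e) \ge 2l(e) \ge 2$ together with \eqref{zetarestriction} and the smallness of $\outau$ gives $\mathbf w\cdot\Be_1 \ge \widetilde l(e) - 2\underline C\outau > 0$; in particular $\mathbf w \ne 0$, and $|\mathbf w|$ is bounded below by a positive constant uniformly over the finitely many $e \in E(\Gamma)$. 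Hence $\Be_1[e] := \mathbf w/|\mathbf w|$ is the unique unit vector satisfying~(1), and it depends smoothly on $\boldsymbol\zeta$ because $\mathbf w$ does and $|\mathbf w|$ stays away from $0$.

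Next I would estimate $\phi := \angle(\Be_1,\Be_1[e])$. Since $\Be_1\times\mathbf w = \Be_1\times\mathbf h$, one has $\sin\phi = |\Be_1\times\Be_1[e]| = |\Be_1\times\mathbf h|/|\mathbf w| \le 2\|\boldsymbol\zeta\|/|\mathbf w| \le C\|\boldsymbol\zeta\|$, so $\phi$ is small for $\outau$ small; in particular $\phi < \pi$, so $\RRR[\Be_1,\Be_1[e]]$ is defined via Definition~\ref{rotationdefn} and we may set $\Be_i[e] := \RRR[\Be_1,\Be_1[e]](\Be_i)$, $i=2,3$. Because $\RRR[\Be_1,\Be_1[e]]$ is a rotation carrying $\Be_1$ to $\Be_1[e]$, the triple $\{\Be_1[e],\Be_2[e],\Be_3[e]\}$ is the image of the standard orthonormal frame under a rotation, hence an orthonormal frame with first vector $\Be_1[e]$; this furnishes existence, and uniqueness is immediate since~(1) fixes $\Be_1[e]$ and~(2) then fixes $\Be_2[e]$ and $\Be_3[e]$. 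Smooth dependence on $\boldsymbol\zeta$ follows from the smoothness of $\Be_1[e]$ together with the Rodrigues-type identity $\RRR[\Be_1,\Be_1[e]] = I + [\mathbf q]_\times + \frac{1}{1+\Be_1\cdot\Be_1[e]}\,([\mathbf q]_\times)^2$, where $\mathbf q := \Be_1\times\Be_1[e]$ and $[\mathbf q]_\times$ is the associated skew matrix; this expression is smooth (no division by $|\mathbf q|$ occurs) as long as $\Be_1\cdot\Be_1[e] \ne -1$, which holds here since $\phi$ is small. This is the same smoothness observation already used in the proof of the corresponding statement for the maps $\hat Y[V,V']$.

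Finally, for~(3) I would decompose an arbitrary $\Bv \in \Real^3$ into its components $\Bv_\parallel$, $\Bv_\perp$ parallel and perpendicular to the axis $\mathbf q/|\mathbf q|$ (the case $\phi = 0$, i.e. $\mathbf q = 0$, being trivial since then $\RRR[\Be_1,\Be_1[e]]$ is the identity), obtaining $\Bv - \RRR[\Be_1,\Be_1[e]](\Bv) = (1-\cos\phi)\,\Bv_\perp - \sin\phi\,\bigl(\tfrac{\mathbf q}{|\mathbf q|}\times\Bv_\perp\bigr)$ and hence, since $\Bv_\perp \perp (\mathbf q\times\Bv_\perp)$ and $|\mathbf q\times\Bv_\perp| = \sin\phi\,|\Bv_\perp|$, $|\Bv - \RRR[\Be_1,\Be_1[e]](\Bv)|^2 = 2(1-\cos\phi)\,|\Bv_\perp|^2 \le \phi^2\,|\Bv|^2$; since $\phi \le 2\sin\phi \le C\|\boldsymbol\zeta\|$ for $\outau$ small, property~(3) follows after absorbing constants. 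I do not anticipate a genuine obstacle: the only points requiring a little care are that all constants can be chosen uniform over the finite edge set $E(\Gamma)$ (handled by the uniform lower bound $\widetilde l(e)\ge 2$) and that the rotation stays in the smooth regime $\phi < \pi$ (handled by \eqref{zetarestriction} and the smallness of $\outau$).
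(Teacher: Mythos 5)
Your proposal is correct and follows essentially the same route as the paper: items (1) and (2) are by construction, and item (3) rests on the bound $|\sin\angle(\Be_1,\Be_1[e])| = |\Be_1\times\Be_1[e]| \leq C\|\boldsymbol\zeta\|$ combined with the elementary identity $|\Bv-\RRR[\Be_1,\Be_1[e]](\Bv)| = \sqrt{2(1-\cos\theta[e])}\,|\Bv_\perp| \leq \sqrt{2}\,|\Bv|\sqrt{1-\cos\theta[e]}$, exactly as in the paper. Your additional details (the lower bound on $|\mathbf w|$ via $\widetilde l(e)\geq 2$ and the Rodrigues-type formula for smoothness, which mirrors the paper's smoothness remark for $\hat Y[V,V']$) only flesh out steps the paper leaves implicit.
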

\begin{proof}
The first two items can be done by definition. To see the last item, observe that 
\[\left|\Be_1 \times \Be_1[e]\right|= \left|\sin \angle(\Be_1 , \Be_1[e])\right|:=\left| \sin \theta[e]\right| \leq C||\boldsymbol \zeta||.
\]As $|\Bv-\RRR[\Be_1, \Be_1[e]](\Bv)| \leq \sqrt 2 |\Bv|\sqrt{1- \cos \theta[e]}$, the result follows.
\end{proof}
\addtocounter{equation}{1}
\begin{definition}
 For $e \in R(\Gamma)$ we simply let $\Be_i[e]=\Be_i$.
\end{definition}

With this frame, we can describe the rigid motion that will position each building block associated to an edge or ray in $\Real^3$.
\addtocounter{equation}{1}
\begin{definition}
For each $e \in E(\Gamma)\cup R(\Gamma)$ with $e'$ denote the corresponding edge or ray on the graph $\Gudl$, 
let $\RRR'[e]$ denote the rotation in $\Real^3$ such that $\RRR'[e] (\Be_i[e])=\Bvp_i[e;d,\ell]$ for $i=1,2,3$.
Let $\TTT[e]$ denote the translation in $\Real^3$ such that $\TTT[e](\RRR'[e]({\boldsymbol \zeta}\ppe))=p^+[e']$. 
Let $\UUU[e]= \TTT[e] \circ \RRR'[e]$.
\end{definition}
Then for all $c_i \in \Real$,
\addtocounter{theorem}{1}
\begin{equation}
\UUU[e]\left({\boldsymbol \zeta}\ppe + c_i \Be_i[e]\right)= p^+[e'] + c_i \Bvp_i[e;d,\ell].
\end{equation}
The building blocks positioned at vertices of the graph $\Gudl$ correspond to spherical pieces with geodesic disks removed. The rigid motion required for positioning these in $\Real^3$ is simply a translation, but the building blocks themselves are determined by a diffeomorphism that depends upon the initial frame $F_\Gamma[e]$ and the new frame $F_{\boldsymbol \zeta}[e]$ for each $e \in E_p$.

For each $p \in V(\Gamma)$, let $\{e_1, \dots, e_{|E_p|}\}$ be an ordering of the edges and rays in the set $E_p$. Let 
\addtocounter{theorem}{1}
\begin{equation}\label{vertexsets}
 V_p^1 \times V_p^2=\{(\Bv_{e_1,p}, \Bv_2[e_1]), \dots, (\Bv_{e_{|E_p|},p}, \Bv_2[e_{|E_p|}])\}.
\end{equation}This set of ordered pairs represents the direction the edge $e$ emanates from $p$ and the second element in the frame $F_\Gamma[e]$. Notice that if $p=p^+[e]$ then the ordered pair is actually the first two elements of the frame $F_\Gamma[e]$ and if $p = p^-[e]$ then the first element in each pair corresponds to $-\Bv_1[e]$ while the second element is unchanged.
Recalling Definition \ref{pmvertexdef}, let
\[V_p^{{\boldsymbol \zeta},1} =\{ \sigma_{e_1,p}\RRR'[e_1](\Be_1), \dots ,\sigma_{e_{|E_p|},p}\RRR'[e_{|E_p|}](\Be_1) \}; \qquad V_p^{{\boldsymbol \zeta},2} =\{ \RRR'[e_1](\Be_2), \dots, \RRR'[e_{|E_p|}](\Be_2)\}.
\]These sets represent the first two elements of the frame that will position the building block corresponding to an edge or ray emanating from $p$. One should note that in general $\RRR'[e]\Be_i \neq \Bv_i[e;d, \ell]$. Figure 6 may help to highlight this fact. Thus, the geodesic disks removed under the diffeomorphism do not correspond to the frame $F_{\Gudl}[e]$ but to another frame that is dislocated from this one depending on the vectors $\boldsymbol \zeta\ppe, \boldsymbol \zeta \pme$. With these tools in hand, we define the initial immersion.

\begin{figure}[h]\label{EmbeddingPic}
\includegraphics[width=5in]{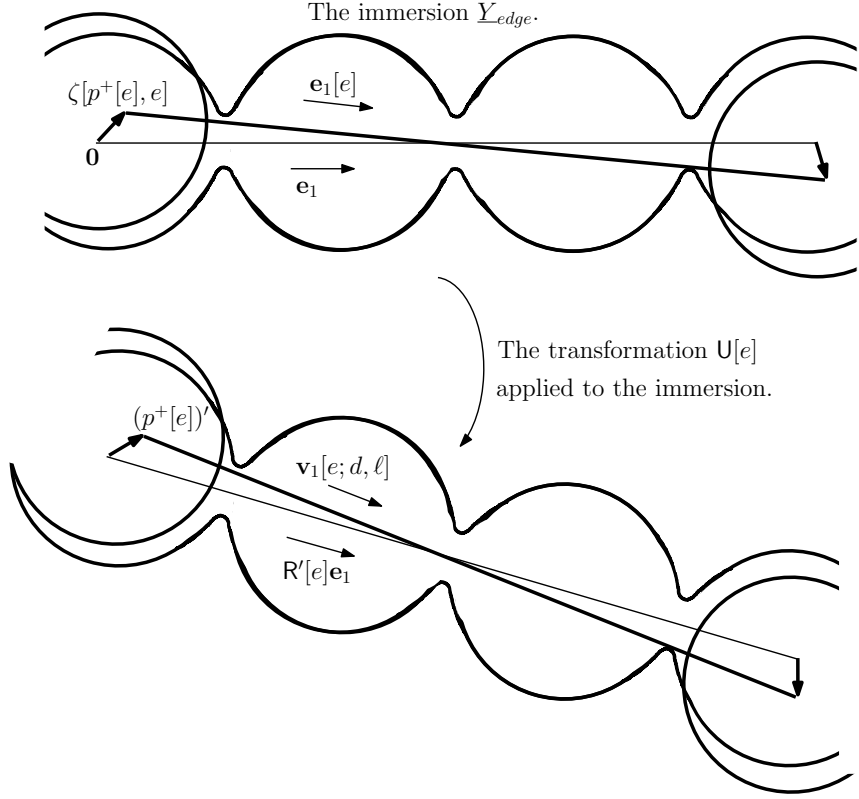}
            \caption{A rough idea of the immersion of one edge. Notice that the transformation $\UUU[e]$ sends the dislocated spheres to the vertices of the graph. The bold segment in the bottom picture corresponds to the positioning of the edge on the graph $\Gudl$. The Delauney piece has axis parallel to $\RRR'[e]\Be_1$, not parallel to the corresponding edge on the graph $\Gudl$.}
                  \end{figure}
                  
\addtocounter{equation}{1}
\begin{definition}\label{MainImmersionDef}
 Let $\hYtdz:M \to \Real^3$ be the smooth immersion defined by
\addtocounter{theorem}{1}
\begin{equation}
 \hYtdz(x)=\left\{ \begin{array}{ll}
                     p'+     \underline Y[V_p^1 \times V_p^2,V_{p}^{{\boldsymbol \zeta},1}\times V_p^{{\boldsymbol \zeta},2}](x)& \text{for } x \in M[p]\\
\UUU[e] \circ \underline  Y_{edge}[ \uthte,\taue, l(e), {\boldsymbol \zeta}\ppe, {\boldsymbol \zeta}\pme](x)& \text{for } x \in M[e], e \in E(\Gamma)\\
\UUU[e] \circ \underline Y_{ray}[\uthte, \taue, {\boldsymbol \zeta}\pe](x)& \text{for } x \in M[e], e \in R(\Gamma)
                           \end{array}\right.
\end{equation}
where here $p'\in V(\Gudl)$ is the vertex corresponding to $p \in V(\Gamma)$.

We denote
\addtocounter{theorem}{1}
\begin{equation}
 \Mtdz =\hYtdz(M)
\end{equation}and let $\Htdz:M \to \Real$ denote the mean curvature in the induced metric.
\end{definition}
\addtocounter{equation}{1}
\begin{remark}
When $\Gamma$ is pre-embedded, the embeddedness of the image surface $\Mtdz$ follows immediately from the smallness of $\outau$, the embeddedness of each of the building blocks and the smallness of $d,\boldsymbol \zeta$.
\end{remark}
On each $M[p]$, $\hYtdz$ embeds a sphere with geodesic disks removed where the centers of these disks match with the directions $\sep \RRR'[e]\Be_1$ for $e \in E_p$. The twisting diffeomorphism that describes the building blocks $M[p]$ must be included so the immersion is well defined on $M$ -- in particular on the regions $M[p] \cap M[e]$.

The immersion of each $M[e]$ amounts to positioning a building block of a Delaunay piece from Section \ref{BuildingBlocks}, where the parameters of the building block and its positioning depend on $\boldsymbol \zeta$ and $\Gudl$. The rigid motion of this piece positions the Delaunay portion of the immersion with axis parallel to $\RRR'[e]\Be_1$. Notice the Delaunay piece may not have axis parallel to the associated edge of the graph $\Gudl$. In fact, the piece will only have axis parallel to the associated edge of $\Gudl$ if $\boldsymbol \zeta\ppe =\boldsymbol \zeta\pme$, which implies $\Be_1[e]=\Be_1$.

\addtocounter{equation}{1}
\begin{definition}For $e \in E(\Gamma)$ define $H_{dislocation}[e],H_{gluing}[e]:M \to \Real$ in the following manner:
\begin{equation*}
H_{dislocation}[e](x):=\left\{ \begin{array}{ll} \Htdz - 1&\text{if } x \in M[e] \cap \left(\left([a,a+2] \cup [\RH-(a+2), \RH-a] \right) \times \Ss^1\right)\\
  0&\text{otherwise},\end{array}\right.  
\end{equation*}
\begin{equation*}
H_{gluing}[e](x):=\left\{ \begin{array}{ll} \Htdz - 1&\text{if } x \in M[e] \cap \left(\left([a+3,a+5] \cup [\RH-(a+5),\RH-(a+3)] \right) \times \Ss^1\right)\\
  0&\text{otherwise}.\end{array}\right.  
\end{equation*}For $e \in R(\Gamma)$ define 
\begin{equation*}
H_{dislocation}[e](x):=\left\{ \begin{array}{ll} \Htdz - 1&\text{if } x \in M[e] \cap \left([a,a+2]   \times \Ss^1\right)\\
  0&\text{otherwise},\end{array}\right.  
\end{equation*}
\begin{equation*}
H_{gluing}[e](x):=\left\{ \begin{array}{ll} \Htdz - 1&\text{if } x \in M[e] \cap \left([a+3,a+5]  \times \Ss^1\right)\\
  0&\text{otherwise}.\end{array}\right.  
\end{equation*}
\end{definition}
As an immediate consequence of the immersion and the definitions we have the following corollary.
\addtocounter{equation}{1}
\begin{corollary}All of the functions described above are smooth. Moreover we can define $H_{dislocation}:= \sum_{e \in E(\Gamma)\cup R(\Gamma)} H_{dislocation}[e] $ and $H_{gluing}$ likewise. Then
 the smooth function $H_{error}:=\Htdz -1:M \to \Real$ can be decomposed as
\[
 H_{error} = H_{dislocation} + H_{gluing}.
\]
\end{corollary}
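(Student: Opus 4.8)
The plan is to work through Definition~\ref{MainImmersionDef} piece by piece and locate exactly where $\Htdz$ differs from $1$. First I would observe that on each $M[p]$ the immersion $\hYtdz$ is $p'$ plus a diffeomorphism onto a piece of the round unit sphere $\Ss^2$, so $\Htdz\equiv1$ on $M[p]$. Likewise, on the collar $M[e]\cap([a,a+1]\times\Ss^1)$, and for $e\in E(\Gamma)$ also on the mirror collar near $\{\RH-a\}\times\Ss^1$, the immersion $\underline Y_{edge}$ (respectively $\underline Y_{ray}$ for a ray) parametrizes a translate of $\Ss^2$, since the reparametrization $\hat Y_{edge}[\uthte,\taue]$ is the identity there; hence $\Htdz\equiv1$ on these collars too. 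This is exactly the compatibility that makes $\hYtdz$ well defined across the identifications in \eqref{mdef}, and it already gives $\Htdz\equiv1$ everywhere on $M$ except possibly in the interior of the various $M[e]$.

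Next I would invoke the earlier Proposition: the image of $\underline Y_{edge}[\uthte,\taue,l(e),\boldsymbol\zeta\ppe,\boldsymbol\zeta\pme]$ has mean curvature identically $1$ except where the $M[e]$-coordinate $t$ lies in $[a+1,a+2]\cup[a+3,a+4]$ near the left end or in the two mirror intervals near $t=\RH-a$, and similarly for $\underline Y_{ray}$ at its one end. Since $\UUU[e]$ is a rigid motion it does not change mean curvature, so $\supp(\Htdz-1)\cap M[e]$ lies in those four intervals times $\Ss^1$. These are separated by the annulus $\{t\in[a+2,a+3]\}$, on which $\underline Y_{edge}$ parametrizes the plain unit sphere and hence $\Htdz\equiv1$, into a ``dislocation'' part lying in $([a,a+2]\cup[\RH-(a+2),\RH-a])\times\Ss^1$ and a ``gluing'' part lying in $([a+3,a+5]\cup[\RH-(a+5),\RH-(a+3)])\times\Ss^1$; these two strips are precisely the ones used in the definitions of $H_{dislocation}[e]$ and $H_{gluing}[e]$ and are pairwise disjoint. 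Finally, by the cutoff structure of Definition~\ref{DelEmbDef}, on the part of each of those strips outside the four ``bad'' intervals the immersion is literally a translate of $\Ss^2$, the sphere $\Ss^2$ itself, or a Delaunay surface, so there $\Htdz-1\equiv0$; in particular $\Htdz-1$ vanishes on open neighborhoods of the boundary circles of both strips.

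These facts give the Corollary directly. Each $H_{dislocation}[e]$ and each $H_{gluing}[e]$ is smooth: it agrees with the smooth function $\Htdz-1$ on an open neighborhood of its compactly contained support and is $0$ on an open neighborhood of the rest of $M$, so the two-case definition patches to a smooth function on all of $M$, identically zero on every $M[p]$ and on $M[e']$ for $e'\neq e$. Summing over $e\in E(\Gamma)\cup R(\Gamma)$ produces the decomposition: if $x\in M[p]$ then all summands and $\Htdz(x)-1$ vanish; if $x\in M[e]$ then every term indexed by $e'\neq e$ vanishes, at most one of $H_{dislocation}[e](x),H_{gluing}[e](x)$ is nonzero because the two strips are disjoint, and the summand whose strip contains $x$ equals $\Htdz(x)-1$ while the other equals $0=\Htdz(x)-1$ because $x$ then lies outside $\supp(\Htdz-1)$. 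Hence $H_{error}=H_{dislocation}+H_{gluing}$.

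The only step needing care is the bookkeeping in the second paragraph: one has to match the intervals of the earlier Proposition, stated in the $M[e]$-coordinate on $[a,\RH-a]\times\Ss^1$ with $\RH=4P_{\uthte}l(e)$, against the coordinate strips in the definitions of $H_{dislocation}[e]$ and $H_{gluing}[e]$ --- checking that the strips contain the ``bad'' intervals with the small extra room the definitions leave, that the dislocation and gluing strips are disjoint, and that the reparametrization $\hat Y_{edge}[\uthte,\taue]$, being the identity on the boundary collars and only a length rescaling near the ends, creates no new mean-curvature error and moves none outside those strips. Once this is read off from Definition~\ref{DelEmbDef} and the cutoff conventions, everything else is routine.
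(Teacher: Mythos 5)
Your proposal is correct and follows exactly the reasoning the paper treats as immediate: combine the support statement for the mean curvature of $\underline Y_{edge}$, $\underline Y_{ray}$ with the fact that $\UUU[e]$ is a rigid motion and that the spherical pieces have $H\equiv 1$, then check that the disjoint dislocation and gluing strips contain the error's support with $\Htdz-1$ vanishing near their boundaries, which gives both smoothness and the decomposition. The bookkeeping you flag at the end (the reparametrization $\hat Y_{edge}[\uthte,\taue]$ possibly shifting the gluing error slightly past $t=a+4$, which is why the strip extends to $a+5$) is precisely the detail the paper's definitions are built to absorb, so there is no gap.
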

\subsection*{Geometry}
We introduce maps $\widetilde Y[p]$ and $\widetilde Y\pen$ on $\widetilde S[p], \widetilde S\pen$, respectively, which one can interpret as the geometric limit of the normal of $\hYtdz$ as
$\outau \to 0$. A comparison between these maps and $\hYtdz$ will accomplish two things. First, in the next subsection, we use these limits to compare two conformal metrics $h, \chi$ to the pull back by the Gauss map of the induced metric $g$. Second, using techniques from Appendix B of \cite{KapAnn}, these maps help us understand the kernel of the linearized operator on $\hYtdz$ by considering the linearized operator in the induced metric of $\widetilde Y$ (which corresponds to the metric on the round sphere).
\addtocounter{equation}{1}
\begin{definition}
 Let $\widetilde Y[p]:\widetilde S[p] \to \mathbb S^2$ such that for $x=(t,\theta)$,
\addtocounter{theorem}{1}
\begin{equation}\label{tildeYp}
\widetilde Y[p](x)=\left\{ \begin{array}{ll}
  \hat Y[V_p,V_p](x) &\text{if }x \in M[p],\\
\RRR[e]\circ Y_0(x)& \text{if } p=p^+[e], x \in M[e]\cap\left([a,2P_{\uthte}-b]\times \Ss^1\right), \\
\RRR[e]\circ Y_0 (t-\RH,\theta )&\text{if } p=p^-[e], \\
&\:\:x \in M[e]\cap\left( [\RH-(2P_{\uthte}-b),\RH-a]\times \Ss^1\right).
 \end{array}\right.
\end{equation}
Let $\widetilde Y[p,e,n]: \widetilde S[p,e,n] \to \mathbb S^2$ such that 
\addtocounter{theorem}{1}
\begin{equation}\label{tildeYpen}
\widetilde Y[p,e,n](t,\theta) = \left\{\begin{array}{ll}Y_0 
( t-2P_{\uthte}n,\theta),& \text{if } n \text{ is even},\\
Y_0 (2P_{\uthte}n-t,\theta), & \text{if } n \text{ is odd}.
\end{array} \right.
\end{equation}
\end{definition}
The definition of $\widetilde Y\pen$ when $n$ is odd has the effect of reflecting the map when $n$ is even over the $\{x_1=0\}$-plane. As we wish to compare the normals of $\widetilde Y\pen$ and a rotated $\hYtdz$, this change is necessary as the normal for $\hYtdz$ when $n$ is odd is a reflection of the normal when $n$ is even.
\addtocounter{equation}{1}
\begin{prop}\label{geolimit}
 Let $p \in V(\Gamma)$ and let $p'$ be the corresponding vertex in the graph $\Gudl$. Then
 \[
 ||(\hYtdz-p') - \widetilde Y[p]:C^k(S_x[p], (\widetilde Y[p])^* g_{\Ss^2})||\leq C(k,x) \outau^{1/2}.
\]
For $e \in E(\Gamma)\cup R(\Gamma)$ 
\[
 || \UUU[e]^{-1}\circ\Ntdz(x) - \widetilde Y\pen(x):C^k(S_x[p,e,n], (\widetilde Y\pen)^*g_{\Ss^2})||\leq C(k,x)|\taue|\leq C(k,x)\outau.
\]
\end{prop}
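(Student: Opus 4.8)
The plan is to establish each of the two inequalities by unwinding Definition~\ref{MainImmersionDef} on the region in question and comparing term by term with the explicit model map; all implied constants will depend only on $k$, $x$ and the fixed graph $\Gamma$.

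\textbf{The standard regions $S_x\pen$.} I would handle these first. Fix one, say with $p=p^+[e]$, so that $S_x\pen=M[e]\cap[2nP_{\uthte}-(b+x),2nP_{\uthte}+(b+x)]\times\Ss^1$. For $\outau$ small this parameter interval avoids the supports of all the cut-offs in Definition~\ref{DelEmbDef}, hence there $\hYtdz=\UUU[e]\circ Y_{\taue}\circ\hat Y_{edge}[\uthte,\taue]$ with $\hat Y_{edge}[\uthte,\taue](t,\theta)=(\alpha(t),\theta)$ for an affine $\alpha$ satisfying $|\alpha(t)-t|\le 4l(e)\,|P_{\taue}-P_{\uthte}|$ and $|\alpha'(t)-1|\le|1-P_{\taue}/P_{\uthte}|$. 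Since a reparametrisation of the domain does not change the Gauss map at a point, and the Gauss map of $\UUU[e]\circ(\,\cdot\,)$ is that of $(\,\cdot\,)$ rotated by the rotational part $\RRR'[e]$ of $\UUU[e]$, one gets on $S_x\pen$ the \emph{exact} identity $\UUU[e]^{-1}\circ\Ntdz(t,\theta)=\nu_{\taue}(\alpha(t),\theta)$ (with $\UUU[e]^{-1}$ acting on the normal through $\RRR'[e]^{-1}$). I would then invoke the asymptotics of the Delaunay building blocks --- Lemma~\ref{radiuslemma}, Corollary~\ref{metriccompg}, and Lemmas~2.1 and~2.2 of Appendix~A of \cite{KapAnn} --- together with the even/odd symmetries of $w$, which identify every standard region with the one at $t=0$ (if $n$ is even) or at $t=2P_{\taue}$ (if $n$ is odd): this gives, uniformly in $n$,
\[
\big\|\nu_{\taue}-Y_0(\pm(\cdot-2nP_{\taue}),\cdot):C^k\big([2nP_{\taue}-(b+x),2nP_{\taue}+(b+x)]\times\Ss^1,\;Y_0^{*}g_{\Ss^2}\big)\big\|\le C(k,x)\,|\taue|,
\]
the sign being chosen as in \eqref{tildeYpen}. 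Finally I would replace the argument $\pm(\alpha(t)-2nP_{\taue})$ by $\pm(t-2nP_{\uthte})$; the two differ by at most $|\alpha(t)-t|+2n|P_{\taue}-P_{\uthte}|$, which by \eqref{diffeodifference} and the bound $nP_{\uthte}=O(-\log\outau)$ (the graph being fixed) is $O(\outau)$, and since $Y_0$ and its derivatives are bounded on the fixed interval this costs only an additional $O(\outau)$. The second estimate follows, using $|\taue|\le C\outau$.

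\textbf{The central region $S_x[p]$.} Here I would split $S_x[p]=M[p]\cup\bigcup_{e\in E_p}\big(M[e]\cap[a,b+x]\times\Ss^1\big)$ and estimate piecewise. On $M[p]$ one has $\hYtdz=p'+\underline Y[V_p^1\times V_p^2,V_p^{{\boldsymbol\zeta},1}\times V_p^{{\boldsymbol\zeta},2}]$ while $\widetilde Y[p]$ is the identity inclusion $\Ss^2_{V_p}\hookrightarrow\Ss^2$; the two diffeomorphisms agree away from the collars of the excised disks, and on each collar their difference is a composition of the two rotations $\RRR[\Bv_{e,p},\sigma_{e,p}\RRR'[e](\Be_1)]$ and $\RRR[\Bv_2[e],\RRR'[e](\Be_2)]$ (from the construction of $\underline Y$ in Section~\ref{BuildingBlocks}) interpolated against the identity by fixed cut-offs. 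By Proposition~\ref{zetaframe}, \eqref{tauratio}, \eqref{ddifftau}, Lemma~\ref{hunbalancing} and \eqref{zetarestriction}, the vectors $\sigma_{e,p}\RRR'[e](\Be_1)$ and $\RRR'[e](\Be_2)$ lie within $O(\outau)$ of $\Bv_{e,p}$ and $\Bv_2[e]$; as $\RRR[\cdot,\cdot]$ depends smoothly on its two arguments (the explicit dependence used in the proofs of Section~\ref{BuildingBlocks}), these rotations are $O(\outau)$-close to the identity in every $C^k$, so $\|\hYtdz-p'-\widetilde Y[p]:C^k(M[p],g_{\Ss^2})\|\le C(k)\,\outau$. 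On a collar $M[e]\cap[a,b+x]\times\Ss^1$ with $p=p^+[e]$, writing $\UUU[e]=\TTT[e]\circ\RRR'[e]$ and $\TTT[e](\RRR'[e]({\boldsymbol\zeta}\ppe))=p'$ yields $\hYtdz-p'=\RRR'[e]\big(\underline Y_{edge}[\uthte,\taue,l(e),{\boldsymbol\zeta}\ppe,{\boldsymbol\zeta}\pme]-{\boldsymbol\zeta}\ppe\big)$, to be compared with $\widetilde Y[p]=\RRR[e]\circ Y_0$; Proposition~\ref{geopropcentral} bounds $\underline Y_{edge}[\cdots]-Y_0$ by $C(k,x)\big(|{\boldsymbol\zeta}\ppe|+|\taue|\big)$ in $C^k((a,b+x)\times\Ss^1,g)$ (with $g$ the induced metric), \eqref{gmetricrelation} lets me pass from $g$ to $(\widetilde Y[p])^{*}g_{\Ss^2}=g_0$ on this region, ${\boldsymbol\zeta}\ppe$ is $O(\outau)$, and $\RRR'[e]-\RRR[e]$ is $O(\outau)$ on unit vectors by the same frame estimates (and symmetrically for $p=p^-[e]$, with $Y_0^-$). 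Assembling the finitely many pieces, which overlap only on fixed collars, gives the bound $C(k,x)\,\outau^{1/2}$.

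I expect the main obstacle to be two issues of uniformity. First, for $S_x\pen$ with $n$ close to $l(e)$, and for the collars of $S_x[p]$ when $x$ is permitted to grow like $|\log\outau|$, the discrepancy between the period $P_{\uthte}$ built into the abstract domain $M$ and the true period $P_{\taue}$ of the Delaunay piece must be kept of order $\outau$ \emph{after} accumulating over $\le l(e)$ periods; this is exactly the content of \eqref{diffeodifference} (equivalently of the period limits \eqref{periodlimits}), and it is what forces the slightly weaker exponent $\outau^{1/2}$ in the central estimate. Second, near a vertex one must check that the several nested rotations entering $\hYtdz$ --- $\RRR'[e]$, the frames $\Be_i[e]$ and $\mathbf v_i\edl$, and the two rotations inside $\underline Y$ --- are \emph{simultaneously} $O(\outau)$-close to the identity in all $C^k$ norms; this is a chaining of Proposition~\ref{zetaframe}, Lemma~\ref{hunbalancing}, \eqref{ddifftau} and \eqref{tauratio}, and is routine precisely because $\Gamma$ is fixed, so every implied constant depends only on $\Gamma$.
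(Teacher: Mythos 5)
Your route is the same as the paper's: absorb the reparametrization $\hat Y_{edge}[\uthte,\taue]$ via \eqref{diffeodifference}, control the collars of $S_x[p]$ by Proposition \ref{geopropcentral} plus a comparison of the rotation positioning the building block with the rotation $\RRR[e]$ built into $\widetilde Y[p]$, and for $S_x\pen$ use the reflection symmetry of $\nu_\tau$ about $t=P_\tau$ together with Lemma \ref{radiuslemma}/\eqref{gmetricrelation}; your treatment of the second inequality (including the $O(\outau)$ shift between $2nP_{\taue}$ and $2nP_{\uthte}$) is correct.

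In the central estimate, however, you overclaim the order of the vertex rotations. The vectors $\sigma_{e,p}\RRR'[e](\Be_1)$ and $\RRR'[e](\Be_2)$ are \emph{not} within $O(\outau)$ of $\Bv_{e,p}$ and $\Bv_2[e]$, and $\RRR'[e]-\RRR[e]$ is not $O(\outau)$ on unit vectors: the frame $F_{\Gudl}[e]$ is tied to the edge directions of $\Gudl$, and by \eqref{ddifftau} (equivalently Lemma \ref{hunbalancing} combined with Lemma \ref{lrestriction}) these differ from the directions in $\Gamma$ by an angle of order $-\outau\log\outau$, because $||\ell||_L\le -C\outau\log\outau$; only the $d$- and $\boldsymbol\zeta$-contributions are $O(\outau)$. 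Consequently your intermediate bound $C(k)\,\outau$ on $M[p]$, and the $O(\outau)$ bound for $\RRR'[e]-\RRR[e]$ on the collars, should read $O(-\outau\log\outau)\le \outau^{1/2}$. This is precisely the paper's step $|(\RRR'[e]-\RRR[\Be_1,\Bv_{e,p}])\Bv|\le C(||d||_D+||\ell||_L+||\boldsymbol\zeta||)\,|\Bv|\le \outau^{1/2}|\Bv|$, and it — not the accumulation of the period discrepancy over the $l(e)$ periods — is what forces the weaker exponent $\outau^{1/2}$ in the first inequality; indeed your own computation shows that the accumulated discrepancy $4l(e)\,|P_{\taue}-P_{\uthte}|$ is only $O(\outau)$ since the graph is fixed, so it cannot be the source of the loss. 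With these orders corrected the final bounds still follow (since $-\outau\log\outau\le\outau^{1/2}$ for $\outau$ small), so the gap is one of misstated intermediate estimates and a misdiagnosed exponent rather than a failure of the argument.
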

\begin{proof}Recall by definition
\begin{align*}
\underline Y_{edge}&[\uthte, \taue,l,{\boldsymbol \zeta}\ppe,{\boldsymbol \zeta}\pme](t,\theta) = \\&Y_{edge}[\taue, l, {\boldsymbol \zeta}\ppe, {\boldsymbol \zeta}\pme] \circ \hat Y_{edge}[\uthte, \taue](t,\theta).
\end{align*}
By \eqref{diffeodifference}, the reparameterization function will only impact derivatives on this function by multiplicative factors of $\frac{\partial \underline t}{\partial t}$ which are bounded by $1-\frac{C\outau}{\log \outau}$. Therefore, we consider $Y_{edge}, Y_{ray}$ in place of $\underline Y_{edge},\underline Y_{ray}$ throughout this proof as the bounds on the estimates we determine can absorb the multiplicative factors.

We consider first the behavior of $\widetilde Y[p]$ on the region $M[e]\cap S_x[p]$ for $e \in E_p$. For $e \in E(\Gamma)$, the $C^k$ bounds are controlled by bounding
\[
\left( \RRR'[e]\circ Y_{edge}[\taue, l, {\boldsymbol \zeta}\ppe, {\boldsymbol \zeta}\pme] - \RRR'[e]\circ Y_0\right) + \left(\RRR'[e]\circ Y_0 - \RRR[\Be_1,\Bv_{e,p}]\circ Y_0\right).
\]Recalling Proposition \ref{geopropcentral}, the $C^k$ bounds of the first parenthesis are controlled by $C(k,x)(||\boldsymbol \zeta||+|\taue|)$. For the second parenthesis, observe 
\[
|(\RRR'[e] - \RRR[\Be_1,\Bv_{e,p}])\Bv| \leq C(||d||_D + ||\ell||_L + ||\boldsymbol \zeta||)|\Bv|.
\]As \eqref{drestriction}, \eqref{zetarestriction}, and Lemma \ref{lrestriction} imply
\[
||d||_D + ||\ell||_L + ||\boldsymbol \zeta||\leq (\underline C + \varepsilon)\outau -C\tau \log \tau \leq  \outau^{1/2},
\]the result follows. (Similar arguments hold for $Y_{ray}$.)
The diffeomorphism on $M[p]$ that describes the immersion depends smoothly on $||\boldsymbol \zeta||$ and thus the $C^k$ bounds on $M[p]$ follow immediately.

For the second norm, first notice that for a general Delaunay immersion $Y_\tau$, the evenness of $w'$ about $P_\tau$ and \eqref{normalvector} imply $\nu_\tau(t,\theta) = \nu_\tau(2P_\tau-t,\theta)$. 
Thus, it is enough to prove the inequality for any $n$ even as the description of the function $\widetilde Y\pen$ for $n$ odd accounts for the reflection. 
Recall that on $S_x\pen$, for $e \in E(\Gamma)$,
\[\UUU[e]^{-1}\circ\hYtdz(x)=\underline Y_{edge}[\uthte, \taue,l,{\boldsymbol \zeta}\ppe,{\boldsymbol \zeta}\pme](t,\theta).
\]Using \eqref{gmetricrelation} to compare the immersions $Y_{\taue}$ and $Y_0$ on $S_x\pen$ for $n$ even implies the result. 
\end{proof}

\subsection*{Conformal Metrics} Following \cite{KapAnn,HaskKap,KapYang,KapSurvey,KapClay,KapWente}, we solve the linearized problem and prove the main theorem by appealing to conformally changed metrics. The metric $h$ will be useful for understanding the approximate kernel as every standard region will behave like an $\Ss^2$ in this metric. We prove the main theorem in the $\chi$ metric, a conformal dilation of the induced metric, which provides a natural scale for the immersion.

We first define a cutoff function that will be helpful for a global description of the metric $\chi$.
\addtocounter{equation}{1}
\begin{definition}
Let $\hat \psi:M \to [0,1]$ be a globally defined smooth function on $M$ such that $\hat \psi =1$ for $x \in S[p] \cap M[p]$, $\hat \psi:= \psi[a+5,a+4](t)$ for $(t,\theta) \in S[p] \cap M[e]$ and $\hat \psi$ is identically zero elsewhere. 
\end{definition}
Thus, $\hat \psi(x)$ is identically 1 on each central sphere $S[p]$ except in a neighborhood of its boundary, where $\hat \psi$ smoothly transits to zero. Notice $\hat \psi, 1-\hat \psi$ form a subordinate partition of unity on $M$.

\addtocounter{equation}{1}
\begin{definition}\label{rhodef}We define the function $\widetilde \rho:\cup_{e \in E(\Gamma) \cup R(\Gamma)}M[e] \to \Real$
such that for each $ (t,\theta) \in M[e]$,
\begin{equation*}\label{rhoeq}
\widetilde \rho(t, \theta) =
{r_{\taue}(\underline t, \theta)}^{-1} .
\end{equation*} Here we define
\addtocounter{theorem}{1}
\begin{equation}
\label{underlineudefn}\underline t =\frac{P_{\taue}}{P_{\uthte}}t.
\end{equation}

Let $\rho:M \to \Real$ be the globally defined smooth function such that
\addtocounter{theorem}{1}
\begin{equation}
\rho(x) = \hat \psi(x) \frac{|A(x)|}{\sqrt{2}} + (1- \hat \psi(x)) \widetilde \rho(x).
\end{equation}
\end{definition}
\addtocounter{equation}{1}
\begin{definition}
Let $g = (\hYtdz)^*g_{\Real^3}$.  On $M$ we define two global metrics $h,\chi$ such that
\addtocounter{theorem}{1}
\begin{equation}\label{confmets}
h = \frac{|A|^2}{2}g; \:\:\: \chi = \rho^2 g.
\end{equation}
\end{definition}

We now list a number of metric relations that will be useful throughout the paper.
\addtocounter{equation}{1}
\begin{lemma}\label{normcomparisons}\label{metriccomparisons}
Assuming $\outau$ is small enough, the following hold:
\begin{enumerate}
\item \label{nc6}$||h: C^k(S_x[p],g)|| \leq C(k,x)$
\item \label{nc7}$||h-(\widetilde Y[p])^* g_{\Ss^2}):C^k(S_x[p], (\widetilde Y[p])^* g_{\Ss^2})||\leq C(k,x) \outau^{1/2}$
\item \label{nc4}$||\chi:C^k(S_x[p],h)||\leq C(k,x)$
\item \label{nc8}$||h-(\Ntdz)^*g:C^k(S_x\pen,h)||\leq C(k,x)\outau$
\item \label{nc10}$||h - (\widetilde Y\pen)^*g_{\Ss^2}:C^k(S_x\pen,(\widetilde Y\pen)^*g_{\Ss^2})||\leq C(k,x)\outau$
\item \label{nc3}$||\chi:C^k(S_x\pen, h)|| \leq C(k,x)$
\item \label{nc2}$||\rho^{\pm 1}:C^k(S_x[p],\chi)||\leq C(k,x)$
\item \label{nc1}$||\rho^{\pm 1}:C^k(M, \chi, \rho^{\pm 1})|| \leq C(k)$
\end{enumerate}
\end{lemma}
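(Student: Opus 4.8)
The plan is to establish each of the eight estimates by reducing it, via the geometric comparison results of the previous subsection, to a statement about the model metrics $g_0$, $g_{\Ss^2}$, and the Delaunay metrics $g_\tau$, for which everything is either explicit or already recorded in Lemma \ref{radiuslemma}, Corollary \ref{metriccompg}, and Proposition \ref{geolimit}. The items come in three natural groups: those on a central standard region $S_x[p]$ (items \ref{nc6}, \ref{nc7}, \ref{nc4}, \ref{nc2}), those on a non-central standard region $S_x\pen$ (items \ref{nc8}, \ref{nc10}, \ref{nc3}), and the global item \ref{nc1}. I would prove them in an order that exploits this structure and the obvious implications among them (for instance \ref{nc6} follows from \ref{nc7}, and \ref{nc4} follows from \ref{nc2} together with the definition $\chi=\rho^2 g$ and $h = \tfrac{|A|^2}{2}g$).

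First I would treat the central regions. On $S_x[p]$, Proposition \ref{geolimit} gives $\|(\hYtdz - p') - \widetilde Y[p] : C^k(S_x[p],(\widetilde Y[p])^*g_{\Ss^2})\| \leq C(k,x)\outau^{1/2}$; since $h = \tfrac{|A|^2}{2}g$ is, up to the universal constant, the pullback under the Gauss map of the induced metric, and since on a sphere the Gauss map is (an isometry onto) $g_{\Ss^2}$, differentiating the immersion estimate yields \ref{nc7}, and \ref{nc6} is then immediate because $(\widetilde Y[p])^*g_{\Ss^2}$ is a fixed smooth metric. For \ref{nc2}, on $S_x[p]$ we have $\hat\psi\equiv 1$ (shrinking $x$ if necessary, or else using that $\hat\psi$ transits smoothly near $\partial S[p]$), so $\rho = |A|/\sqrt2$; the curvature formula \eqref{curvatureeq}, together with the fact that $w$ is uniformly bounded on the standard region (by Lemma \ref{radiuslemma}), shows $\rho^{\pm1}$ is bounded in $C^k$ with respect to $g$, and then with respect to $\chi=\rho^2 g$ because $\rho$ itself is controlled; alternatively one compares directly to the sphere where $|A_0|^2 \equiv 2$. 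Then \ref{nc4} follows by writing $\chi = \rho^2 g = \tfrac{2\rho^2}{|A|^2}\, h$ and bounding the conformal factor $\tfrac{2\rho^2}{|A|^2}$ in $C^k(S_x[p],h)$ using \ref{nc2} and \ref{nc6}.

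Next, the non-central regions. Here the key input is that, after applying $\UUU[e]^{-1}$, the immersion is $\underline Y_{edge}$ (or $\underline Y_{ray}$), and Proposition \ref{geopropcentral} compares this to $Y_0$ while Corollary \ref{metriccompg} compares $g_\tau$ to $g_0$ on $[-1+\eps,1-\eps]\times\Ss^1$, all with error $O(\outau)$ (recall $|\taue|\leq C\outau$). The reparameterization $\hat Y_{edge}[\uthte,\taue]$ contributes only multiplicative factors $1 - O(\outau/\log\outau)$ by \eqref{diffeodifference}, which are absorbed. This gives \ref{nc8}; item \ref{nc10} then follows by combining \ref{nc8} with the second estimate of Proposition \ref{geolimit} (comparing $\Ntdz$ to $\widetilde Y\pen$), using the triangle inequality and that all three metrics in play are uniformly equivalent. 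For \ref{nc3}, on $S_x\pen$ we have $\hat\psi\equiv 0$, so $\rho = \widetilde\rho = r_{\taue}(\underline t,\theta)^{-1}$; since $r_{\taue}$ is uniformly bounded above and below on the standard region (Lemma \ref{radiuslemma}), $\chi = \rho^2 g$ and $h$ are uniformly $C^k$-equivalent there — concretely one checks $\chi/h$ and its derivatives are bounded using \ref{nc8} and the explicit form of $|A_\tau|^2$. Finally, for the global estimate \ref{nc1}: by the very definition of the weighted norm $\|\rho^{\pm1}:C^{k,\alpha}(M,\chi,\rho^{\pm1})\|$, one must show that on each unit $\chi$-ball $B_x$, $\rho$ oscillates by a bounded multiplicative factor; this is where the scale-invariance of $\chi$ pays off — in the $\chi$ metric each Delaunay piece looks, up to a small reparameterization, like a fixed flat cylinder, on which $\log\rho = -\log r_{\taue}$ has uniformly bounded gradient (since $|w'|\leq 1$ by \eqref{weq}), so $\rho$ varies by a bounded ratio on any $\chi$-unit ball; near the central spheres one uses \ref{nc2}; and one patches across the transitions using that $\hat\psi$ and $1-\hat\psi$ are a smooth partition of unity with $\chi$-bounded derivatives.

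The main obstacle I anticipate is item \ref{nc1}: the other estimates are essentially local comparisons on a region of fixed $\chi$-size where one model metric limits onto another, but \ref{nc1} is a genuinely global statement and its proof hinges on verifying that $\chi$ really does impose a uniform scale across the whole of $M$ — in particular that the reparameterization $\underline t = (P_{\taue}/P_{\uthte})\,t$ hidden in the definition of $\widetilde\rho$, and the gluing cutoffs, do not distort the $\chi$-geometry of unit balls. Controlling $\osc_{B_x}\log\rho$ uniformly in $x\in M$ (equivalently, showing $|\nabla^\chi \log\rho|$ is bounded) is the crux; once that is in hand, the weighted-norm bound on all derivatives follows from the $C^k$-equivalence of $\chi$ with the flat cylinder metric (resp. the round sphere metric) already established in items \ref{nc3} and \ref{nc4}.
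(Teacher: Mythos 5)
There is a genuine gap in your treatment of the non-central items \eqref{nc8}, \eqref{nc10}, \eqref{nc3}: your argument only works on the positively curved standard regions ($n$ even) and never addresses the negatively curved ones ($n$ odd), which are centered at the Delaunay necks. On those regions $r_{\taue}\approx|\taue|$, so your assertion that ``$r_{\taue}$ is uniformly bounded above and below on the standard region (Lemma \ref{radiuslemma})'' is false there, and neither Lemma \ref{radiuslemma} nor Corollary \ref{metriccompg} applies, since the immersion on an odd region is close (only after rescaling by $|\taue|^{-1}$) to a catenoid, not to the sphere in the induced metric. You also invoke Proposition \ref{geopropcentral} for these regions, but that proposition only compares $\underline Y_{edge}$ with $Y_0$ on $(a,b)\times\Ss^1$ and its mirror near the other end, i.e.\ inside the central regions $S[p^\pm[e]]$, not on the interior standard regions $S_x\pen$. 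The paper's proof hinges on an ingredient you are missing: using \eqref{curvatureeq} one has $h=\bigl(r^2+\taue^2/r^2\bigr)\chi=2|\taue|\cosh(2w(\underline t))\,(d\underline t^2+d\theta^2)$ and $\chi=d\underline t^2+d\theta^2$, so $h$, $\chi$ and $(\Ntdz)^*g$ all induce an isometry between the positively and negatively curved standard regions (via $t\mapsto 2P_{\taue}-t$ and the symmetries of $w$), which reduces \eqref{nc8}--\eqref{nc3} to the $K>0$ case; and even there \eqref{nc8} is not a consequence of the first-fundamental-form comparison alone (it involves the Gauss-map pullback and $|A|^2$) --- the paper quotes it from Appendix~A of \cite{KapAnn}.

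A secondary weakness is item \eqref{nc1}. Your $C^0$ step (bounded oscillation of $\log\rho$ on $\chi$-unit balls from $|w'|\le 1$) is exactly the paper's, but the claim that the bounds on all higher derivatives then ``follow from the $C^k$-equivalence of $\chi$ with the flat cylinder metric already established in items \eqref{nc3} and \eqref{nc4}'' does not hold: those items are metric equivalences on standard regions and give no derivative control of $\rho$ itself, in particular none on the transition regions. What is needed is to differentiate $\hat\rho=\rho/\rho(\hat t,\theta)=e^{-w(\underline s)+w(\underline{\hat t})}$ and control $w'',w''',\dots$ from the ODE \eqref{weq} (e.g.\ $w''=-4|\taue|\cosh w\sinh w$ is uniformly bounded because $C|\taue|\le r_{\taue}\le 1+C|\taue|$), with an induction on the order of differentiation; this ODE-based step, not the metric equivalences, is what yields the weighted $C^k$ bound in the paper.
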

\begin{proof}The definition of $h$ and the uniform geometry of the region $S_x[p]$ for $\outau$ small implies \eqref{nc6}; \eqref{nc7} follows immediately from Proposition \ref{geolimit} and the triangle inequality.
Lemma \ref{radiuslemma}, Proposition \ref{geolimit}, and the definition of the metrics $h$ and $\chi$ immediately imply \eqref{nc4}.

Using \eqref{curvatureeq}, on each $S_x\pen$,
\addtocounter{theorem}{1}
\begin{equation}\label{metricrelation}
 h = \left(r^2 + \frac{\taue^2}{r^2}\right) \chi.
\end{equation}
Further, $h = 2|\taue| \cosh(2w(\underline t))(d\underline t^2 + d\theta^2)$ and $\chi =d\underline t^2+ d\theta^2$. Thus, $(\Ntdz)^*g$, $h$, and $\chi$ all provide a natural isometry between positively curved standard regions and negatively curved standard regions on each $M[e]$. Therefore, it is enough to prove the next three items on a standard region with limit $\Ss^2$, that is, a standard region with Gauss curvature $K>0$. Item \eqref{nc8} is proven in \cite{KapAnn}, Appendix A. We adapt the statement there to fit with our notation. For \eqref{nc10} we appeal to Proposition \ref{geolimit} and \eqref{nc8}. Item \eqref{nc3} follows from Lemma \ref{radiuslemma} and \eqref{metricrelation}.

On $S_x[p]$, \eqref{nc2} follows from the definition of $\rho$, Proposition \ref{geolimit} and the uniform equivalence of $g,h,\chi$ on each $S_x[p]$.
For \eqref{nc1} we define a new function and consider its $C^k$ bounds.
For any $x=(s,\theta) \in M[e] \backslash S[p]$ let $\hat \rho(x):= \frac{\rho(x)}{\rho({\hat t},\theta)}$ for some fixed ${\hat t} \in M[e]\backslash S[p]$. By definition,
\[
\hat \rho(x) =  e^{-w(\underline s)+w(\underline {\hat t})}.
\]We are interested, of course, in $C^k$ bounds on $\hat \rho(x)^{\pm 1}$ for $x \in ({\hat t}-3/2,{\hat t}+3/2)\times \Ss^1:={\hat D}$. Recall the bounds on $\frac{P_{\taue}}{P_{\uthte}}$ imply that ${\hat D}$ contains a unit ball about the point $(\hat t, \theta)\in M[e]$ with respect to the $\chi$ metric. The definition of $w$ implies $|w'| \leq 1$ and thus
\[
|w(\underline s)-w(\underline {\hat t})| =\left|\int_{\underline {\hat t}}^{\underline s} w'(\underline t) d\underline t \right| \leq C|{\hat t}-s|.
\]Thus, $||\hat \rho^{\pm1}:C^0({\hat D}, \chi)|| \leq C$. Now
\[
\frac{\partial}{\partial \underline t} \hat \rho(x) = -w'(\underline s)\hat \rho(x); \qquad \frac{\partial^2}{\partial t^2} \hat \rho(x) = -w''(\underline s) \hat \rho(x) + (w'(\underline s))^2 \hat \rho(x).
\]The definition of $w$ implies $w''(\underline s) = -4|\taue| \cosh w(\underline s) \sinh w(\underline s)= |\taue| (e^{2w(\underline s)}-e^{-2w(\underline s)})$. Recalling that $C |\taue| \leq r_{\taue} \leq 1 + C|\taue|$ on each $M[e]$, $r_{\taue} = |\taue|^{1/2} e^w$, and $w$ is odd about $P_{\taue}$ we observe that $|w''| \leq C$ on each $M[e]$. The uniform bound on $|w'|$ along with the previous analysis implies $||\hat \rho^{\pm1}:C^2({\hat D}, \chi)|| \leq C$ for some constant $C$. For any $k \in \mathbb N$, we can write $\frac{\partial^k}{\partial \underline t^k} \hat \rho (x)$ as a function of $\frac{\partial^m}{\partial \underline t^m} w(x)$ and $\hat \rho(x)$ for $m=0, \dots, k-2$. This follows simply from repeated differentiation of the ODE of $w$ and the fact that $\hat \rho$ is an exponential in the variable $\underline t$. Thus, we obtain $C^k$ upper bounds for $\hat \rho$ on each $M[e]$ via the $C^{k-2}$ upper bounds. Moreover, for derivatives of $\hat \rho^{-1}$, we apply the same logic and note that derivatives will now include factors of $\hat \rho^{-k-1}$, which is bounded as a function of $k$. Putting all of this together with item \eqref{nc2} implies item \eqref{nc1}.
\end{proof}

Finally, as we choose to solve the global linear problem in the metric $\chi$, we state precisely the estimates we have for the error on the mean curvature with respect to the metric $\chi$.

\addtocounter{equation}{1}
\begin{prop}\label{Hestimates}
$||\rho^{-2}H_{gluing}:{C^k}(M,\chi)|| \leq C(k) \outau$.

 $||\rho^{-2}H_{dislocation}:{C^k}(M,\chi)|| \leq C(k)||{\boldsymbol \zeta}|| \leq C(k) \underline C  \outau$.

\end{prop}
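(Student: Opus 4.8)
The plan is to reduce the statement to the building-block estimates of Corollary~\ref{Hbounds} by a change of reference metric, exploiting that both $H_{gluing}$ and $H_{dislocation}$ are supported near the attaching boundaries of the cylindrical pieces, where the three metrics $g,h,\chi$ are uniformly comparable. The two bounds are proven the same way, so I will describe the $H_{gluing}$ argument and indicate the (identical) modifications for $H_{dislocation}$ at the end.

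\emph{First, localize.} Write $H_{gluing}=\sum_{e\in E(\Gamma)\cup R(\Gamma)}H_{gluing}[e]$. By definition each $H_{gluing}[e]$ is supported, on $M[e]$, in $\bigl([a+3,a+5]\cup[\RH-(a+5),\RH-(a+3)]\bigr)\times\Ss^1$ (only the first piece when $e\in R(\Gamma)$), and likewise $H_{dislocation}[e]$ is supported where $[a+1,a+2]$ replaces $[a+3,a+4]$. Since $b>a+5$, each such support lies inside a central standard region $S[p]\cap M[e]$ for the adjacent vertex $p$, away from the long transition cylinders. As $\Gamma$ is finite there are only finitely many such supports, so it suffices to bound $\|\rho^{-2}H_{gluing}[e]:C^k(M,\chi)\|$ for each $e$ and sum, absorbing the number of pieces into the constant.

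\emph{Next, control the conformal factor.} On any central standard region $S_x[p]$ the metrics $g$, $h$, $\chi$ are mutually uniformly $C^k$-equivalent and $\rho^{\pm1}$ has $C^k$-norm $\le C(k,x)$; this is contained in Lemma~\ref{metriccomparisons} (items (1)--(3) and (7)), whose proof rests on the geometric limit — the immersion is $C^k(g)$-close to the fixed nondegenerate reference $Y_0$ on $(a,b)\times\Ss^1$ by Proposition~\ref{geopropcentral}, and $r_{\taue}$ is $C^k$-close on bounded coordinate ranges to its $\outau\to0$ limit, hence pinched between positive constants depending only on $\Gamma$, by Lemma~\ref{radiuslemma}. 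Consequently, for any function $\varphi$ supported in the regions identified in the localization step,
\[
\|\rho^{-2}\varphi:C^k(M,\chi)\|\ \le\ C(k)\,\|\varphi:C^k(M,g)\|,
\]
since changing the reference metric from $\chi=\rho^2g$ to $g$ and multiplying by the $C^k$-bounded factor $\rho^{-2}$ each cost only a constant there.

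\emph{Finally, apply the building-block bounds.} On $M[e]$ the induced metric $g=(\hYtdz)^*g_{\Real^3}$ coincides with $(\underline Y_{edge})^*g_{\Real^3}$ (resp. $(\underline Y_{ray})^*g_{\Real^3}$), because $\UUU[e]$ is a rigid motion, and under this identification $H_{gluing}[e]$ and $H_{dislocation}[e]$ are precisely the functions estimated in Corollary~\ref{Hbounds}. Hence $\|H_{gluing}[e]:C^k(M,g)\|\le C(a,k)|\uthte|\le C(a,k)\outau$ and $\|H_{dislocation}[e]:C^k(M,g)\|\le C(a,k)\bigl(|\boldsymbol\zeta[p^+[e],e]|+|\boldsymbol\zeta[p^-[e],e]|\bigr)\le C(a,k)\|\boldsymbol\zeta\|$, using $|\uthte|\le C\outau$ from the finiteness of $\Gamma$. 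Combining with the previous display and summing over the finitely many edges and rays yields $\|\rho^{-2}H_{gluing}:C^k(M,\chi)\|\le C(k)\outau$ and $\|\rho^{-2}H_{dislocation}:C^k(M,\chi)\|\le C(k)\|\boldsymbol\zeta\|$; the last inequality in the statement is then exactly \eqref{zetarestriction}. There is no serious obstacle here; the only point requiring care is the localization — verifying that the supports of $H_{gluing}$ and $H_{dislocation}$ stay in the part of each $M[e]$ near its attaching boundary, so that $\rho$ (equivalently, the conformal factor $\chi/g$) remains uniformly pinched rather than degenerating with $\outau$ as it does along the transition cylinders.
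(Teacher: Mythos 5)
Your argument is correct and follows essentially the same route as the paper: the paper's proof is exactly the combination of Corollary~\ref{Hbounds} with item \eqref{nc2} of Lemma~\ref{metriccomparisons} and the uniform equivalence of $g$ and $\chi$ on the supports of $H_{gluing}$ and $H_{dislocation}$, which you have simply spelled out (localization to the finitely many attaching regions inside the $S[p]$, where $\rho^{\pm1}$ is $C^k$-bounded, then the building-block bounds).
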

\begin{proof}
The bounds follow immediately from Proposition \ref{Hbounds} and item \eqref{nc2} of Lemma \ref{metriccomparisons} using the uniform equivalence of the metrics $g, \chi$ on the support of the functions involved.
\end{proof}

\section{The linearized equation}
\label{LinearizedEq}
The goal of this section is to clearly state and prove Proposition \ref{LinearSectionProp}. Recalling \eqref{confmets}, we start by defining the operators 
\[
\mathcal L_g:= \Delta_g + |A|^2; \:\: \mathcal L_h = \frac{2}{|A|^2}\mathcal L_g; \: \: \mathcal L_\chi = \rho^{-2} \mathcal L_g.
\]We choose to study the inhomogenous linearized equation in one of the following forms:
\[
\mathcal L_\chi \phi = E; \: \: \mathcal L_h \phi = \cfE E.
\]

\subsection*{The linear equation on the transition regions}
We first consider the linearized equation on $\Lambda_{x,y}[p,e,n']$ -- the transition regions. 
Throughout this section, we refer to each of these regions simply by $\Lambda$ and their boundary components, $C^+_x[p,e,n'-1]$ and $C^-_y[p,e,n']$ respectively as  $C^+, C^-$.

On the transition regions, $\chi$ takes the form $d\underline t^2 + d\theta^2$. That is, $\chi$ is the flat metric on the reparameterized cylinder.
Let $\underline x, \overline x,\overline{ \underline { x}}$ denote the $t$-coordinate distance from $C^+, C^-$ and $\partial \Lambda=C^+ \cup C^-$. Then
\addtocounter{theorem}{1}
\begin{equation}
(2n'-2)P_{\uthte} + b + x + \underline{ x} = t, \ \ \ 2n'P_{\uthte} -b -y - \overline x = t, \ \ \ \overline {\underline {x}}:= \min\{\underline{ x}, \overline x\}.
\end{equation}
Let $\eL$ define the $t$-coordinate length of the cylinder parameterizing $\Lambda$.  Then
\addtocounter{theorem}{1}
\begin{equation}
\eL = 2P_{\uthte} -2b -x -y.
\end{equation}

We consider weighted Sobolev spaces in the $\chi$ metric where the weight function is with respect to $\underline x$. This poses no problem as the weight function is a decaying exponential, and \eqref{diffeodifference} implies that exponential decay with respect to $\underline t$ and $t$ differ only by multiplicative factors of the form $e^{\outau} = 1+ O(\outau)$.

For each $\epsilon_1>0$, there exists $b \gg 1$ so that for $\Lambda$ defined in terms of that $b$, 
\addtocounter{theorem}{1}
\begin{equation}\label{decaymain}
||\rho^{-2}|A|^2:C^k(\Lambda, \chi,e^{- 2\overline {\underline x}})|| \leq \epsilon_1. 
\end{equation}
This follows from the work done in Lemma \ref{Plemma}, once we observe 
$\rho^{-2}|A|^2 = \tau e^{2w}(2+2e^{-4w})=4\tau \cosh (2w)$. Recall $\underline t =  \frac{P_{\taue}}{P_{\uthte}}t$. Each $\Lambda$ with parameter $\tau_0$, equipped with the metric $\chi$, is isometric to $[b,2P_{\tau_0} - b] \times \mathbb{S}^1$ 
equipped with the metric $d\underline t^2 + d\theta^2$. Moreover Lemma \ref{Plemma} and the fact that $\outau/C \leq \tau_0 \leq \outau$ implies
 \addtocounter{theorem}{1}
\begin{equation}\label{elllength}
{|\ell_\Lambda}{+\log \outau|}\leq C
\end{equation}where $C$ depends on $\epsilon_1$ and on $\Gamma$.

We now choose $\epsilon_1>0$ appropriately small to satisfy the desired conditions going forward and $b$ large enough to satisfy \eqref{decaymain}. For this fixed $b\gg 1$,
\addtocounter{equation}{1}
\begin{prop} \label{lowestevtrans}
The lowest eigenvalue for the Dirichlet problem for $\mathcal{L}_\chi$ on $\Lambda$ is $> C\eL^{-2}$.
\end{prop}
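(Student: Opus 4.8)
The plan is to estimate the lowest Dirichlet eigenvalue of $\mathcal L_\chi = \Delta_\chi + \rho^{-2}|A|^2$ on $\Lambda$ from below by comparing it to the lowest Dirichlet eigenvalue of the flat Laplacian $-\Delta_\chi$ on the same cylinder. First I would recall from the discussion preceding the statement that, in the $\chi$ metric, $\Lambda$ is isometric to the flat cylinder $[b, 2P_{\tau_0}-b]\times\Ss^1$ with metric $d\underline t^2 + d\theta^2$, a cylinder of length $\eL = 2P_{\uthte}-2b-x-y$ (up to the multiplicative factors $1+O(\outau)$ coming from \eqref{diffeodifference}, which do not affect the order of the estimate). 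On this flat cylinder the Dirichlet spectrum of $-\Delta_\chi$ is explicit: the eigenvalues are $\left(\frac{m\pi}{\eL}\right)^2 + k^2$ for $m\in\mathbb Z^+$, $k\in\mathbb Z_{\geq 0}$, so the lowest eigenvalue is exactly $\pi^2/\eL^2$.

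Next I would handle the zeroth-order term. Writing $\mathcal L_\chi = \Delta_\chi + V$ with $V = \rho^{-2}|A|^2 = 4\tau\cosh(2w)$, the key input is \eqref{decaymain}: for the fixed large $b$ we have $\|V : C^0(\Lambda,\chi)\| \leq \epsilon_1$, so in particular $\|V\|_{L^\infty(\Lambda)} \leq \epsilon_1$ (the weight $e^{-2\overline{\underline x}}\leq 1$ only helps). Hence for the Rayleigh quotient of $-\mathcal L_\chi$ (note the sign: $\mathcal L_\chi$ is $\Delta_\chi + V$, and eigenvalues here should be read as those of the operator governing the Dirichlet problem, i.e. of $-\Delta_\chi - V$ if we want positivity, or equivalently we bound the bottom of the spectrum), for any $\phi \in H^1_0(\Lambda)$,
\[
\int_\Lambda \left(|\nabla_\chi \phi|^2 - V\phi^2\right)\, d\chi \;\geq\; \int_\Lambda |\nabla_\chi\phi|^2\, d\chi - \epsilon_1 \int_\Lambda \phi^2\, d\chi \;\geq\; \left(\frac{\pi^2}{\eL^2} - \epsilon_1\right)\int_\Lambda \phi^2\, d\chi,
\]
using the Poincaré inequality on the flat cylinder. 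This alone is not quite enough, because $\epsilon_1$ is a fixed constant while $\pi^2/\eL^2 \to 0$ as $\outau\to 0$; so I would instead exploit the exponential decay built into \eqref{decaymain}.

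The refined step, which I expect to be the main point of the argument, is to use the weighted bound $\|\rho^{-2}|A|^2 : C^0(\Lambda,\chi,e^{-2\overline{\underline x}})\| \leq \epsilon_1$, i.e. $|V(\underline t,\theta)| \leq \epsilon_1 e^{-2\overline{\underline x}}$, together with a one-dimensional Hardy/Poincaré-type inequality on an interval of length $\eL$ with an exponentially-weighted potential concentrated near the two endpoints. Decomposing $\phi$ into Fourier modes in $\theta$, the modes with $k\geq 1$ contribute $\geq 1$ to the Rayleigh quotient and are trivially controlled (since $1 \gg C\eL^{-2}$), so it suffices to treat the $\theta$-independent part $u(\underline t)$, for which one needs
\[
\int |u'|^2 \,d\underline t \;-\; \epsilon_1\int e^{-2\overline{\underline x}} u^2\, d\underline t \;\geq\; \frac{C}{\eL^2}\int u^2\, d\underline t, \qquad u\in H^1_0([0,\eL]).
\]
For this I would argue that $\int e^{-2\overline{\underline x}}u^2 \leq \int_0^1 u^2 + \int_{\eL-1}^{\eL} u^2 + (\text{tail})$; since $u$ vanishes at the endpoints, $|u(\underline t)|^2 \leq \underline t \int_0^{\underline t}|u'|^2$ near the left end and symmetrically near the right end, so $\int e^{-2\overline{\underline x}}u^2 \leq C\int|u'|^2$ with an absolute constant $C$ — but crucially $\epsilon_1$ is at our disposal (it was fixed \emph{after} $b$ only to satisfy \eqref{decaymain}, so we may shrink it), giving $\epsilon_1\int e^{-2\overline{\underline x}}u^2 \leq \tfrac12\int|u'|^2$, and then $\tfrac12\int|u'|^2 \geq \tfrac12\cdot\frac{\pi^2}{\eL^2}\int u^2$ by the standard Poincaré inequality. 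Combining the two modes' estimates yields the claimed lower bound $> C\eL^{-2}$ for the full operator. The only genuine subtlety is keeping track of the $1+O(\outau)$ distortion between the $\underline t$- and $t$-parametrizations when invoking \eqref{decaymain} and \eqref{diffeodifference}, which changes the constants but not the order $\eL^{-2}$; I would absorb this into $C$ at the end.
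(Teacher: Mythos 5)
Your proposal is correct and follows the same route as the paper, which simply cites the Rayleigh-quotient argument of \cite{KapWente} (Lemma 2.26): you bound the quadratic form of $\mathcal L_\chi=\Delta_\chi+\rho^{-2}|A|^2$ below using the flat Poincar\'e bound $\pi^2/\eL^2$ together with the exponentially weighted smallness of the potential from \eqref{decaymain}, absorbing the potential via a Hardy-type estimate near the two ends (and the harmless $1+O(\outau)$ reparameterization). The only remark is that your Fourier decomposition is not even needed, since the endpoint Hardy estimate can be applied fiberwise in $\theta$ to all of $\phi\in H^1_0(\Lambda)$ at once; this changes nothing in the conclusion.
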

\begin{proof}
The proof follows exactly as in \cite{KapWente}, in particular refer to the proof of Lemma 2.26. Using the Rayleigh quotient to determine the lowest eigenvalue gives the lower bound described.
\end{proof}
Immediately, we get the following corollary.
\addtocounter{equation}{1}
\begin{corollary}
\begin{enumerate}
\item The Dirichlet problem for $\mathcal{L}_\chi$ on $\Lambda$ for given $C^{2, \beta}$ Dirichlet data has a unique solution.
\item 
For $E \in C^{0,\beta}$ there is a unique $\varphi \in C^{2,\beta}$ such that $\mathcal{L}_\chi \varphi = E$ on $\Lambda$ and $\varphi=0$ on $\partial \Lambda$.  Moreover $||\varphi:C^{2,\beta}(\Lambda,\chi)|| \leq C(\beta) \eL^2 ||E:C^{0,\beta}(\Lambda, \chi) ||$.
\end{enumerate}
\end{corollary}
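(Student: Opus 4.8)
The plan is to deduce the whole corollary from the spectral gap in Proposition~\ref{lowestevtrans}. On $\Lambda$ the metric $\chi$ is flat and $\mathcal L_\chi = \Delta_\chi + V$ with $V := \rho^{-2}|A|^2 \ge 0$; by \eqref{decaymain} the potential decays exponentially away from $\partial\Lambda$, i.e. $0 \le V \le \epsilon_1 e^{-2\overline{\underline x}}$, with $\epsilon_1$ as small as we like. Proposition~\ref{lowestevtrans} is exactly the coercivity $\int_\Lambda\bigl(|\nabla_\chi u|^2 - V u^2\bigr)\,d\chi \ge C\,\eL^{-2}\,\|u\|^2_{L^2(\Lambda,\chi)}$ for all $u \in H^1_0(\Lambda)$, so the homogeneous Dirichlet problem $\mathcal L_\chi u = 0$, $u|_{\partial\Lambda} = 0$ has only the trivial solution. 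Ellipticity of $\mathcal L_\chi$ on the compact surface-with-boundary $\Lambda$ together with the Fredholm alternative then gives, for each $E \in C^{0,\beta}(\Lambda)$, a solution $\varphi \in H^1_0(\Lambda)$ of $\mathcal L_\chi\varphi = E$, unique by the coercivity; interior and boundary Schauder estimates --- with constants uniform in $\outau$, since $(\Lambda,\chi)$ is a flat cylinder of bounded injectivity radius and $\|V:C^{0,\beta}\|$ is uniformly small --- promote $\varphi$ to $C^{2,\beta}(\Lambda)$. That is part~(2) as a solvability statement, and part~(1) reduces to it: extend the given $C^{2,\beta}$ boundary data to some $\bar\gamma \in C^{2,\beta}(\Lambda)$ and solve $\mathcal L_\chi\varphi = -\mathcal L_\chi\bar\gamma \in C^{0,\beta}(\Lambda)$ with zero data; then $\bar\gamma + \varphi$ is the unique solution.

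For the estimate in part~(2) I would first prove the sup bound $\|\varphi\|_{C^0(\Lambda)} \le C\,\eL^{2}\,\|E\|_{C^0(\Lambda)}$ and then upgrade it by Schauder. The coercivity yields the comparison principle for $-\mathcal L_\chi$ (a nonnegative right-hand side and zero boundary data force a nonnegative solution), hence $|\varphi| \le \|E\|_{C^0}\,\Phi$ pointwise, where $\Phi \ge 0$ solves $-\mathcal L_\chi\Phi = 1$ on $\Lambda$, $\Phi|_{\partial\Lambda} = 0$. Since $V$ and $\Lambda$ are rotationally symmetric, uniqueness forces $\Phi$ to depend only on the cylinder coordinate, and expressing it through the one-dimensional Dirichlet Green's function $G_0$ of $-\partial^2$ on the interval of $\chi$-length $\eL(1+O(\outau))$ gives $\Phi(s) = \int G_0(s,t)\bigl(1 + V(t)\Phi(t)\bigr)\,dt$. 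Now $\int G_0(s,\cdot)\,dt \le C\eL^2$, while the elementary bound $G_0(s,t) \le \min(t,\eL-t)$ together with $V \le \epsilon_1 e^{-2\overline{\underline x}}$ gives $\sup_s\int G_0(s,\cdot)\,V\,dt \le C\epsilon_1$ \emph{independently of} $\eL$. With $\epsilon_1$ chosen small this term absorbs, so $\|\Phi\|_{C^0} \le C\eL^2$ and hence $\|\varphi\|_{C^0(\Lambda)} \le C\eL^2\|E\|_{C^0(\Lambda)}$. Finally, interior and boundary Schauder on unit $\chi$-balls give $\|\varphi:C^{2,\beta}(\Lambda,\chi)\| \le C(\beta)\bigl(\|\varphi\|_{C^0(\Lambda)} + \|E:C^{0,\beta}(\Lambda,\chi)\|\bigr) \le C(\beta)\,\eL^{2}\,\|E:C^{0,\beta}(\Lambda,\chi)\|$, using $\eL \ge 1$.

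The only delicate point --- and the one that genuinely uses both Proposition~\ref{lowestevtrans} and \eqref{decaymain} --- is getting the exponent of $\eL$ right. Routing the estimate through an $L^2$ resolvent bound from the spectral gap would cost an extra factor $\eL^{1/2}$ from $\mathrm{vol}(\Lambda)\sim\eL$; to avoid it one must exploit that the zeroth-order term $V$ contributes only $O(\epsilon_1)$, rather than $O(\epsilon_1\eL)$, to the Green's function of $-\mathcal L_\chi$, which is precisely the exponential concentration of $V$ near $\partial\Lambda$. The remaining ingredients --- Fredholm solvability, the comparison principle, and the Schauder bootstrap --- are routine and parallel \cite{KapWente}.
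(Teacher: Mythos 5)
Your proposal is correct, and its skeleton (positivity of the lowest Dirichlet eigenvalue from Proposition \ref{lowestevtrans} gives uniqueness, Fredholm plus Schauder give existence and regularity, part (1) reduces to part (2) by extending the boundary data) is exactly what the paper has in mind: the paper offers no written proof at all, presenting the corollary as an immediate consequence of the eigenvalue bound together with standard linear theory, i.e.\ the $L^2$ resolvent bound $\|\varphi\|_{L^2}\leq C\eL^{2}\|E\|_{L^2}$ followed by interior/boundary elliptic estimates on unit $\chi$-balls. Where you genuinely depart from that implicit route is in the derivation of the constant: you are right that the crude chain $C^0\to L^2\to L^2\to C^{2,\beta}$ picks up $\operatorname{vol}(\Lambda)^{1/2}\sim\eL^{1/2}$ and only yields $\eL^{5/2}$, and your fix --- the comparison principle for $-\mathcal L_\chi$ (which does follow from the coercivity; testing $-\mathcal L_\chi w\geq 0$, $w\geq 0$ on $\partial\Lambda$, against the negative part $w^-\in H^1_0$ gives $\int_\Lambda\bigl(|\nabla_\chi w^-|^2-\rho^{-2}|A|^2(w^-)^2\bigr)\,d\chi\leq 0$, forcing $w^-=0$), the rotationally symmetric barrier $\Phi$, and the observation that $\int G_0(s,\cdot)\,\rho^{-2}|A|^2\,dt\leq C\epsilon_1$ uniformly in $\eL$ because of \eqref{decaymain} --- is a correct and sharper argument that recovers the $\eL^{2}$ exactly as stated. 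What each approach buys: the paper's (implicit) route is shorter and entirely adequate for the rest of the construction, since $\eL\sim|\log\outau|$ so any fixed power would do and the quantitatively important transition-region estimates are anyway obtained through Proposition \ref{RLambda} with exponential weights; your route costs an extra comparison-function argument but justifies the corollary with the literal constant $\eL^{2}$, and it isolates the genuinely relevant structural fact, namely that the potential $\rho^{-2}|A|^2$ contributes only $O(\epsilon_1)$ rather than $O(\epsilon_1\eL)$ because it concentrates exponentially near $\partial\Lambda$.
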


\addtocounter{equation}{1}
\begin{definition}
 Let $\mathcal H_k[C]$ denote the finite dimensional space of spherical harmonics on the meridian sphere $C$ 
that includes all of the functions in the first $k$-eigenspaces of $\Delta_{\Ss^1}$. 
\end{definition}
Observe $\mathcal H_0[C]$ is the space of constant functions and
$\mathcal H_1[C]$ is spanned by $\{1, \cos \theta, \sin \theta\}$. In the next proposition and corollary, we examine the Dirichlet problem where we are allowed to modify the low harmonics on one boundary circle. This induces fast exponential decay for the norm of the solution to the Dirichlet problem on the transition region.
\addtocounter{equation}{1}
\begin{prop}\label{RLambda}Given $\beta \in(0,1)$ and $\gamma \in (1,2)$ there exists $\epsilon_1>0$ such that the following holds.\\
There is a linear map $\mathcal{R}_\Lambda:C^{0,\beta}(\Lambda) \to C^{2, \beta}(\Lambda)$ such that the following holds for $E \in C^{0,\beta}(\Lambda)$ and $V:= \mathcal{R}_\Lambda E$:
\begin{enumerate}
\item $\mathcal{L}_\chi V =E$ on $\Lambda$.
\item $V|_{C^+} \in \mathcal H_1[C^+]$ and vanishes on $C^-$.
\item $||V:C^{2,\beta}(\Lambda, \chi, e^{-\gamma \underline { x}})|| \leq C||E:C^{0,\beta}(\Lambda, \chi, e^{-\gamma \underline{ x}})||$.
\item $\mathcal{R}_\Lambda$ depends continuously on all of the parameters of the construction.
\end{enumerate}
The proposition still holds if the roles of $C^+$ and $C^-$ are reversed and $\underline { x}$ is replaced by $\overline x$.  
\end{prop}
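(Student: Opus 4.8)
The plan is to reduce to the flat model operator on the cylinder, invert it by hand mode by mode so that the prescribed boundary behaviour and fast decay are built in, and then absorb the (small) zeroth-order term by a Neumann series.

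First I would record the structure of $\mathcal L_\chi$ on $\Lambda$. Since $\chi$ is exactly the flat metric $d\underline t^2+d\theta^2$ there and a conformal change in two dimensions scales the Laplacian, $\mathcal L_\chi=\Delta_\chi+\rho^{-2}|A|^2=\partial_{\underline t}^2+\partial_\theta^2+\rho^{-2}|A|^2$, where $\rho^{-2}|A|^2=4\,\uthte\cosh(2w)$ depends only on $\underline t$ (the interior of the transition region lies on a genuine Delaunay piece up to the $\theta$-independent reparametrisation $\hat Y_{edge}$, and by \eqref{diffeodifference} the decay weights expressed in $\underline t$ and in $\underline x$ agree up to factors $e^{O(\outau)}$). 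By \eqref{decaymain} this potential has $C^k$ norm $\le\epsilon_1$ relative to the weight $e^{-2\oux}$; in particular it is small, and, being $\theta$-independent, it does not couple the $\theta$-Fourier modes.

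Next I would construct a right inverse $\mathcal R_0$ for the flat Laplacian $\Delta_\chi$ with the required properties by separating variables on $\Ss^1$: writing $E=\sum_k E_k(\underline t)e^{ik\theta}$, solve mode by mode. For the high modes $|k|\ge 2$, solve $(\partial_{\underline t}^2-k^2)V_k=E_k$ with $V_k$ vanishing at both ends; the explicit Dirichlet Green's function on the interval is bounded by $C|k|^{-1}e^{-|k||\underline t-\underline t'|}$ up to harmless boundary-layer terms, so convolving against $|E_k|\lesssim e^{-\gamma\underline t}$ gives $|V_k|\lesssim|k|^{-1}(|k|-\gamma)^{-1}e^{-\gamma\underline t}$, which is summable in $k$ and uniform in $\eL$ precisely because $\gamma<2$. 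For the low modes $k\in\{-1,0,1\}$, take instead the variation-of-parameters particular solution that vanishes together with its first derivative at $C^-$, e.g. $V_0(\underline t)=-\int_{\underline t}^{\eL}(\underline t'-\underline t)E_0\,d\underline t'$ and $V_{\pm1}(\underline t)=\int_{\underline t}^{\eL}\sinh(\underline t'-\underline t)E_{\pm1}\,d\underline t'$; since $\gamma>1$ these integrals converge and decay like $e^{-\gamma\underline t}$ away from $C^+$, while their value at $C^+$ is a finite constant times $1$, $\cos\theta$, or $\sin\theta$, hence lies in $\mathcal H_1[C^+]$. Setting $\mathcal R_0E:=\sum_kV_ke^{ik\theta}$ yields $\Delta_\chi\mathcal R_0E=E$, $(\mathcal R_0E)|_{C^+}\in\mathcal H_1[C^+]$, $(\mathcal R_0E)|_{C^-}=0$, together with an $\eL$-uniform weighted $C^0$ bound; interior and boundary Schauder estimates on balls of $\chi$-radius comparable to $1$ (the boundary data being smooth on $C^+$ and zero on $C^-$) then upgrade this to the claimed weighted $C^{2,\beta}$ bound.

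Finally I would pass to the full operator: writing $\mathcal L_\chi=\Delta_\chi+V$ with $V:=\rho^{-2}|A|^2$, one has $\mathcal L_\chi\mathcal R_0=\mathrm{Id}+V\mathcal R_0$, and since $\|V:C^{0,\beta}(\Lambda,\chi)\|\le\epsilon_1$ while $\mathcal R_0$ is bounded on $C^{0,\beta}(\Lambda,\chi,e^{-\gamma\underline t})$ with constant independent of $\eL$, the operator $V\mathcal R_0$ has operator norm $\le C\epsilon_1<1$ once $\epsilon_1$ (hence $b$, via \eqref{decaymain}) is chosen small depending on $\beta,\gamma$. A Neumann series then inverts $\mathrm{Id}+V\mathcal R_0$, and $\mathcal R_\Lambda:=\mathcal R_0(\mathrm{Id}+V\mathcal R_0)^{-1}$ is a right inverse for $\mathcal L_\chi$ satisfying (1)--(3): the boundary behaviour (2) is imposed by the outer factor $\mathcal R_0$, and the Neumann series preserves the weighted estimate (3). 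Property (4) holds because all the operators are explicit integral kernels depending continuously on $P_{\uthte}$, $\eL$ and $V$, and the Neumann inverse depends continuously on $V\mathcal R_0$. The reversed statement follows by reflecting the cylinder and interchanging $C^+$ and $C^-$. I expect the main obstacle to be the $\eL$-uniformity of the estimate for $\mathcal R_0$ — i.e. ensuring the threshold $\epsilon_1$ does not degenerate as $\outau\to0$ and $\eL\to\infty$ — which is exactly what forces $\gamma\in(1,2)$, keeping every mode's decay rate strictly away from the resonant values $1$ and $|k|$; a secondary nuisance is carrying the Hölder part of the estimate through the Fourier decomposition, handled by proving the weighted sup bound first and invoking Schauder afterwards. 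This argument parallels the corresponding step in \cite{KapWente}.
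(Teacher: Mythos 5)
Your proposal takes essentially the same route as the paper's proof: invert the flat model operator $\Delta_\chi$ on the cylinder with the prescribed boundary behaviour (value in $\mathcal H_1[C^+]$ at $C^+$, zero at $C^-$) and the $e^{-\gamma \underline x}$-weighted estimate, then absorb the zeroth-order term $\rho^{-2}|A|^2$, which is small by \eqref{decaymain}, by iteration — your Neumann series is exactly the paper's "small perturbation of $\Delta_\chi$" iteration. The only difference is that you construct the flat inverse explicitly by separation of variables where the paper invokes standard linear theory, and this works as written apart from a harmless sign: the zero-mode particular solution should be $V_0(\underline t)=+\int_{\underline t}^{\eL}(\underline t'-\underline t)E_0\,d\underline t'$ so that $\partial_{\underline t}^2V_0=E_0$.
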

\begin{proof}
The first two items follow by standard linear theory. For the third and fourth, let $\widetilde{\mathcal R}_\Lambda$ denote the linear map applied to the operator $\Delta_\chi$ and suppose
$V_1$ is such that $\Delta_\chi V_1 =E$, i.e. $\widetilde{\mathcal R}_\Lambda E=V_1$. Then 
\[
 ||\mathcal L_\chi V_1-E: C^{0,\beta}(\Lambda, \chi, e^{-\gamma \underline x})|| \leq C(\beta, \gamma)\epsilon_1 ||E: C^{0,\beta}(\Lambda, \chi, e^{-\gamma \underline x})||.
\]For $\epsilon_1$ sufficiently small, $\mathcal L_\chi$ is a small perturbation of $\Delta_\chi$ and thus an iteration implies the result.
\end{proof}

\addtocounter{equation}{1}
\begin{corollary}\label{Rpartial}
Assuming $\epsilon_1$ is small enough in terms of $\epsilon_2$ and $\beta \in(0,1), \gamma \in (1,2)$, there is a linear map:
\[\mathcal{R}_\partial :\{f \in C^{2, \beta}(C^+): f \text{ is } L^2(C^+, d\theta^2) \text{-orthogonal to } \mathcal H_1[C^+]\} \to C^{2, \beta}(\Lambda)\]
such that the following hold for $f$ in the domain of $\mathcal{R}_\partial$ and $V:= \mathcal{R}_\partial f$:
\begin{enumerate}
\item $\mathcal{L}_\chi V=0$ on $\Lambda$.
\item $V|_{C^+}-f \in \mathcal H_1[C^+]$  and $V$ vanishes on $C^-$.
\item \label{rp3}$||V|_{C^+} - f:C^{2, \beta}(C^+, d\theta^2)|| \leq \epsilon_2||f:C^{2,\beta}(C^+, d\theta^2)||$.
\item \label{rp4}$||V:C^{2,\beta}(\Lambda, \chi, e^{-\gamma\underline{ x}})|| \leq C(\beta, \gamma)||f:C^{2, \beta}(C^+, d\theta^2)||$.
\item $\mathcal{R}_\partial$ depends continuously on all of the parameters of the construction.
\end{enumerate} The proposition still holds if the roles of $C^+$ and $C^-$ are exchanged and $\underline { x}$ is replaced by $\overline x$.
\end{corollary}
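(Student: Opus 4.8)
The plan is to produce $V$ in two steps: first solve the flat problem exactly, using the hypothesis $f\perp_{L^2}\mathcal H_1[C^+]$ to gain decay, and then cancel the resulting small self-interaction error with the operator $\mathcal R_\Lambda$ of Proposition \ref{RLambda}. Recall that on $\Lambda$ the metric $\chi$ equals, after the reparameterization $\underline t=\tfrac{P_{\taue}}{P_{\uthte}}t$ whose effect on norms and on exponential decay is a harmless factor $1+O(\outau)$ by \eqref{diffeodifference}, the flat cylinder metric $d\underline t^2+d\theta^2$ on a cylinder of $\underline t$-length $\eL\gg1$. First I would let $V_0\in C^{2,\beta}(\Lambda)$ be the unique $\Delta_\chi$-harmonic function with $V_0|_{C^+}=f$ and $V_0|_{C^-}=0$, which depends linearly on $f$; writing $f=\sum_{|n|\ge2}f_ne^{in\theta}$ (the sum runs only over $|n|\ge2$ because $f\perp_{L^2}\mathcal H_1[C^+]$) one has $V_0=\sum_{|n|\ge2}f_n\,\frac{\sinh(|n|(\eL-\underline x))}{\sinh(|n|\eL)}\,e^{in\theta}$. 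Since every mode satisfies $|n|\ge2>\gamma$, the elementary bound $\frac{\sinh(|n|(\eL-\underline x))}{\sinh(|n|\eL)}\le Ce^{-|n|\underline x}\le Ce^{-2\underline x}$ holds uniformly in $\underline x\in[0,\eL]$ and in $\eL$; combined with interior and up-to-$C^+$ Schauder estimates this gives $||V_0:C^{2,\beta}(\Lambda,\chi,e^{-2\underline x})||\le C||f:C^{2,\beta}(C^+,d\theta^2)||$, and a fortiori the same bound with weight $e^{-\gamma\underline x}$. Then $\mathcal L_\chi V_0=\rho^{-2}|A|^2V_0=:-E_1$, and since $||\rho^{-2}|A|^2:C^k(\Lambda,\chi,e^{-2\overline{\underline x}})||\le\epsilon_1$ by \eqref{decaymain} while $e^{-2\overline{\underline x}}e^{-2\underline x}\le e^{-\gamma\underline x}$, I get $||E_1:C^{0,\beta}(\Lambda,\chi,e^{-\gamma\underline x})||\le C\epsilon_1||f:C^{2,\beta}(C^+,d\theta^2)||$. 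I would then set $V_1:=\mathcal R_\Lambda E_1$, $V:=V_0+V_1$, and define $\mathcal R_\partial f:=V$; linearity of $V_0\mapsto f$ and of $\mathcal R_\Lambda$ makes $\mathcal R_\partial$ linear.

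Items (1) and (2) are then immediate: $\mathcal L_\chi V=\mathcal L_\chi V_0+\mathcal L_\chi V_1=-E_1+E_1=0$ by Proposition \ref{RLambda}(1); $V|_{C^+}-f=V_1|_{C^+}\in\mathcal H_1[C^+]$ by Proposition \ref{RLambda}(2); and $V|_{C^-}=0+0=0$. For (rp4), Proposition \ref{RLambda}(3) gives $||V_1:C^{2,\beta}(\Lambda,\chi,e^{-\gamma\underline x})||\le C||E_1:C^{0,\beta}(\Lambda,\chi,e^{-\gamma\underline x})||\le C\epsilon_1||f:C^{2,\beta}(C^+,d\theta^2)||$, so by the triangle inequality $||V:C^{2,\beta}(\Lambda,\chi,e^{-\gamma\underline x})||\le C(\beta,\gamma)||f:C^{2,\beta}(C^+,d\theta^2)||$. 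For (rp3), I would evaluate the bound on $V_1$ at $C^+$, where $\underline x=0$ and the weight is $1$, and use that restriction to a boundary circle is bounded on $C^{2,\beta}$, to get $||V|_{C^+}-f:C^{2,\beta}(C^+,d\theta^2)||=||V_1|_{C^+}:C^{2,\beta}(C^+,d\theta^2)||\le C\epsilon_1||f:C^{2,\beta}(C^+,d\theta^2)||$; it then suffices to take $\epsilon_1$ small enough that $C\epsilon_1\le\epsilon_2$. Item (5) holds because $V_0$ depends continuously on the parameters of the construction through $\chi$ (i.e.\ through $\eL$ and the reparameterization), $\rho^{-2}|A|^2$ does too, and $\mathcal R_\Lambda$ does by Proposition \ref{RLambda}(4). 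The reversed version, with $C^+$ and $C^-$ exchanged and $\underline x$ replaced by $\overline x$, follows by running the identical argument with the two ends interchanged and invoking the symmetric form of Proposition \ref{RLambda}.

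The main obstacle will be the weighted decay estimate for $V_0$: one must bound the $e^{-2\underline x}$-weighted $C^{2,\beta}(\Lambda,\chi)$ norm of the flat harmonic extension of $f$ \emph{uniformly} as $\eL\to\infty$ and all the way down to $C^-$, where the factor $\sinh(|n|(\eL-\underline x))$ degenerates. This is precisely where the hypothesis $f\perp_{L^2}\mathcal H_1[C^+]$ is used---it eliminates the slowly decaying modes $n=0,\pm1$, so that the self-interaction $\rho^{-2}|A|^2V_0$ is genuinely of size $O(\epsilon_1)$---and it is handled by combining the explicit Fourier bound above with interior and boundary Schauder estimates. Everything else is a single perturbative correction via $\mathcal R_\Lambda$ (no iteration is needed here, since $\mathcal R_\Lambda$ already delivers an exact solution of $\mathcal L_\chi$) together with routine bookkeeping of weighted H\"older norms.
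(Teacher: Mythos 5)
Your proposal is correct and is essentially the paper's own argument: the paper defines a flat-harmonic extension map $\widetilde{\mathcal R}_\partial$ (with exactly your decay estimate, cited as standard linear theory) and sets $\mathcal R_\partial f:=\widetilde{\mathcal R}_\partial f-\mathcal R_\Lambda\mathcal L_\chi\widetilde{\mathcal R}_\partial f$, which is identical to your $V=V_0+\mathcal R_\Lambda E_1$ with $E_1=-\mathcal L_\chi V_0$. Your Fourier-mode justification of the decay of $V_0$ and the smallness of the correction just fills in the "standard linear theory" step the paper leaves implicit.
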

\begin{proof}
 Let 
\[
 \widetilde {\mathcal R}_\partial:\{ 
f \in C^{2, \beta}(C^+): f \text{ is } L^2(C^+, d\theta^2) \text{-orthogonal to } \mathcal H_1[C^+]\} \to C^{2, \beta}(\Lambda)
\]
define the linear map such that for $f$ in the domain and $\widetilde V = \widetilde{\mathcal R}_\partial f$ one has
\begin{enumerate}
 \item $\Delta_\chi \widetilde V=0$ on $\Lambda$.
\item $\widetilde V|_{C^+}=f$ and $\widetilde V$ vanishes on $C^-$.
\item $||\widetilde V: C^{2,\beta}(\Lambda, \chi, e^{-\gamma \underline x})|| \leq C(\beta, \gamma)||f:C^{2,\beta}(C^+, d\theta^2)||$.
\end{enumerate}
Such a map exists by standard linear theory. 
The corollary follows immediately by applying the previous proposition to the map
\[
 \mathcal R_\partial f:= \widetilde{\mathcal R}_\partial f - \mathcal R_\Lambda \mathcal L_\chi \widetilde{\mathcal R}_\partial f.
\]

\end{proof}

We consider the behavior of solutions to the Dirichlet problem with prescribed boundary data in $\mathcal H_1[C^+]\cup\mathcal H_1[C^-]$ for the operator $\mathcal L_\chi$ by comparing these solutions to known solutions for the operator $\Delta_\chi$. As we frequently reference and use these solutions, we provide a thorough definition and description. 
\addtocounter{equation}{1}
\begin{definition}
 For $i=1,2,3$, let $V_i[\Lambda, a_1,a_2]$, $\widetilde V_i[\Lambda, a_1,a_2]$ denote the solutions to the Dirichlet problems on $\Lambda$ given by 
\[
 \mathcal L_\chi V_i[\Lambda, a_1,a_2]=0, \qquad \Delta_\chi \widetilde V_i[\Lambda, a_1,a_2]=0,
\]
with boundary data
\begin{equation*}
\begin{array}{lr}
 V_1[\Lambda, a_1,a_2]=\widetilde V_1[\Lambda, a_1,a_2]=a_1 &\text{on } C^+,\\

 V_1[\Lambda, a_1,a_2]=\widetilde V_1[\Lambda, a_1,a_2]=a_2 &\text{on } C^-,\\

 V_2[\Lambda, a_1,a_2]=\widetilde V_2[\Lambda, a_1,a_2]=a_1\cos \theta &\text{on } C^+,\\

 V_2[\Lambda, a_1,a_2]=\widetilde V_2[\Lambda, a_1,a_2]=a_2 \cos \theta &\text{on } C^-,\\

 V_3[\Lambda, a_1,a_2]=\widetilde V_3[\Lambda, a_1,a_2]=a_1\sin \theta &\text{on } C^+,\\

 V_3[\Lambda, a_1,a_2]=\widetilde V_3[\Lambda, a_1,a_2]=a_2\sin \theta &\text{on } C^-.
\end{array}\end{equation*}
As  $V_i[\Lambda, a_1,a_2]$ and $\widetilde V_i[\Lambda, a_1,a_2]$ are linear on $a_1,a_2$ and the roles of $C^+,C^-$ can be exchanged, it is enough to understand
 $V_i[\Lambda, 1,0]$, $\widetilde V_i[\Lambda, 1,0]$. We observe
\addtocounter{theorem}{1}
\begin{equation}\label{Vlambda}
 \widetilde V_1[\Lambda, 1, 0] = \frac{\overline x}{\ell_\Lambda}, \: \widetilde V_2[\Lambda, 1,0] = \frac{\sinh (P_{\taue}\overline x/P_{\uthte})}{\sinh  (P_{\taue}\ell_\Lambda/P_{\uthte})}\cos \theta,\: \widetilde V_3[\Lambda,1,0] = \frac{\sinh  (P_{\taue}\overline x/P_{\uthte})}{\sinh  (P_{\taue}\ell_\Lambda/P_{\uthte})}\sin \theta.
\end{equation}

\end{definition}
\addtocounter{equation}{1}
\begin{prop}\label{LambdaKernel}
 $V_1[\Lambda, a_1,a_2]$ is constant on each meridian and $V_2[\Lambda, a_1,a_2], V_3[\Lambda, a_1,a_2]$ are constant multiples of $\cos \theta, \sin \theta$ respectively on each meridian.
Moreover, for $\epsilon_1$ sufficiently small in terms of $\epsilon_3>0$, $\beta\in(0,1), \gamma \in (1,2)$, there are constants $A_1^+,A_1^-, A_2, A_3$ such that
\begin{enumerate}
 \item $|A_1^+-1|\leq \epsilon_3$, $A_1^-\leq \epsilon_3/\ell$.
\item $||V_1[\Lambda,1,0]-\widetilde V_1[\Lambda, A_1^+, A_1^-]:C^{2,\beta}(\Lambda, \chi, e^{-\gamma\underline x} + \ell^{-1} e^{-\gamma \overline x})||\leq \epsilon_3$.
\item $|A_2-1|\leq \epsilon_3$, $|A_3-1|\leq \epsilon_3$.
\item $||V_2[\Lambda,1,0]-\widetilde V_2[\Lambda, A_2, 0]:C^{2,\beta}(\Lambda, \chi, e^{-(\gamma+1)\underline x} + e^{-\ell})||\leq \epsilon_3$.
\item $||V_3[\Lambda,1,0]-\widetilde V_3[\Lambda, A_3, 0]:C^{2,\beta}(\Lambda, \chi, e^{-(\gamma+1)\underline x} + e^{-\ell})||\leq \epsilon_3$.
\end{enumerate}
\end{prop}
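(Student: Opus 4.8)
I will prove the two parts in turn: the meridian-wise structure by a symmetry argument, and the quantitative comparison by a perturbative iteration based on the Dirichlet-solvers of Proposition \ref{RLambda} applied at each of the two ends of $\Lambda$.

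First, the structural claim is a consequence of the rotational symmetry of $\mathcal L_\chi$ on $\Lambda$. On $\Lambda$ the metric $\chi$ is exactly $d\underline t^{\,2}+d\theta^2$ and the potential $q:=\rho^{-2}|A|^2=4\tau\cosh (2w(\underline t))$ depends only on $\underline t$, so $\mathcal L_\chi=\partial_{\underline t}^2+\partial_\theta^2+q$ commutes with the $\mathbb S^1$-action and preserves each $\theta$-Fourier mode; within the first eigenspace of $\Delta_{\mathbb S^1}$ it does not mix $\cos\theta$ with $\sin\theta$. Since the Dirichlet problem for $\mathcal L_\chi$ on $\Lambda$ has a unique solution (the corollary to Proposition \ref{lowestevtrans}) and the boundary data of $V_1$, $V_2$, $V_3$ lie in the constant, $\cos\theta$, and $\sin\theta$ modes respectively, so do $V_1[\Lambda,a_1,a_2]$, $V_2[\Lambda,a_1,a_2]$, $V_3[\Lambda,a_1,a_2]$.

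For the estimates, the point is that by \eqref{decaymain} the operator $\mathcal L_\chi=\Delta_\chi+q$ is a small, two-sidedly exponentially localized perturbation of $\Delta_\chi$, and the explicit functions $\widetilde V_i[\Lambda,a_1,a_2]$ of \eqref{Vlambda} are the corresponding $\Delta_\chi$-solutions (here $|P_{\taue}/P_{\uthte}-1|\le C\outau/(-\log\outau)$ by \eqref{diffeodifference}, so the $\sinh$-factors behave as the unperturbed ones). Fix $i$ and set $u_0:=\widetilde V_i[\Lambda,1,0]$, which vanishes on $C^-$ and equals the appropriate low harmonic on $C^+$; then $\mathcal L_\chi u_0=q\,u_0=:E_0$, and since $u_0$ is $O(1)$ near $C^+$ and decays like $e^{-\underline x}$ into $\Lambda$ for $i=2,3$, $E_0$ is small in the weighted $C^{0,\beta}$ norms with weight $e^{-\gamma\underline x}$ near $C^+$ and $\ell^{-1}e^{-\gamma\overline x}$ (resp.\ $e^{-\ell}$) near $C^-$. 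Using a partition of unity subordinate to the two ends, split $E_0=E_0^+ +E_0^{\mathrm{mid}}+E_0^-$ and solve $\mathcal L_\chi w_0^+=-E_0^+$ with $\mathcal R_\Lambda$, $\mathcal L_\chi w_0^-=-E_0^-$ with the $C^+\leftrightarrow C^-$ version of $\mathcal R_\Lambda$, and $\mathcal L_\chi w_0^{\mathrm{mid}}=-E_0^{\mathrm{mid}}$ by the generic Dirichlet inverse (the factor $\ell^2$ there is harmless against the $O(\tau)$ size of $E_0^{\mathrm{mid}}$). Each $w_0^{\pm}$ lies in $\mathcal H_1$ on its near circle and vanishes on the far one, so $V_i[\Lambda,1,0]-\big(u_0+w_0^+ +w_0^{\mathrm{mid}}+w_0^-\big)$ is a homogeneous solution whose boundary data are small low harmonics; approximating that solution by $\widetilde V_i[\Lambda,c^+,c^-]$ with the matching data replaces it by a residue of size $O(\epsilon_1^2)$ and we iterate. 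The resulting series converges geometrically in $\epsilon_1$; its sum is $V_i[\Lambda,1,0]$, the accumulated $\widetilde V_i$-terms assemble into $\widetilde V_i[\Lambda,A_i^+,A_i^-]$ (with $A_i^-=0$ for $i=2,3$, the $C^-$-side corrections being absorbed into the $e^{-\ell}$ part of the weight), and the accumulated $w$-terms are the claimed remainder, bounded in the stated weights by $C\epsilon_1$. Reading off the boundary values on $C^+$ gives $|A_1^+-1|,|A_2-1|,|A_3-1|\le\epsilon_3$ and those on $C^-$ give $|A_1^-|\le\epsilon_3/\ell$, once $\epsilon_1$ is chosen small in terms of $\epsilon_3$.

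The main obstacle is bookkeeping the two different exponential weights at the two ends simultaneously and making sure the zero-mode (affine) asymptotics and the $\cos\theta/\sin\theta$ (exponential) asymptotics are tracked with the correct prefactors — this is exactly where the $\ell^{-1}$ and $e^{-\ell}$ factors and the improved exponents $\gamma$ versus $\gamma+1$ in items (2)--(5) come from, reflecting that $q$ is $O(\epsilon_1)$ at the boundary circles but $O(e^{-2\overline{\underline x}})$ in the interior. Everything else is routine linear perturbation theory, following the analogous neck analysis in \cite{KapWente}.
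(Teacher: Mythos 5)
Your proposal is correct in substance and uses the same analytic inputs as the paper (rotational invariance for the mode structure, the smallness \eqref{decaymain} of $\rho^{-2}|A|^2$, the explicit $\Delta_\chi$-solutions \eqref{Vlambda}, and $\mathcal R_\Lambda$ with two-sided weights), but it closes the argument differently. The paper applies the correction operator once to each model: it produces $\hat V_1[\Lambda,1,0]$, $\hat V_1[\Lambda,0,1]$ with $\mathcal L_\chi \hat V_1 = -\mathcal L_\chi \widetilde V_1$, so that $\widetilde V_1[\Lambda,A_1^+,A_1^-]+A_1^+\hat V_1[\Lambda,1,0]+A_1^-\hat V_1[\Lambda,0,1]$ lies exactly in the kernel of $\mathcal L_\chi$; matching boundary data then reduces to a near-identity $2\times 2$ linear system for $A_1^\pm$, and uniqueness of the Dirichlet problem gives equality on all of $\Lambda$ in one shot, with the bounds $|A_1^+-1|\le\epsilon_3$, $|A_1^-|\le\epsilon_3/\ell$ read off from the meridian values. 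You instead split the error $q\,\widetilde V_i$ by a partition of unity into two end pieces (handled by $\mathcal R_\Lambda$ and its reversed version) and a middle piece (handled by the crude Dirichlet inverse), and then iterate on the small low-harmonic boundary residue, summing a geometric series in $\epsilon_1$; this is a Neumann-series version of the paper's exact matching and it works, with the same implicit extension of Proposition \ref{RLambda} to the weights $e^{-\gamma\underline x}+\ell^{-1}e^{-\gamma\overline x}$ and $e^{-(\gamma+1)\underline x}+e^{-\ell}$ that the paper also invokes ("with the weaker estimates available here"). The one point you should make explicit is where the partition is cut: for items (4)--(5) the weight in the interior is only $e^{-\ell}$, so the $\ell^2$ loss in the plain Dirichlet estimate is harmless only if the middle piece is supported deep in the neck, where $q\lesssim\epsilon_1 e^{-\ell}$ (your remark about the $O(\tau)$ size of $E_0^{\mathrm{mid}}$ shows you intend this, but a cut at fixed distance from the boundary circles would break the estimate, since $\epsilon_1$ must be independent of $\ell$). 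The paper's route avoids both the partition and the iteration, which is what the extra structural observation (that $\widetilde V_i$ plus its single correction is an exact kernel element) buys.
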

\begin{proof}The rotational invariance of the solutions follows from the rotational invariance of the operator $\mathcal L_\chi$.

 By \eqref{decaymain}, and the definition of $\widetilde V_1$ the decay takes the form of:
\[
 ||\mathcal L_\chi \widetilde V_1[\Lambda,1,0]: C^{0,\beta}(\Lambda, \chi, e^{-\gamma\underline x} + \ell^{-1} e^{-\gamma \overline x})|| \leq \epsilon_1 C(\beta,\gamma)
\]and
\[
 ||\mathcal L_\chi \widetilde V_1[\Lambda,0,1]: C^{0,\beta}(\Lambda, \chi, e^{-\gamma\overline x} + \ell^{-1}e^{-\gamma \underline x})|| \leq \epsilon_1 C(\beta,\gamma).
\]
 We now apply Proposition \ref{RLambda}, but with the weaker estimates available here. Upon application, we determine the existence of $\hat V_1[\Lambda, 1,0], \hat V_1[\Lambda,0,1]\in C^{2,\beta}(\Lambda)$ such that
\[
 \mathcal L_\chi \hat V_1[\Lambda,1,0] = -\mathcal L_\chi \widetilde V_1[\Lambda,1,0]; \qquad \mathcal L_\chi \hat V_1[\Lambda,0,1]=-\mathcal L_\chi \widetilde V_1[\Lambda,0,1],
\]both functions are constant on $C^+,C^-$ by rotational invariance, and finally
\[
 || \hat V_1[\Lambda,1,0]: C^{2,\beta}(\Lambda, \chi, e^{-\gamma\underline x} + \ell^{-1} e^{-\gamma\overline x})|| \leq \epsilon_1 C(\beta,\gamma)
\]and
\[
 || \hat V_1[\Lambda,0,1]: C^{2,\beta}(\Lambda, \chi, e^{-\gamma\overline x} + \ell^{-1}e^{-\gamma\underline x})|| \leq \epsilon_1 C(\beta,\gamma).
\]
We choose $A_1^+,A_1^-$ so that
\[
 V_1[\Lambda,1,0] = \widetilde V_1[\Lambda, A_1^+,A_1^-] + A_1^+\hat V_1[\Lambda,1,0] + A_1^- \hat V_1[\Lambda,0,1]
\]is achieved on $\partial \Lambda = C^+ \cup C^-$. Notice both sides of the equation are in the kernel of $\mathcal L_\chi$ on $\Lambda$. The uniqueness of the
Dirichlet solution on $\Lambda$ implies equality holds on all of $\Lambda$. 
The estimates on $A_1^+, A_1^-$ come simply from applying the given estimates on the appropriate meridians. 

For $\widetilde V_i[\Lambda,1,0]$ with $i=2,3$, we determine
\[
 ||\mathcal L_\chi \widetilde V_i[\Lambda,1,0]: C^{0,\beta}(\Lambda, \chi, e^{-(\gamma+1)\underline x} + e^{-\ell})|| \leq \epsilon_1 C(\beta,\gamma).
\]
The rest of the proof follows the strategy outlined previously.

\end{proof}

\addtocounter{equation}{1}
\begin{corollary}\label{LambdaCloseCor}
If $f \in C^{2,\beta}(\Lambda)$ satisfies $\mathcal{L}_\chi f = 0$ on $\Lambda$ and $f|_{C^-}=0$, then 
\begin{enumerate}
\item \label{LCC1}$||f:C^{2,\beta}(\Lambda, \chi, (\overline x+1)/\ell)|| \leq C(b,\beta)||f:C^{2, \beta}(C^+, d\theta^2)||.$
\item \label{LCC2}$||\partial_t f:C^0(\Lambda \cap \widetilde C, \chi)||\leq C(b,\beta)||f:C^{2, \beta}(C^+, d\theta^2)||/|\log \outau|$.
\end{enumerate}
\end{corollary}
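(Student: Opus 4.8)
The plan is to exploit the rotational symmetry of $\mathcal L_\chi$ on $\Lambda$. In the metric $\chi$ the region $\Lambda$ is a flat cylinder of axial length $\eL$, $\mathcal L_\chi = \Delta_\chi + \rho^{-2}|A|^2$, and the zeroth order coefficient $\rho^{-2}|A|^2 = 4\outau\cosh(2w)$ depends only on the axial variable; in particular it is $\Ss^1$-invariant and, by \eqref{decaymain}, uniformly small once $b$ has been fixed large. Hence $\mathcal L_\chi$ commutes with the $\Ss^1$-action, and I would write $f = f^0 + f' + f^\perp$, where on each meridian $f^0$ is the mean value, $f'$ is the projection onto $\mathrm{span}\{\cos\theta,\sin\theta\}$, and $f^\perp$ is $L^2$-orthogonal to $\mathcal H_1$. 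Each summand then solves $\mathcal L_\chi(\cdot)=0$ with vanishing data on $C^-$, its restriction to $C^+$ is the corresponding Fourier piece of $f|_{C^+}$, and hence $f^0 = a_0\,V_1[\Lambda,1,0]$, $f' = b_1\,V_2[\Lambda,1,0] + c_1\,V_3[\Lambda,1,0]$ with $|a_0|,|b_1|,|c_1| \le C\|f:C^{2,\beta}(C^+,d\theta^2)\|$.

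For $f^0$ and $f'$ I would invoke Proposition \ref{LambdaKernel}, which says $V_1[\Lambda,1,0]$ lies within $\epsilon_3$, in the weighted norm with weight $e^{-\gamma\underline x} + \eL^{-1}e^{-\gamma\overline x}$, of the explicit harmonic function $\widetilde V_1[\Lambda,A_1^+,A_1^-] = A_1^+\overline x/\eL + A_1^-\underline x/\eL$ (with $|A_1^+-1|\le\epsilon_3$, $|A_1^-|\le\epsilon_3/\eL$), while $V_2[\Lambda,1,0]$, $V_3[\Lambda,1,0]$ are within $\epsilon_3$ of $\widetilde V_2[\Lambda,A_2,0]$, $\widetilde V_3[\Lambda,A_3,0]$, which by \eqref{Vlambda} decay like $e^{-\underline x}$ away from $C^+$. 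An elementary computation, using $\underline x+\overline x=\eL$ and $\eL = -\log\outau + O(1)$ from \eqref{elllength}, shows that each of the weights $e^{-\gamma\underline x}+\eL^{-1}e^{-\gamma\overline x}$, $e^{-(\gamma+1)\underline x}+e^{-\eL}$, $e^{-\underline x}$, and also $\widetilde V_1[\Lambda,A_1^+,A_1^-]$ together with its $\chi$-derivatives, is $\le C(\overline x+1)/\eL$ on $\Lambda$. Combining these, $\|f^0+f':C^{2,\beta}(\Lambda,\chi,(\overline x+1)/\eL)\| \le C\|f:C^{2,\beta}(C^+,d\theta^2)\|$.

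For $f^\perp$, whose data on $C^+$ is $L^2$-orthogonal to $\mathcal H_1[C^+]$, I would compare it with the $\Delta_\chi$-harmonic function $\widetilde V$ on $\Lambda$ having the same boundary data (vanishing on $C^-$ and equal to $f^\perp|_{C^+}$ on $C^+$). Since only modes $k\ge2$ occur, $\widetilde V$ is a superposition of terms decaying like $\sinh(k\overline x)/\sinh(k\eL)\lesssim e^{-k\underline x}$, so $\|\widetilde V:C^{2,\beta}(\Lambda,\chi,e^{-\gamma\underline x})\|\le C\|f^\perp|_{C^+}:C^{2,\beta}(C^+,d\theta^2)\|$ for any fixed $\gamma\in(1,2)$ — this is exactly the map $\widetilde{\mathcal R}_\partial$ built in the proof of Corollary \ref{Rpartial}. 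Then $f^\perp-\widetilde V$ solves $\mathcal L_\chi(f^\perp-\widetilde V) = -\rho^{-2}|A|^2\,\widetilde V$ with zero boundary data, and iterating the Dirichlet estimate for $\mathcal L_\chi$ against the smallness \eqref{decaymain}, as in the proof of Proposition \ref{RLambda}, gives $\|f^\perp:C^{2,\beta}(\Lambda,\chi,e^{-\gamma\underline x})\|\le C\|f:C^{2,\beta}(C^+,d\theta^2)\|$; since $e^{-\gamma\underline x}\le C(\overline x+1)/\eL$ this establishes the first estimate. The second then follows by restricting the decomposition to $\widetilde C$, where $\underline x,\overline x\approx\eL/2\approx|\log\outau|/2$: there $f'$, $f^\perp$, and the difference $a_0(V_1[\Lambda,1,0]-\widetilde V_1[\Lambda,A_1^+,A_1^-])$, along with their $\partial_t$-derivatives, are exponentially small in $\eL$ and negligible, whereas $\partial_t\!\left(a_0\widetilde V_1[\Lambda,A_1^+,A_1^-]\right) = a_0(-A_1^++A_1^-)/\eL$, which has absolute value $\le C|a_0|/\eL \le C\|f:C^{2,\beta}(C^+,d\theta^2)\|/|\log\outau|$.

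The step I expect to be the main obstacle is the decay estimate for $f^\perp$: getting $C^{2,\beta}$ decay at a rate $\gamma>1$ with constants independent of $\outau$ and of the position of $\Lambda$ within $M$. This is precisely where the smallness of $\rho^{-2}|A|^2$ ensured by the large choice of $b$ in \eqref{decaymain} is used, and the argument is parallel to the perturbation arguments already carried out for Proposition \ref{RLambda} and Corollary \ref{Rpartial}, so those may be quoted rather than redone. Everything else — the Fourier decomposition, the bounds on $a_0,b_1,c_1$, and the replacement of the various natural weights by the single weight $(\overline x+1)/\eL$ — is routine bookkeeping.
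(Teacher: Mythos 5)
Your proposal is correct and follows essentially the same route as the paper: the paper likewise splits the boundary data on $C^+$ into its $\mathcal H_1[C^+]$ part and its orthogonal part, handles the low harmonics through $V_1,V_2,V_3[\Lambda,\cdot,0]$ via Proposition \ref{LambdaKernel} and \eqref{Vlambda}, handles the orthogonal part with $\mathcal R_\partial$ from Corollary \ref{Rpartial} (the perturbation-off-$\Delta_\chi$ argument you sketch is exactly how that corollary is proved), and obtains item (2) from \eqref{elllength}. Your use of rotational invariance to project $f$ itself onto meridian Fourier modes, rather than subtracting $\mathcal R_\partial f^\perp$ and invoking uniqueness of the Dirichlet solution, is only a bookkeeping variant of the same argument.
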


\begin{proof}
On $C^+$, decompose $f= f_1 + f_2 + f_3 + f^\perp$ where $f_1=a_1, f_2 = a_2\cos \theta, f_3 = a_3 \sin \theta$ and $f^\perp$ is in the domain of $\mathcal R_\partial$ defined in Corollary \ref{Rpartial}. Let $V = \mathcal R_\partial f^\perp$ given by Corollary \ref{Rpartial}. Then using the estimates from Corollary \ref{Rpartial} and Proposition \ref{LambdaKernel} implies \eqref{LCC1}. 
Using this same decomposition and analysis, \eqref{LCC2} follows by recalling \eqref{elllength}.
\end{proof}

\subsection*{The approximate kernel on the standard regions}We now consider the nature of the approximate kernel for the operator $\mathcal L_h$ on the standard regions. By approximate kernel, we mean the span of eigenfunctions of $\mathcal L_h$ that have eigenvalues close to $0$. Following the methods of Appendix B in \cite{KapAnn}, we understand the approximate kernel by comparing each standard region with the $h$ metric to a round sphere with the standard metric. Because we do not impose symmetry, the span of the approximate kernel is three dimensional on each standard region. We determine a basis for the approximate kernel on each standard region using a function defined on that domain of $M$. The first functions defined below are normalizations of the components of the unit normal to the immersion $\hYtdz$. The second are also based on the unit normal to the immersion, but are defined relative to a Delaunay building block with axis positioned on the $x_1$-axis.

\addtocounter{equation}{1}
\begin{definition}
Let $\hat f_i:M \to \Real$ for $i=1,2,3$ such that
\addtocounter{theorem}{1}
\begin{equation}\label{kernelcomparison}
\hat f_i :=\frac{\mathbf e_i \cdot \Ntdz}{||\mathbf e_i \cdot N_{\Ss^2} :L^2(\Ss^2)||}= \frac{1}{\pi}\mathbf e_i \cdot \Ntdz
\end{equation} where $\Ntdz$ is the normal to the immersion $\hYtdz$.

Let $\cf_i: \cup_{e \in E(\Gamma) \cup R(\Gamma)}M[e] \to \Real$ for $i=1,2,3$ such that for $x \in M[e]$
\addtocounter{theorem}{1}
\begin{equation}\label{cfeq}
\cf_i(x):= \frac{\Be_i \cdot \left(\RRR'[e]^{-1}\Ntdz(x)\right)}{||\mathbf e_i \cdot N_{\Ss^2} :L^2(\Ss^2)||}= \frac 1\pi \Be_i \cdot \left(\RRR'[e]^{-1}\Ntdz(x)\right).
\end{equation}
\end{definition}
Recall that the functions 
\[\frac{N_{\Ss^2} \cdot \Be_i}{||\mathbf e_i \cdot N_{\Ss^2} :L^2(\Ss^2)||}\]
form an orthonormal basis for the kernel of the operator $\Delta_{\Ss^2} + 2$. This observation and an understanding of the geometric limiting objects (in the $h$ metric) of the standard regions as $\outau \to 0$ motivates the definition of the functions. The incorporation of a rotation in the definition of $\cf_i$ makes the choice of constants in Definition \ref{tildefdef}, \eqref{phipen} and the estimates they satisfy more immediately obvious.
\addtocounter{equation}{1}
\begin{prop}\label{approxkerprop}
For fixed $b$ large, let $\epsilon>0$. There exists $\tau_\epsilon$ such that for $0<\outau<\tau_\epsilon$, 
\begin{enumerate}
\item $\mathcal{L}_h$ acting on $\widetilde S\pen$, with vanishing Dirichlet conditions, has exactly three eigenvalues in $[-\epsilon, \epsilon]$ and no other eigenvalues in $[-1/2, 1/2]$.  There exist $\{ f_{1}[p,e,n], f_{2}[p,e,n], f_{3}[p,e,n] \}$ that denote an orthonormal basis 
of the \emph{approximate kernel} where $f_{i}[p,e,n] \in C^\infty(\widetilde{S}[p,e,n])$. Moreover $f_{i}\pen$ depends continuously on the parameters of the construction and each $f_i\pen$ satisfies
\addtocounter{theorem}{1}
\begin{equation}
\label{approxkerest}
||f_{i}[p,e,n] - \cf_i: C^{2,\beta}(S_5[p,e,n],h) || < \epsilon.
\end{equation}
\item $\mathcal L_h$ acting on $\widetilde S[p]$  has exactly three eigenvalues in $[-\epsilon, \epsilon]$ and no other eigenvalues in $[-1/2, 1/2]$.  There exist $\{ f_{1}[p], f_{2}[p], f_{3}[p] \}$ that denote an orthonormal basis 
of the \emph{approximate kernel} where $f_{i}[p] \in C^\infty(\widetilde{S}[p])$.  Moreover, $f_{i}[p]$ depends continuously on  the parameters of the construction and satisfies
\[||f_{i}[p] - \hat f_i: C^{2,\beta}(S_5[p],h) || < \epsilon.
\]
\end{enumerate}
\end{prop}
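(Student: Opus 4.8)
The plan is to follow the scheme of Appendix~B of \cite{KapAnn}: on each standard region we transplant $\mathcal L_h$ to a domain of the round sphere by the relevant comparison map ($\widetilde Y\pen$ in case~(1), $\widetilde Y[p]$ in case~(2)), observe that there it is a small perturbation of $\mathcal L_{g_{\Ss^2}}=\Delta_{\Ss^2}+2$, and read off the low spectrum from the explicit spectrum of the latter. Recall that $\mathcal L_h=\Delta_h+2$ (conformal invariance of the Laplacian in two dimensions), that $\Delta_{\Ss^2}+2$ has eigenvalues $2,0,-4,-10,\dots$, and that the eigenvalue $0$ is isolated with three-dimensional eigenspace spanned by the normalized coordinate functions $\tfrac1\pi\,\Be_i\cdot N_{\Ss^2}$; in particular $\mathrm{spec}(\Delta_{\Ss^2}+2)\cap[-1/2,1/2]=\{0\}$. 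By construction, $\cf_i$ (resp.\ $\hat f_i$) is approximately the $\widetilde Y\pen$-pullback (resp.\ $\widetilde Y[p]$-pullback) of $\tfrac1\pi\,\Be_i\cdot N_{\Ss^2}$.

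For case~(1), I would push $h$ forward by $\widetilde Y\pen$ to a metric $h'$ on $\Omega:=\widetilde Y\pen(\widetilde S\pen)\subset\Ss^2$. On the image of the bulk $S_x\pen$, item~\eqref{nc10} of Lemma~\ref{metriccomparisons} gives $\|h'-g_{\Ss^2}:C^k\|\le C(k,x)\outau$, so there $\mathcal L_{h'}$ differs from $\Delta_{\Ss^2}+2$ by an operator with $C^k$-small coefficients; the complement in $\widetilde S\pen$ is the pair of flanking transition regions, on which $h=2|\taue|\cosh(2w)\,\chi$ and (by Lemma~\ref{Plemma} and $\outau/C\le\taue\le\outau$) carries a thin waist collapsing to $h$-diameter $O(\outau^{1/2})$, the Dirichlet boundary of $\widetilde S\pen$ lying beyond these waists. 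Standard domain-degeneration and capacity estimates — the substance of Appendix~B of \cite{KapAnn} — then show that the Dirichlet spectrum of $\mathcal L_h$ on $\widetilde S\pen$ intersected with $[-1/2,1/2]$ converges to $\{0\}$ with multiplicity three as $\outau\to0$: upper bounds for the top four Dirichlet eigenvalues come from testing against logarithmically cut-off versions of $1,\cf_1,\cf_2,\cf_3$, whose Rayleigh quotients for $\mathcal L_h$ are $2+o(1)$ and $o(1)$ respectively (the cut-off costing only capacity $O(1/|\log\outau|)$, and $\mathcal L_h\cf_i$ being $O(\outau)$ on the bulk by the computation below); the absence of any other eigenvalue in $[-1/2,1/2]$ is secured by taking $b$ large, which forces the eigenvalues concentrated on the capped thin necks out of $[-1/2,1/2]$ and, via domain monotonicity into a slightly enlarged closed surface, keeps the fifth eigenvalue below $-1/2$ and the first above $1/2$. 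Define $f_1\pen,f_2\pen,f_3\pen$ to be any $L^2(\widetilde S\pen,h)$-orthonormal basis of the span of the three eigenfunctions with eigenvalue in $[-\epsilon,\epsilon]$; continuous dependence on the construction parameters is inherited from that of $\hYtdz$, hence of $h$, and of a spectral projection onto an eigenspace separated by a fixed gap.

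It remains to prove \eqref{approxkerest}. On $S_5\pen$, Proposition~\ref{geolimit} gives $\|\RRR'[e]^{-1}\,\Ntdz-\widetilde Y\pen:C^k(S_5\pen,(\widetilde Y\pen)^*g_{\Ss^2})\|\le C(k)\outau$, so by \eqref{cfeq} together with \eqref{nc10} the function $\cf_i$ is within $C(k)\outau$, in $C^{2,\beta}(S_5\pen,h)$, of $\phi_i:=\tfrac1\pi\,\Be_i\cdot\widetilde Y\pen$, and $\phi_i$ is annihilated by $\mathcal L$ in the pulled-back round metric $(\widetilde Y\pen)^*g_{\Ss^2}$; hence $\mathcal L_h\cf_i=O(\outau)$ in $C^{0,\beta}$ throughout the bulk. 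Cutting $\cf_i$ off to vanish on $\partial\widetilde S\pen$ as above produces $\eta_i\in C^\infty(\widetilde S\pen)$ with $\eta_i=\cf_i$ on the bulk, $\eta_i=0$ on $\partial\widetilde S\pen$, and $\|\mathcal L_h\eta_i:L^2(\widetilde S\pen,h)\|=o(1)$. Writing $\eta_i=\eta_i^{0}+\eta_i^{\perp}$, where $\eta_i^{0}$ is the $L^2(h)$-projection onto the approximate kernel, the fact that the rest of the Dirichlet spectrum lies outside $[-1/2,1/2]$ yields $\|\eta_i^{\perp}:L^2\|\le 2\|\mathcal L_h\eta_i:L^2\|=o(1)$, and interior Schauder estimates for $\mathcal L_h$ on $S_5\pen$, compactly contained in $S_{20}\pen$ where $h$ is uniformly controlled and $\mathcal L_h\eta_i^{\perp}=\mathcal L_h\eta_i-\sum_j\lambda_j\langle\eta_i,f_j\rangle f_j$ is $O(\epsilon)$ in $C^{0,\beta}$, upgrade this to $\|\eta_i^{\perp}:C^{2,\beta}(S_5\pen,h)\|\le C\epsilon$. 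Thus $\eta_i^{0}$ lies in the approximate kernel and is $C^{2,\beta}(S_5\pen,h)$-close to $\cf_i$; since the $\eta_i^{0}$ are near-orthonormal (the $\cf_i$ being near-pullbacks of the $L^2(\Ss^2)$-orthonormal coordinate functions), a Gram--Schmidt correction moves them by $O(\epsilon)$ and produces the orthonormal basis $f_i\pen$ with $\|f_i\pen-\cf_i:C^{2,\beta}(S_5\pen,h)\|<\epsilon$ after the usual adjustment of constants.

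Case~(2) is identical, with $\widetilde Y[p]$, $\hat f_i$, and item~\eqref{nc7} of Lemma~\ref{metriccomparisons} replacing $\widetilde Y\pen$, $\cf_i$, and \eqref{nc10}; the model domain is $\Ss^2$ with $|E_p|$ collapsing caps rather than two, and the only change is that the metric error is $O(\outau^{1/2})$ in place of $O(\outau)$. The main obstacle is the domain degeneration: one must verify, using the explicit form $h=2|\taue|\cosh(2w)\,\chi$ on the transition regions together with Lemma~\ref{Plemma}, both that the collapsing necks produce no Dirichlet eigenvalue of $\mathcal L_h$ in $[-1/2,1/2]$ beyond the three near $0$ and that they do not spoil the interior Schauder estimate on $S_5\pen$, which sits at $h$-distance $O(1)$ from where the cut-offs are supported — precisely the estimates of Appendix~B of \cite{KapAnn}, which carry over once the comparison maps $\widetilde Y[p],\widetilde Y\pen$ and the metric estimates of Lemma~\ref{metriccomparisons} are in hand.
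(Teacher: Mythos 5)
Your overall strategy is the paper's: compare $\mathcal L_h$ on each extended standard region with $\Delta_{\Ss^2}+2$ via the maps $\widetilde Y\pen$, $\widetilde Y[p]$ and the eigenvalue-comparison machinery of Appendix~B of \cite{KapAnn}, then identify the approximate kernel with the $\cf_i$ (resp.\ $\hat f_i$) using Proposition~\ref{geolimit} and upgrade to $C^{2,\beta}$ by interior elliptic estimates; your second half (cutting off $\cf_i$, using the spectral gap to control the off-kernel component in $L^2$, interior Schauder on $S_5\pen$, Gram--Schmidt) is a sound variant of the paper's identification of the $f_i\pen$. The gap is in the one step that carries the content of the proposition: the exact count of the spectrum in $[-1/2,1/2]$. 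Test functions bound eigenvalues from one side only, and your proposed complement --- ``domain monotonicity into a slightly enlarged closed surface'' --- does not supply the other side. Any closed surface obtained by capping off $\widetilde S\pen$ contains, beyond the middle circles of the two necks, pieces of $h$-area $O(e^{-2b})$ joined to the main almost-spherical region through waists of conformal capacity $O(1/|\log\outau|)$; the nearly locally constant functions supported on those pieces produce two spurious eigenvalues of $-\Delta$ of size $O(e^{2b}/|\log\outau|)$, i.e.\ two eigenvalues of $\Delta+2$ just below $2$. Hence the closed comparison surface has low spectrum roughly $\{2,2,2,0,0,0,-4,\dots\}$, and one-sided monotonicity only yields that the fifth Dirichlet eigenvalue of $\widetilde S\pen$ is at most $o(1)$ and the second at most $2+o(1)$ --- not that the fifth lies below $-1/2$ nor that the second lies below $\epsilon$ --- so neither ``exactly three in $[-\epsilon,\epsilon]$'' nor ``no others in $[-1/2,1/2]$'' follows; and a one-sided comparison can in any case never simultaneously pin eigenvalues from above and from below.

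This is exactly why the paper's model $\hat S$ is not a closed enlargement but the disjoint union of a full round sphere with the two small geodesic disks $D_{close},D_{far}$ carrying \emph{Dirichlet} conditions: for $b$ large their first Dirichlet eigenvalue exceeds $3$, so the model operator has only the eigenvalue $0$, with multiplicity three, in $[-1,1]$, and the two-sided transplantation comparison (the maps $F_1,F_2$ built with the logarithmic cutoff $\psi[2d,d]\circ\oux$, hypotheses B.1.4 and Theorem B.2.3 of \cite{KapAnn}) transfers this count to $\mathcal L_h$ on $\widetilde S\pen$, and likewise on $\widetilde S[p]$ with $|E_p|$ caps. You do gesture at Appendix~B as ``the substance,'' but the argument you actually articulate in its place is the part that fails; to repair the write-up, replace the monotonicity step by this Dirichlet-cap model and the quantitative two-sided comparison (or an equivalent bracketing in which the cap pieces are treated with Dirichlet data), noting also that the $C^k$ closeness of $h$ to the pulled-back round metric in Lemma~\ref{metriccomparisons} is only available on $S_x\pen$ for bounded $x$, so the necks must be handled through the quadratic-form framework with the logarithmic cutoff rather than by coefficientwise perturbation of the operator.
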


Before we proceed with the proof, we make the following relabeling of transition regions.
\addtocounter{equation}{1}
\begin{definition}\label{genericlabel}
 For any standard region $S\pen$, define $\Lambda_{close}, \Lambda_{far}$ such that 
 \[
 \Lambda_{close}:= \Lambda \pen
 \]
 \[
 \Lambda_{far}:=\left\{ \begin{array}{ll} \Lambda[p,e,n+1]& \text{if } n < l(e),\\
 \Lambda[p^-[e],e,l(e)]& \text{if } n=l(e).\end{array}\right.
 \]
  \end{definition}
Notice that the wording of this definition makes sense everywhere except on the middle of each edge. That is, except when $n=l(e)$, the labeling of ``close'' and ``far'' correspond to relative distance to the nearest boundary circle on each $M[e]$. At the center of $M[e]$, we keep the labeling but note that both transition regions are closer to a boundary than the standard region between them.

We now prove the proposition.
\begin{proof}
We prove both items in the proposition by taking advantage of the results of \cite{KapAnn}, Appendix B, which determine relationships between eigenfunctions and eigenvalues of two Riemannian manifolds that are shown to be
close in some reasonable sense. Before proceeding, note that the inequality B.1.6 in \cite{KapAnn} should read
\[
 ||F_if||_\infty \leq 2 ||f||_\infty.
\]

To simplify the language of the proof, we let $\widetilde S$ denote a general $\widetilde S\pen$. Moreover, let $\widetilde C_{close}, \widetilde C_{far}$ denote the center meridian circle of $\Lambda_{close}, \Lambda_{far}$ respectively. Finally, let $\widetilde Y, \widetilde Y_{close}, \widetilde Y_{far}$ denote the immersions defined in \eqref{tildeYpen} for the 
particular $\pen$ of interest and those that also share $\Lambda_{close}, \Lambda_{far}$, respectively, on their extended standard regions. Thus, for $2 \leq n \leq l(e)$, $\widetilde Y_{close} := \widetilde Y[p,e,n-1]$, and for $1 \leq n \leq l(e)-1$, $\widetilde Y_{far}:= \widetilde Y[p,e,n+1]$. Moreover, when $n=1$, $\widetilde Y_{close} := \widetilde Y[p]$ and when $n=l(e)$, $\widetilde Y_{far}:= \widetilde Y[p^-[e],e,l(e)]$.

We begin by considering $\widetilde S \backslash \left(\widetilde C_{close} \cup \widetilde C_{far}\right)$.
Let $\widetilde S^+$ denote the connected component containing $S$ and let $\Lambda_{close}^-, \Lambda_{far}^-$ denote the two remaining connected components where naturally we have $\Lambda_{close}^- \subset \Lambda_{close}$. Let 
$D_{close}, D_{far}$ denote the smallest geodesic disks in $\mathbb S^2$ that contain, respectively, $\widetilde Y_{close}(\Lambda_{close}^-), \widetilde Y_{far}(\Lambda_{far}^-)$. Consider two abstract copies of $\Ss^2$, $\{1\}\times \Ss^2, \{2\}\times \Ss^2$
and their disjoint union $\{1,2\}\times \Ss^2$.

We first determine a comparison Riemannian manifold $(\hat S, \hat g)$ on which the linear operator of interest is well understood. In order to apply the results of \cite{KapAnn}, we must show there exist linear
maps such that all of the assumptions of B.1.4 are satisfied. To that end, let 
\[
 \hat S := \left(\{1\} \times \mathbb S^2\right) \bigcup \left(\{2 \} \times \left( D_{close} \cup D_{far}\right)\right)
\]and define $\hat g$ on $\{1,2\}\times \Ss^2$ by making $\hat g$ the restriction to $g_{\Ss^2}$ on each copy of $\Ss^2$. Consider the operator $\Delta_{\Ss^2}$ on $\Ss^2$ with the standard metric. It it well 
known that the lowest eigenvalues for this operator are $0, 2, 6$, where the eigenvalue $2$ has multiplicity $3$. Therefore, the only eigenvalue of $\Delta_{\Ss^2}+2$  in the interval $[-1,1]$ is
zero, with multiplicity $3$. Choosing $b$ sufficiently large, thus ensuring the radii of $D_{close}, D_{far}$ are sufficiently small,
we can guarantee the lowest eigenvalue of $\Delta_{\hat g}$ on $(D_{close,far}, \hat g)$ with Dirichlet data must be larger than $3$. 

Define $\widetilde {\underline Y}: \widetilde S \to \hat S$ such that $\widetilde {\underline Y} (x) := (1,\widetilde Y(x))$ if $x \in \widetilde S^+$ and $\widetilde {\underline Y}(x) :=(2,\widetilde Y_{close,far}(x))$ if $x \in \Lambda_{close,far}^-$.
We now construct two functions $F_1, F_2$ that will satisfy the assumptions of B.1.4 in \cite{KapAnn}, thus allowing us to appeal to Theorem B.2.3 in that same paper. First, let $\overline \psi$ be a cutoff function on $\widetilde S$ such that $\overline \psi \equiv 1$ on $S$ and $\overline \psi := \psi[2d,d]\circ \oux$ on $\Lambda_{close}, \Lambda_{far}$. This provides a necessary logarithmic cutoff function that yields appropriate gradient bounds; see B.1.8 in \cite{KapAnn}. For $f \in C_0^\infty(\widetilde S)$, define $F_1(f) \circ \widetilde{\underline Y}=\overline \psi \cdot f$ and for $f \in C_0^\infty(\hat S)$ let $F_2(f)=\overline \psi \cdot f \circ \widetilde {\underline Y}^{-1}$. For $d$ sufficiently large, the assumptions of B.1.4 are easily checked and one can apply Theorem B.2.3 with $r=2, n=4, c'=1$. That is, for the operator $\Delta_{\Ss^2}$, there is a gap between the first and second eigenvalues and between the fourth and fifth eigenvalues (counting multiplicity).
 
 Now recall that an orthonormal basis of the kernel of $\Delta_{\Ss^2}+2$ are the functions
 \[\hat f_{i, \Ss^2}:= \frac{\Be_i\cdot N_{\Ss^2}}{||\Be_i\cdot N_{\Ss^2}:L^2(\Ss^2)||},\] $i=1,2,3$, where $\Be_i$ are the standard unit basis vectors for $\Real^3$ and $N_{\Ss^2}$ is the outward normal to the unit sphere. Note $\hat f_{i, \Ss^2} \in C_0^\infty(\hat S)$ and on $S_d$ we can determine 
\[F_2(\hat f_{i,\Ss^2}) =\hat f_{i,\Ss^2} \circ \widetilde Y.
\] Moreover, by Proposition \ref{geolimit}, for $\outau$ sufficiently small 
\[
||F_2(\hat f_{i,\Ss^2})-\cf_i:C^{2,\beta}( S_6, h)|| < \epsilon/2.
\]

To finish the proof we make the following observations. Because of the uniform geometry of $S_6$ in the $h$ metric -- for $\outau$ sufficiently small-- we can use standard linear theory on the interior to increase the $L^2$ norms of Appendix B to $C^{2,\beta}$ norms. To make the dependence continuous, we let $f_{i}$ denote the normalized $L^2(\widetilde S, h)$ projection of $\cf_i$ onto the span of $F_2(\hat f_{i, \Ss^2})$. 

The second half of the theorem is nearly identical. We sketch here a few of the main differences. Let $\widetilde S$ denote an $\widetilde S[p]$ and let $\Lambda_j:=\Lambda[p,e_j,1]$, $j=1,\dots, |E_p|$, denote the adjacent transition regions. Let $\widetilde C_j$ denote the center meridian circle of each $\Lambda_j$. As before $\widetilde S^+$ denotes the connected component of $\widetilde S \backslash \left(\cup_{j=1}^{|E_p|} \widetilde C_j\right)$ and $\Lambda^-_j$ denote the other $|E_p|$ components where the enumeration choice is the obvious one. Let $D_j$ be the smallest geodesic disk in $\Ss^2$ containing $\widetilde Y[p,e_j,1](\Lambda^-_j)$. We set 
\[
\hat S:= \left(\{1\} \times \widetilde S^+\right) \cup \left(\{2\} \times D_1\right) \cup \cdots \cup \left(\{|E_p|+1\} \times D_{|E_p|}\right)
\]and $\hat g$ is the restriction of $g_{\Ss^2}$ to the appropriate copy of $\Ss^2$.
Let $\widetilde{\underline Y}: \widetilde S \to \hat S$ such that $\widetilde {\underline Y} (x) := (1,\widetilde Y(x))$ if $x \in \widetilde S^+$ and $\widetilde {\underline Y}(x) :=(j+1,\widetilde Y[p,e_j,1](x))$ if $x \in \Lambda_j^-$.
Again, using $\overline \psi$ defined to be identically $1$ on $S$ and $\overline \psi := \psi[2d,d]\circ \oux$ on $\Lambda_j$ for each $j$, we have the appropriate log cutoff function. From here, after defining
$F_1, F_2$ in an analogous manner and compare with $\hat f_i$ rather than $\cf_i$, the result proceeds exactly as before. 
\end{proof}

\subsection*{The extended substitute kernel}
Following the general methodology of \cite{KapClay,KapWente}, we introduce the extended substitute kernel. The extended substitute kernel, which we denote by $\mathcal K$, is the direct sum of three-dimensional spaces corresponding to each standard region -- central and non-central -- and each attachment. We first define the substitute kernel, the space of functions which will allow us to solve the semi-local linearized problem.
\addtocounter{equation}{1}
\begin{definition}
We let\[
\mathcal K_V \oplus \mathcal K_S:=  \bigoplus_{p \in V(\Gamma)} \mathcal K[p]\bigoplus_{[p,e,n] \in S(\Gamma)} \mathcal K[p,e,n]
\]denote the \emph{substitute kernel}. 
The spaces on the right hand side of the expression are defined below.
\end{definition}

Let $\mathcal{K}[p,e,n] = \langle w_1[p,e,n], w_2[p,e,n], w_3[p,e,n] \rangle_{\mathbb{R}}$ where we define,
for $i =1,2,3$, $\pen \in S(\Gamma)$,
\[w_i[p^+[e],e,n] := c_i [p^+[e],e,n](\psi[2nP_{\uthte} - 1, 2nP_{\uthte}](t))(\psi[2nP_{\uthte}+1, 2nP_{\uthte}](t))\cf_i,\]
\begin{align*}w_i[p^-[e],e,n] := c_i[p^-[e],e,n] &(\psi[\RH-(2nP_{\uthte} -1),\RH-2nP_{\uthte}](t))\\ \cdot &(\psi[\RH-(2nP_{\uthte}+1), \RH-2nP_{\uthte}](t))\cf_i
\end{align*}
for each $[p,e,n]\in S(\Gamma)$, where the coefficients $c_i\pen$
depend smoothly on the parameters of the construction and are determined by requiring
\addtocounter{theorem}{1}
\begin{equation}\label{wpendef}
 \int_{S[p,e,n]}w_i[p,e,n] \cf_j \:dh = \delta_{ij}.
\end{equation}
Notice the symmetry of the functions $\cf_j$ on $S\pen$ in the $h$ metric and the symmetric choice of the $w_i[p,e,n]$ imply that the coefficients are not overdetermined. Moreover, the controlled geometry of $S\pen$ in the $h$ metric implies the coefficients are well controlled. 

To define $\mathcal K[p]=\langle w_1[p], w_2[p], w_3[p] \rangle_{\mathbb{R}}$, we first need to determine a new cutoff function. 
Let $Sym[p] \subset \Ss^2_{V_p}$ denote the largest region symmetric in all three functions $x_1, x_2, x_3$ such that $\dist_{\Ss^2}(\partial Sym[p], \partial \Ss^2_{V_p}) > \delta$. Because the graph is finite, for $\outau$ small there is a uniform lower bound on the area
of $Sym[p]$. In fact, under the assumptions of the construction, the distance between the centers of any two geodesic disks removed on $M[p]$ must be at least $\pi/3+\delta$.
The controlled geometry of $\Ss^2_{V_p}$ implies that there exists a symmetric subset $ {Sym}'[p] \subset Sym[p]$ such that the exponential map from $\partial {Sym}'[p]$ into $Sym'[p]$ is one to one on a
$2\delta$ neighborhood.

For $X \subset \Ss^2$, let $T_r(X)$ denote the $r$ tubular neighborhood of $X$ on $\Ss^2$ where $r$ is measured in the induced metric. Define the function $\psi[p]$ such that $\psi[p] \equiv 1$ on $Sym'[p] \backslash T_{2\delta}(\partial Sym'[p])$ and smoothly cuts off to zero by $\partial T_\delta(\partial Sym'[p])\cap Sym'[p]$, remaining zero thereafter. Moreover, we can choose this cutoff so that it preserves the symmetry of the coordinate functions. That is $\int_{\Ss^2} \psi[p] x_i x_j =0$ if $i \neq j$.

With this definition, we define the relevant functions on the central spheres.
For $i=1,2,3$, let $$w_i[p]:=c_i' \psi[p] \hat f_i$$ on $S[p]$ where the coefficients $c_i'$ depend smoothly on the parameters of the construction and are determined by requiring
\addtocounter{theorem}{1}
\begin{equation}\label{noncentw}
 \int_{S[p]} w_i[p] \hat f_j \: dh = \delta_{ij}.
\end{equation}
Because of the construction, the coefficients are again determined solely when $i=j$ as the other terms vanish; thus the problem is not overdetermined. Moreover, the uniform lower bounds on area and the choice 
of domain where $\psi[p]$ is non-zero imply that the coefficients are uniformly bounded, independent of $\outau$.


We now use elements of the substitute kernel to modify the inhomogeneous term and ensure that the modified term is orthogonal to the approximate kernel, see items \eqref{esl3}, \eqref{esl4} below. 

\addtocounter{equation}{1}
\begin{lemma}\label{extsubslemma}
For each $p\in V(\Gamma), [p,e,n]\in S(\Gamma)$, the following hold:
\begin{enumerate}
\item $w_i[p,e,n]$ is supported on $S[p,e,n]$ and $w_i[p]$ is supported on $S[p]$.
\item $||w_i[p,e,n], w_i[p]: C^{2, \beta}(M, \chi)|| \leq C$.
\item\label{esl3} For $E \in C^{0, \beta}(\widetilde S[p,e,n], \chi)$ there is a unique $\widetilde w \in \mathcal{K}[p,e,n]$ such that $\cfE E+ \widetilde w$ is $L^2(\widetilde S[p,e,n],h)$-orthogonal to the approximate
 kernel on $\widetilde S[p,e,n]$.  Moreover, if $E$ is supported on $S_1[p,e,n],$ then
\[||\widetilde w: C^{2, \beta}(M, \chi)|| \leq C(b, \beta)||E:C^{0, \beta}(\widetilde S[p,e,n], \chi)||.\]
\item\label{esl4} For $E \in C^{0,\beta}(\widetilde S[p],\chi)$ there is a unique $\widetilde w \in \mathcal{K}[p]$ such that $\cfE E+\widetilde w$ is $L^2(\widetilde S[p],h)$ orthogonal to the approximate kernel on $\widetilde S[p]$. Moreover, if $E$ is supported on $S_1[p]$, then
\[
||\widetilde w:C^{2,\beta}(M,\chi)||\leq C(b,\beta)||E:C^{0,\beta}(\widetilde S[p],\chi)||.
\]
\end{enumerate}

\end{lemma}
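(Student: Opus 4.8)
The plan is to dispatch items (1) and (2) directly from the definitions, and to reduce items (3) and (4) to the invertibility of a $3\times3$ matrix that is a small perturbation of the identity; the genuine content of the lemma is this perturbation argument, which rests on Proposition~\ref{approxkerprop} together with the normalizations \eqref{wpendef} and \eqref{noncentw}. For item (1): each $w_i[p,e,n]$ is a constant multiple of $\cf_i$ times a product of two cutoff functions whose common support lies in $\{|t-2nP_{\uthte}|\le 1\}\times\Ss^1$ (resp.\ the reflected interval), and since $b>a+5>1$ this is contained in $S_1[p,e,n]\subset S[p,e,n]$; similarly $w_i[p]=c_i'\,\psi[p]\,\hat f_i$ is supported where $\psi[p]\neq0$, a subset of $Sym'[p]\subset M[p]\subset S[p]$. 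For item (2): on any standard region (central or not) the metrics $g,h,\chi$ are uniformly equivalent with uniformly controlled $C^k$ norms (Lemma~\ref{metriccomparisons}); the functions $\cf_i,\hat f_i$ are components of unit normal fields, hence bounded by $1$ and, by Proposition~\ref{geolimit}, uniformly close to coordinate functions on $\Ss^2$, so they have uniformly bounded $C^{2,\beta}(S_5[p,e,n],h)$ (resp.\ $C^{2,\beta}(S_5[p],h)$) norms; the cutoff functions have uniformly bounded $C^{2,\beta}(\cdot,\chi)$ norms because the $t$-coordinate is $\chi$-comparable to arclength; and the coefficients $c_i[p,e,n],c_i'$ are uniformly bounded, as noted immediately after their definitions. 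Multiplying these gives $\|w_i[p,e,n],w_i[p]:C^{2,\beta}(M,\chi)\|\le C$.

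For item (3), write $\widetilde w=\sum_{j=1}^3 a_j w_j[p,e,n]$. Orthogonality of $\cfE E+\widetilde w$ to $\{f_1[p,e,n],f_2[p,e,n],f_3[p,e,n]\}$ in $L^2(\widetilde S[p,e,n],h)$ is the linear system
\[
\sum_{j=1}^3 M_{ij}a_j=-\int_{\widetilde S[p,e,n]}\cfE E\,f_i[p,e,n]\,dh,\qquad M_{ij}:=\int_{\widetilde S[p,e,n]}w_j[p,e,n]\,f_i[p,e,n]\,dh.
\]
Since $w_j[p,e,n]$ is supported in $S[p,e,n]\subset S_5[p,e,n]$, estimate \eqref{approxkerest} gives $\|f_i[p,e,n]-\cf_i:C^{2,\beta}(S_5[p,e,n],h)\|<\epsilon$, whence by \eqref{wpendef} and the uniform bound on $\|w_j\|_{L^2(S[p,e,n],h)}$ from (2),
\[
M_{ij}=\int_{S[p,e,n]}w_j\cf_i\,dh+\int_{S[p,e,n]}w_j(f_i-\cf_i)\,dh=\delta_{ij}+O(\epsilon).
\]
Choosing $\outau$ (hence $\epsilon$) small, $M$ is invertible with $\|M^{-1}\|\le 2$, which gives existence and uniqueness of $\widetilde w$. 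If in addition $E$ is supported on $S_1[p,e,n]$, the right-hand side equals $-\int_{S_1[p,e,n]}\cfE E\,f_i\,dh$; since on $S_1[p,e,n]$ the metrics $h,\chi$ are uniformly equivalent with $\cfE$ bounded and controlled (Lemma~\ref{metriccomparisons} and \eqref{metricrelation}), one has $\big|\int_{S_1[p,e,n]}\cfE E\,f_i\,dh\big|\le C\,\|E:C^{0,\beta}(S_1[p,e,n],\chi)\|\le C\,\|E:C^{0,\beta}(\widetilde S[p,e,n],\chi)\|$, so $|a_j|\le C\,\|E:C^{0,\beta}(\widetilde S[p,e,n],\chi)\|$; combining with (2) yields the claimed bound on $\|\widetilde w:C^{2,\beta}(M,\chi)\|$.

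Item (4) is proved identically after replacing $\cf_i$ by $\hat f_i$, $S[p,e,n]$ by $S[p]$, \eqref{wpendef} by \eqref{noncentw}, and \eqref{approxkerest} by the estimate $\|f_i[p]-\hat f_i:C^{2,\beta}(S_5[p],h)\|<\epsilon$ of Proposition~\ref{approxkerprop}(2); the off-diagonal entries of the pairing matrix again vanish to leading order because $\psi[p]$ is chosen to preserve the symmetry of the coordinate functions, so that $\int_{S[p]}w_i[p]\hat f_j\,dh=\delta_{ij}$, and on $S_1[p]$ the conformal factor $\cfE$ is again bounded and controlled (on the core $M[p]$ it is identically $1$). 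The only delicate point — and the main obstacle — is to make this perturbation argument uniform: one must know that the $\epsilon$ of Proposition~\ref{approxkerprop} can be made as small as one likes by shrinking $\outau$, and that all implied constants (the bounds on $c_i[p,e,n],c_i'$, on $\|w_j\|_{L^2}$, and on $\cfE$ over the $S_1$-regions) are uniform across the finitely many regions and across all parameters of the construction, so that a single smallness threshold for $\outau$ makes $M$ invertible on every standard region simultaneously.
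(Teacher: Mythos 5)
Your proposal is correct and follows essentially the same route as the paper: the paper's (very brief) proof also derives (1) and (2) from the definitions and the uniform control of the coefficients, and obtains (3) and (4) from Proposition \ref{approxkerprop} together with the uniform equivalence of $h$ and $\chi$ on $S_1[p]$, $S_1[p,e,n]$ and the normalizations \eqref{wpendef}, \eqref{noncentw}; your $3\times3$ perturbation-of-the-identity matrix argument is just the explicit form of what the paper leaves implicit.
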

\begin{proof}
The first and second items follow by definition of each of the functions and the uniform control on the coefficients, independent of $\outau$. The third and fourth items follow immediately from Proposition \ref{approxkerprop}, the uniform equivalence of the $h, \chi$ metrics on $S_1[p], S_1\pen$, and the definition of elements of $\mathcal K[p],\mathcal K\pen$. 
\end{proof}

When solving the linearized equation, we must produce solutions on each extended standard region that satisfy sufficiently fast exponential decay conditions on one of the adjacent transition regions. (For central standard regions, the decay must hold on all adjacent transition regions.) This is the second use of the extended substitute kernel; we modify the solutions on $C_1^+[p,e,n'']$ by prescribing the low harmonics of the solution to ensure such fast decay. The modification to the solution on each $\widetilde S[p], \widetilde S\pen$ corresponds to an additional modification of the inhomogeneous term by elements of $\mathcal K$. To prescribe the low harmonics at each attachment to a central sphere, we introduce a three dimensional space of functions supported in a neighborhood of each attachment.

Let $\mathcal{K}\pe = \langle w_1\pe, w_2\pe, w_3\pe \rangle_{\mathbb{R}}$ where we define,
for $i=1,2,3$, $\pe \in A(\Gamma)$,
\addtocounter{theorem}{1}
\begin{equation}\label{tildecs}
w_i\pe:=\widetilde c_i\pe \mathcal L_h(\psi[0,1]\circ \underline x \: V_i[\Lambda[p,e,1],1,0]), 
\end{equation}
 where the $\widetilde c_i\pe$ are normalized constants so that
\addtocounter{theorem}{1}
\begin{equation}\label{tildecs2}
\begin{array}{ll}
 \widetilde c_1\pe V_1[\Lambda[p,e,1],1,0]=1,& \\ \widetilde c_2\pe V_2[\Lambda[p,e,1],1,0]=\cos \theta, &\text{ on } C_1^+[p,e,0].\\ \widetilde c_3\pe V_3[\Lambda[p,e,1],1,0]=\sin \theta &
 \end{array}
\end{equation}The $\widetilde c_i\pe$ depend smoothly on the parameter $\taue$. Moreover, Proposition \ref{LambdaKernel} and \eqref{Vlambda} imply uniform upper and lower bounds on $\widetilde c_i\pe$ for $i=1,2,3$. 

We now define the extended substitute kernel.
\addtocounter{equation}{1}
\begin{definition}
Let
\[
 \mathcal K := \mathcal K_V \oplus \mathcal K_S \oplus \mathcal K_A :=\bigoplus_{p \in V(\Gamma)} \mathcal K[p]\bigoplus_{[p,e,n] \in S(\Gamma)} \mathcal K[p,e,n]\bigoplus_{[p,e] \in A(\Gamma)}\mathcal K\pe
\]denote the \emph{extended substitute kernel}. 
\end{definition}
We introduce $\mathcal K_v \pen, \mathcal K_v \pe$ so that for $v \in \mathcal K_v\pen$, $\mathcal L_h v \in \mathcal K\pen$ and likewise for $\mathcal K_v\pe$. On each $\widetilde S\pen$, the substitute kernel itself can be used to modify the solutions to guarantee exponential decay. On the other hand, for the central standard regions we define the spaces $\mathcal K\pe$ solely to arrange for the decay away from a central sphere. The prescription of the low harmonics near a central standard region is understood by considering the functions $\cf_i$ rather than $\hat f_i$. Recall that on $C_1^+[p,e,0]$, the functions $\cf_i$ are multiples of $\nu_{\taue} \cdot \Be_i$ where $\nu_{\taue}$ refers to \eqref{normalvector}. 

\addtocounter{equation}{1}
\begin{definition}\label{tildefdef}
For $\penpp \in N^+(\Gamma)$ and $n''>0$,
let 
\[\mathcal{K}_v\penpp :=\langle v_{1}\penpp , v_{2}\penpp , v_{3}\penpp  \rangle_{\mathbb{R}}\]
where $v_{i}\penpp $
is supported on $\widetilde S\penpp $ and 
$v_{i}\penpp := c_i\penpp(f_{i}\penpp + u_{i}\penpp )$ where
\begin{itemize}
\item $f_i\penpp$ are as in Proposition \ref{approxkerprop}.
\item $\widetilde w_{i}\penpp $ is chosen such that $-\mathcal{L}_h f_{i}\penpp  + \widetilde w_{i}\penpp $ is $L^2(\widetilde S\pen, h)$ orthogonal to the approximate kernel on 
$\widetilde S\penpp $. 
\item $u_{i}\penpp $ solves the problem $\mathcal{L}_h u_{i}\penpp = -\mathcal{L}_h f_{i}\penpp  + \widetilde w_{i}\penpp $ on $\widetilde S\penpp $ with zero Dirichlet data.

\item $c_i\penpp$ is such that on $C_1^+\penpp $ we have $c_1 \penpp\cf_1=1,\: c_2 \penpp\cf_2 = \cos \theta, \: c_3 \penpp\cf_3 =  \sin \theta.$
\end{itemize}

Let $\mathcal K_v\pe:=\langle v_{1}\pe, v_{2}\pe, v_{3}\pe \rangle_{\mathbb{R}}$ where $v_{i}\pe$ is supported on $\Lambda[p,e,1]$ and 
\[v_{i}\pe:= \widetilde c_i\pe\psi[0,1]\circ \underline x V_i[\Lambda[p,e,1],1,0]\]
 where the $\widetilde c_i\pe$ are defined by \eqref{tildecs2}.
\end{definition}
The choice of construction implies that a solution to the linearized problem on $\widetilde S[p,e,l(e)]$ will not be modified to induce the exponential decay along either adjacent transition region. On this extended standard region, the solutions will possess the linear decay guaranteed by Corollary \ref{LambdaCloseCor}. This will prove sufficient for the following reason. Solving the global linearized problem requires that the semi-local solutions coming from adjacent extended standard regions must have one solution decay exponentially on the common transition region (away from the nearest central sphere) and one decay linearly on the common transition region (toward the nearest central sphere). The linear decay on both sides of $\widetilde S[p,e,l(e)]$ corresponds to the decay toward the nearest central sphere on each of its transition regions.

\addtocounter{equation}{1}
\begin{lemma}\label{vlemma}For $\epsilon_4>0$ there exists $b$ large enough so that the $v_i\pe,v_{i}\penpp, n''>0$ defined previously are smooth on the corresponding $\widetilde S[p],\widetilde S[p,e,n'']$ and satisfy
\begin{enumerate}
\item \label{v1}$\mathcal L_h v_{i}[p,e,n] \in \mathcal{K}[p,e,n]$ and $\mathcal L_h v_i \pe \in \mathcal K\pe$. The support of $\mathcal L_h v_{i}[p,e,n]$ is on $S[p,e,n]$, and the support of $\mathcal L_h v_i\pe$ is on $S_1[p]$.
\item \label{v2}$v_i[p,e,n]=0$ on $\partial \widetilde S[p,e,n]$, $v_i\pe=0$ on $\partial \widetilde S[p]$. 
\item \label{v3}$||v_{i}[p,e,n] : C^{2, \beta}(\widetilde S[p,e,n], \chi)||\leq C(b, \beta)$, $||v_{i}\pe : C^{2, \beta}(\widetilde S[p], \chi)||\leq C(b, \beta)$.
\item \label{v4}For $n''>0$, $||v_{1}[p,e,n''] -1, v_{2}[p, e,n''] -\cos \theta,v_{3}[p,e,n''] - \sin \theta: C^{2, \beta}(C^+_1[p,e,n''], \chi)|| < \epsilon_{4}$ and on $C^+_1[p,e,0]$, $v_1\pe=1, v_2\pe= \cos \theta, v_3\pe=\sin \theta$.
\end{enumerate}

\end{lemma}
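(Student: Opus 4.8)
The plan is to read off all four items directly from Definition \ref{tildefdef}, feeding in Proposition \ref{approxkerprop} for the structure of the approximate kernel on $\widetilde S\penpp$, Lemma \ref{extsubslemma} for the existence and bounds of the correction $\widetilde w_i$, and the transition--region results (Proposition \ref{LambdaKernel} and \eqref{Vlambda}) for the central pieces $v_i\pe$. Throughout I would fix $b$ large enough that all of these auxiliary estimates hold with their attendant small constants, and then take $\outau$ small. For item \eqref{v1}: since the kernel of $\mathcal L_\chi$ coincides with that of $\mathcal L_h$ (conformal invariance in dimension two), $\mathcal L_h V_i[\Lambda[p,e,1],1,0]=0$, so the product rule gives $\mathcal L_h v_i\pe=\widetilde c_i\pe\,\mathcal L_h(\psi[0,1]\circ\underline x\,V_i[\Lambda[p,e,1],1,0])=w_i\pe\in\mathcal K\pe$, supported precisely where $\psi[0,1]\circ\underline x$ is non-constant, i.e.\ on the slab $\underline x\in(\tfrac13,\tfrac23)$, which lies in $S_1[p]$; and for $v_i\penpp$ the defining identity $\mathcal L_h u_i\penpp=-\mathcal L_h f_i\penpp+\widetilde w_i\penpp$ gives $\mathcal L_h v_i\penpp=c_i\penpp\,\widetilde w_i\penpp\in\mathcal K\penpp$, which is supported on $S\penpp$ by the support statement in Lemma \ref{extsubslemma}. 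For item \eqref{v2}: the $f_i\penpp$ and $u_i\penpp$ both vanish on $\partial\widetilde S\penpp$ (Proposition \ref{approxkerprop} and the zero Dirichlet data), while $\psi[0,1]\circ\underline x$ vanishes near $C^+$ and $V_i[\Lambda[p,e,1],1,0]$ vanishes on $C^-$, so $v_i\pe$ extends by zero to a smooth function on $\widetilde S[p]$ vanishing on $\partial\widetilde S[p]=\bigcup_{e\in E_p}C^-[p,e,1]$; smoothness everywhere is then immediate from smoothness of the $f_i\penpp$, interior elliptic regularity for the $u_i\penpp$, and the explicit formula for $v_i\pe$.

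For item \eqref{v3} the coefficients are uniformly bounded --- $\widetilde c_i\pe$ by \eqref{Vlambda} and Proposition \ref{LambdaKernel}, $c_i\penpp$ by the uniform two--sided bounds on $\cf_i$ on $C_1^+\penpp$ --- so the central estimate reduces to the transition--region $C^{2,\beta}$ bound for $V_i[\Lambda[p,e,1],1,0]$ coming from Proposition \ref{LambdaKernel}. For $v_i\penpp$ the substance is the bound on $u_i\penpp$: by construction $F:=-\mathcal L_h f_i\penpp+\widetilde w_i\penpp$ is $L^2(\widetilde S\penpp,h)$--orthogonal to the approximate kernel and has controlled $C^{0,\beta}$ norm (because $\mathcal L_h f_i\penpp$ is small, and $\widetilde w_i\penpp$ is controlled by item \eqref{esl3} of Lemma \ref{extsubslemma}); since Proposition \ref{approxkerprop} tells us that the Dirichlet spectrum of $\mathcal L_h$ on $\widetilde S\penpp$ meets $[-\tfrac12,\tfrac12]$ only in the three small eigenvalues, $\mathcal L_h$ is invertible on the $L^2$--orthogonal complement of the approximate kernel with inverse of norm at most $2$, and elliptic regularity --- carried out in the $\chi$ metric on the transition parts of $\widetilde S\penpp$, where the geometry is uniform, and in the $h$ metric on $S_5\penpp$, where $h\sim\chi$ by Lemma \ref{metriccomparisons} --- upgrades the $L^2$ bound to the stated $C^{2,\beta}(\widetilde S\penpp,\chi)$ bound.

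For item \eqref{v4}: on $C_1^+\pe$ one has $\underline x=1$, so $\psi[0,1]\circ\underline x\equiv 1$ there and \eqref{tildecs2} gives $v_1\pe=1$, $v_2\pe=\cos\theta$, $v_3\pe=\sin\theta$ exactly. For $n''>0$, $C_1^+\penpp\subset S_5\penpp$; by the normalization of $c_i\penpp$ the function $c_i\penpp\cf_i$ equals $1$, $\cos\theta$, $\sin\theta$ on $C_1^+\penpp$ for $i=1,2,3$, so on that circle $v_i\penpp$ minus the corresponding function equals $c_i\penpp(f_i\penpp-\cf_i)+c_i\penpp u_i\penpp$. Proposition \ref{approxkerprop} bounds $\|f_i\penpp-\cf_i:C^{2,\beta}(S_5\penpp,h)\|$ by the small constant $\epsilon$, and item \eqref{v3} bounds $u_i\penpp$ by a constant times the (small) norm of $F$; hence taking $b$ large and then $\outau$ small makes both contributions $<\epsilon_4$.

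The step I expect to be the main obstacle is precisely the control of $u_i\penpp$ underlying items \eqref{v3} and \eqref{v4}: one must make rigorous that $\mathcal L_h$, restricted to the $L^2(\widetilde S\penpp,h)$--orthogonal complement of the three--dimensional approximate kernel with zero Dirichlet data, is invertible with inverse bounded independently of $\outau$ --- this uses the spectral gap of Proposition \ref{approxkerprop} but requires care because the transition ``horns'' of $\widetilde S\penpp$ pinch off in the $h$ metric as $\outau\to 0$ --- and then that interior elliptic estimates can be applied with $\outau$--independent constants, which forces one to switch to the $\chi$ metric on the pinching parts so that $\|u_i\penpp:C^{2,\beta}(\widetilde S\penpp,\chi)\|$ comes out both bounded and small. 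Everything else is bookkeeping against Definition \ref{tildefdef} and the already--established approximate--kernel and transition--region estimates.
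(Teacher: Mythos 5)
Your overall skeleton matches the paper's proof --- items \eqref{v1}, \eqref{v2} from the construction, the $\pe$ case from \eqref{tildecs2} and Proposition \ref{LambdaKernel}, the $L^2$ control of $u_i\penpp$ from the smallness of $\mathcal L_h f_i\penpp$, the bound on $\widetilde w_i\penpp$, and the spectral gap of Proposition \ref{approxkerprop}, and item \eqref{v4} from \eqref{approxkerest} plus the interior estimate on $u_i\penpp$. The gap is exactly at the point you flagged: your proposed fix for the transition (''neck'') portions of $\widetilde S\penpp$ in item \eqref{v3} --- ''elliptic regularity carried out in the $\chi$ metric on the transition parts'' starting from the $L^2(\widetilde S\penpp,h)$ bound --- does not close. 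On the neck the conformal factor relating $h$ and $\chi$ is $\frac{|A|^2}{2\rho^2}=2|\taue|\cosh(2w)$, so the $h$-measure of a $\chi$-unit ball is of order $|\taue|$; an $L^2(h)$ bound of size $C\epsilon$ therefore only yields local $L^2(\chi)$ control of size $C\epsilon/\sqrt{|\taue|}$ there, and Schauder estimates for $\mathcal L_\chi$ then give $C^{2,\beta}(\chi)$ bounds of that same size, which blows up as $\outau\to 0$. You cannot rescue this by making $\epsilon$ small, since Proposition \ref{approxkerprop} gives no rate for $\epsilon$ in terms of $\outau$ (in particular you cannot arrange $\epsilon\ll\sqrt{\outau}$).

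The paper's proof sidesteps this degeneration: the interior elliptic upgrade is performed only on $S_1\penpp$ (where $h$ and $\chi$ are uniformly equivalent), and on the adjacent transition regions one uses that $\mathcal L_\chi v_i\penpp=0$ on $\Lambda_{0,1}\penpp$ and $\Lambda_{1,0}[p,e,n''+1]$ (since $\mathcal L_h v_i\penpp=c_i\penpp\,\widetilde w_i\penpp$ is supported in $S\penpp$) together with $v_i\penpp=0$ on $\partial\widetilde S\penpp$; Corollary \ref{LambdaCloseCor} then bounds $\|v_i\penpp:C^{2,\beta}(\Lambda,\chi)\|$ by the $C^{2,\beta}$ norm of its trace on the meridian circle bounding $S_1\penpp$, which is controlled by the interior estimate. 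You need this (or an equivalent transition-region Dirichlet argument) to obtain the uniform $C^{2,\beta}(\widetilde S\penpp,\chi)$ bound in item \eqref{v3}; with that substitution the rest of your argument, including item \eqref{v4}, goes through as in the paper.
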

\begin{proof}
 Items \eqref{v1}, \eqref{v2} follow immediately from the choice of construction. For $\pe$, \eqref{v3} and \eqref{v4} follow from \eqref{tildecs2} and Proposition \ref{LambdaKernel}. For $\pen$, we use 
the following. First, by Proposition \ref{approxkerprop}, $||\mathcal L_h f_i\pen:L^2(\widetilde S\pen,h)||< \epsilon$. 
Then the estimates from Lemma \ref{extsubslemma} for $\widetilde w_i\pen$ and standard theory imply $||u_i\pen:L^2(\widetilde S\pen,h)||<C\epsilon$. Upgrading the $L^2$ estimate to a $C^{2,\beta}$ estimate on the interior, and the uniform equivalence of the $h, \chi$ metrics on $ S_1\pen$, we get $||u_i\pen:C^{2,\beta}(S_1\pen, \chi)||\leq C(b,\beta)\epsilon$. As $\mathcal L_\chi v_i\pen=0$ on $\Lambda_{0,1}\pen, \Lambda_{1,0}[p,e,n+1]$, we can apply Corollary \ref{LambdaCloseCor} to get \eqref{v3}. Finally, \eqref{approxkerest} and the interior estimates on $u_i\pen$ imply \eqref{v4}.
\end{proof}

\subsection*{Solving the linearized equation semi-locally}
Using Lemmas \ref{extsubslemma}, \ref{vlemma}, we now solve the linearized problem on each extended standard region and obtain solutions with appropriate decay. Notice we presume the inhomogeneous term is supported in a small neighborhood of the standard region so the global proof must allow for precisely this setup. The proofs for central and non-central standard regions are essentially the same, though on each central standard region one must modify the solution on every adjacent transition region.
\addtocounter{equation}{1}
\begin{definition}Recalling Definition \ref{genericlabel},
on each extended standard region $\widetilde S\pen:=\widetilde S$, we define the function $f[\widetilde S]$:
\addtocounter{theorem}{1}
\begin{equation}
f[\widetilde S](x) = \left\{ \begin{array}{ll}
\frac{\underline x + 1}{\ell},& \text{if }x \in \Lambda_{close}\\
1 + \frac{1}{\ell}\psi(t),& \text{if } x=(t,\theta) \in S \\
 e^{-\gamma \underline x},& \text{if } x \in \Lambda_{far}, n<l(e)\\
 \frac{\underline x + 1}{\ell},& \text{if }x \in \Lambda_{far}, n=l(e).
\end{array}\right.
\end{equation}When $n< l(e)$, take $\psi(t)$ to be a standard cutoff function that is $1$ in a neighborhood of $\Lambda_{close}\cap S$ and $0$ in a neighborhood of $\Lambda_{far}\cap S$. For $n=l(e)$, take $\psi(t) \equiv 1$.
\end{definition}

\addtocounter{equation}{1}
\begin{lemma} \label{linearpartpen}Let $S:= S\pen$ denote a standard region on $M$. 
Given $\gamma\in (1,2), \beta \in (0,1)$, there is a linear map 
\[\mathcal{R}_{\widetilde S}:\{E\in C^{0, \beta}(\widetilde S,\chi): E \text{ is supported on } S_{1}\} \to C^{2, \beta}(\widetilde S, \chi) \times \mathcal{K}\pen \] 
such that the following hold for $E$ in the domain of $\mathcal{R}_{\widetilde S}$ above and $(\varphi, w) = \mathcal{R}_{\widetilde S}(E)$:
\begin{enumerate}
\item $\mathcal{L}_\chi \varphi = E +\cfw w$ or $\mathcal L_h \varphi = \frac{2\rho^2}{|A|^2}E + w$ on $\widetilde S$.
\item $\varphi$ vanishes on $\partial \widetilde S$.
\item $||\varphi:C^{2,\beta}(\widetilde S,\chi, f[\widetilde S])|| \leq C(\beta, b,\gamma)||E:C^{0,\beta}( S_1, \chi, f[\widetilde S])||$.
\item \label{mubound}For $i=1,2,3$, $|\mu_i|\leq C(b,\beta)||E:C^{0,\beta}(S_1,\chi)||$ where $w:=\sum_{i=1}^3 \mu_i w_i\pen$.
\item $\mathcal{R}_{\widetilde S}$ depends continuously on all of the parameters of the construction.
\end{enumerate}
\end{lemma}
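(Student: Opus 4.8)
The plan is to build $\mathcal{R}_{\widetilde S}$ by assembling the pieces already prepared: the three–dimensional approximate kernel and the spectral gap of $\mathcal L_h$ on $\widetilde S\pen$ (Proposition \ref{approxkerprop}), the first use of the substitute kernel (Lemma \ref{extsubslemma}.\eqref{esl3}), the functions $v_i\pen$ realizing the second use (Lemma \ref{vlemma}), and the transition–region estimates (Corollary \ref{LambdaCloseCor}, Proposition \ref{RLambda}). Throughout I write $\widetilde S=\widetilde S\pen=\Lambda_{close}\cup S\cup\Lambda_{far}$ as in Definition \ref{genericlabel} and recall that $E$ is supported on $S_1\subset S$ (with only a thin overlap into the two transition regions). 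First, by Lemma \ref{extsubslemma}.\eqref{esl3} I pick the unique $\widetilde w\in\mathcal K\pen$ with $\|\widetilde w:C^{2,\beta}(M,\chi)\|\le C(b,\beta)\|E:C^{0,\beta}(\widetilde S,\chi)\|$ making $\cfE E+\widetilde w$ orthogonal in $L^2(\widetilde S,h)$ to the approximate kernel. By Proposition \ref{approxkerprop} the Dirichlet operator $\mathcal L_h$ on $\widetilde S$ has no spectrum in $[-1/2,1/2]$ besides the three small approximate–kernel eigenvalues, so on the $L^2(\widetilde S,h)$–orthogonal complement of the approximate kernel it is invertible with norm $\le 2$; I let $\varphi_0$ be the resulting solution of $\mathcal L_h\varphi_0=\cfE E+\widetilde w$ with zero Dirichlet data, orthogonal to the approximate kernel. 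Interior (and, since the data vanish, boundary) Schauder estimates together with the uniform equivalence of $h$ and $\chi$ on $S$ upgrade the $L^2$ bound to $\|\varphi_0:C^{2,\beta}(S_1,\chi)\|\le C(b,\beta)\|E:C^{0,\beta}(S_1,\chi)\|$.

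Next, since $E$ and $\widetilde w$ are supported in $S_1$, $\mathcal L_\chi\varphi_0=0$ on both transition regions and $\varphi_0$ vanishes on the parts of $\partial\widetilde S$ bounding them; Corollary \ref{LambdaCloseCor}.\eqref{LCC1} (with the roles of the two boundary circles interchanged) then gives the linear bound $\|\varphi_0:C^{2,\beta}(\Lambda_{close},\chi,(\underline x+1)/\eL)\|\le C\|E:C^{0,\beta}(S_1,\chi)\|$, and likewise on $\Lambda_{far}$ when $n=l(e)$. In the case $n=l(e)$ one is done: take $(\varphi,w):=(\varphi_0,\widetilde w)$, note that the prescribed weight $f[\widetilde S]$ is exactly this linear weight on both transition regions, and let the $\mu_i$ of item \eqref{mubound} be the coordinates of $\widetilde w$ in the basis $\{w_i\pen\}$, which inherit the bound above.

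For $n<l(e)$ I would instead set $\varphi=\varphi_0-\sum_{i=1}^3\mu_i' v_i\pen$ and $w=\widetilde w-\sum_{i=1}^3\mu_i'\,\mathcal L_h v_i\pen$, so that $w\in\mathcal K\pen$ by Lemma \ref{vlemma}.\eqref{v1}, $\mathcal L_h\varphi=\cfE E+w$, and $\varphi$ still vanishes on $\partial\widetilde S$ by Lemma \ref{vlemma}.\eqref{v2}. I choose the $\mu_i'$ so that the $\mathcal H_1[C^+_1\pen]$–projection of $\varphi|_{C^+_1\pen}$ vanishes: this is solvable because by Lemma \ref{vlemma}.\eqref{v4} the matrix of $\mathcal H_1$–components of the $v_i\pen|_{C^+_1}$ relative to $\{1,\cos\theta,\sin\theta\}$ is $\epsilon_4$–close to the identity, and then $|\mu_i'|\le C\|\varphi_0:C^{2,\beta}(C^+_1\pen,d\theta^2)\|\le C\|E:C^{0,\beta}(S_1,\chi)\|$. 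Since $\widetilde w$ and each $\mathcal L_h v_i\pen$ have coefficients controlled in the fixed basis $\{w_i\pen\}$ of the substitute kernel, writing $w=\sum\mu_i w_i\pen$ gives $|\mu_i|\le C(b,\beta)\|E:C^{0,\beta}(S_1,\chi)\|$, which is \eqref{mubound}; and on $S$ the correction contributes only $O(|\mu_i'|)$, while on $\Lambda_{close}$ it decays linearly (each $v_i\pen$ has $\mathcal L_\chi v_i\pen=0$ there and vanishes on $\partial\widetilde S$, so Corollary \ref{LambdaCloseCor} applies again), so the bounds of the previous paragraph persist for $\varphi$.

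The crux — and the step I expect to be the main obstacle — is the exponential decay of $\varphi$ on $\Lambda_{far}$, which is precisely what the low–harmonic cancellation is designed to produce. On $\Lambda_{1,0}[p,e,n+1]\subset\Lambda_{far}$ one has $\mathcal L_\chi\varphi=0$, $\varphi$ vanishes on the outer meridian circle lying in $\partial\widetilde S$, and $\varphi|_{C^+_1\pen}$ is $L^2(C^+_1,d\theta^2)$–orthogonal to $\mathcal H_1$. Because on a transition region $\chi$ is exactly the flat cylindrical metric and $\rho^{-2}|A|^2=4\taue\cosh(2w(\underline t))$ is independent of $\theta$, the equation $\mathcal L_\chi\varphi=0$ decouples over the Fourier modes in $\theta$: the modes $|k|\le 1$ solve a Sturm--Liouville ODE with vanishing data at both ends and small potential (so the lowest Dirichlet eigenvalue is positive) and hence vanish identically, while for $|k|\ge 2$ each mode is a small perturbation, treated exactly as in Proposition \ref{RLambda}, of a multiple of $\sinh(k(\eL-\underline x))/\sinh(k\eL)$, which decays like $e^{-2\underline x}\le e^{-\gamma\underline x}$ since $\gamma<2$. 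Summing and upgrading to $C^{2,\beta}$ by interior elliptic estimates yields $\|\varphi:C^{2,\beta}(\Lambda_{far},\chi,e^{-\gamma\underline x})\|\le C(\beta,b,\gamma)\|E:C^{0,\beta}(S_1,\chi)\|$, and combining the three regional weights reproduces the estimate against $f[\widetilde S]$. Linearity of $\mathcal{R}_{\widetilde S}$ is built into the construction, and continuity in the construction parameters follows from the continuity of $\mathcal L_h^{-1}$ on the orthogonal complement, of the families $f_i\pen,v_i\pen,w_i\pen$, and of the two finite–dimensional linear solves (for $\widetilde w$ and for the $\mu_i'$). The reason this last step is the delicate one is exactly that one must cancel only the three lowest harmonics on $C^+_1\pen$ and still extract $e^{-\gamma\underline x}$ decay, which forces one to exploit the exact flatness and rotational symmetry of the transition regions rather than the softer bounds of Corollary \ref{LambdaCloseCor}.
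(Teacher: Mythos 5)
Your proposal is correct, and its skeleton is the paper's: the first use of the substitute kernel via Lemma \ref{extsubslemma}\,\eqref{esl3} together with the spectral gap of Proposition \ref{approxkerprop} to produce $\varphi_0$ and $\widetilde w$, Corollary \ref{LambdaCloseCor} for the linear decay on $\Lambda_{close}$ (and on $\Lambda_{far}$ when $n=l(e)$), and a correction by $\sum\mu_i'v_i\pen$ using Lemma \ref{vlemma} when $n<l(e)$. Where you genuinely diverge is the crux step on $\Lambda_{far}$: the paper does \emph{not} annihilate the $\mathcal H_1[C^+_1\pen]$ part of $\varphi$; it chooses $v$ so that $\varphi|_{C^+_1}$ coincides with $\bigl(\mathcal R_\partial(\varphi'_\perp)\bigr)|_{C^+_1}$, whose low-harmonic content is the specific small $\mathcal H_1$ element built into $\mathcal R_\partial$, and then uniqueness of the Dirichlet problem on the sub-cylinder identifies $\varphi$ with that fast-decaying solution, so the $e^{-\gamma\underline x}$ bound is Corollary \ref{Rpartial}\,\eqref{rp4} verbatim (the solvability and size of the $\mu_i''$ coming from Lemma \ref{vlemma}\,\eqref{v4} and Corollary \ref{Rpartial}\,\eqref{rp3}, exactly as in your coefficient argument). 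You instead set the $\mathcal H_1$ projection of $\varphi|_{C^+_1}$ to zero and re-derive the decay directly: since $\rho^{-2}|A|^2=4|\taue|\cosh(2w(\underline t))$ is $\theta$-independent, $\mathcal L_\chi$ decouples in Fourier modes, the $|k|\le 1$ modes vanish identically because the one-dimensional Dirichlet forms are positive (the $k=0$ case is the Rayleigh-quotient estimate behind Proposition \ref{lowestevtrans}, using \eqref{decaymain}), and the $|k|\ge 2$ modes decay at rate $\sqrt{4-\epsilon_1}>\gamma$ by barriers, with summation and Schauder upgrading as in the proof of Proposition \ref{RLambda}. This is a valid alternative — in effect you reprove a zero-low-harmonic variant of Proposition \ref{RLambda}/Corollary \ref{Rpartial} — and it makes the decay mechanism more explicit; the paper's matching normalization buys economy (no new PDE work, only finite-dimensional linear algebra on $C^+_1$) and is the formulation reused later in Lemma \ref{linearpartp} and Proposition \ref{prescribequad}, whereas your route leans on the exact rotational symmetry and flatness of the transition region, which holds here but is extra structure the paper's argument does not need at this point. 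The resulting $\mu_i$ differ slightly from the paper's, but both choices lie in $\mathcal K\pen$ with the stated bounds, so all five conclusions hold either way.
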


\begin{proof}
 Lemma \ref{extsubslemma} implies there exists $\widetilde w \in \mathcal K\pen$ and $\varphi' \in C^{2,\beta}(\widetilde S)$ such that $\mathcal L_h \varphi' = \frac{2\rho^2}{|A|^2}E + \widetilde w$ and $\varphi'|_{\partial \widetilde S}=0$. The controlled geometry for $\outau$ small implies \eqref{mubound} holds for $\mu_i'$ where $\widetilde w:=\sum_i \mu_i' w_i\pen$. Now observe that the designated support of $E$ and the construction of $\widetilde w$ implies that we can apply Corollary \ref{LambdaCloseCor} to $\varphi'$ on $\Lambda_{close}$ to produce the necessary decay there. If $n=l(e)$, the same corollary provides necessary estimates on $\Lambda_{far}$ and the proof is complete. 
 
For $n<l(e)$, we consider how to modify $\varphi'$ on $\Lambda_{far}$ to prescribe the fast exponential decay.  Let $\varphi'_T$ denote the projection of $\varphi'$ onto $\mathcal H_1[C^+_1]$ where the $C^+_1$ of interest here is in $\Lambda_{far}$. Let $\varphi'_\perp =\varphi'-\varphi'_T$ on $C_1^+\subset \Lambda_{far}$ and let $V_{\varphi'} := \mathcal R_\partial (\varphi'_\perp|_{C_1^+})$.  Notice that $(\varphi' - V_{\varphi'})|_{C_1^+} \in \mathcal H_1[C_1^+]$ and we denote 
\[
(\varphi' - V_{\varphi'})|_{C_1^+} = \alpha_1 + \alpha_2 \cos \theta + \alpha_3 \sin \theta.
\]
Standard theory implies $||\varphi':C^{2,\beta}(\widetilde S,\chi)||\leq C(b,\beta) ||E:C^{0,\beta}(\widetilde S,\chi)||$. Coupling this with Corollary \ref{Rpartial} \eqref{rp4} implies $|\alpha_i| \leq C(b,\beta) ||E:C^{0,\beta}(\widetilde S,\chi)||$.
We now choose $v \in \mathcal K_v[p,e,n]$ uniquely such that on $C_1^+$, $\varphi' + v = V_{\varphi'} + \mathcal R_\partial(v_\perp|_{C_1^+})$. 
We may write $v = \sum_{i=1}^3 \mu_i'' v_i[p,e,n]$. For each $i$, let $V_i :=\mathcal R_\partial(v_i[p,e,n]_\perp|_{C_1^+})$. It is enough to show that for each $i=1,2,3$, there exists a unique $\mu_i''$ such that 
\[
\mu_i''(V_i - v_i)|_{C_1^+} = \alpha_i(\delta_{i1}+\cos \theta \delta_{i2} + \sin \theta \delta_{i3}).
\] Lemma \ref{vlemma} \eqref{v4} and Corollary \ref{Rpartial} \eqref{rp3} give uniform (positive) lower bounds on the difference of the coefficients of $V_i - v_i$ at $C_1^+$. This, coupled with the bounds on the $|\alpha_i|$ implies
\[
|\mu_i''| \leq C(b,\beta)||E:C^{0,\beta}( S_1, \chi, f[\widetilde S])||.
\]
 Setting $\varphi = \varphi' +v$ and $w = \widetilde w + \mathcal L_h v$, we see $\mathcal R_\partial \varphi_\perp = \varphi$ on $C_1^+$ and thus Corollary \ref{Rpartial} provides the necessary decay on $\Lambda_{far}$. 
\end{proof}
The procedure for solving the linearized problem with appropriate decay at each central standard region is nearly identical. In this case, we use $\mathcal K\pe$ rather than $\mathcal K\pen$ to modify our solutions but all of the estimates will follow by the logic outlined in the previous proof.

Observe that in the statement of the lemma, the term $\cfw$ does not appear. This is a simple consequence of the fact that $\rho^2 \equiv |A|^2/2$ on $\supp(w)\subset M[p]$.
\addtocounter{equation}{1}
\begin{lemma}\label{linearpartp}
Given $\gamma \in (1,2), \beta \in (0,1)$, there is a linear map
\[
\mathcal R_{\widetilde S[p]}:\{E \in C^{0,\beta}(\widetilde S[p], \chi):E \text{ is supported on } S_1[p]\} \to C^{2,\beta}(\widetilde S[p], \chi) \times (\mathcal K[p]\cup_{e \in E_p} \mathcal K\pe) \] such that the following hold for $E$ in the domain of $\mathcal R_{\widetilde S[p]}$ and $(\varphi,w)=\mathcal R_{\widetilde S[p]}(E)$:
\begin{enumerate}
\item $\mathcal{L}_\chi \varphi = E +w$ or $\mathcal L_h \varphi = \frac{2\rho^2}{|A|^2}E + w$ on $\widetilde S[p]$.
\item $\varphi$ vanishes on $\partial \widetilde S[p]$.
\item $ ||\varphi: C^{2, \beta}(\widetilde S[p], \chi)|| \leq C(b, \beta)||E: C^{0,\beta}(S_1[p], \chi)||$.
\item $\sum_{i} |\mu_i[p]| + \sum_{i,j}|\mu_{i}[p,e_j]| \leq C(b,\beta)||E:C^{0,\beta}( S_1, \chi)||$ where \\$w=\sum_i \mu_i[p]w_i[p]+ \sum_{i,j} \mu_{i}[p,e_j] w_i[p,e_j]$.
\item $ ||\varphi:C^{2,\beta}(\Lambda[p,e,1], \chi, e^{-\gamma \underline x})|| \leq C(b, \beta, \gamma)||E:C^{0,\beta}(S_1[p], \chi)||$ for all $e \in E_p$. 
\item $\mathcal{R}_{\widetilde S[p]}$ depends continuously on all of the parameters of the construction.
\end{enumerate}
\end{lemma}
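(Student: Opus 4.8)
The plan is to follow the proof of Lemma \ref{linearpartpen} essentially verbatim; the only structural change is that a central extended standard region $\widetilde S[p]$ has $|E_p|$ adjacent transition regions $\Lambda[p,e,1]$, $e\in E_p$, rather than a single ``far'' transition region, so the exponential‑decay modification must be carried out at \emph{every} attachment $[p,e]$, $e\in E_p$, using the spaces $\mathcal K_v[p,e]$ and $\mathcal K[p,e]$ in place of $\mathcal K_v\pen$ and $\mathcal K\pen$.

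First I would produce the semi‑local solution. By Lemma \ref{extsubslemma}\eqref{esl4}, for $E$ supported on $S_1[p]$ there is a unique $\widetilde w=\sum_i\mu_i[p]\,w_i[p]\in\mathcal K[p]$ with $\cfE E+\widetilde w$ orthogonal in $L^2(\widetilde S[p],h)$ to the approximate kernel $\langle f_1[p],f_2[p],f_3[p]\rangle$; the spectral gap of Proposition \ref{approxkerprop}(2) then gives a unique $\varphi'\in C^{2,\beta}(\widetilde S[p])$, vanishing on $\partial\widetilde S[p]$ and orthogonal to the approximate kernel, solving $\mathcal L_h\varphi'=\cfE E+\widetilde w$. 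Interior elliptic estimates together with the uniform geometry of $S_1[p]$ in the $h$‑metric (for $\outau$ small) yield $||\varphi':C^{2,\beta}(\widetilde S[p],\chi)||\le C(b,\beta)||E:C^{0,\beta}(S_1[p],\chi)||$ and $\sum_i|\mu_i[p]|\le C(b,\beta)||E||$, which already delivers items 2, 3, and part of 4.

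Next, for each $e\in E_p$ I would repeat the decay argument of Lemma \ref{linearpartpen} on $\Lambda[p,e,1]$, taking $C^+:=C^+_1[p,e,0]$ (the meridian toward $S[p]$) as the circle whose low harmonics are prescribed, so that the weight $e^{-\gamma\underline x}$ of item 5 measures distance away from the central sphere. Since $E$ and $\widetilde w$ are supported in $S_1[p]$, which meets $\Lambda[p,e,1]$ only within bounded distance of $C^+$, $\varphi'$ is $\mathcal L_\chi$‑harmonic on the rest of $\Lambda[p,e,1]$ and vanishes on the far boundary; I decompose $\varphi'|_{C^+}=\alpha_1+\alpha_2\cos\theta+\alpha_3\sin\theta+\varphi'_\perp$, set $V_{\varphi'}:=\mathcal R_\partial(\varphi'_\perp)$ (Corollary \ref{Rpartial}), and then solve a $3\times3$ linear system for $\mu_i[p,e]$ — uniquely solvable because Lemma \ref{vlemma}\eqref{v4} and Corollary \ref{Rpartial}\eqref{rp3} give uniform positive lower bounds on the relevant coefficient differences — so that $v_e:=\sum_i\mu_i[p,e]v_i[p,e]\in\mathcal K_v[p,e]$ makes $\varphi'+v_e$ agree with $V_{\varphi'}+\mathcal R_\partial((v_e)_\perp)$ on $C^+$; the bound $\sum_i|\mu_i[p,e]|\le C(b,\beta)||E||$ follows from the estimates on the $\alpha_i$ (controlled by $||\varphi':C^{2,\beta}(\widetilde S[p],\chi)||$ via Corollary \ref{Rpartial}\eqref{rp4}). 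Killing the whole $\mathcal H_1[C^+]$ part is necessary because $V_1[\Lambda[p,e,1],1,0]$ decays only linearly and $V_2,V_3$ only like $e^{-\underline x}$, both too slow for the target $\gamma\in(1,2)$. Because $v_i[p,e]$ is supported on $\Lambda[p,e,1]$ and $\mathcal L_h v_i[p,e]=w_i[p,e]\in\mathcal K[p,e]$ is supported near $C^+_1[p,e,0]$ on the cylinder $M[e]$, the corrections for distinct $e$ have disjoint supports and the problems decouple; setting $\varphi:=\varphi'+\sum_{e\in E_p}v_e$ and $w:=\widetilde w+\sum_{e\in E_p}\mathcal L_h v_e\in\mathcal K[p]\bigoplus_{e\in E_p}\mathcal K[p,e]$ gives $\mathcal L_h\varphi=\cfE E+w$, with the $\mathcal L_\chi$ form following exactly as in the remark after Lemma \ref{linearpartpen}, $\varphi|_{\partial\widetilde S[p]}=0$ by Lemma \ref{vlemma}\eqref{v2}, and — via the identity $\mathcal R_\partial(\varphi_\perp|_{C^+})=\varphi$ on each $\Lambda[p,e,1]$ plus uniqueness of the Dirichlet problem — the exponential decay $||\varphi:C^{2,\beta}(\Lambda[p,e,1],\chi,e^{-\gamma\underline x})||\le C(b,\beta,\gamma)||E||$ of item 5; continuous dependence (item 6) is inherited from $\mathcal R_\partial$, Lemma \ref{extsubslemma}, Lemma \ref{vlemma}, and Proposition \ref{approxkerprop}.

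The only genuinely new point over Lemma \ref{linearpartpen}, and the step I would be most careful about, is verifying that the $|E_p|$ separate low‑harmonic corrections $v_e$ do not interfere: one must confirm that $\supp v_e\subset\Lambda[p,e,1]$ and $\supp\mathcal L_h v_e$ sit near the attachment $C^+_1[p,e,0]$ on the distinct cylinders $M[e]$, so that the correction at one attachment neither changes $\varphi$ on the other transition regions nor disturbs the orthogonality and estimates obtained from Lemma \ref{extsubslemma}\eqref{esl4} over $M[p]$, and that the assembled $w$ is a legitimate element of $\mathcal K[p]\bigoplus_{e\in E_p}\mathcal K[p,e]$ with the stated coefficient bounds. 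Everything else is routine bookkeeping parallel to the non‑central case.
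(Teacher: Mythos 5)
Your proposal is correct and follows essentially the same route as the paper: first invoke Lemma \ref{extsubslemma}\eqref{esl4} to get $\widetilde w\in\mathcal K[p]$ and the Dirichlet solution $\varphi'$, then at each attachment $e_j\in E_p$ repeat the low-harmonic correction of Lemma \ref{linearpartpen} with a unique $v_j\in\mathcal K_v[p,e_j]$, and use the disjointness of the supports of the $v_j$ (each contained in $\Lambda[p,e_j,1]$) to set $\varphi=\varphi'+\sum_j v_j$ and $w=\widetilde w+\sum_j\mathcal L_h v_j$. The point you single out as needing care — that the corrections at distinct attachments decouple — is exactly the observation the paper makes, so no gap remains.
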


\begin{proof}Lemma \ref{extsubslemma} implies there exists $\widetilde w \in \mathcal K[p]$ and $\varphi' \in C^{2,\beta}(\widetilde S[p])$ such that $\mathcal L_h \varphi' = \frac{2\rho^2}{|A|^2}E + \widetilde w$ and $\varphi'|_{\partial \widetilde S[p]}=0$. For each $\Lambda[p, e_j,1]$ where $e_j \in E_p$ we modify $\varphi'$ in a manner identical to the previous proof. On each $C_1^+[p,e_j,0]$, find the unique $v_j \in \mathcal K_v[p,e_j]$ such that for $\varphi_j = \varphi' + v_j$, and $\varphi_{j,\perp}$ representing the part orthogonal to $\mathcal H_1[C_1^+[p,e_j,0]]$ on $C_1^+[p,e_j,0]$, one has $\mathcal R_\partial (\varphi_{j,\perp}) = \varphi_j$. The estimates on $\varphi_j, v_j$ and the unique choice of $v_j$ follow as they did previously.  Now set
\[
\varphi = \varphi' + \sum_{j=1}^{ |E_p|}v_j. 
\]Observe that $\supp(v_i) \cap \supp(v_j) = \emptyset$ for $i \neq j$ and thus $\varphi|_{\Lambda[p,e_j,1]} = \varphi_j$. Finally, setting $w= \widetilde w + \sum_{j=1}^{|E_p|} \mathcal L_h v_j$ gives the result.
\end{proof}

\subsection*{Solving the linearized equation globally}
We can now solve the linearized problem semi-locally on each transition region and each extended standard region. To get the estimates we desire, the inhomogeneous term for each extended standard region must be supported in a neighborhood of the standard region. Thus as a first step toward solving the global linearized problem, we determine a partition of unity on $M$ that we apply to the inhomogeneous term. Solving the problem on each region separately, we then patch the solutions back together to construct a global function. Of course, this process will introduce some error, which we correct by iteration.

We first introduce all of the cutoff functions we require.
\addtocounter{equation}{1}
\begin{definition} We define uniquely smooth functions
$\psi_{S[p]}, \psi_{\widetilde S[p]}, \psi_{S\pen}, \psi_{\widetilde S\pen}, \psi_{\Lambda[p,e,n']}$ such that
\begin{enumerate}[(i)]
\item $\psi_{S[p]} = \psi_{\widetilde S[p]} \equiv 1$ on $S[p]$.
\item $\psi_{S[p]}=\psi[1,0] \circ \underline x$ on each $\Lambda[p,e,1]$ for $e \in E_p$.
\item $\psi_{\widetilde S[p]}= \psi[0,1] \circ \overline x$ on each $\Lambda[p,e,1]$ for $e \in E_p$.
\item $\psi_{S\pen}=\psi_{\widetilde S\pen}\equiv 1$ on $S\pen$.
\item For $n<l(e)$, $\psi_{S[p,e,n]} = \psi[1,0] \circ \overline x$ on $\Lambda[p,e,n]$ and $\psi_{S[p,e,n]} = \psi[1,0] \circ \underline x$ on $\Lambda[p,e,n+1]$.
\item $\psi_{S[p,e,l(e)]}= \psi[1,0] \circ \overline x$ on $\Lambda[p,e,l(e)]$ and $\psi_{S[p,e,l(e)]}= \psi[1,0] \circ \overline x$ on $\Lambda[p^-[e],e,l(e)]$.
\item For $n<l(e)$, $\psi_{\widetilde S[p,e,n]} = \psi[0,1] \circ \underline x$ on $\Lambda[p,e,n]$ and $\psi_{S[p,e,n]} = \psi[0,1] \circ \overline x$ on $\Lambda[p,e,n+1]$.
\item $\psi_{\widetilde S[p,e,l(e)]}= \psi[0,1] \circ \underline x$ on $\Lambda[p,e,l(e)]$ and $\psi_{S[p,e,l(e)]}= \psi[0,1] \circ \underline x$ on $\Lambda[p^-[e],e,l(e)]$.
\item $\psi_{\Lambda[p,e,n']} = \psi[0,1]\circ \oux$ on $\Lambda[p,e,n']$. 
\end{enumerate}
\end{definition}
Observe that $\psi_{S[p]}, \psi_{S\pen}, \psi_{\Lambda[p,e,n']}$ form a partition of unity on $M$. Also note that each of the functions $\psi_{\widetilde S[p]}, \psi_{\widetilde S\pen}$ are identically 1 on almost all of $\widetilde S[p], \widetilde S\pen$, respectively. Near the boundary they transition smoothly to zero. Finally, $\supp(\psi_{S[p]}) \subset S_1[p], \supp (\psi_{S\pen} )\subset S_1\pen$.

We now set the notation for defining a global $C^{2,\beta}$ function by pasting together appropriately cutoff local functions.
\addtocounter{equation}{1}
\begin{definition}\label{patching}
Let $u[p] \in C^{k, \beta}( \widetilde S[p])$, $u\pen \in C^{k,\beta}(\widetilde S\pen)$, $p \in V(\Gamma), \pen \in S(\Gamma)$, be functions that are 
zero in a neighborhood of $\partial \widetilde S[p]$, $\partial \widetilde S\pen$. We define $U=\mathbf U(\{u[p],u\pen\})\in C^{k,\beta}(M)$ to be the unique function such that
\begin{enumerate}[(i)]
\item $U|_{S[p]}=u[p], U|_{S\pen}=u\pen$.
\item $U|_{\Lambda[p,e,1]}=u[p] + \sum_{e \in E_p}u[p,e,1]$.
\item For $n'<l(e)$, $U|_{\Lambda[p,e,n']} = u[p,e,n'-1] + u[p,e,n']$.
\item For $U|_{\Lambda[p^+[e],e,l(e)]} = u[p^+[e],e,l(e)-1] + u[p^+[e],e,l(e)]$ while $U|_{\Lambda[p^-[e],e,l(e)]}=u[p^-[e],e,l(e)-1]+u[p^+[e],e,l(e)]$. 
\end{enumerate}
\end{definition}

We define the global norms that we will use in the statement and proof of the global linearized problem and in the main theorem. 
\addtocounter{equation}{1}
\begin{definition}
For $k \in \mathbb N$, $\beta \in (0,1)$, and $\gamma \in (1,2)$, we define a norm $||\cdot||_{k, \beta, \gamma}$ on $C^{k,\beta}(M)$ by taking $||f||_{k,\beta,\gamma}$ to be the supremum of the following semi-norms with $p \in V(\Gamma)$, $\pen \in S(\Gamma)$, and $[p,e,n'] \in N(\Gamma)$ defined in \ref{pendef}:
\begin{enumerate}[(i)]
\item $||f:C^{k,\beta}(S_1[p], \chi)||$
\item $\outau^{-\gamma n}||f: C^{k,\beta}(S_1[p,e,n],\chi)||$
\item $\outau^{-\gamma (n'-1)}||f:C^{k,\beta}(\Lambda[p,e,n'], \chi, e^{-\gamma \underline x})||$
\end{enumerate}
For $w \in \mathcal K$ such that
\[
w= \sum_{i=1}^3  \left(\sum_{p \in V(\Gamma)}\mu_i[p] w_i[p]+\sum_{\pe \in A(\Gamma)} \mu_{i}\pe w_i\pe +\sum_{\pen \in S(\Gamma)} \mu_i\pen w_i\pen \right),
\] let $w^X$ denote the projection of $w$ onto $\mathcal K_X$. We define norms for each of these projections separately to help highlight the differences in scale for the different coefficients. To that end, let
\addtocounter{theorem}{1}
\begin{equation}
||w^V||_V:= \max_{\substack{i=1,2,3\\p } } \{|\mu_i[p]|\}, \; \; ||w^A||_A:=\max_{\substack{i=1,2,3\\ [p,e] }} \{|\mu_i[p,e]|\}, ||w^S||_\gamma:= \sup_{\substack{i=1,2,3\\ [p,e,n]}} \{\outau^{-\gamma n}|\mu_i[p,e,n]|\}.
\end{equation} 
\end{definition}

We are finally ready to state and prove the main result of this section. The strategy of proof is to first use Proposition \ref{RLambda} to solve the problem on each transition region. We then apply the cutoff functions to the inhomogeneous term and use Lemmas \ref{linearpartpen}, \ref{linearpartp} to solve semi-locally on each extended standard region. Finally, we patch together these semi-local solutions via \ref{patching}. This process introduces error which we show is small enough to correct by iteration.
\addtocounter{equation}{1}
\begin{prop}\label{LinearSectionProp}
There is a linear map $\mathcal{R}_M: C^{0, \beta}(M) \to C^{2,\beta}(M)\times \mathcal{K}$ such that for $E \in C^{0,\beta}(M)$ and $(\varphi, w)=\mathcal{R}_M(E)$ the following hold:
\begin{enumerate}
\item $\mathcal{L}_\chi \varphi = E+ \cfw w$ on $M$.
\item $||\varphi||_{2,\beta,\gamma}+||w^V||_{V}+||w^A||_A + ||w^S||_\gamma \leq C(b, \beta,\gamma)||E||_{0, \beta, \gamma}$
\item $\mathcal{R}_M$ depends continuously on all of the parameters of the construction.
\end{enumerate}
\end{prop}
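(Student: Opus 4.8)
The plan is to construct $\mathcal{R}_M$ by solving first on the transition regions, then semi-locally on each extended standard region, patching the pieces together with the operator $\mathbf U$ of Definition \ref{patching}, and finally correcting the resulting error by a geometrically convergent iteration. I would begin by decomposing $E$ via the partition of unity $\{\psi_{S[p]},\psi_{S[p,e,n]},\psi_{\Lambda[p,e,n']}\}$. On each transition region I apply Proposition \ref{RLambda} to $\psi_{\Lambda[p,e,n']}E$, in the orientation for which $\underline x$ measures distance from the boundary circle on the side away from the nearest central sphere, obtaining $V[p,e,n'] := \mathcal{R}_\Lambda(\psi_{\Lambda[p,e,n']}E)$ with $\mathcal{L}_\chi V[p,e,n'] = \psi_{\Lambda[p,e,n']}E$, vanishing on one boundary circle, lying in $\mathcal H_1$ on the other, and satisfying the weighted decay bound of item (3) there. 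On each $\widetilde S\pen$ I apply Lemma \ref{linearpartpen} to $\psi_{S[p,e,n]}E$ (whose support lies in $S_1[p,e,n]$ by the choice of cutoffs), and on each $\widetilde S[p]$ I apply Lemma \ref{linearpartp} to $\psi_{S[p]}E$, producing $(\varphi[p,e,n],w[p,e,n])$ and $(\varphi[p],w[p])$ with $\mathcal{L}_\chi\varphi = \psi_{S[\cdot]}E + \cfw w$, vanishing Dirichlet data, the weighted bound carried by $f[\widetilde S]$, and $w[p,e,n]\in\mathcal{K}\pen$, $w[p]\in\mathcal{K}[p]\oplus\bigoplus_{e\in E_p}\mathcal{K}\pe$, with all coefficients controlled by $C\,\|\psi_{S[\cdot]}E:C^{0,\beta}\|$.

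Next I would assemble the global approximate solution $\varphi_0 := \mathbf U(\{\varphi[p],\varphi\pen\}) + \sum_{[p,e,n']}V[p,e,n']$ and $w_0 := \sum_{\pen\in S(\Gamma)}w\pen + \sum_{p\in V(\Gamma)}w[p] \in \mathcal{K}$. The boundary behaviour is compatible precisely because, on each shared circle $C_1^+[p,e,n'']$, the semi-local solution was arranged (via $\mathcal{R}_\partial$ and the $\mathcal{K}_v$ correction of Lemma \ref{vlemma}) to equal $\mathcal{R}_\partial$ of its own restriction, which dovetails with the $\mathcal H_1$-valued transition solution and with the linear-decay behaviour at $\widetilde S[p,e,l(e)]$ furnished by Corollary \ref{LambdaCloseCor}. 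The crucial estimate is then for the error $E_1 := E + \cfw w_0 - \mathcal{L}_\chi\varphi_0$. On each $S[p]$ and $S\pen$ the terms cancel identically; the error is concentrated on the transition regions and consists of commutator terms $[\mathcal{L}_\chi,\psi_{S[\cdot]}]$ arising where the standard-region cutoffs overlap a $\Lambda$, together with the overlap of the $\psi_{\Lambda}E$ contribution. By construction each such commutator is supported where the relevant semi-local solution already decays: exponentially with rate $\gamma$ on the ``far'' side, and with a gain of order $1/|\log\outau|$ on the ``close'' side, and one uses \eqref{decaymain} together with the spectral smallness from Proposition \ref{approxkerprop}. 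Because the length of each transition region is $\sim -\log\outau$ and $\gamma\in(1,2)$, a single hop across a transition region costs a factor of order $\outau^\gamma$, whereas the norm $\|\cdot\|_{0,\beta,\gamma}$ builds in at worst a $\outau^{-\gamma}$ between consecutive regions; this margin, with the $\outau^{-\gamma n}$ and $\outau^{-\gamma(n'-1)}$ weights in the definition of the norm chosen exactly to make the cancellation work, yields $\|E_1\|_{0,\beta,\gamma}\le C\epsilon\,\|E\|_{0,\beta,\gamma}$ with $\epsilon\to 0$ as $\outau\to 0$ (equivalently $\epsilon_1$ small, $b$ large), uniformly over $M$ despite the unbounded multiplicity of regions along long edges.

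Finally I would iterate: define $E_{j+1} := E_j + \cfw w_j - \mathcal{L}_\chi\varphi_j$, obtain $(\varphi_{j+1},w_{j+1})$ by feeding $E_{j+1}$ through the same construction, and set $\varphi := \sum_{j\ge0}\varphi_j$, $w := \sum_{j\ge0}w_j$. The bound $\|E_{j+1}\|_{0,\beta,\gamma}\le C\epsilon\,\|E_j\|_{0,\beta,\gamma}$ with $C\epsilon<1$ makes both series converge geometrically in $\|\cdot\|_{2,\beta,\gamma}$ and in $\|\cdot\|_V,\|\cdot\|_A,\|\cdot\|_\gamma$, and summing the defining equations gives $\mathcal{L}_\chi\varphi = E + \cfw w$ on $M$ with the estimate in item (2). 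Linearity of $\mathcal{R}_M$ and its continuous dependence on all parameters (item (3)) are inherited from $\mathcal{R}_\Lambda$, $\mathcal{R}_\partial$, $\mathcal{R}_{\widetilde S\pen}$, $\mathcal{R}_{\widetilde S[p]}$ and from Lemmas \ref{extsubslemma} and \ref{vlemma}. The main obstacle is the contraction estimate $\|E_1\|_{0,\beta,\gamma}\le C\epsilon\,\|E\|_{0,\beta,\gamma}$: one must verify that every commutator term produced by patching lands in a subregion where the corresponding semi-local or transition solution already obeys a strictly better weighted bound than the target norm requires, so that the geometric gains from the $\gamma<2$ decay and the logarithmic length of the necks dominate the $\outau^{-\gamma}$ loss encoded in the norm, with constants independent of the size of the graph.
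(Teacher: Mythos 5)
Your overall architecture is the same as the paper's (partition of unity, neck solutions from Proposition \ref{RLambda}, semi-local solutions from Lemmas \ref{linearpartpen} and \ref{linearpartp}, patching via $\mathbf U$ of Definition \ref{patching}, then a geometric iteration), but two steps, as written, do not work. The first is the orientation and assembly of the transition-region solutions. You solve on $\Lambda[p,e,n']$ with the decay variable measured from the boundary circle \emph{away} from the nearest central sphere, so your $V[p,e,n']$ vanishes on the vertex-side circle and carries its $\mathcal H_1$ data on the far circle. This is backwards: the global norm only tolerates size $\outau^{\gamma(n'-1)}e^{-\gamma \underline x}||E||_{0,\beta,\gamma}$ on $\Lambda[p,e,n']$, with $\underline x$ measured from the vertex-side circle $C^+[p,e,n'-1]$, i.e.\ it demands more smallness the farther one moves from the vertex, while the data $E$ is allowed to be largest near the vertex-side circle. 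With your orientation the neck solution transports that mass across the neck with no exponential discount and deposits $\mathcal H_1$ boundary data of size roughly $\outau^{\gamma(n'-1)}||E||_{0,\beta,\gamma}$ (up to logarithmic or $\outau^{-\gamma}$ weighted-norm factors) at the far circle, where the target norm only tolerates $\outau^{\gamma n'}$; item (2) then fails by a factor that diverges as $\outau\to 0$. The paper instead puts the $\mathcal H_1$ data on $C^+[p,e,n'-1]$ and requires vanishing plus decay $e^{-\gamma\underline x}$ toward $C^-[p,e,n']$. Relatedly, your claim that the semi-local solutions ``dovetail'' with the $\mathcal H_1$ values of $V$ on the shared circle has no basis: they have zero Dirichlet data on $\partial\widetilde S$ and are constructed with no reference to $V$, so adding $\sum V[p,e,n']$ extended by zero is not even $C^{2,\beta}$. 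One must cut off, as the paper does, writing $\psi_{\Lambda[p,e,n']}V[p,e,n']$ and moving the commutator $[\psi_{\Lambda[p,e,n']},\mathcal L_\chi]V[p,e,n']$ into a modified inhomogeneous term supported in the $S_1$ regions, which is then fed to the standard-region machinery.

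The second gap is the contraction estimate that you yourself identify as the main obstacle. Your ledger --- a hop across a neck ``costs $\outau^{\gamma}$'' against a norm loss of ``$\outau^{-\gamma}$'' --- balances exactly, giving only $||E_1||_{0,\beta,\gamma}\le C||E||_{0,\beta,\gamma}$ with no small factor; the $1/|\log\outau|$ gain on the close side, \eqref{decaymain}, and Proposition \ref{approxkerprop} do not produce smallness for the far-side commutators, which are precisely the borderline terms, and the smallness of $\epsilon_1$ is not the relevant mechanism either. The paper extracts the missing factor by using a second exponent $\gamma'\in(\gamma,2)$: the semi-local solutions decay at rate $\gamma'$ as well, so measuring the patching error with weight $e^{-\gamma'\underline x}$ and the norm with weight $e^{-\gamma\underline x}$ yields $e^{(\gamma-\gamma')\underline x}\le C\outau^{\gamma'-\gamma}$ on the support of the error, by \eqref{elllength}; this is what makes the series $\sum_r\mathcal R^r E$ and $\sum_r \mathcal W^r E$ converge. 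Without introducing $\gamma'$ (or an equivalent source of a positive power of $\outau$) your iteration does not close.
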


\begin{proof}
We first prove the proposition, assuming $\supp(E) \subset \left(\cup_{V(\Gamma)} S_1[p]\right) \bigcup \left( \cup_{S(\Gamma)} S_1\pen\right)$. Using Lemmas \ref{linearpartp} and \ref{linearpartpen} we determine pairs 
\[(\varphi\pen, \widetilde w\pen)=\mathcal R_{\widetilde S\pen}(E|_{S_1\pen}), \qquad (\varphi[p], \widetilde w[p]) = \mathcal R_{\widetilde S[p]}(E|_{S_1[p]})\]
for each $\pen \in S(\Gamma), p \in V(\Gamma)$. (Note that $\widetilde w[p]$ will be a linear combination of elements of both $\mathcal K[p]$ and $\mathcal K\pe$.)  In order to patch together the local solutions $\varphi[p], \varphi\pen$ and maintain the regularity,
we have to cutoff each of these functions near the boundary. But this introduces some error.
We denote
\[
\mathcal{R}E := \mathbf U(\{\psi_{\widetilde S[p]}\varphi[p], \psi_{\widetilde S\pen}\varphi\pen\})\in C^{2,\beta}(M),\]
\[ \mathcal{W}E := \left(\sum_{V(\Gamma)} \widetilde w[p] + \sum_{S(\Gamma)} \widetilde w\pen\right)\in \mathcal K,
\]
\[
\mathcal{E}E:=\mathbf U(\{[\psi_{\widetilde S[p]},\mathcal L_\chi]\varphi[p], [\psi_{\widetilde S\pen},\mathcal L_\chi]\varphi\pen\})\in C^{0,\beta}(M),
\]where here $[\:,\:]$ denotes the commutator. That is, $[\psi_{\widetilde S[p]},\mathcal L_\chi]\varphi[p]= \psi_{\widetilde S[p]}\mathcal L_ \chi \varphi[p] -\mathcal L_\chi (\psi_{\widetilde S[p]}\varphi[p])$ and the like for $\varphi\pen$.

One easily checks that, as the support of $E$ implies $\psi_{\widetilde S[p]} \mathcal L_\chi \varphi[p]= \mathcal L_\chi \varphi[p]$ and the like for $\varphi\pen$, 
\addtocounter{theorem}{1}
\begin{equation}
\mathcal L_\chi \mathcal{R}E + \mathcal{E}E = E+\cfw \mathcal{W}E \qquad \text{on } M.
\end{equation}Moreover, for $\gamma' \in (\gamma, 2)$, we get the estimates
\begin{align*}
||\mathcal RE||_{2,\beta,\gamma} &\leq C(b,\beta,\gamma)||E||_{0,\beta,\gamma}\\
||\mathcal WE^V||_{V}+ ||\mathcal WE^A||_{A}+||\mathcal WE^S||_{\gamma}  &\leq C(b,\beta,\gamma)||E||_{0,\beta,\gamma}\\
||\mathcal EE||_{0,\beta,\gamma}&\leq \outau^{(\gamma'-\gamma)}C(b,\beta,\gamma, \gamma')||E||_{0,\beta,\gamma}.
\end{align*}
The first two estimates are immediate from Lemmas \ref{linearpartp}, \ref{linearpartpen} and the uniform geometry of the immersions on the support of $\mathcal WE$. To determine the third, we first observe that applying Lemma \ref{linearpartpen}
to $\mathcal EE$ for $\gamma' \in (\gamma,2)$ yields the estimate
\[
 ||\mathcal EE||_{0,\beta,\gamma} \leq e^{(\gamma-\gamma')\underline x}C(b,\beta,\gamma)||E||_{0,\beta,\gamma}.
\]Next observe $\supp(\mathcal EE)\subset
 \left(\cup_{p \in V(\Gamma} S_1[p] \backslash S[p]\right)\bigcup \left(\cup_{\pen \in S(\Gamma)} S_1[p,e,n] \backslash S\pen\right)$. 
Finally, for a fixed $e\in E(\Gamma) \cup R(\Gamma)$, on the supported regions, by \eqref{elllength}, we have the estimate
\[
 \exp({(\gamma- \gamma')\underline x}) \leq C(\gamma, \gamma', b)|\taue|^{(\gamma'-\gamma)}\leq  C(\gamma, \gamma', b)\outau^{(\gamma'-\gamma)}.
\]If $\outau$ is sufficiently small, we can guarantee $ C(\gamma, \gamma', b)\outau^{(\gamma'-\gamma)}<1/2$. This allows us to remove the error by iteration.

Now observe that since $\mathcal EE \in C^{0,\beta}\left(\cup_{V(\Gamma)} S_1[p]\right) \bigcup \left( \cup_{S(\Gamma)} S_1\pen\right)$, we can apply the steps above to $\mathcal EE$ to produce $\mathcal R^1E, \mathcal W^1E, \mathcal E^1E$. We then proceed by induction to produce, for all $r \in \mathbb Z^+$
\addtocounter{theorem}{1}
\begin{equation}
\mathcal L_\chi \mathcal R^rE + \mathcal E^{r+1}E = \mathcal E^{r}E +\cfw \mathcal W^rE \qquad \text{on } M.
\end{equation}
Defining
\addtocounter{theorem}{1}
\begin{equation}
\varphi := \sum_{r=0}^\infty \mathcal R^rE; \qquad w:=\sum_{r=0}^\infty \mathcal W^rE,
\end{equation}which converge based on the provided estimates, we prove the proposition in the first case.

Now consider the more general case. We begin by applying Proposition \ref{RLambda} to each $\Lambda\penp$ to determine
\[
 V\penp = \mathcal R_{\Lambda\penp}E|_{\Lambda\penp}
\]such that $V\penp|_{C^+[p,e,n'-1]} \in \mathcal H_1[C^+[p,e,n'-1]]$ and $V\penp|_{C^-\penp}=0$. We define
\[
 \widetilde E:= \mathbf U(\{\psi_{S[p]}E, \psi_{S\pen}E\}) + \mathbf U(\{0,[\psi_{\Lambda\penp},\mathcal L_\chi]V\penp\})\in C^{0,\beta}(M),
\]and observe that $\widetilde E \subset \left(\cup_{V(\Gamma)} S_1[p]\right) \bigcup \left( \cup_{S(\Gamma)} S_1\pen\right)$. We now apply this case
to the work done above to produce $(\widetilde \varphi, w)\in C^{2,\beta}(M) \times \mathcal K$ such that $\mathcal L_\chi \widetilde \varphi = \widetilde E + \cfw w$ on $M$.
We define $\varphi \in C^{2, \beta}(M)$ such that
\[
 \varphi = \widetilde \varphi + \mathbf U(\{0,\psi_{\Lambda\penp}V\penp\}).
\]Recalling that $\psi_{S[p]},\psi_{S\pen}, \psi_{\Lambda\penp}$ partition $M$, we see $\mathcal L_\chi \varphi = E+\cfw w$. Moreover, $(\varphi,w)$ satisfy
the necessary estimates so the proof is complete.
\end{proof}

\section{The Geometric Principle and proof of the Main Theorem}\label{GeoP}
Throughout this section, let $\gamma \in (1,2), \beta \in (0,1)$, $\gamma' \in (\gamma, 2)$, and $\beta' \in (0,\beta)$. Any constant depending only on $b, \beta, \gamma, \beta', \gamma'$ we will simply denote by $C$. If it depends on other constants, they will be made explicit in the notation.

\subsection*{Prescribing the extended substitute kernel on a non-central standard region}
As in the paper \cite{HaskKap}, prescribing the extended substitute kernel on the non-central standard regions can be done entirely on the linear level.
The main tool of the argument is Green's second identity \cite{GiTr}, which is a linearization of the force balancing.
Before we proceed, recall that $\cf_i$, defined in \eqref{kernelcomparison}, are simply translation vector fields that have been projected onto the normal bundle of $\Mtdz$ (multiplied by some constant scale factor). Since translations preserve the mean curvature, $\mathcal L \cf_i \equiv 0$ for $i=1,2,3$.

We begin by defining, for $i=1,2,3$, $\phi_i'\pen \in C^{2,\beta}(\widetilde S\pen)$ where
\addtocounter{theorem}{1}
\begin{equation}\label{phipen}
\begin{aligned}
&\phi_{1}'\pen = \psi[0,1]\circ \overline x V_1[\Lambda_{close},\widehat c_{1}\pen\ell_{\Lambda_{close}},0],\\
&\phi_{2}'\pen = \psi[0,1]\circ \overline x V_2[\Lambda_{close}, \widehat c_{2}\pen \sinh \ell_{\Lambda_{close}}, 0],\\
&\phi_{3}'\pen = \psi[0,1]\circ \overline x V_3[\Lambda_{close}, \widehat c_{3}\pen \sinh \ell_{\Lambda_{close}}, 0].
\end{aligned}
\end{equation}
We choose $\widehat c_{i}\pen$, $i=1,2,3$, such that -- recalling \eqref{wpendef}--
\addtocounter{theorem}{1}
\begin{equation}\label{precichoice}
\int_{\widetilde C_{close}}\frac{\partial \phi_i'\pen}{\partial t} \cf_j - \frac{\partial \cf_j}{\partial t} \phi_i'\pen d\theta = -\delta_{ij}
\end{equation}where here $\widetilde C_{close}$ represents the middle meridian circle on $\Lambda_{close}$. (Here $\Lambda_{close}, \Lambda_{far}$ are as in Definition \ref{genericlabel}.) For $\taue>0$,  \eqref{weq},\eqref{normalvector} imply that $\partial \cf_j/\partial t \equiv 0$ on $\widetilde C_{close}$ for $j=1,2,3$. For $\taue<0$, the same equations imply that $\cf_2, \cf_3 \equiv 0$, $\partial \cf_1/\partial t \equiv 0$ on this meridian. 

For the rest of the argument we drop the dependence on $\pen \in S(\Gamma)$ and presume the reader understands the implied dependence. Where there might be confusion, we state it explicitly.

Notice that $\supp(\phi_i') \subset \Lambda_{close}$ and $\supp(\mathcal L_\chi \phi_i' )\subset S_1 \cap \Lambda_{close}$.  Proposition \ref{LambdaKernel}, \eqref{Vlambda},  \eqref{elllength}, and the definition of $\cf_i$ imply 
\[
C^{-1} < \widehat c_i < C,\: i=1,2,3
\]
and
\addtocounter{theorem}{1}
\begin{equation}\label{phiprime}
\begin{aligned}
&||\phi_i':C^{2,\beta}(\Lambda_{close}, \chi, e^{-\underline x})|| \leq Ce^\ell\leq C\outau^{-1}
\\
&||\mathcal L_\chi \phi_i':C^{0, \beta}(\widetilde S, \chi)|| \leq C.
\end{aligned}
\end{equation}Observing $\supp(\mathcal L_\chi \phi_i') \subset S_1 \cap \Lambda_{close}$, 
let $(\phi_i'', \underline w_i) = \mathcal R_{\widetilde S}(-\mathcal L_\chi \phi_i')$ and finally, define
\[
\phi_i := \phi_i'' + \phi_i'.
\]
\addtocounter{equation}{1}
\begin{lemma}\label{prescribelinear}
Given $\phi_i, \underline w_i$ as defined above, for $\widetilde S:= \widetilde S\pen$,
\begin{enumerate}
\item \label{pl1}$\mathcal L_\chi \phi_i =\cfw \underline w_i$ on $\widetilde S$. 
\item \label{pl3}$||\phi_i:C^{2,\beta}(\widetilde S \backslash \Lambda_{close}, \chi)|| \leq C$.
\item \label{pl4}$||\phi_i:C^{2, \beta}(\Lambda_{close}, \chi, e^{-\underline x})|| \leq C \outau^{-1}$.
\item \label{pl5}$||\phi_i: C^{2,\beta}(\Lambda_{far},\chi,e^{-\gamma' \underline x})||\leq C$ when $n<l(e)$.
\item \label{pl5b}$||\phi_i: C^{2,\beta}(\Lambda_{far},\chi,\frac{\underline x +1}{\ell_\Lambda})||\leq C$ when $n=l(e)$. 
\item \label{pl6}$|\underline w_i - w_i| \leq C/|\log \outau|$ where $w_i$ represents the appropriate $w_i\pen$.
\item \label{pl7}$\phi_i, \underline w_i$ are unique by their construction and depend continuously upon all of the parameters of the construction.
\end{enumerate}
\end{lemma}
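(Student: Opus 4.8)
The plan is to read off items \eqref{pl1} and \eqref{pl3}--\eqref{pl5b} directly from the decomposition $\phi_i=\phi_i'+\phi_i''$ and the estimates already in hand, to prove the one substantive assertion \eqref{pl6} by Green's second identity, and to note that \eqref{pl7} is automatic.

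Item \eqref{pl1} is immediate: Lemma~\ref{linearpartpen} applied to $E:=-\mathcal L_\chi\phi_i'$ (supported in $S_1\cap\Lambda_{close}$) produces $(\phi_i'',\underline w_i)$ with $\mathcal L_\chi\phi_i''=-\mathcal L_\chi\phi_i'+\cfw\underline w_i$ on $\widetilde S$, so $\mathcal L_\chi(\phi_i'+\phi_i'')=\cfw\underline w_i$. For \eqref{pl3}--\eqref{pl5b} one combines the estimate $\|\mathcal L_\chi\phi_i':C^{0,\beta}(\widetilde S,\chi)\|\le C$ from \eqref{phiprime} (so that the $\mathcal R_{\widetilde S}$-output $\phi_i''$ satisfies $\|\phi_i'':C^{2,\beta}(\widetilde S,\chi,f[\widetilde S])\|\le C$) with the $\phi_i'$-estimate in \eqref{phiprime}. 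On $\widetilde S\setminus\Lambda_{close}$ one has $\phi_i=\phi_i''$ because $\supp\phi_i'\subset\Lambda_{close}$, and there $f[\widetilde S]$ is bounded (by a constant on $S$, by $e^{-\gamma\underline x}\le1$ or $(\underline x+1)/\eL\le C$ on $\Lambda_{far}$), giving \eqref{pl3}, \eqref{pl5} and \eqref{pl5b}; for \eqref{pl5} one simply runs $\mathcal R_{\widetilde S}$ with parameter $\gamma'$ in place of $\gamma$. On $\Lambda_{close}$, \eqref{phiprime} bounds $\phi_i'$ in the $e^{-\underline x}$-weighted norm by $Ce^{\eL}\le C\outau^{-1}$ via \eqref{elllength}, while $\phi_i''$ is bounded by $C$ in the $(\underline x+1)/\eL$-weighted norm, and since $(\underline x+1)/\eL\le C\outau^{-1}e^{-\underline x}$ for $0\le\underline x\le\eL$ this yields \eqref{pl4}.

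The heart of the proof is \eqref{pl6}. Writing $\underline w_i=\sum_{l=1}^3\mu_{il}w_l\pen\in\mathcal K\pen$, the normalization \eqref{wpendef} gives $\mu_{il}=\int_{S\pen}\underline w_i\,\cf_l\,dh$, so it suffices to show $\mu_{il}=\delta_{il}+O(1/|\log\outau|)$. Now $\mathcal L_g\phi_i=\rho^2\mathcal L_\chi\phi_i=\tfrac12|A|^2\underline w_i$ is supported in $S\pen\subset\widetilde S$, and $\mathcal L_g\cf_l\equiv0$ since translations preserve $H$; so Green's second identity for $\mathcal L_g=\Delta_g+|A|^2$ on $\widetilde S$, together with $dh=\tfrac12|A|^2\,dg$, gives
\[
\mu_{il}=\int_{\partial\widetilde S}\big(\cf_l\,\partial_\nu\phi_i-\phi_i\,\partial_\nu\cf_l\big)\,ds_g ,
\]
the $\partial_\nu$ being the outward $g$-unit normal derivative. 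On a meridian the conformal factors cancel, $\partial_\nu\phi\,ds_g=\pm\partial_{\underline t}\phi\,d\theta$, and $\partial_{\underline t}=(P_{\uthte}/P_{\taue})\partial_t$ with $|1-P_{\uthte}/P_{\taue}|\le C\outau/|\log\outau|$ by \eqref{diffeodifference}. On the boundary circle of $\widetilde S$ in $\Lambda_{close}$ one has $\phi_i''=0$ and $|\partial_{\underline t}\phi_i''|\le Cf[\widetilde S]=C/\eL$ (the weight being $(\underline x+1)/\eL$ with $\underline x=0$ there), while $\phi_i'=V_i[\Lambda_{close},\cdot]$ (the cut-off is $\equiv1$ on the part of $\Lambda_{close}$ joining that circle to $\widetilde C_{close}$, on which $\mathcal L_\chi\phi_i'=0$), so the flux of $\phi_i'$ there equals its flux at $\widetilde C_{close}$, which by the choice \eqref{precichoice} of $\widehat c_i\pen$ is exactly $-\delta_{il}$; here the off-diagonal entries vanish automatically because $\cf_1$ and $V_1[\Lambda,\cdot]$ are meridian-constant while $\cf_2,\cf_3$ are $L^2(\Ss^1)$-orthogonal to constants, and because $\partial_t\cf_j\equiv0$ on $\widetilde C_{close}$ for $\taue>0$ (resp.\ $\cf_2\equiv\cf_3\equiv0$, $\partial_t\cf_1\equiv0$ for $\taue<0$). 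On the boundary circle of $\widetilde S$ in $\Lambda_{far}$ we have $\phi_i=\phi_i''$ with $\phi_i''=0$ and $|\partial_{\underline t}\phi_i''|\le Cf[\widetilde S]$, where now $f[\widetilde S]\le C\outau^{\gamma}$ when $n<l(e)$ and $f[\widetilde S]=O(1/\eL)$ when $n=l(e)$. Collecting these and absorbing the $1+O(\outau/|\log\outau|)$ factors, $\mu_{il}=\delta_{il}+O(1/|\log\outau|)$; since $\|w_l\pen:C^{2,\beta}(M,\chi)\|\le C$ by Lemma~\ref{extsubslemma}, this proves \eqref{pl6}.

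Item \eqref{pl7} is routine: $\phi_i'$, $\phi_i''$ and $\underline w_i$ are defined by the explicit formula \eqref{phipen}, by the map $\mathcal R_{\widetilde S}$, and by the linear system \eqref{precichoice} (whose coefficient matrix is uniformly invertible by Proposition~\ref{LambdaKernel} and \eqref{Vlambda}), all depending continuously on the parameters. The parallel statement for a central standard region is proved in the same way, prescribing the low harmonics on every adjacent transition region by means of the spaces $\mathcal K\pe$ rather than $\mathcal K\pen$. I expect the one delicate point to be \eqref{pl6}: one must carefully convert the intrinsic $g$-flux in Green's identity into the $\partial_t$-normalized boundary integral of \eqref{precichoice} (tracking the reparameterization $\underline t=(P_{\taue}/P_{\uthte})t$), and one must evaluate the $\phi_i''$-part of the flux on $\partial\widetilde S$ rather than on an interior meridian, precisely because that is where the weight $f[\widetilde S]$ --- hence $\phi_i''$ and its first derivatives --- is genuinely $O(1/|\log\outau|)$.
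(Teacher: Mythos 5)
Your proposal is correct, and items \eqref{pl1}, \eqref{pl3}--\eqref{pl5b}, \eqref{pl7} are handled exactly as in the paper (read off from the decomposition $\phi_i=\phi_i'+\phi_i''$, \eqref{phiprime}, \eqref{elllength}, and Lemma \ref{linearpartpen} run with $\gamma'$ in place of $\gamma$). For the key item \eqref{pl6} you use the same basic tool -- Green's second identity with $\mathcal L\cf_l=0$ -- but you execute it on a genuinely different domain. The paper takes $\Omega$ to be the component of $\widetilde S\setminus(\widetilde C_{close}\cup\widetilde C_{far})$ containing $S$ and evaluates the fluxes on the two middle meridians; there the $\phi_j'$-contribution is read off directly from \eqref{precichoice}, while the $\phi_j''$-contribution is controlled by Corollary \ref{LambdaCloseCor} (item \eqref{LCC2}, and item \eqref{LCC1} in the $\taue<0$ case), and one must exploit the pointwise structure of the Delaunay normal at the necks ($\partial_t\cf_i\equiv0$ there for $\taue>0$; $\cf_2=\cf_3\equiv0$ and $|\partial_t\cf_{2,3}|\le C|\taue|^{1/2}$ for $\taue<0$), which forces the case analysis on the sign of $\taue$. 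You instead integrate over all of $\widetilde S$ and evaluate on $\partial\widetilde S$: the $\phi_i'$-flux is transported from $\widetilde C_{close}$ to the outer circle by exact Wronskian conservation (legitimate, since on the intervening annulus the cutoff is $\equiv1$, so $\mathcal L_\chi\phi_i'=0=\mathcal L_\chi\cf_l$), while the $\phi_i''$-contribution is small precisely because $\phi_i''$ vanishes on $\partial\widetilde S$ and the weighted output estimate of Lemma \ref{linearpartpen} makes $|\partial_t\phi_i''|\le Cf[\widetilde S]\le C/|\log\outau|$ (or $C\outau^{\gamma}$) at those circles. What each buys: the paper's choice keeps the error estimate at interior circles where Corollary \ref{LambdaCloseCor} applies but pays with the $\taue\gtrless0$ dichotomy; yours avoids both that dichotomy and Corollary \ref{LambdaCloseCor} altogether, at the cost of the extra flux-conservation step and of checking the weight $f[\widetilde S]$ at $\partial\widetilde S$. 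Your sign and $t$-versus-$\underline t$ bookkeeping (the $1+O(\outau/|\log\outau|)$ factor from \eqref{diffeodifference}) and the observation that the off-diagonal conditions in \eqref{precichoice} hold automatically by harmonic-mode orthogonality are at the same level of care as the paper's; I see no gap.
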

\begin{proof}
Items \eqref{pl1} and \eqref{pl7} follow immediately from the definition.  Items \eqref{pl3} and \eqref{pl4} follow from \eqref{phiprime}, \eqref{elllength} and \eqref{pl5}, \eqref{pl5b} follow from the definition and Lemma \ref{linearpartpen}. 

For \eqref{pl6}, we require the use of Green's Theorem. For a domain $\Omega \subset \widetilde S$, we have
\[
\int_\Omega \cf_i \mathcal L_\chi \phi_j - \phi_j \mathcal L_\chi \cf_i \: d\chi = \int_{\partial \Omega} \cf_i \left( \frac{\partial \phi_j}{\partial \eta}\right) - \phi_j \left( \frac{\partial \cf_i}{\partial \eta}\right),
\]where here $\eta$ is the outward pointing conormal to $\partial \Omega$. Let $\Omega$ be the connected component of $\widetilde S \backslash \left(\widetilde C_{close} \cup \widetilde C_{far}\right)$ where here $\widetilde C_{far}$ is the middle meridian circle on $\Lambda_{far}$. 
As translations preserve the mean curvature equation, we know $\mathcal L_\chi \cf_i \equiv 0$. Further, using $\mathcal L_\chi \phi_i =\cfw \underline w_i$, the above equation reduces to
\[
\int_\Omega \cf_i \underline w_j \: \cfw d\chi= \int_\Omega \cf_i \underline w_j \: dh= \int_{\widetilde C_{far}}\left( \cf_i  \frac{\partial \phi_j}{\partial t}- \frac{\partial \cf_i}{\partial t} \phi_j\right) - \int_{\widetilde C_{close}}\left(\cf_i  \frac{\partial \phi_j}{\partial t}- \frac{\partial \cf_i}{\partial t} \phi_j\right).
\]Using \eqref{precichoice}, for $\taue>0$, since $\partial \cf_i/\partial t$ vanishes on each boundary curve it is enough to show appropriate control on $\frac{\partial \phi_j''}{\partial t}$. Recalling the construction of $\phi_j''$, observe that $\mathcal L_\chi \phi_j'' \equiv 0$ on $\Lambda_{far}$ and $\Lambda_{close} \backslash \left(S_1 \cap \Lambda_{close}\right)$. If we denote $\partial S_1 \cap \Lambda_{close}:= C^-_{close,1}$ then  $||\phi_j'':C^{2,\beta}(C_{far}^+\cup C^-_{close,1}, d\theta)|| \leq C$. Item \eqref{LCC2} in Corollary \ref{LambdaCloseCor} then provides the necessary estimate.

For $\taue<0$, the same argument works for $\cf_1$. For $\cf_2,\cf_3$, we observe that on $\widetilde C_{close}, \widetilde C_{far}$, $|\partial \cf_2/\partial t|, |\partial \cf_3/\partial t|\leq C|\taue|^{1/2}$. Since $\cf_2=\cf_3\equiv 0$ on the boundary curves, using the estimate for $\phi_j''$ on $C_{far}^+ \cup C^-_{close,1}$ coupled now with \eqref{LCC1} in Corollary \ref{LambdaCloseCor} is strong enough to provide \eqref{pl6}.
\end{proof}

\subsection*{Prescribing substitute kernel on the central spheres}The prescription of the extended substitute kernel on each central sphere is completed in two steps. We first prescribe substitute kernel based on the unbalancing parameter $d$. We then prescribe the extended substitute kernel at each attachment based on the dislocation parameter $\boldsymbol \zeta$. Part of the difficulty in what follows comes from the fact that the unbalancing is understood by considering the graph $\Gd$ and an immersion based on that structure while the dislocation is understood by considering dislocations from the graph $\Gudl$, as the dislocations are introduced on an immersion positioned relative to that graph. 

We state here an important lemma without proof. The lemma depends upon a flux argument \cite{Kus,KKS}; the proof for our specific setup can be found in Appendix A.3 of \cite{KapAnn}.
We state the lemma in the form exactly as we need for our purposes. Throughout this subsection, for any $p \in V(\Gamma)$, let $S^+[p]$ denote the connected component of $\widetilde S[p] \backslash \left(\cup_{e \in E_p} \widetilde C[p, e, 1]\right)$. 
\addtocounter{equation}{1}
\begin{lemma}\label{unbalancinglemma}
For $d,\boldsymbol \zeta$, $\hYtdz$ and the function $H_{gluing}$ induced from this immersion,
\[
\int_{S^+[p]} H_{gluing}[p]\:\Ntdz \: dg= \pi\sum_{e \in E_p} \taue\sep \RRR'[e]\Be_1 := \pi d_{\boldsymbol \zeta}(p).
\]Here $p'\in V(\Gudl)$ is the vertex corresponding to $p$ and $e'$ corresponds to $e$.
\end{lemma}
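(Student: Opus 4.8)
The plan is to derive the identity from the first variation / flux formula for the mean curvature operator, applied to the region $S^+[p]$ on the immersed surface $\hYtdz$. Recall that for the unit normal field $\Ntdz$ and any constant vector $\Bv \in \Real^3$, the function $\Bv \cdot \Ntdz$ is (up to the universal constant $1/\pi$) the normal component of the Killing field generated by translation in the direction $\Bv$; hence $\mathcal L (\Bv\cdot \Ntdz) = 0$ identically, and more precisely there is a divergence identity expressing $(\Delta + |A|^2)(\Bv\cdot \Ntdz)$ as the divergence of a vector field built from $\Bv$ and the geometry. Integrating $H$ against $\Ntdz$ over a compact region with boundary then reduces, by the divergence theorem, to a boundary integral --- the classical balancing/flux formula. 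This is exactly the flux argument of \cite{Kus,KKS}, worked out for the present setup in Appendix A.3 of \cite{KapAnn}; the lemma as stated is simply the bookkeeping of what that boundary term evaluates to on each $\widetilde C[p,e,1]$.

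First I would fix $p \in V(\Gamma)$ and consider $S^+[p] \subset \widetilde S[p]$, whose boundary is $\bigcup_{e\in E_p} \widetilde C[p,e,1]$, the collection of middle meridian circles of the adjacent transition regions. On $S^+[p]$ the mean curvature of $\hYtdz$ is $1 + H_{error}[p]$, and by the support properties of $H_{dislocation}$ and $H_{gluing}$ established in Section \ref{InitialSurface}, the only contribution to $\int_{S^+[p]} (H - 1)\,\Ntdz\,dg$ that survives --- after the purely spherical and dislocation pieces are accounted for --- is the gluing error; moreover the $H_{dislocation}$ terms integrate against $\Ntdz$ over the dislocation annuli to contributions that are absorbed, since those annuli sit on round spheres where $\int \Ntdz\,dg$ over a symmetric region vanishes. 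I would then apply the flux identity: $\int_{S^+[p]} (H-1)\,\Ntdz\,dg$ equals a sum over $e \in E_p$ of the flux through $\widetilde C[p,e,1]$ of the constant-mean-curvature vector field. On each such meridian circle the surface is, to the relevant order, a Delaunay piece of parameter $\taue$ with axis along $\sep\,\RRR'[e]\Be_1$; the Delaunay flux through any meridian is the well-known constant, proportional to $\taue$ times the unit axis vector. Summing over $e\in E_p$ gives $\pi \sum_{e\in E_p}\taue\,\sep\,\RRR'[e]\Be_1$, which is the definition of $\pi\, d_{\boldsymbol\zeta}(p)$.

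The main obstacle --- or rather the point requiring care --- is the evaluation and comparison of the boundary fluxes: one must confirm that on $\widetilde C[p,e,1]$ the surface really is close enough to the appropriate Delaunay model that the flux integral picks out exactly $\taue\,\sep\,\RRR'[e]\Be_1$, with the dislocation parameter $\boldsymbol\zeta$ entering only through the axis direction $\RRR'[e]\Be_1$ (not through the coefficient $\taue$), and that all the lower-order discrepancies --- from the reparameterization $\hat Y_{edge}[\uthte,\taue]$, from $H_{dislocation}$, and from the cutoff transitions --- cancel or vanish upon integration against $\Ntdz$. This is precisely where one invokes the exactness of the flux (it is a homological invariant along each $M[e]$, so it may be computed on any meridian, in particular on a meridian deep inside the Delaunay portion where the surface is exactly Delaunay with parameter $\taue$), together with Proposition \ref{geopropcentral} and the definitions of $\RRR'[e]$ and $\Be_1[e]$. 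Since the statement is quoted directly from \cite{KapAnn} and only the indexing/notation is adapted, I would not reproduce the full flux computation here but cite Appendix A.3 of \cite{KapAnn} and simply verify that the boundary term, in the present notation, is $\pi\sum_{e\in E_p}\taue\,\sep\,\RRR'[e]\Be_1$.
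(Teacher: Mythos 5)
You take essentially the same route as the paper: Lemma \ref{unbalancinglemma} is stated there without proof, deferring to the flux/balancing argument of \cite{Kus,KKS} as carried out for this setup in Appendix A.3 of \cite{KapAnn}, and that is exactly the argument you sketch --- homological invariance of the flux, evaluated on meridians lying in the exactly-Delaunay part of the immersion, where it equals $\pi\taue$ times the outward axis direction $\sep\RRR'[e]\Be_1$, with $\boldsymbol\zeta$ entering only through the rotation $\RRR'[e]$.

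One justification in your sketch is wrong as stated, although the fact it is meant to support is true. Because you route the computation through $\int_{S^+[p]}(H-1)\,\Ntdz\,dg$, you must dispose of the $H_{dislocation}$ contribution, and you do so ``since those annuli sit on round spheres where $\int \Ntdz\,dg$ over a symmetric region vanishes.'' The dislocation annuli are not subsets of round spheres --- they are precisely the regions where the immersion interpolates between the unit sphere centered at the relevant $\boldsymbol\zeta\pe$ and the undislocated unit sphere --- and symmetry of $\int\Ntdz$ is not the mechanism. The one-line fix is the same flux invariance you already invoke: both boundary circles of a dislocation annulus lie on exact unit spheres, whose flux vanishes, so the integral of $H_{dislocation}\,\Ntdz$ over that annulus is exactly zero. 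Cleaner still, since the integrand in the lemma is only $H_{gluing}[p]$, apply the flux identity separately on each gluing annulus: one boundary circle lies in the exact Delaunay part with parameter $\taue$ and axis $\RRR'[e]\Be_1$ (flux $\pi\taue\sep\RRR'[e]\Be_1$, with the normalization computed in \cite{KapAnn}), the other lies on the exact unit sphere (flux zero), and summing over $e\in E_p$ gives the stated identity without $H_{dislocation}$ ever entering the argument.
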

Notice we do not expect that $d$ equals $d_{\boldsymbol \zeta}$. However, the difference between the two is bounded by higher powers of $\outau$.

\addtocounter{equation}{1}
\begin{prop}\label{subsprescribep}
Given $d,\boldsymbol \zeta$, let $H_{gluing}$ denote the gluing function induced from the immersion $\hYtdz$. For each $p \in V(\Gamma)$, there exist $\phi_{gluing}[p]\in C^{2,\beta}(\widetilde S[p])$, $\mu_i'[p], \mu_i'[p,e_j]$, $i=1,2,3,j=1,\dots, |E_p|$ such that 
\begin{enumerate}
\item $\mathcal L_\chi \phi_{gluing}[p] +\rho^{-2}H_{gluing}= \sum \mu_i'[p] w_i[p] + \sum \mu_i'[p,e_j]w_i[p,e_j]$ on $\widetilde S[p]$.
\item $\phi_{gluing}[p] = 0$ on $\widetilde S[p]$.
\item $ ||\phi_{gluing}[p]: C^{2, \beta}(\widetilde S[p], \chi)|| \leq C\outau$.
\item $ ||\phi_{gluing}[p]:C^{2,\beta}(\Lambda[p,e,1], \chi, e^{-\gamma \underline x})|| \leq C\tau$ for all $e \in E_p$. 
\item \label{dsep}$\left| \sum_{i=1}^3 \mu_i'[p] \Be_i - d(p)\right| \leq \varepsilon \outau/16$.
\item $|\mu_i'[p,e_j]| \leq C\outau$.
\item Each of the $\phi_{gluing}, \mu_i'[p], \mu_i'[p,e_j]$ are all unique by their construction and depend continuously on all of the parameters of the construction.
\end{enumerate}
\end{prop}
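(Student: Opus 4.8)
The plan is to apply the global linear theory of Proposition \ref{LinearSectionProp} to the inhomogeneous term $\rho^{-2}H_{gluing}$, but localized to the central extended standard region $\widetilde S[p]$ using the semi-local solution operator $\mathcal R_{\widetilde S[p]}$ of Lemma \ref{linearpartp}. Concretely, by Corollary \ref{Hbounds} (items (1)–(2)) together with item \eqref{nc2} of Lemma \ref{metriccomparisons}, the restriction of $\rho^{-2}H_{gluing}$ to $\widetilde S[p]$ is supported in $S_1[p]$ (after cutting off appropriately) and satisfies $\|\rho^{-2}H_{gluing}:C^{0,\beta}(S_1[p],\chi)\|\leq C\outau$; indeed on $S[p]\cap M[e]$ the gluing region $t\in[a+3,a+5]$ sits inside $S_1[p]$, so no cutoff is even needed. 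First I would set $(\phi_{gluing}[p],w)=\mathcal R_{\widetilde S[p]}(-\rho^{-2}H_{gluing}|_{S_1[p]})$, which immediately yields items (1), (2), (3), (4), (6), and (7) from the corresponding conclusions of Lemma \ref{linearpartp} and the continuous dependence of $\mathcal R_{\widetilde S[p]}$. The scale $C\outau$ in (3), (4), (6) is just the $C\outau$ bound on the inhomogeneous term propagated through the bounded linear operator.

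The substance of the proposition is item \eqref{dsep}, the statement that the $\mathcal K[p]$-component $\sum_i\mu_i'[p]\Be_i$ of the output is within $\varepsilon\outau/16$ of $d(p)$. This is where the Geometric Principle enters. The key input is Lemma \ref{unbalancinglemma}, the flux identity
\[
\int_{S^+[p]}H_{gluing}[p]\,\Ntdz\,dg=\pi\sum_{e\in E_p}\taue\sigma_{e,p}\RRR'[e]\Be_1=\pi d_{\boldsymbol\zeta}(p).
\]
The plan is to pair the equation in item (1) against the functions $\hat f_i$ and integrate over $S^+[p]$, using Green's second identity exactly as in the proof of Lemma \ref{prescribelinear} item \eqref{pl6}. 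Since $\mathcal L\hat f_i\equiv 0$ (the $\hat f_i$ are normal projections of translation fields, which preserve mean curvature), the bulk term $\int \hat f_i\,\mathcal L_\chi\phi_{gluing}[p]\,d\chi$ converts to boundary integrals over $\cup_{e\in E_p}\widetilde C[p,e,1]$; these boundary terms are controlled by item (4) and the fast-decay estimate there, hence are $O(\outau/|\log\outau|)$ or smaller. On the other hand $\int_{S^+[p]}\hat f_i\,\rho^{-2}H_{gluing}\,d\chi=\int_{S^+[p]}\hat f_i\,H_{gluing}\,dg$ (since $\rho^2\,d\chi=dg$ weighted correctly, or more precisely $\rho^{-2}H_{gluing}\,\chi$-integrates to the $g$-integral of $H_{gluing}$ against the same test function), and by the flux identity and the definition \eqref{kernelcomparison} of $\hat f_i$ this equals $d_{\boldsymbol\zeta}(p)\cdot\Be_i$ up to the error $\|\hat f_i - (\text{component of }\Ntdz)\|$ which is absorbed. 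Meanwhile the right-hand side of (1) pairs with $\hat f_i$ to give $\mu_i'[p]$ (by the normalization \eqref{noncentw}, $\int_{S[p]}w_i[p]\hat f_j\,dh=\delta_{ij}$) plus a small contribution from the $w_i[p,e_j]$ terms, which are supported near the attachments and contribute $O(\outau)$ by item (6) times the smallness of their $\hat f_i$-pairing. Assembling these, $\sum_i\mu_i'[p]\Be_i = d_{\boldsymbol\zeta}(p)+O(\outau/|\log\outau|)$; combining with the estimate $|d-d_{\boldsymbol\zeta}|\leq -C\outau^2\log\outau$ (the remark after Lemma \ref{unbalancinglemma}, comparable to \eqref{ddifftau}) and choosing $\outau$ small gives $|\sum_i\mu_i'[p]\Be_i-d(p)|\leq\varepsilon\outau/16$.

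The main obstacle I anticipate is bookkeeping the error terms in the Green's identity computation carefully enough to beat the threshold $\varepsilon\outau/16$: one must verify that the boundary flux through the meridian circles $\widetilde C[p,e,1]$, the mismatch between $h$ and $g$ and between $\rho^{-2}\chi$ and $g$ on $S^+[p]$, the difference between $\hat f_i$ and the true normal component normalization, and the $\mathcal K_A$-contributions $\mu_i'[p,e_j]$ are all genuinely of order $o(\outau)$ (or at worst $C\outau^{1+\delta}$ for some $\delta>0$), not merely $O(\outau)$ with a large constant. The decay estimate of Lemma \ref{linearpartp} item (5), giving $e^{-\gamma\underline x}$ decay on each $\Lambda[p,e,1]$ with the extra $C\outau$ prefactor from item (4), is what makes the boundary terms small; the $|d-d_{\boldsymbol\zeta}|=O(\outau^2|\log\outau|)$ comparison handles the difference between the ``ideal'' target $d_{\boldsymbol\zeta}$ and the prescribed $d$. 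Finally, continuous dependence in (7) is inherited from the continuous dependence statements in Lemmas \ref{extsubslemma}, \ref{linearpartp}, Proposition \ref{approxkerprop}, and the smooth dependence of the immersion $\hYtdz$ and hence of $H_{gluing}$ on $d,\boldsymbol\zeta$.
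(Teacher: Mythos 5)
Your construction of $\phi_{gluing}[p]$ and the coefficients — applying $\mathcal R_{\widetilde S[p]}$ of Lemma \ref{linearpartp} to $-\rho^{-2}H_{gluing}$, whose support indeed lies in $S_1[p]$ and whose $C^{0,\beta}(\chi)$ norm is $O(\outau)$ — is exactly what the paper does, and it does give items (1)--(4), (6), (7). The problem is item \eqref{dsep}, where your route has a genuine gap. By pairing the full equation (1) with $\hat f_j$ over $S^+[p]$ and using Green's identity, you necessarily pick up the term $\sum_{i,k}\mu_i'[p,e_k]\int_{S^+[p]}\hat f_j\,w_i[p,e_k]\,d\chi$. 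The functions $w_i[p,e_k]=\widetilde c_i[p,e_k]\,\mathcal L_h(\psi[0,1]\circ\underline x\,V_i[\Lambda[p,e_k,1],1,0])$ are $O(1)$ in $C^0$ and supported on an annulus of unit $\chi$-area just outside $C^+[p,e_k,0]$, and a direct computation (e.g.\ for $i=1$ the $\theta$-integral leaves $\int\psi'w''$, a fixed positive constant depending on $b$) shows $\int\hat f_j\,w_i[p,e_k]\,d\chi$ is genuinely of order one, not small. Since all one knows is $|\mu_i'[p,e_k]|\leq C(b,\beta)\outau$, this contribution is $O(C\outau)$ with an uncontrolled constant, which cannot be absorbed into the threshold $\varepsilon\outau/16$ (here $\varepsilon$ is the fixed flexibility constant of $\Gamma$, not at our disposal). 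Shrinking the domain to $S[p]$ to avoid the attachment terms does not help: the Green boundary terms at $C^+[p,e,0]$ then involve $\phi_{gluing}$ and $\partial_\eta\phi_{gluing}$ where no exponential decay is available, again giving an error $O(C\outau)$ with an uncontrolled constant. (There is also the smaller imprecision that $\mathcal L\hat f_i\equiv 0$ fails on the gluing and dislocation annuli inside $S[p]$, where $H$ is not constant; that error is only $O(\outau^2)$ and could be patched, but the attachment-term issue cannot.)

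The paper's proof avoids the PDE entirely for item \eqref{dsep}. The coefficients $\mu_i'[p]$ are pinned down by the exact $L^2(h)$-orthogonality from Lemma \ref{extsubslemma}: $\int_{\widetilde S[p]}\bigl(\sum_i\mu_i'w_i[p]-\cfE H_{gluing}\bigr)f_j[p]\,dh=0$. One then pairs $\sum_i\mu_i'w_i[p]-\cfE H_{gluing}$ against $\Ntdz=\pi(\hat f_1,\hat f_2,\hat f_3)$: the $w_i[p]$ part gives $\pi\sum_i\mu_i'\Be_i$ by the normalization \eqref{noncentw}, the $H_{gluing}$ part gives $\pi d_{\boldsymbol\zeta}(p)$ by Lemma \ref{unbalancinglemma} (using $\cfE\,dh=dg$), and the orthogonality converts the difference into an integral against $\hat f_j-f_j[p]$, which by Proposition \ref{approxkerprop} carries the small factor $\epsilon$ (so the error is $C\epsilon\outau<\varepsilon\outau/32$, beatable by taking $\outau$ small), with the remaining $|d_{\boldsymbol\zeta}(p)-d(p)|\leq C\outau^{3/2}+C\underline C\outau^2$ handled via \eqref{ddifftau} and Proposition \ref{zetaframe}, as you anticipated. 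So no Green's identity, no boundary fluxes, and crucially no $\mathcal K_A$ terms enter; the smallness comes from $\|f_j[p]-\hat f_j\|\leq\epsilon$ rather than from decay estimates on $\phi_{gluing}$. To repair your argument you would need to replace the pairing of the full equation by this orthogonality-based identity (or else prove, which is false in general, that either the pairings $\int\hat f_j w_i[p,e_k]\,d\chi$ or the coefficients $\mu_i'[p,e_k]$ are $o(\outau)$).
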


\begin{proof}
Determine $\mu_i'$ such that for $j=1,2,3$, 
\[
\int_{\widetilde S[p]} \left(\sum_{i=1}^3 \mu_i' w_i[p] - \frac{2}{|A|^2}H_{gluing}\right) f_j[p] \: dh=0.
\]Rather than investigating this integral, we initially consider
\[
\int_{\widetilde S[p]} \left(\sum_{i=1}^3 \mu_i' w_i[p] - \frac{2}{|A|^2}H_{gluing}\right) \Ntdz\: dh,
\]recalling $\Ntdz \cdot \Be_i = \pi \hat f_i$.
Based on the definition of $w_i[p]$, 
\[
\int_{\widetilde S[p]} \left(\sum_{i=1}^3 \mu_i' w_i[p] \right) \Ntdz \: dh = \pi\sum_{i=1}^3 \mu_i' \Be_i.
\]An application of Lemma \ref{unbalancinglemma} implies
\[
\sum_{i=1}^3 \mu_i' \Be_i- d(p)=-\int_{S^+[p]} \frac{1}{\pi}\left(\sum_{i=1}^3 \mu_i' w_i[p] - \frac{2}{|A|^2}H_{gluing}\right)\left(f_j[p] - \hat f_j\right)\:dh+d_{\boldsymbol \zeta}(p)-d(p)
\]where $d_{\boldsymbol \zeta}(p)$ is defined in Lemma \ref{unbalancinglemma}.
By Proposition \ref{approxkerprop}, $||f_i[p] - \hat f_i||_{C^{k}(S_5[p],h)} \leq \epsilon$, where $\epsilon$ can be made as small as we like by making $\outau$ small. Coupled with Proposition \ref{Hestimates} and Lemma \ref{extsubslemma}, we estimate
\[
 \left|\int_{S^+[p]} \left(\sum_{i=1}^3 \mu_i' w_i[p] - \frac{2}{|A|^2}H_{gluing}\right)\left(f_j[p] - \hat f_j\right)\:dh \right|\leq C\epsilon\outau <\varepsilon \outau /32.
\]Notice $C$ is independent of $\outau$ so we can easily achieve the inequality $C\epsilon < \varepsilon/32$.
Observe also that
\[
d_{\boldsymbol \zeta}(p) -d_{\Gudl}(p) + d_{\Gudl}(p)- d(p)= \sum_{e\in E_p} \taue \sep \left(\RRR'[e]\Be_1-\RRR'[e]\Be_1[e] + \RRR'[e]\Be_1[e] - \Bv_1[e;d,0]\right).
\]
Using \eqref{ddifftau} and Proposition \ref{zetaframe} we see
\[
|d_{\boldsymbol \zeta}(p) - d(p)|\leq C \outau^{3/2} + C \underline C \outau^2
\] 
and item \eqref{dsep} follows.

The rest of the estimates follow immediately from Lemma \ref{linearpartp} and the bound on $\rho^{-2}H_{gluing}$.
\end{proof}

\subsection*{Prescribing the extended substitute kernel on the central spheres}
We are now ready to describe the prescription of elements of $\mathcal K\pe$ on each extended central standard region. The proof relies on the estimates of Appendix \ref{quadapp} and uses the functions defined in Appendix \ref{appendixb}. Given the parameters $d,\boldsymbol \zeta$, we modify the inhomogeneous term of the linearized equation by elements of $\oplus_{e \in E_p} \mathcal K\pe$ such that the coefficients of these elements are close to the parameters $\boldsymbol \zeta \pe$. Notice that when we solve for $\rho^{-2}H_{dislocation}$, the elements of the substitute kernel have coefficients bounded by $C ||\boldsymbol \zeta||/|\log \tau|$ which reflects the fact that $\boldsymbol \zeta \pe$ is essentially orthogonal to the approximate kernel. 

\addtocounter{equation}{1}
\begin{definition}\label{wfdef}We normalize the functions $\cf_i$ defined in \eqref{cfeq} on the meridian circle $C_1^+[p,e,0]$. Precisely, for $\pe \in A(\Gamma)$, choose $c'_i[p,e]$ such that on $C_1^+[p,e,0]$,
\[
c'_1[p,e]\pi \cf_1 =1; \: c_2'[p,e]\pi\cf_2 = \cos \theta; \: c_3'[p,e]\pi\cf_3 = \sin \theta.
\]
 Let 
\addtocounter{theorem}{1}
\begin{equation}\label{cprimedef}
c':= \max_{\substack{i=1,2,3,\\ [p,e] \in A(\Gamma)}} |c'_i[p,e]|.
\end{equation}
\end{definition}The normalization implies that on $C_1^+[p,e,0]$ the function that describes $\hYtdz$ as a graph over $Y_0 + \boldsymbol \zeta\pe$ is approximately $\zeta_1\pe/c_1'[p,e] + (\zeta_2\pe/c_2'[p,e])\cos \theta +(\zeta_3\pe/c_3'[p,e])\sin \theta$. (See Appendix \ref{appendixb}.) Observe that as $\outau \to 0$, $c'$ converges to a value that depend upon the geometric limit (in this case $\Ss^2$). That is, $c'$ depends on $b$ and is independent of $\outau$.

\addtocounter{equation}{1}
\begin{prop}\label{prescribequad}
For each $p \in V(\Gamma)$, 
there exists $\phi_{dislocation}[p] \in C^{2, \beta}(\widetilde S[p])$ and $\mu_{i}''[p], \mu_{i}''[p,e_j]$ with $i=1,2,3$ and $j=1, \dots, |E_p|$ such that, for parameters $d,\boldsymbol \zeta$, the immersion $\hYtdz$, and the induced function $H_{dislocation}$,
\begin{enumerate}
\item $\mathcal L_\chi \phi_{dislocation}[p]+ \rho^{-2}H_{dislocation}= \sum_{i=1}^3 \left(\mu_{i}''[p]w_i[p] +  \sum_{j=1}^{|E_p|}\mu_{i}''[p,e_j]w_i[p,e_j]\right)$ on $\widetilde S[p]$.
\item $\phi_{dislocation}[p] = 0$ on $\partial \widetilde S[p]$.
\item $|\mu_i''[p]| + | \zeta_i[p,e_j]/c_i'[p,e_j]-\mu_{i}''[p,e_j]|\leq C||\boldsymbol \zeta||/|\log \outau|$ for $i=1,2,3$ and each $e_j \in E_p$.
\item $|| \phi_{dislocation}[p]:C^{2,\beta}(\widetilde S[p], \chi)||\leq C||\boldsymbol \zeta||$.
\item $||\phi_{dislocation}[p]:C^{2,\beta}(\Lambda[p, e_j, 1], \chi, e^{-\gamma' \underline x})||\leq C||\boldsymbol \zeta||/|\log \outau|$  for each $e_j \in E_p$.
\item $\phi_{dislocation}[p], \mu_i''[p], \mu_{i}''[p,e_j]$ are all unique by their construction and depend continuously on the parameters of the construction.

\end{enumerate}
\end{prop}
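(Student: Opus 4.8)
The plan is to follow the template of Proposition~\ref{subsprescribep}, the difference being that the obstruction carried by $\mathcal K[p,e_j]$ at each attachment is now to be read off from the dislocation vector $\boldsymbol\zeta[p,e_j]$ through the local graph description of the dislocation developed in Appendix~\ref{appendixb}, rather than from a flux identity. First I would set $H^p_{dis}$ to be the part of $H_{dislocation}$ supported near $p$, i.e. the sum over $e_j\in E_p$ of the components of $H_{dislocation}[e_j]$ living on the attachment-transition regions adjacent to $p$; by the support conditions in its definition $H^p_{dis}$ is supported in $S_1[p]$, and by Proposition~\ref{Hestimates} one has $\|\rho^{-2}H^p_{dis}:C^{0,\beta}(S_1[p],\chi)\|\le C\|\boldsymbol\zeta\|$. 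Next I would invoke Appendix~\ref{appendixb} to produce, for each $e_j\in E_p$, a function $\widetilde\phi[p,e_j]\in C^{2,\beta}(\widetilde S[p])$, depending continuously (and to leading order linearly) on $\boldsymbol\zeta[p,e_j]$, with: (a) $\|\widetilde\phi[p,e_j]:C^{2,\beta}(\widetilde S[p],\chi)\|\le C|\boldsymbol\zeta[p,e_j]|$; (b) on $C^+_1[p,e_j,0]$ the $\mathcal H_1$-component of $\widetilde\phi[p,e_j]$ equals exactly $\zeta_1[p,e_j]/c_1'[p,e_j]+(\zeta_2[p,e_j]/c_2'[p,e_j])\cos\theta+(\zeta_3[p,e_j]/c_3'[p,e_j])\sin\theta$, which is precisely the content of the normalization in Definition~\ref{wfdef}; (c) $\mathcal L_\chi\widetilde\phi[p,e_j]+\rho^{-2}H_{dislocation}[e_j]$, modulo the approximate kernel on $\widetilde S[p]$ and the element of $\mathcal K[p,e_j]$ generated by the transition, is bounded in $C^{0,\beta}(S_1[p],\chi)$ by $C\|\boldsymbol\zeta[p,e_j]\|^2+C\outau\|\boldsymbol\zeta[p,e_j]\|$, the quadratic remainder being controlled by the nonlinear estimates of Appendix~\ref{quadapp}. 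Because $\|\boldsymbol\zeta\|\le\underline C\outau$ by \eqref{zetarestriction} and $\outau\ll1/|\log\outau|$, every remainder above is $o(\|\boldsymbol\zeta\|/|\log\outau|)$.

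\textbf{The semi-local solve.} I would then run the construction of Lemma~\ref{linearpartp} on $\widetilde S[p]$ applied to $-\rho^{-2}H^p_{dis}$, with one modification: when carrying out the $\mathcal K_v[p,e_j]$-adjustment on each $\Lambda[p,e_j,1]$ I would \emph{prescribe} the coefficient to be $\zeta_i[p,e_j]/c_i'[p,e_j]$, i.e. subtract $\sum_{e_j\in E_p}\sum_{i=1}^3(\zeta_i[p,e_j]/c_i'[p,e_j])\,v_i[p,e_j]$, rather than solve for the coefficient that kills the low harmonics. By (a)--(c), the unmodified zero-Dirichlet solution $\varphi'$ of $\mathcal L_h\varphi'=\cfE(-\rho^{-2}H^p_{dis})+\widetilde w$ (with $\widetilde w\in\mathcal K[p]$ chosen for orthogonality to the approximate kernel) agrees with $-\widetilde\phi[p,e_j]$ near each $C^+_1[p,e_j,0]$ up to $C\|\boldsymbol\zeta\|/|\log\outau|$ in $C^{2,\beta}$, so after the prescribed subtraction the $\mathcal H_1$-component of the solution on each $C^+_1[p,e_j,0]$ has size $\le C\|\boldsymbol\zeta\|/|\log\outau|$; the orthogonal remainder is then absorbed by $\mathcal R_\partial$ and $\mathcal R_\Lambda$ exactly as in Lemma~\ref{linearpartp}, producing $\phi_{dislocation}[p]$ which vanishes on $\partial\widetilde S[p]$, which satisfies the equation of item~(1) with $\mu_i''[p,e_j]=\zeta_i[p,e_j]/c_i'[p,e_j]+(\text{coefficient generated by the }\mathcal R_\Lambda\text{-iteration})$ where the latter is $O(\|\boldsymbol\zeta\|/|\log\outau|)$, and which decays like $e^{-\gamma'\underline x}$ on each $\Lambda[p,e_j,1]$ with amplitude $O(\|\boldsymbol\zeta\|/|\log\outau|)$ --- this is item~(5). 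The unweighted bound $\|\phi_{dislocation}[p]:C^{2,\beta}(\widetilde S[p],\chi)\|\le C\|\boldsymbol\zeta\|$ of item~(4) is immediate from the $C^{2,\beta}$-estimates for $\varphi'$ and for the $v_i[p,e_j]$ (Lemma~\ref{vlemma}\eqref{v3}), together with lower bounds on $|c_i'[p,e_j]|$ which hold since $c'$ converges as $\outau\to0$ to a nonzero limit depending only on $b$. Combining the above with (b) gives the second term in item~(3).

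\textbf{The $\mathcal K[p]$-coefficient and conclusion.} For the first term of item~(3) I would argue as in Proposition~\ref{subsprescribep}: up to the $O(\outau^{1/2})$-error from replacing $f_i[p]$ by $\hat f_i$ (Proposition~\ref{approxkerprop}), $\mu_i''[p]$ equals $\int_{\widetilde S[p]}\rho^{-2}H^p_{dis}\,\hat f_i\,dg$, and since the dislocation acts, to leading order, as a rigid translation by $\boldsymbol\zeta[p,e_j]$ of the spherical region --- whose normal variation lies, on the limiting sphere, in the kernel of $\Delta_{\Ss^2}+2$, as does $\hat f_i$ --- self-adjointness forces this integral to vanish to leading order, leaving $|\mu_i''[p]|\le C(\outau^{1/2}+\|\boldsymbol\zeta\|)\|\boldsymbol\zeta\|\le C\|\boldsymbol\zeta\|/|\log\outau|$. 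Uniqueness and continuous dependence (item~(6)) follow because every ingredient --- Lemma~\ref{linearpartp}, the functions of Appendix~\ref{appendixb}, the constants $c_i'[p,e_j]$, and the $\mathcal H_1$-projections --- depends continuously on the parameters.

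\textbf{Main obstacle.} The hard part is not the bookkeeping with Lemma~\ref{linearpartp} but the sharp matching in (b)--(c) and in item~(3): one must show that the \emph{exact} low-harmonic content of the dislocation at each $C^+_1[p,e_j,0]$ is $\zeta_i[p,e_j]/c_i'[p,e_j]$, and that the discrepancy --- arising from the curvature of the transition region, from the building blocks being only $O(\outau)$-close to exact spheres and Delaunay pieces, and from the genuinely quadratic dependence of $H_{dislocation}$ on $\boldsymbol\zeta$ --- is bounded by $\outau^{1/2}\|\boldsymbol\zeta\|$ and $\|\boldsymbol\zeta\|^2$, hence (using $\|\boldsymbol\zeta\|\le\underline C\outau$) by $\|\boldsymbol\zeta\|/|\log\outau|$. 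This is exactly what the graph analysis of Appendix~\ref{appendixb} and the quadratic estimates of Appendix~\ref{quadapp} are designed to deliver; granting those, the remainder of the argument is a routine repackaging of the semi-local linear theory already established.
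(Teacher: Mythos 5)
Your overall strategy is the paper's own: describe the dislocation as a graph via Appendix \ref{appendixb}, use the normalization of Definition \ref{wfdef} to identify the low-harmonic content $\zeta_i[p,e_j]/c_i'[p,e_j]$ at $C_1^+[p,e_j,0]$, and run the semi-local machinery of Lemma \ref{linearpartp} so the attachment coefficients land within $C||\boldsymbol\zeta||/|\log\outau|$ of these values. But one step, as written, would fail. Prescribing the $\mathcal K_v[p,e_j]$-coefficients to be exactly $\zeta_i[p,e_j]/c_i'[p,e_j]$ ``rather than solve for the coefficient that kills the low harmonics'' is incompatible with item (5): after that subtraction the solution still carries an $\mathcal H_1[C_1^+]$-component of size $\sim ||\boldsymbol\zeta||/|\log\outau|$ on $C_1^+[p,e_j,0]$, and neither $\mathcal R_\partial$ (Corollary \ref{Rpartial}) nor $\mathcal R_\Lambda$ (Proposition \ref{RLambda}) can absorb a leftover $\mathcal H_1$ boundary value; its constant mode then crosses $\Lambda[p,e_j,1]$ with only linear decay (Corollary \ref{LambdaCloseCor}), which in the middle of the neck is vastly larger than the $e^{-\gamma'\underline x}$ bound item (5) demands. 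The repair is precisely what the paper does: solve for the coefficients $\underline\mu_i[p,e_j]$ that match the solution to $\mathcal R_\partial$ of its own $\mathcal H_1$-orthogonal part on $C_1^+$ (so fast decay holds by construction), and only afterwards prove $|\underline\mu_i[p,e_j]-\zeta_i[p,e_j]/c_i'[p,e_j]|\leq C||\boldsymbol\zeta||/|\log\outau|$. Your ``coefficient generated by the $\mathcal R_\Lambda$-iteration'' gestures at the needed extra adjustment, but it must be a further $\mathcal K_v[p,e_j]$-coefficient, not anything $\mathcal R_\Lambda$ produces. Relatedly, the dominant source of the $1/|\log\outau|$ in item (3) is not the quadratic errors you list ($||\boldsymbol\zeta||^2$, $\outau^{1/2}||\boldsymbol\zeta||$) but the harmonic correction $\underline V_j$ needed to extend $-\pi\sum_i\zeta_i[p,e_j]\cf_i$ across $\Lambda[p,e_j,1]$ with vanishing data on $\partial\widetilde S[p]$: the Appendix \ref{appendixb} graph functions live only on $[a,a+3]\times\Ss^1$, so your property (b) at $C_1^+$ is only obtained after this extension, at an $O(||\boldsymbol\zeta||/|\log\outau|)$ cost.

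The second soft spot is your bound on $|\mu_i''[p]|$. ``Self-adjointness'' does not by itself make $\int \rho^{-2}H_{dislocation}\,\hat f_i$ vanish to leading order; writing $\rho^{-2}H_{dislocation}[e_j]\approx \mathcal L_\chi f^+[e_j]$ (Lemma \ref{quadflemma}) and using $\mathcal L_g\hat f_i=0$, Green's identity only reduces the integral to the pairing $\int_C (\partial_\eta f^+)\hat f_i - f^+\partial_\eta \hat f_i$ over a meridian in $[a+2,a+3]\times\Ss^1$ with $f^+\approx -\pi\sum_k\zeta_k\cf_k$, and you must still check that these pairings of translational Jacobi fields vanish to leading order (they do, since the immersion is the exact round sphere there and the cross terms integrate to zero in $\theta$) --- that computation is missing. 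The paper sidesteps it entirely: it builds the explicit approximate solution $\uphipp$ first, so the residual $\mathcal L_\chi \uphipp - \rho^{-2}H_{dislocation}$ is already $O(||\boldsymbol\zeta||/|\log\outau|)$ in $C^{0,\beta}$, and the smallness of the $\mathcal K[p]$-coefficient then comes for free from the coefficient bound in Lemma \ref{linearpartp}. With these two repairs your argument becomes, in substance, the paper's proof.
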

\begin{proof}Throughout the proof, $i \in \{1,2,3\}, j \in \{1, \dots, |E_p|\}$. 
Let $\underline f[p]$ represent the function from Definition \ref{underlinef}. We construct $\uphipp \in C^{2, \beta} (\widetilde S[p])$ in the following way. On $M[p]$, let $\uphipp=\underline f[p]$. On $M[e_j] \cap S[p]$, for $p=p^+[e_j]$ set
$\check \psi_j:=\psi[a+3,a+2](t)$ and for $p=p^-[e_j]$ set $\check \psi_j:= \psi[\RHj-(a+3), \RHj-(a+2)](t)$. On $M[e_j] \cap S[p]$, let $\uphipp = \check \psi_j \underline f[p]- (1-\check \psi_j)\pi \sum_{i=1}^3 \zeta_i[p,e_j] \cf_i$.  Finally, on each $\Lambda[p,e_j,1]$, find $\underline V_j$ such that $\mathcal L_\chi \underline V_j =0$, $\underline V_j = - \pi\sum_{i=1}^3 \zeta_i[p,e_j] \cf_i$ on $C^-[p,e_j,1]$ and $\underline V_j = 0$ on $C^+[p,e_j,0]$. On $\Lambda[p,e_j,1]$, let $\uphipp =-\pi\sum_{i=1}^3 \zeta_i[p,e_j] \cf_i+ (1-\psi_{S[p]})\underline V_j$.
The construction of $\underline V_j$ implies

\[||\underline V_j:C^{2,\beta}([b,b+2] \times \Ss^1 \cap M[e_j],\chi)||\leq C||\boldsymbol \zeta||/|\log \outau|.\]

Noting the estimates provided by Corollary \ref{corundf}, we have the following estimates for $\uphipp$:
\begin{enumerate}
\item $\mathcal L_\chi \uphipp$ is supported on $S_1[p]$ and $\uphipp=0$ on $\partial \widetilde S[p]$.
\item For each $j$, $||\mathcal L_\chi \uphipp - \rho^{-2}H_{dislocation}:C^{0,\beta}(S_1[p] \cap M[e_j], \chi)|| \leq C||\boldsymbol \zeta|/|\log \outau|$.

\noindent Recall the argument used in Lemma \ref{linearpartpen} to determine the coefficients $\mu_i$. In this case we determine $\underline\mu_{i}[p,e_j]$  by noting that on $C_1^+[p, e_j,0]$ we must have 
\[
-\sum_i \underline \mu_{i}[p,e_j]v_i[p, e_j] = \underline V_j - \pi\sum \zeta_i[p,e_j] \cf_i - \mathcal R_\partial (\underline V_{j,\perp}).
\]This is a simpler expression than the one in Lemma \ref{linearpartpen} because $v_i, \cf_i \in \mathcal H_1[C_1^+]$. Then Corollary \ref{Rpartial}, items \eqref{rp3} and \eqref{rp4} coupled with the estimate on $\underline V_j$ imply 
\[|\zeta_i[p,e_j] / c_i'[p,e_j]-\underline\mu_{i}[p,e_j] | \leq C ||\boldsymbol \zeta||/|\log \outau|, \text{ and}
\]
\[
||\uphipp+ \sum_{i=1}^3 \underline \mu_{i}[p,e_j] v[p,e_j]:C^{2,\beta}(\Lambda[p,e_j,1], \chi, e^{-\gamma'\underline x})|| \leq C ||\boldsymbol \zeta||/|\log \outau|.
\] Using Lemma \ref{linearpartp} with $E: = \mathcal L_\chi \uphipp -\rho^{-2} H_{dislocation}$, let $$(\uphip, w_{dislocation}):= \mathcal R_{\widetilde S[p]}(E)$$ where
\[
w_{dislocation} = \sum_i \mu_i''[p] w_i[p] + \sum_{i,j} \mu_{i}'''[p,e_j]w_i[p,e_j].
\]
 Then:
\item $\mathcal L_\chi \uphip= \mathcal L_\chi \uphipp - \rho^{-2}H_{dislocation}+ w_{dislocation}$ on $\widetilde S[p]$ 
\item $|\mu_i''[p]|, |\mu_{i}'''[p,e_j]|\leq C||\boldsymbol \zeta||/|\log \outau|$.
\end{enumerate}

Set
\[
\phi_{dislocation}[p] = \uphip - \uphipp - \sum_{i}\underline \mu_{i}[p,e_j]v[p,e_j]
\]and
\[
\mu_{i}''[p,e_j] = \mu_{i}'''[p,e_j]-\underline\mu_{i}[p,e_j].
\]We complete the proof by appealing to all of the estimates above and those of Lemma \ref{linearpartp}.
\end{proof}

\subsection*{Prescribing the extended substitute kernel globally}
Let $\Real^{V}, \Real^A$ denote the finite dimensional vector spaces with components on $\Real$ and indexed over $i=1,2,3$ and $p \in V(\Gamma)$ or $i=1,2,3$ and $\pe \in A(\Gamma)$. Let $\Real^{S}$ denote the infinite dimensional vector space indexed over $i=1,2,3$ and $\pen \in S(\Gamma)$. Let $\boldsymbol \xi \in \Real^V \times \Real^A \times \Real^S$ and let $\boldsymbol \xi^X$ denote its projection onto $\Real^X$. 
We define norms such that 
\[
||\boldsymbol \xi^V||_V:=\max_{i,p}\{|\xi_i[p]|\}, \; \; ||\boldsymbol \xi^A||_A:=\max_{i,\pe}\{|\xi_i\pe|\}, \; \; ||\boldsymbol \xi^S||_\gamma:=\sup_{i,\pen}\{\outau^{-\gamma n}|\xi_i\pen|\}.
\]
We proceed by assuming throughout
\addtocounter{theorem}{1}
\begin{equation}
||\boldsymbol \xi^V||_V \leq \varepsilon \outau/\sqrt 3, \;\; ||\boldsymbol \xi^A||_A \leq \underline C \outau/c', \; \; ||\boldsymbol \xi^S||_\gamma \leq \outau.
\end{equation}

Now for $p \in V(\Gamma)$ and $\pe \in A(\Gamma)$, set
\addtocounter{theorem}{1}
\begin{equation}\label{zetaxi}
d(p) = \sum_i \xi_i[p] \Be_i   \: \:
\text{and}\: \:
 \zeta_i\pe = c'_i[p,e] \xi_i[p,e] .
\end{equation}Notice the scaling for the norms for the spaces $\Real^V, \Real^{A}$ implies that $||d||_D \leq \varepsilon \outau, ||\boldsymbol \zeta|| \leq \underline C  \outau$. With this definition of $d, \boldsymbol \zeta$ we find $\phi_{gluing}[p],\phi_{dislocation}[p]$ from Propositions \ref{subsprescribep}, \ref{prescribequad} and $\phi_i\pen$ from Lemma \ref{prescribelinear} and set
\[
\Phi'_{\boldsymbol \xi} = {\bf U}\left( \left\{\psi_{\widetilde S[p]} \cdot (\phi_{gluing}[p]+ \phi_{dislocation}[p]), \psi_{\widetilde S\pen} \cdot\sum_{i=1}^3 \xi_i\pen \phi_i\pen \right\}\right).
\] Setting $\mu_i[p]:=\mu_i'[p]+\mu_i''[p], \mu_i\pe:=\mu_{i}'\pe+\mu_i''\pe$ where these coefficients come from Propositions \ref{subsprescribep}, \ref{prescribequad}, define
\[
\underline w':=\sum_{i=1}^3\left( \sum_{p \in V(\Gamma)}\mu_i[p]w_i[p] + 
\sum_{\pe \in A(\Gamma)} \mu_i\pe w_i\pe+\sum_{\pen \in S(\Gamma)}\xi_i[p,e,n]\underline w_i[p,e,n]  \right) \in \mathcal K.
\]
Here $\underline w_i\pen$ comes from Lemma \ref{prescribelinear}. 
Using Proposition \ref{LinearSectionProp}, determine
 $$(\Phi_{\boldsymbol \xi}'' , \underline w''):=\mathcal R_M\left(-\mathcal L_\chi\Phi_{\boldsymbol \xi}'+\cfw\underline w' - \rho^{-2}H_{error}\right).$$  Now set 
\[
\Phi_{\boldsymbol \xi}:= \Phi_{\boldsymbol \xi}'' + \Phi_{\boldsymbol \xi}' \: \text{ and } \underline w_{\boldsymbol \xi}:= \underline w'' + \underline w'.
\]

\addtocounter{equation}{1}
\begin{prop}\label{prescribexi}
$\Phi_{\boldsymbol \xi}$ and $ \underline w_{\boldsymbol \xi}$ as defined above for the immersion $\hYtdz$ where $d,\boldsymbol \zeta$ are defined by the components chosen in \eqref{zetaxi}, depend continuously on $\boldsymbol \xi$ and satisfy:
\begin{enumerate}
\item $\mathcal L_\chi \Phi_{\boldsymbol \xi}  + \rho^{-2}H_{error}= \cfw \underline w_{\boldsymbol \xi}$ on $M$.
\item $||\Phi_{\boldsymbol \xi}||_{2,\beta,\gamma} \leq C(  ||\boldsymbol \xi^S||_{\gamma}+ \outau + ||\boldsymbol \zeta||)$.
\item $||(\underline w_{\boldsymbol \xi}- w_{\boldsymbol \xi})^V||_{V} \leq  \varepsilon \outau/ 8$,\\
$||(\underline w_{\boldsymbol \xi}- w_{\boldsymbol \xi})^A||_{A} \leq C \outau/c',$\\
$||(\underline w_{\boldsymbol \xi}- w_{\boldsymbol \xi})^S||_{\gamma} \leq C ||\boldsymbol \xi^S||_\gamma/|\log \outau|$. 
Here \[w_{\boldsymbol \xi}:= \sum_{i=1}^3 \left( \sum_{p \in V(\Gamma)}\xi_i[p] w_i[p]+\sum_{\pe \in A(\Gamma)} \xi_i\pe w_i\pe+\sum_{\pen \in S(\Gamma)}\xi_i\pen w_i\pen\right).\]
\end{enumerate}
\end{prop}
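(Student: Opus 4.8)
The plan is to read the three assertions off the construction of $\Phi_{\boldsymbol\xi}=\Phi''_{\boldsymbol\xi}+\Phi'_{\boldsymbol\xi}$ and $\underline w_{\boldsymbol\xi}=\underline w''+\underline w'$ carried out just above the statement. Assertion~(1) I expect to be purely formal: by Proposition~\ref{LinearSectionProp}, item~(1), applied to the defining relation $(\Phi''_{\boldsymbol\xi},\underline w'')=\mathcal R_M(-\mathcal L_\chi\Phi'_{\boldsymbol\xi}+\cfw\underline w'-\rho^{-2}H_{error})$, one has $\mathcal L_\chi\Phi''_{\boldsymbol\xi}=(-\mathcal L_\chi\Phi'_{\boldsymbol\xi}+\cfw\underline w'-\rho^{-2}H_{error})+\cfw\underline w''$, and adding $\mathcal L_\chi\Phi'_{\boldsymbol\xi}$ to both sides gives $\mathcal L_\chi\Phi_{\boldsymbol\xi}+\rho^{-2}H_{error}=\cfw\underline w_{\boldsymbol\xi}$ on $M$. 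Continuity in $\boldsymbol\xi$ will be inherited at each step, since $d$ and $\boldsymbol\zeta$ depend linearly on $\boldsymbol\xi$ through \eqref{zetaxi} and each of $\phi_{gluing}[p]$, $\phi_{dislocation}[p]$, $\phi_i\pen$ and $\mathcal R_M$ depends continuously on the parameters of the construction.

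The central preliminary step, mirroring the proof of Proposition~\ref{LinearSectionProp}, is to recognize that the argument of $\mathcal R_M$ collapses to a commutator error $\mathcal E$. On each region where $\psi_{\widetilde S[p]}\equiv1$, Propositions~\ref{subsprescribep} and~\ref{prescribequad} and the identity $H_{error}=H_{gluing}+H_{dislocation}$ give
\[\mathcal L_\chi(\phi_{gluing}[p]+\phi_{dislocation}[p])=-\rho^{-2}H_{error}+\cfw\Big(\sum_i\mu_i[p]w_i[p]+\sum_{i,j}\mu_i[p,e_j]w_i[p,e_j]\Big),\]
and on each region where $\psi_{\widetilde S\pen}\equiv1$, Lemma~\ref{prescribelinear} gives $\mathcal L_\chi(\sum_i\xi_i\pen\phi_i\pen)=\cfw\sum_i\xi_i\pen\underline w_i\pen$; since the supports of all the $w$'s and of $H_{error}$ lie in those regions, the whole argument of $\mathcal R_M$ is a sum of commutators $[\psi_{\widetilde S[p]},\mathcal L_\chi](\phi_{gluing}[p]+\phi_{dislocation}[p])$ and $[\psi_{\widetilde S\pen},\mathcal L_\chi](\sum_i\xi_i\pen\phi_i\pen)$ assembled through $\mathbf U$, which I call $\mathcal E$. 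These commutators are supported only where the cutoffs transition, i.e.\ deep inside the transition regions, so feeding the decay estimates of Propositions~\ref{subsprescribep}, \ref{prescribequad} and Lemma~\ref{prescribelinear} into \eqref{elllength} (which identifies $\ell_\Lambda$ with $-\log\outau$ up to $O(1)$) I expect $\|\mathcal E\|_{0,\beta,\gamma}\le C(\|\boldsymbol\xi^S\|_\gamma+\outau+\|\boldsymbol\zeta\|)$; moreover the portion of $\mathcal E$ near the central spheres — the only portion whose $\mathcal R_M$-image has a nonzero $\mathcal K_V$-component — should carry an extra factor $\outau^{\gamma'-\gamma}$ or $1/|\log\outau|$, since there the relevant functions decay at rate $\ge\gamma$ against the weight $e^{-\gamma\underline x}$.

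For assertion~(2) I would split $\|\Phi_{\boldsymbol\xi}\|_{2,\beta,\gamma}\le\|\Phi'_{\boldsymbol\xi}\|_{2,\beta,\gamma}+\|\Phi''_{\boldsymbol\xi}\|_{2,\beta,\gamma}$, bounding $\Phi''_{\boldsymbol\xi}$ by Proposition~\ref{LinearSectionProp}, item~(2), applied to $\mathcal E$, and bounding $\Phi'_{\boldsymbol\xi}$ semi-norm by semi-norm: the $\psi_{\widetilde S[p]}(\phi_{gluing}[p]+\phi_{dislocation}[p])$ part from the $C^{2,\beta}$-bounds of Propositions~\ref{subsprescribep} and~\ref{prescribequad} (whose $e^{-\gamma'\underline x}$-weighted bounds dominate the $e^{-\gamma\underline x}$-weighted semi-norm because $\gamma<\gamma'$), giving $\le C(\outau+\|\boldsymbol\zeta\|)$, and the $\psi_{\widetilde S\pen}\sum_i\xi_i\pen\phi_i\pen$ part from Lemma~\ref{prescribelinear} together with $|\xi_i\pen|\le\|\boldsymbol\xi^S\|_\gamma\outau^{\gamma n}$ and $e^{-\ell_\Lambda}\sim\outau$ — the exponent arithmetic closing precisely because $\gamma>1$ on $\Lambda_{close}$ and $\gamma'>\gamma$ on $\Lambda_{far}$ — giving $\le C\|\boldsymbol\xi^S\|_\gamma$; the cutoffs have bounded $C^{2,\beta}(\chi)$-norm on the flat transition cylinders, so the $\mathbf U$-patching costs only a universal factor.

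The heart of the matter, and what I expect to be the main obstacle, is assertion~(3), since the three projections of $\underline w_{\boldsymbol\xi}-w_{\boldsymbol\xi}$ live at genuinely different scales and the prescribed fractions ($\varepsilon/8$, and so on) must be met exactly. I would write
\[\underline w_{\boldsymbol\xi}-w_{\boldsymbol\xi}=\underline w''+\sum_i\Big(\sum_p(\mu_i[p]-\xi_i[p])\,w_i[p]+\sum_{\pe}(\mu_i\pe-\xi_i\pe)\,w_i\pe+\sum_{\pen}\xi_i\pen(\underline w_i\pen-w_i\pen)\Big)\]
and estimate the four pieces separately. The $\mathcal K_V$-coefficients $\mu_i[p]-\xi_i[p]=(\mu_i'[p]-\xi_i[p])+\mu_i''[p]$ are controlled by the bound $|\sum_i\mu_i'[p]\Be_i-d(p)|\le\varepsilon\outau/16$ of Proposition~\ref{subsprescribep} (using $d(p)=\sum_i\xi_i[p]\Be_i$, so $\xi_i[p]$ is the $i$-th component of $d(p)$) plus the bound $|\mu_i''[p]|\le C\|\boldsymbol\zeta\|/|\log\outau|$ of Proposition~\ref{prescribequad}; the $\mathcal K_A$-coefficients by the $O(\outau)$-bound on $\mu_i'\pe$ of Proposition~\ref{subsprescribep} and the bound $|\mu_i''\pe-\zeta_i\pe/c'_i[p,e]|\le C\|\boldsymbol\zeta\|/|\log\outau|$ of Proposition~\ref{prescribequad} together with $\zeta_i\pe/c'_i[p,e]=\xi_i\pe$ from \eqref{zetaxi}; the coefficients coming from $\underline w_i\pen-w_i\pen$ by the bound $|\underline w_i-w_i|\le C/|\log\outau|$ of Lemma~\ref{prescribelinear}, which after multiplication by $\xi_i\pen$ and weighting by $\outau^{-\gamma n}$ is exactly $\le C\|\boldsymbol\xi^S\|_\gamma/|\log\outau|$; and $\underline w''$ by Proposition~\ref{LinearSectionProp}, item~(2), applied to the refined estimate on $\mathcal E$. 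The nontrivial bookkeeping — and the reason the precise constants come out — is that $1/|\log\outau|$ and $\outau^{\gamma'-\gamma}$ tend to $0$, so once $\outau$ is taken small enough (after $\varepsilon$, $\underline C$, $b$, $\beta$, $\gamma$, $\gamma'$ are fixed) the large universal constant $\underline C$ and the fixed constants produced by $\mathcal R_M$ are absorbed into the gaps $\varepsilon/8-\varepsilon/16$, etc.; keeping this absorption honest across all three projections, rather than any single hard inequality, is where the real work lies.
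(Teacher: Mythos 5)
Your overall architecture is the same as the paper's: assertion (1) is read off the definition of $(\Phi''_{\boldsymbol\xi},\underline w'')=\mathcal R_M(-\mathcal L_\chi\Phi'_{\boldsymbol\xi}+\cfw\underline w'-\rho^{-2}H_{error})$; assertion (2) is obtained by bounding $\Phi'_{\boldsymbol\xi}$ semi-norm by semi-norm and $\Phi''_{\boldsymbol\xi}$ through Proposition \ref{LinearSectionProp}; and your term-by-term comparison for $\underline w'-w_{\boldsymbol\xi}$ (Proposition \ref{subsprescribep}(5), Proposition \ref{prescribequad}(3), Lemma \ref{prescribelinear}(6), together with \eqref{zetaxi}) is exactly how the paper produces the intermediate bounds $\varepsilon\outau/12$, $C\outau/c'$, $C\|\boldsymbol\xi^S\|_\gamma/|\log\outau|$. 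The genuine gap is in your treatment of $\underline w''$. You only claim the smallness gain ($\outau^{\gamma'-\gamma}$ or $1/|\log\outau|$) for the portion of the commutator error $\mathcal E$ near the central spheres, on the grounds that this is ``the only portion whose $\mathcal R_M$-image has a nonzero $\mathcal K_V$-component.'' That localization is not supplied by Proposition \ref{LinearSectionProp}, which bounds $\|(\underline w'')^V\|_V$, $\|(\underline w'')^A\|_A$, $\|(\underline w'')^S\|_\gamma$ only by $C\|\mathcal E\|_{0,\beta,\gamma}$ with the full global norm, and it is false as stated: $\mathcal R_M$ is constructed by iteration, and the iterated errors do re-enter the collars $S_1[p]\setminus S[p]$ and generate $\mathcal K_V$- and $\mathcal K_A$-components at every stage. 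With only your crude bound $\|\mathcal E\|_{0,\beta,\gamma}\le C(\|\boldsymbol\xi^S\|_\gamma+\outau+\|\boldsymbol\zeta\|)$, Proposition \ref{LinearSectionProp} gives $\|(\underline w'')^V\|_V\le C(1+\underline C)\outau$, which cannot be absorbed into the gap between $\varepsilon\outau/16$ and $\varepsilon\outau/8$, and the $A$-bound would acquire a constant depending on $\underline C$, which is fatal because in Theorem \ref{maintheorem} the constant $\underline C$ is chosen \emph{after} (and larger than) the constants appearing in this proposition.

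The repair is the one the paper uses, and your own exponent bookkeeping essentially contains it: the argument of $\mathcal R_M$ is supported only in the collars $S_1[p]\setminus S[p]$ and $S_1\pen\setminus S\pen$, and on those collars \emph{every} constituent decays strictly faster than the weight in $\|\cdot\|_{0,\beta,\gamma}$, namely $\phi_{gluing}[p]$ at rate $\gamma'$ (Lemma \ref{linearpartp}, hence Proposition \ref{subsprescribep}, holds with $\gamma'$ in place of $\gamma$), $\phi_{dislocation}[p]$ at rate $\gamma'$ by Proposition \ref{prescribequad}, the $\Lambda_{far}$ side of $\phi_i\pen$ at rate $\gamma'$, and the $\Lambda_{close}$ side of $\xi_i\pen\phi_i\pen$ with the factor $\outau^{\gamma n}\cdot\outau^{-1}$ against the weight $\outau^{-\gamma(n-1)}$, i.e. a net factor $\outau^{\gamma-1}$. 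Exactly as in the proof of Proposition \ref{LinearSectionProp}, via \eqref{elllength}, this yields the uniform estimate $\|-\mathcal L_\chi\Phi'_{\boldsymbol\xi}+\cfw\underline w'-\rho^{-2}H_{error}\|_{0,\beta,\gamma}\le C\outau^{\gamma'-\gamma}(\|\boldsymbol\xi^S\|_\gamma+\outau+\|\boldsymbol\zeta\|)$ for the \emph{whole} error, after which the crude global bound of Proposition \ref{LinearSectionProp}(2) on all three projections of $\underline w''$ suffices, and taking $\outau$ small (after $\underline C$, $\varepsilon$, $b$, $\beta$, $\gamma$, $\gamma'$ are fixed) closes all three inequalities in (3) simultaneously; alternatively, your localized route can be made rigorous only by reopening the iteration in the proof of Proposition \ref{LinearSectionProp} to separate the stage-zero contribution to the $\mathcal K_V$- and $\mathcal K_A$-components, which is more work than the uniform-gain observation.
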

\addtocounter{equation}{1}
\begin{remark}
Notice that the definitions of the norms implies $||\boldsymbol \xi^A ||_A = ||w_{\boldsymbol \xi}^A||_A$, $||\boldsymbol \xi^V ||_V = ||w_{\boldsymbol \xi}^V||_V$, and $||\boldsymbol \xi^S ||_\gamma= ||w_{\boldsymbol \xi}^S||_\gamma$.
\end{remark}
\begin{proof}
From the definitions and previously determined estimates, we observe $\mathcal L_\chi \Phi_{\boldsymbol \xi} + \rho^{-2}H_{error} = \cfw \underline w_{\boldsymbol \xi}$ on $M$ and $||\Phi'_{\boldsymbol \xi}||_{2, \beta, \gamma} \leq C(  ||\boldsymbol \xi^S||_{\gamma}+ \outau + ||\boldsymbol \zeta||)$. Moreover, as the function $-\mathcal L_\chi\Phi_{\boldsymbol \xi}'+\cfw \underline w' - \rho^{-2}H_{error}$ is supported on $\cup_{p \in V(\Gamma)} S_1[p] \backslash S[p] \bigcup \cup_{\pen \in S(\Gamma)} S_1\pen \backslash S\pen$, we can apply the same technique as in the proof of Proposition \ref{LinearSectionProp} to determine
\[
||-\mathcal L_\chi\Phi_{\boldsymbol \xi}'+\cfw \underline w' - \rho^{-2}H_{error}||_{0, \beta, \gamma} \leq C \outau^{\gamma'-\gamma}(  ||\boldsymbol \xi^S||_{\gamma}+ \outau + ||\boldsymbol \zeta||).
\]Finally, observe that a term by term comparison implies 
\begin{align*}||( \underline w'- w_{\boldsymbol \xi})^V||_{V} &\leq  \varepsilon \tau/12 \\
||( \underline w'- w_{\boldsymbol \xi})^A||_{A} &\leq C\outau/c' \\
||( \underline w'- w_{\boldsymbol \xi})^S||_{\gamma} &\leq C ||\boldsymbol \xi^S||_\gamma/|\log \outau|. 
\end{align*}
\end{proof}

\section{The Main Theorem}\label{MainTheorem}
We are now ready to state and prove the main theorem of the paper. The proof of the theorem depends upon all of the estimates in the paper, but the main estimates we need come from Propositions \ref{prescribexi} and \ref{LinearSectionProp} as well as the estimates in Appendices \ref{quadapp},\ref{appendixb}.

\addtocounter{equation}{1}
\begin{theorem}\label{maintheorem}Let $\Gamma$ be a flexible, central graph. There exists $\underline C>0$ sufficiently large and $\tau'>0$ sufficiently small such that for all $0<\outau <\tau'$:

There exist $\boldsymbol \zeta \in \Real^{A}$ and $d \in D(\Gamma)$ with $||\boldsymbol \zeta||\leq \underline C \outau, ||d||_D \leq \varepsilon \tau$, and $f \in C^{2,\beta}(M)$ such that $(\hYtdz)_f:M \to \Real^3$ is a well-defined immersed surface of constant mean curvature equal to 1 and $||f||_{2,\beta,\gamma} \leq \outau^{15/16}$. Moreover if $\Gamma$ is pre-embedded then $(\hYtdz)_f$ is embedded. Here $M$ is the abstract surface defined in \eqref{mdef} depending on $\Gamma$ and $\outau$, $\hYtdz$ is the initial immersion described in Section \ref{InitialSurface} depending on $\Gamma$ and the parameters $\tau, d, \boldsymbol \zeta$, and $(\hYtdz)_f$ is defined in Appendix \ref{quadapp}. 
\end{theorem}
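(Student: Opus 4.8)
The plan is to solve the mean curvature equation in the $\chi$-formulation, $\mathcal L_\chi f = -\rho^{-2}H_{error} - \rho^{-2}Q_f$, by a Schauder fixed point argument in which the parameters $\boldsymbol\xi$ (equivalently $d,\boldsymbol\zeta$ through \eqref{zetaxi}) are determined \emph{simultaneously} with $f$. Write $f = \Phi_{\boldsymbol\xi} + v$. By Proposition \ref{prescribexi}(1) we have $\mathcal L_\chi \Phi_{\boldsymbol\xi} + \rho^{-2}H_{error} = \cfw\,\underline w_{\boldsymbol\xi}$, so the equation for $f$ becomes $\mathcal L_\chi v = -\cfw\,\underline w_{\boldsymbol\xi} - \rho^{-2}Q_{\Phi_{\boldsymbol\xi}+v}$. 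Applying $\mathcal R_M$ of Proposition \ref{LinearSectionProp} to the inhomogeneous term $-\rho^{-2}Q_{\Phi_{\boldsymbol\xi}+v}$ produces a pair $(v',w)$ with $\mathcal L_\chi v' = -\rho^{-2}Q_{\Phi_{\boldsymbol\xi}+v} + \cfw\,w$, and the residual extended substitute kernel $\cfw(\underline w_{\boldsymbol\xi} + w)$ is then cancelled by choosing a new parameter $\boldsymbol\xi'$ with $w_{\boldsymbol\xi'} = -w - (\underline w_{\boldsymbol\xi} - w_{\boldsymbol\xi})$; this is possible and isometric in each of the three scaled norms because $\boldsymbol\xi\mapsto w_{\boldsymbol\xi}$ is the tautological basis isomorphism onto $\mathcal K$ (the remark following Proposition \ref{prescribexi}). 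The map $(\boldsymbol\xi,v)\mapsto(\boldsymbol\xi',v')$ is the one we feed to Schauder; a fixed point gives $\mathcal L_\chi f = -\rho^{-2}H_{error} - \rho^{-2}Q_f$, i.e. $H_{(\hYtdz)_f}\equiv 1$.

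\textbf{Self-mapping.} Next I would verify that the map preserves the closed convex set $\mathcal B$ of pairs with $\|\boldsymbol\xi^V\|_V \le \varepsilon\outau/\sqrt3$, $\|\boldsymbol\xi^A\|_A \le \underline C\outau/c'$, $\|\boldsymbol\xi^S\|_\gamma \le \outau$ and $\|v\|_{2,\beta,\gamma}\le \tfrac12\outau^{15/16}$. On $\mathcal B$, Proposition \ref{prescribexi}(2) gives $\|\Phi_{\boldsymbol\xi}\|_{2,\beta,\gamma} \le C(\outau+\|\boldsymbol\zeta\|) \le C\underline C\outau$, so $\|f\|_{2,\beta,\gamma}\le C\underline C\outau + \tfrac12\outau^{15/16}\le \outau^{15/16}$ for $\outau$ small; the quadratic estimates of Appendix \ref{quadapp} (whose whole point is that the $\chi$-scale makes $Q_f$ genuinely quadratic, uniformly in the construction data) then give $\|\rho^{-2}Q_f\|_{0,\beta,\gamma}\le C\outau^{15/8}$, whence $\|v'\|_{2,\beta,\gamma}\le C\outau^{15/8}\le \tfrac12\outau^{15/16}$ since $15/8>15/16$. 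For the parameter component, Proposition \ref{LinearSectionProp}(2) bounds $\|w^V\|_V,\|w^A\|_A,\|w^S\|_\gamma$ by $C\outau^{15/8}$, while Proposition \ref{prescribexi}(3) bounds $\|(\underline w_{\boldsymbol\xi}-w_{\boldsymbol\xi})^V\|_V \le \varepsilon\outau/8$, $\|(\cdot)^A\|_A\le C\outau/c'$, $\|(\cdot)^S\|_\gamma\le C\|\boldsymbol\xi^S\|_\gamma/|\log\outau|$; adding, $\|\boldsymbol\xi'^V\|_V < \varepsilon\outau/\sqrt3$, $\|\boldsymbol\xi'^A\|_A < \underline C\outau/c'$ once $\underline C$ is fixed larger than this $C$, and $\|\boldsymbol\xi'^S\|_\gamma \le (C/|\log\outau|)\outau + C\outau^{15/8} < \outau$ for $\outau$ small. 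This is precisely where the hypotheses ``$\underline C$ sufficiently large'' and ``$\tau'$ sufficiently small'' are used.

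\textbf{Fixed point and conclusion.} I would then apply Schauder's fixed point theorem to $\mathcal B$, regarded inside $\Real^V\times\Real^A\times\Real^S\times C^{2,\beta'}(M)$ with $\beta'<\beta$: the uniform $C^{2,\beta}$ bound makes the $v$-component land in a precompact subset of $C^{2,\beta'}$, and the factor $\outau^{-\gamma n}$ in $\|\cdot\|_\gamma$ forces super-exponential decay of $\boldsymbol\xi^S$ in $n$, giving compactness of the parameter factor; continuity of the map follows from the ``depends continuously on all parameters'' clauses of Propositions \ref{LinearSectionProp}, \ref{prescribexi} and of the quadratic estimates in Appendix \ref{quadapp} (recalling that $\mathcal L_\chi$, $\rho$, $g$ and hence $\mathcal R_M$, $\Phi_{\boldsymbol\xi}$, $\underline w_{\boldsymbol\xi}$ all vary with $\boldsymbol\xi$). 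A fixed point $(\boldsymbol\xi,v)$ yields $f=\Phi_{\boldsymbol\xi}+v$ solving the CMC equation, with $\|d\|_D\le\varepsilon\outau$, $\|\boldsymbol\zeta\|\le\underline C\outau$ by the scaling in \eqref{zetaxi}, and $\|f\|_{2,\beta,\gamma}\le \|\Phi_{\boldsymbol\xi}\|_{2,\beta,\gamma}+\|v\|_{2,\beta,\gamma}\le C\underline C\outau + \tfrac12\outau^{15/16}\le \outau^{15/16}$. Smallness of $f$ in $C^{2}$ makes $(\hYtdz)_f$ a well-defined immersion (Appendix \ref{quadapp}); and when $\Gamma$ is pre-embedded, $\Mtdz$ is embedded by the remark following Definition \ref{MainImmersionDef}, so the $\outau^{15/16}$-small normal graph $(\hYtdz)_f$ is embedded as well.

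\textbf{Main obstacle.} The delicate part is the quadratic remainder: showing $\|\rho^{-2}Q_f\|_{0,\beta,\gamma}$ is quadratically small in $\|f\|_{2,\beta,\gamma}$ uniformly over all admissible $\outau,d,\boldsymbol\zeta$, and that it depends continuously on $f$ \emph{and} on $\boldsymbol\xi$. This is genuinely hard because the underlying surface $\hYtdz$ itself moves with $\boldsymbol\xi$, so one is differentiating a family of metrics and second fundamental forms, and because the gluing regions together with the weight $\rho^{-2}$ must be treated so that none of the $\chi$-scale is lost — this is exactly the content of Appendix \ref{quadapp} (with the auxiliary graphical description of the dislocation in Appendix \ref{appendixb}).
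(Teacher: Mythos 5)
Your proposal is correct and takes essentially the same approach as the paper: a Schauder fixed point on the product of the parameter box and a small function ball, with $\Phi_{\boldsymbol \xi}$ and $\underline w_{\boldsymbol \xi}$ from Proposition \ref{prescribexi} killing $\rho^{-2}H_{error}$ modulo $\mathcal K$, $\mathcal R_M$ from Proposition \ref{LinearSectionProp} applied to the quadratic error, and the parameter update via the tautological identification of $\boldsymbol \xi$ with $w_{\boldsymbol \xi}$ (your sign conventions just reshuffle the paper's $u' , \boldsymbol \mu$ step). One minor slip: Proposition \ref{globalquad} gives $\outau^{\alpha-1}||f||_{2,\beta,\gamma}^2$ (e.g. $\outau^{7/4}$ with $\alpha=7/8$, the largest exponent compatible with $||f||_{2,\beta,\gamma}\leq \outau^{15/16}$), not $C\outau^{15/8}$, but this still beats $\outau^{3/2}$ and $\tfrac12\outau^{15/16}$, so your numerology closes exactly as in the paper.
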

\begin{proof}Choose $\underline C$ so that $\underline C > 2C$ for all $C$ appearing in the statement of Proposition \ref{prescribexi}. Choose $\tau'$ small enough such that the value $\max_{e \in E(\Gamma) \cup R(\Gamma)}|\tau' \hat \tau(e)|$ satisfies all geometric requirements throughout the paper. Notice that none of the geometric estimates $\tau'$ must satisfy depend upon the structure of a graph but only on the function $\hat \tau$. For any $0<\tau <\tau'$, let
\begin{align*}
B:= &\{ u \in C^{2, \beta}(M) : \: ||u||_{2,\beta,\gamma} \leq \outau^{3/2}\} \times\\& \{\boldsymbol \xi \in \Real^V \times \Real^{A}\times \Real^S :\:||\boldsymbol \xi^V||_V \leq \varepsilon \outau/\sqrt 3, ||\boldsymbol \xi^A||_A \leq \underline C \outau/c',  ||\boldsymbol \xi^S||_\gamma \leq \outau\}.
\end{align*}We define a map $\mathcal J:B \to B$ in the following manner. 
Let $(u,\boldsymbol \xi) \in B$. Recall $\boldsymbol \xi^V$ induces a $d \in D(\Gamma)$ and $\boldsymbol \xi^A$ induces a $\boldsymbol \zeta\in \Real^{A}$ as outlined in \eqref{zetaxi}. With this $d, \boldsymbol \zeta$, we find a $\Gudl \in \mathcal F(\Gamma)$ and determine the immersion $\hYtdz$.
Set $v:= \Phi_{\boldsymbol \xi} -u $. Then by Proposition \ref{prescribexi},
$\mathcal L_\chi v = \cfw \underline w_{\boldsymbol \xi} -\rho^{-2}H_{error}-\mathcal L_\chi u $ and
$||v||_{2, \beta, \gamma}\leq C(  ||\boldsymbol \xi^S||_{\gamma}+ \outau + ||\boldsymbol \zeta||)+ \outau^{3/2}  \leq \outau^{15/16}$.
Now apply Proposition \ref{LinearSectionProp}  to obtain $(u', w'):= \mathcal R_M(\rho^{-2}(H_v-\Htdz)-\mathcal L_\chi v)$ where here $H_v$ is the mean curvature of the surface induced by $(\hYtdz)_v$. Then
\begin{enumerate}
\item $\mathcal L_\chi u'=\rho^{-2}(H_v-\Htdz+H_{error}) +\cfw(w'-\underline w_{\boldsymbol \xi}) + \mathcal L_\chi u$

\noindent and by Proposition \ref{globalquad},
\item \label{uest}$||u'||_{2,\beta,\gamma}+||w'||_{\gamma} \leq C\outau^{15/8}\outau^{-1/8} \leq \outau^{3/2}$.

Define $\boldsymbol \mu \in \Real^V \times \Real^{A}\times \Real^S$ such that
\[
\sum \mu_i[p] w_i[p] + \sum \mu_i\pe w_i\pe+ \sum \mu_i\pen w_i\pen= w' + w_{\boldsymbol \xi}-\underline w_{\boldsymbol \xi}.
\]By the estimates of Proposition \ref{prescribexi} and the equivalence of the norms of $\boldsymbol \mu$ and $w_{\boldsymbol \mu}$,
\item \label{muest}$||\boldsymbol \mu^V||_V \leq \varepsilon \outau/\sqrt 3, ||\boldsymbol \mu^A||_A \leq \underline C \outau, ||\boldsymbol \mu^S||_{\gamma} \leq  \outau$.
\end{enumerate}
Define the map $\mathcal J(u, \boldsymbol \xi) = (u', \boldsymbol \mu)$. Then by \eqref{uest}, \eqref{muest} $\mathcal J(u, \boldsymbol \xi) \in B$ and the map $\mathcal J: B \to B$ is well defined. Moreover, for some $\beta' \in (0, \beta)$, $B$ is a compact, convex subset of $C^{2,\beta'}(M) \times (\Real^V \times \Real^A \times \Real^S)$ and one can easily check that $\mathcal J$ is continuous in the induced topology. Thus, Schauder's fixed point theorem, \cite{GiTr} Theorem 11.1, implies there exists a fixed point $(u', \boldsymbol \mu') \in B$. Let $f=\Phi_{\boldsymbol \mu'}-u'$.

Since $(u', \boldsymbol \mu')$ is a fixed point, the definition of $\boldsymbol \mu'$ implies $w' - \underline w_{\boldsymbol \xi} =0$. Moreover, $\mathcal L_\chi u = \mathcal L_\chi u'$. Thus
\[
H_f -\Htdz + H_{error} =0, \: \: \text{or} \:\: H_f = \Htdz - H_{error}\equiv 1
\] and the surface $(\hYtdz)_f$ has mean curvature identically 1.
Embeddedness follows when $\Gamma$ is pre-embedded as in this case $\Mtdz$ is embedded and $||f||_{2,\beta,\gamma} \leq \outau^{15/16}$.
\end{proof}

\appendix
\section{Quadratic estimates}\label{quadapp}
For completeness, we include here a proposition we will need, the proof of which appears in the appendices of \cite{KapYang,HaskKap}.
Let $X:D \to U$ be an immersion of the unit disk in $\Real^2$ into an open cube $U\subset \Real^3$ equipped with a metric $g$. Assume $\dist_g(X(D), \partial U)>1$ and there exists $c_1>0$ such that:
\addtocounter{theorem}{1}
\begin{equation}\label{quadconditions}
||\partial X: C^{2,\beta}(D,g_0)|| \leq c_1, \:\:\:||g_{ij}, g^{ij}:C^{4, \beta}(U,g_0)||\leq c_1, \:\:\:g_0 \leq c_1X^*g,
\end{equation}where here $\partial X$ represents the partial derivatives of the coordinates of $X$, $g^{ij}$ are the components of the inverse of the metric $g$, and $g_0$ denotes the standard Euclidean metric on $D$ or $U$ respectively. We note that \eqref{quadconditions} can be arranged by an appropriate magnification of the target, which we will exploit in order to make use of the following proposition. 

Let $\nu:D \to \Real^3$ be the unit normal for the immersion $X$ in the $g$ metric. Given a function $\phi:D \to \Real$ which is sufficiently small, we define $X_\phi:D \to U$ by
\addtocounter{theorem}{1}
\begin{equation}\label{xphi}
X_\phi(p):= \exp_{X(p)}(\phi(p)\nu(p))
\end{equation}where here $\exp$ is the exponential map with respect to the $g$ metric. Then the following holds:
\begin{prop}\label{quadboundsunscaled}
There exists a constant $\epsilon(c_1)>0$ such that if $X$ is an immersion satisfying \eqref{quadconditions} and the function $\phi:D \to \Real$ satisfies
\[
||\phi:C^{2,\beta}(D,g_0)|| \leq \epsilon(c_1)
\]then $X_\phi:D \to U$ is a well defined immersion by \eqref{xphi} and satisfies
\[
||X_\phi - X - \phi \nu:C^{1,\beta}(D,g_0)||\leq C(c_1)||\phi:C^{2,\beta}(D,g_0)||^2
\]and
\[
||H_\phi - H - \mathcal L_{X^*g} \phi:C^{0, \beta}(D,g_0)||\leq C(c_1)||\phi: C^{2, \beta}(D,g_0)||^2.
\]Here $H=tr_g A$ is the mean curvature of $X$, defined with respect to the metric $X^*g$ where $A$ is the second fundamental form, $H_\phi$ the mean curvature of $X_\phi$, and $\mathcal L_{X^*g}:= \Delta_g + |A|^2$.
\end{prop}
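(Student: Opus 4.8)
The plan is to reduce the statement to a standard estimate for the normal graph map and the mean curvature operator on a domain of fixed size, obtained from the given immersion by a uniform rescaling of the ambient metric. First I would pass to a magnified target: since the conditions \eqref{quadconditions} guarantee uniform control of $\partial X$ in $C^{2,\beta}$, of the metric coefficients $g_{ij},g^{ij}$ in $C^{4,\beta}$, and a uniform lower bound $g_0\le c_1 X^*g$, all the objects entering the construction of $X_\phi$ — the exponential map $\exp_{X(p)}$, its derivatives in the base point and in the direction, the Christoffel symbols and curvature of $g$ — are controlled in terms of $c_1$ alone on a fixed-size neighborhood of $X(D)$. The smallness hypothesis $\|\phi:C^{2,\beta}(D,g_0)\|\le\epsilon(c_1)$ then keeps $X_\phi(p)=\exp_{X(p)}(\phi(p)\nu(p))$ inside $U$ and makes $X_\phi$ an immersion, because its first derivatives differ from those of $X$ by $O(\epsilon)$ in $C^{1,\beta}$.

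Next I would establish the first displayed inequality by Taylor-expanding the exponential map. Writing $\exp_{X(p)}(v)=X(p)+v+R(X(p),v)$ where the remainder $R$ satisfies $|R(q,v)|\le C(c_1)|v|^2$ together with the analogous bounds on one derivative — these are exactly the uniform bounds on $\exp$ and its derivatives coming from \eqref{quadconditions} — we get $X_\phi-X-\phi\nu = R(X,\phi\nu)$, and differentiating once (using $\|\partial X\|_{C^{2,\beta}}\le c_1$, $\|\phi\|_{C^{2,\beta}}\le\epsilon$, and the control on $\nu$, which is an algebraic expression in $\partial X$ and $g$) yields the $C^{1,\beta}(D,g_0)$ bound by $C(c_1)\|\phi:C^{2,\beta}(D,g_0)\|^2$.

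For the mean curvature estimate I would use the well-known formula that $H_\phi$, expressed in the fixed coordinates on $D$, is a smooth function of the metric coefficients $g_{ij}(X_\phi)$, their first ambient derivatives, the first and second base-derivatives $\partial X_\phi$, $\partial^2 X_\phi$, and that this function, together with its derivatives, is controlled by $c_1$ on the relevant range of arguments. The first variation of $H$ in the normal direction is precisely $\mathcal L_{X^*g}\phi=(\Delta_g+|A|^2)\phi$; this is the classical linearization. Then $H_\phi-H-\mathcal L_{X^*g}\phi$ is, by Taylor's theorem applied to the above function of the geometric data along the path $s\mapsto X_{s\phi}$, a quadratic remainder in $\|\phi:C^{2,\beta}(D,g_0)\|$ with constant depending only on $c_1$, provided $\epsilon(c_1)$ is chosen small enough that the path stays in the region where the uniform derivative bounds hold. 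I expect the main obstacle to be purely bookkeeping: carefully tracking that the second-order remainder in the expansion of $H$ genuinely involves no more than two base-derivatives of $\phi$ (so that a $C^{2,\beta}$ norm of $\phi$ suffices, with a $C^{0,\beta}$ norm of the output), and that every constant arising from the chain rule, from Schauder-type product estimates in H\"older spaces, and from the exponential-map remainder depends on $c_1$ alone and not on finer features of the immersion. Since the precise computation is carried out in the appendices of \cite{KapYang,HaskKap}, I would cite those for the detailed constants and present here only the structure of the argument.
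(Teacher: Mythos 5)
Your proposal is correct and matches the paper's treatment: the paper itself gives no proof of Proposition \ref{quadboundsunscaled}, simply citing the appendices of \cite{KapYang,HaskKap}, and your sketch (uniform control of the exponential map and of the mean curvature functional from \eqref{quadconditions}, Taylor expansion along the normal graph with the classical linearization $\Delta_g+|A|^2$, and quadratic remainders with constants depending only on $c_1$) is exactly the standard argument carried out in those references. Nothing further is needed.
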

As mentioned, away from the central spheres, we need a scaled version in order to satisfy the hypotheses. We first prove a local estimate.
\begin{prop}\label{localquad}
Let $D \subset M$ be a disk of radius $1$ in the $\chi$ metric, centered at $x$. Let $\hYtdz:M\to \Real^3$ be the immersion from Definition \ref{MainImmersionDef}. If $v \in C^{2, \beta}(D,\chi)$ satisfies
\[
||v:C^{2,\beta}(D,\chi)||\leq \frac{ {\epsilon(c_1)}}{\rho(x)}
\]then
\[
||\rho^{-2}(H_v-\Htdz)-\mathcal L_\chi v:C^{0,\beta}(D,\chi)||\leq C(c_1){\rho(x)}||v:C^{2,\beta}(D,\chi)||^2.
\]
\end{prop}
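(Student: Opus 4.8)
The plan is to deduce Proposition \ref{localquad} from Proposition \ref{quadboundsunscaled} by an explicit rescaling of the target. The point is that the hypotheses \eqref{quadconditions} impose curvature, metric, and injectivity-radius bounds that hold on a fixed (order one) Euclidean scale, whereas near a point $x\in M$ the immersion $\hYtdz$ in the induced metric $g$ has its natural length scale governed by $\rho(x)^{-1}$; passing to the conformal metric $\chi=\rho^2 g$ is what makes disks of $\chi$-radius one the right objects, and it is exactly this conversion that the scaling encodes. So first I would introduce the magnified immersion: let $\mathbf H_x:\Real^3\to\Real^3$ be the homothety $\yY\mapsto \rho(x)\yY$, and set $X:=\mathbf H_x\circ\hYtdz|_D$, viewed as an immersion of $D$ into $\Real^3$ equipped with the flat metric $g_{\Real^3}$ (a constant-coefficient metric, so the bounds on $g_{ij},g^{ij}$ in \eqref{quadconditions} are trivial). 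Since $X^*g_{\Real^3}=\rho(x)^2\, g$, and on $D$ the function $\rho$ is comparable to $\rho(x)$ by Lemma \ref{metriccomparisons}\eqref{nc1} (which gives $\|\rho^{\pm1}:C^k(M,\chi,\rho^{\pm1})\|\le C(k)$, hence $\rho/\rho(x)$ and its $\chi$-derivatives are bounded on the unit $\chi$-ball $D$), the metric $X^*g_{\Real^3}$ is, up to a bounded factor and with bounded $C^{k,\beta}$-distance in the $\chi$ metric, equal to $\rho(x)^2 g = (\rho(x)/\rho)^2\,\chi$, which is uniformly equivalent to $\chi$ on $D$.

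Next I would check that $X$ satisfies \eqref{quadconditions} for a universal $c_1$. The bound $\|\partial X:C^{2,\beta}(D,g_0)\|\le c_1$ and $g_0\le c_1 X^*g_{\Real^3}$ follow from the uniform geometry of $\hYtdz$ in the $\chi$ metric on $\chi$-unit balls: on standard regions this is Lemma \ref{metriccomparisons}\eqref{nc2}, \eqref{nc4}, and on the transition regions $\chi$ is (up to the harmless reparametrization factor $1+O(\outau)$ from \eqref{diffeodifference}) the flat cylindrical metric $d\underline t^2+d\theta^2$, in which the immersion is, after magnification by $\rho(x)$, again uniformly controlled; in all cases one may first identify $D$ with a coordinate disk in $\Real^2$ via a uniformly bicontrolled chart and then observe that the magnified coordinate functions have bounded $C^{2,\beta}$ norms. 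The global smallness $\outau\ll1$ is what makes every one of these local pictures close to its limiting model (round sphere in the $h$ metric, flat cylinder in the $\chi$ metric), so $c_1$ can be taken independent of $\outau$, of the graph $\Gamma$, and of the point $x$.

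Now I would apply Proposition \ref{quadboundsunscaled} with $\phi:=\rho(x)\,v$ (so that $X_\phi=\mathbf H_x\circ(\hYtdz)_v|_D$, since the homothety $\mathbf H_x$ intertwines the normal exponential maps up to the factor $\rho(x)$, and the displacement $\phi\,\nu$ on the magnified side corresponds to $v\,\nu$ on the original side). The smallness hypothesis $\|v:C^{2,\beta}(D,\chi)\|\le \epsilon(c_1)/\rho(x)$ translates into $\|\phi:C^{2,\beta}(D,g_0)\|\le\epsilon(c_1)$ after absorbing the bounded comparison between the $g_0$, $\chi$, and $X^*g_{\Real^3}$ norms into $c_1$. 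The proposition then gives $\|H_\phi^X - H^X - \mathcal L_{X^*g_{\Real^3}}\phi:C^{0,\beta}(D,g_0)\|\le C(c_1)\|\phi:C^{2,\beta}(D,g_0)\|^2$. Finally I would unwind the homothety: mean curvature scales as $H^X=\rho(x)^{-1}H[\hYtdz]$ and $H_\phi^X=\rho(x)^{-1}H_v$, while $\mathcal L_{X^*g_{\Real^3}}=\rho(x)^{-2}\mathcal L_g$ on functions; combined with $\phi=\rho(x)v$ this yields $H^X_\phi-H^X-\mathcal L_{X^*g_{\Real^3}}\phi=\rho(x)^{-1}\big(H_v-\Htdz-\rho(x)\mathcal L_{X^*g_{\Real^3}}(\rho(x)v)\big)$; noting $\rho(x)^2\mathcal L_{X^*g_{\Real^3}}=\mathcal L_g$ and $\rho(x)^{-2}\mathcal L_g$ differs from $\mathcal L_\chi=\rho^{-2}\mathcal L_g$ by the bounded factor $(\rho/\rho(x))^{-2}$, one rewrites everything in terms of $\mathcal L_\chi$ and the $\chi$-norm, paying only bounded constants. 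Translating the error bound: the left side becomes $\rho(x)^{-1}$ times $\|\rho^{-2}(H_v-\Htdz)-\mathcal L_\chi v\|$ up to bounded factors, and the right side $C(c_1)\rho(x)^2\|v:C^{2,\beta}(D,\chi)\|^2$ up to bounded factors, giving the claimed $\|\rho^{-2}(H_v-\Htdz)-\mathcal L_\chi v:C^{0,\beta}(D,\chi)\|\le C(c_1)\rho(x)\|v:C^{2,\beta}(D,\chi)\|^2$.

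The main obstacle I anticipate is not the scaling bookkeeping but verifying \eqref{quadconditions} uniformly across \emph{all} of $M$ with a single $c_1$: one has to handle the central spheres (where $\rho=|A|/\sqrt2$ and the geometry limits to the round $\Ss^2$), the standard regions on the Delaunay pieces (limiting to $\Ss^2$ in the $h$ metric, with $\chi$ uniformly equivalent to $h$ there), and the long transition regions (where $\chi$ is flat cylindrical and the curvature $\rho^{-2}|A|^2=4\tau\cosh(2w)$ is tiny by \eqref{decaymain}) — in each regime the reparametrization $\underline t$ versus $t$ contributes only $1+O(\outau)$ factors by \eqref{diffeodifference}, but one must be careful that the $C^{2,\beta}$ control on $\partial X$ and the $C^{4,\beta}$ control on the target metric genuinely hold on $\chi$-unit balls, using Lemma \ref{metriccomparisons} and the smallness of $\outau$. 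Once the uniform geometry is in place, the reduction to Proposition \ref{quadboundsunscaled} is essentially formal.
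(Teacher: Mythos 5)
Your proposal takes essentially the same route as the paper's proof: magnify the target by $\rho(x)$, verify \eqref{quadconditions} on the $\chi$-unit ball using Lemma \ref{normcomparisons}, apply Proposition \ref{quadboundsunscaled} to the dilated function $\phi=\rho(x)v$, and unwind the scaling of $H$ and $\mathcal L$ using item \eqref{nc1} — so this is correct and matches the paper. One small bookkeeping slip in your final translation: since $\rho\approx\rho(x)$ on $D$, the rescaled error $\rho(x)^{-1}\left(H_v-\Htdz-\mathcal L_g v\right)$ is, up to bounded factors, $\rho(x)^{+1}$ times $\rho^{-2}(H_v-\Htdz)-\mathcal L_\chi v$ (not $\rho(x)^{-1}$ times it); with that correction the claimed bound $C(c_1)\rho(x)\,||v:C^{2,\beta}(D,\chi)||^2$ follows exactly as you assert.
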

\begin{proof}
To apply Proposition \ref{quadboundsunscaled}, we rescale the target space by $\rho(x)$. Thus, recalling item \eqref{nc1} from Lemma \ref{normcomparisons}, the conditions \eqref{quadconditions} are satisfied and the hypothesis on the norm is satisfied for the dilated function $\rho(x)v$.  Observe that $H_{\rho(x)v} = \frac{1}{\rho(x)}H_v$ and $\mathcal L_{(\rho(x)\hYtdz)^*(\rho^2(x)g)} = \rho(x)^{-2} \mathcal L_{(\hYtdz)^*g}$. Thus, the proposition implies
\[
||\rho^{-1}(x)\left(H_v - \Htdz- \mathcal L_{X^*g}v\right) :C^{0,\beta}(D,\chi)||\leq C(c_1)||\rho(x) v: C^{2, \beta}(D,\chi)||^2.
\] By the multiplicative property of the norms involved and again using item \eqref{nc1} of Lemma \ref{normcomparisons}, we observe
\[
||\rho^{-2}\left(H_v - \Htdz- \mathcal L_{X^*g}v\right) :C^{0,\beta}(D,\chi)||\leq C(c_1) {\rho(x)}|| v: C^{2, \beta}(D,\chi)||^2.
\]
\end{proof}
To apply the quadratic part of the proposition in the main theorem, we need a global form. While better estimates can be shown to hold, we state the proposition in a way that is sufficient for our use.
\begin{prop}\label{globalquad}Given $\alpha \in (0,1)$,
let $v \in C^{2,\beta}(M, \Real^3)$ such that $||v||_{2,\beta,\gamma} \leq \outau^{1-\alpha/14}$.
Then
\[
||\rho^{-2}\left(H_v - \Htdz\right)- \mathcal L_{\chi}v ||_{0,\beta,\gamma}\leq \outau^{\alpha-1}|| v||_{2,\beta,\gamma}^2.
\]
\end{prop}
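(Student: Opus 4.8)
The strategy is to pass from the local quadratic estimate of Proposition \ref{localquad} to the global weighted estimate by covering $M$ with unit $\chi$-balls and checking that the pointwise bound in each ball, when multiplied by the appropriate power of $\rho$ and the weight, is controlled by the claimed right-hand side. First I would fix $x \in M$ and let $D = D_x$ be the unit $\chi$-ball centered at $x$, so that Proposition \ref{localquad} applies provided the hypothesis $\|v : C^{2,\beta}(D,\chi)\| \le \epsilon(c_1)/\rho(x)$ holds. The key point is to verify this smallness hypothesis from the global assumption $\|v\|_{2,\beta,\gamma} \le \outau^{1-\alpha/14}$: on a standard region the $\chi$-norm of $v$ on $D$ is bounded by the relevant seminorm in $\|v\|_{2,\beta,\gamma}$ times at most a factor $\outau^{-\gamma n}$ coming from the definition of the norm, while on a transition region the exponential weight $e^{-\gamma\underline x}$ only helps; since $\rho \le C\outau^{-1}$ everywhere (using $\rho = \widetilde\rho = r_{\taue}^{-1}$ on the cylindrical pieces and $r_{\taue} \ge C\outau$ from Lemma \ref{radiuslemma}, together with $\rho \sim |A| \sim 1$ on the central spheres), one checks $\|v : C^{2,\beta}(D,\chi)\| \cdot \rho(x) \le C\outau^{-2}\cdot\outau^{1-\alpha/14} = C\outau^{-1-\alpha/14} $ — wait, this needs care, and I will in fact exploit the fact that the worst-case $\outau^{-\gamma n}$ weight is itself already absorbed into $\|v\|_{2,\beta,\gamma}$, so locally $\|v:C^{2,\beta}(D,\chi)\|$ is bounded by $\|v\|_{2,\beta,\gamma}$ times the local value of the weight $f$, which is at most $O(1)$; thus the smallness hypothesis reduces to $\outau^{1-\alpha/14} \le \epsilon(c_1)/\rho(x)$, valid for $\outau$ small since $\rho \lesssim \outau^{-1}$ only where the weight $f$ is correspondingly small.

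\textbf{Main steps.} Concretely I would proceed as follows. (i) Unwind the definitions of $\|\cdot\|_{0,\beta,\gamma}$ and $\|\cdot\|_{2,\beta,\gamma}$ from the displayed seminorms in the statement of Proposition \ref{LinearSectionProp}: on each $S_1[p]$ the weight is $1$; on each $S_1[p,e,n]$ the weight is $\outau^{\gamma n}$; on each $\Lambda[p,e,n']$ the weight is $\outau^{\gamma(n'-1)}e^{-\gamma\underline x}$. (ii) Fix a region (central standard, non-central standard, or transition) and a point $x$ in it; bound the $C^{2,\beta}(D_x,\chi)$-norm of $v$ by $\|v\|_{2,\beta,\gamma}$ divided by the local weight, and similarly for the $C^{0,\beta}$ output. (iii) Apply Proposition \ref{localquad} to get $\|\rho^{-2}(H_v - \Htdz) - \mathcal L_\chi v : C^{0,\beta}(D_x,\chi)\| \le C(c_1)\rho(x)\|v:C^{2,\beta}(D_x,\chi)\|^2$. (iv) Multiply through by the local $C^{0,\beta}$-weight and bound: on a standard region $S_1[p,e,n]$ one gets $\outau^{\gamma n}\cdot\rho(x)\cdot(\outau^{-\gamma n}\|v\|_{2,\beta,\gamma})^2 = \rho(x)\outau^{-\gamma n}\|v\|_{2,\beta,\gamma}^2 \le C\outau^{-1}\outau^{-\gamma n}\|v\|_{2,\beta,\gamma}^2$, and one checks $\rho(x)\outau^{-\gamma n} \le C\outau^{\alpha-1}$ — this is where the exact power $1-\alpha/14$ in the hypothesis and the relation between $\rho$, $\outau$, and $n$ (via $r_{\taue}$ and the number of periods) must be reconciled. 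On the transition regions, $\rho$ grows from $O(1)$ at the standard-region end to $O(\outau^{-1})$ at the neck, but the exponential weight $e^{-\gamma\underline x}$ against the factor $\rho(x)$ (which itself decays like $e^{-2w}$, hence no worse than $e^{c\underline x}$ for a controlled $c<\gamma$ by \eqref{decaymain}-type bounds) keeps the product summable; I would use $\rho^{-2}|A|^2 = 4\tau\cosh(2w)$ from the computation right after \eqref{decaymain} to track the growth of $\rho$ precisely. (v) Take the supremum over $x$ and over all regions to conclude $\|\rho^{-2}(H_v-\Htdz) - \mathcal L_\chi v\|_{0,\beta,\gamma} \le \outau^{\alpha-1}\|v\|_{2,\beta,\gamma}^2$, after absorbing the universal constant $C(c_1)$ into the gain $\outau^{\alpha-1}$ using $\outau$ small and the slack $\alpha/14$ in the hypothesis exponent versus $\alpha$ in the conclusion exponent.

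\textbf{Main obstacle.} The hard part is bookkeeping the $\outau$-powers so that the constant $C(c_1)$ and the various $\rho$-to-$\outau$ conversions (which introduce positive powers of $\outau^{-1}$ scaling like $\outau^{-1-\gamma n}$ in the worst standard region) are dominated by $\outau^{\alpha-1}\|v\|_{2,\beta,\gamma}^2 \le \outau^{\alpha-1}\outau^{2-\alpha/7}$. Two things make this work: the quadratic nature of the estimate squares the already-small $\|v\|_{2,\beta,\gamma} \le \outau^{1-\alpha/14}$, producing $\outau^{2-\alpha/7}$, which beats the at-worst $\outau^{-1}$ loss from $\rho$ with a net gain; and the deliberate mismatch between the hypothesis exponent ($1-\alpha/14$) and the conclusion exponent ($\alpha-1$), which provides exactly the room to absorb $C(c_1)$ once $\outau$ is chosen small enough (depending on $c_1$, hence on $b,\beta$). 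I do not expect any genuinely new difficulty beyond this careful accounting, since all the geometric inputs — $\rho \sim |A| \sim 1$ on the central spheres, $\rho \sim r_{\taue}^{-1}$ with $C\outau \le r_{\taue} \le 1+C\outau$ on the cylinders, the metric equivalences from Lemma \ref{metriccomparisons}, and the uniform bounds \eqref{quadconditions} after magnification — are already established in the excerpt.
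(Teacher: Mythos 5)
Your overall architecture coincides with the paper's: apply Proposition \ref{localquad} on unit $\chi$-balls and then do region-by-region bookkeeping with the weights in $\|\cdot\|_{k,\beta,\gamma}$ and the behavior of $\rho$. The problem is step (iv), the heart of the bookkeeping, where you apply the weights in the wrong direction and consequently assert a false inequality. By the definition of the norm, on $S_1[p,e,n]$ one has $\|v:C^{2,\beta}(D_x,\chi)\|\le\outau^{\gamma n}\|v\|_{2,\beta,\gamma}$ (the seminorm carries the prefactor $\outau^{-\gamma n}$, so the local norm of $v$ is small there, not large), while the output seminorm of $\|\cdot\|_{0,\beta,\gamma}$ is $\outau^{-\gamma n}$ times the local $C^{0,\beta}$ norm. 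You have both conversions inverted: you bound the local input norm by $\outau^{-\gamma n}\|v\|_{2,\beta,\gamma}$ and multiply the output by $\outau^{+\gamma n}$, arriving at $\rho(x)\,\outau^{-\gamma n}\|v\|_{2,\beta,\gamma}^2$ and then claiming $\rho(x)\outau^{-\gamma n}\le C\outau^{\alpha-1}$. That claim is false: on an odd (catenoidal) standard region $\rho\sim\outau^{-1}$, so $\rho\,\outau^{-\gamma n}\sim\outau^{-1-\gamma n}$, which no choice of the hypothesis exponent can repair. With the conversions done correctly the dangerous factor is instead $\rho(x)\,\outau^{\gamma n}\le C\outau^{\gamma n-1}\le C\outau^{\gamma-1}$, and this is $\le\outau^{\alpha-1}$ for $\outau$ small simply because $\gamma>1>\alpha$; that is exactly the paper's computation. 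Relatedly, your ``main obstacle'' paragraph misplaces the role of the hypothesis $\|v\|_{2,\beta,\gamma}\le\outau^{1-\alpha/14}$: it is not needed to close the weight bookkeeping (the factor $\outau^{\alpha-1}\to\infty$ absorbs all constants by itself), but only to verify the smallness hypothesis $\|v:C^{2,\beta}(D_x,\chi)\|\le\epsilon(c_1)/\rho(x)$ of Proposition \ref{localquad} at every point of $M$.

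A second point that must be made precise, and which the paper handles explicitly, is the transition region $\Lambda[p,e,n']$ with $n'$ odd: there $\rho$ grows in $\underline x$ while the weight decays like $e^{-\gamma\underline x}$, and since the quadratic estimate squares the weighted norm one retains only a single factor $e^{-\gamma\underline x}$ against $\rho$. The paper closes this by showing $e^{-\gamma\underline x}\rho$ is non-increasing, via $\partial_{\underline x}\bigl(e^{-\gamma\underline x}\rho\bigr)=\bigl(-\gamma-\tfrac{P_{\taue}}{P_{\uthte}}w'\bigr)e^{-\gamma\underline x}\rho\le 0$ using $-1\le w'<0$ and $\gamma>1$, so that $e^{-\gamma\underline x}\rho\le C(b)$ on the whole region. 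You gesture at exactly this ($\rho$ grows no faster than $e^{c\underline x}$ with $c<\gamma$), which is the right idea, though note $\rho\propto e^{-w}$ rather than $e^{-2w}$; for $n'$ even both $e^{-\gamma\underline x}$ and $\rho$ decrease in $\underline x$ and one also uses $n'\ge 2$. In short: same route as the paper, but step (iv) must be redone with the weights in the correct direction before the argument is valid.
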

\begin{proof}Notice that the definition of the global norm and of the function $\rho$ implies that the hypothesis of Proposition \ref{localquad} is satisfied for each $x \in M$. The inequality from Proposition \ref{localquad} implies that for any weighting $f$, 
\[
||\rho^{-2}\left(H_v - \Htdz- \mathcal L_{X^*g}v\right) :C^{0,\beta}(D,\chi,f)||\leq C(c_1){f(x)\rho(x)}|| v: C^{2, \beta}(D,\chi,f)||^2.
\]
We now recall that the global norm implies that for $S_1[p], S_1\pen$, we have $f\equiv 1$ and for $\Lambda \pen$, $f=e^{-\gamma \underline x}$. Moreover, $\rho$ is uniformly bounded above and below on each $S_1[p]$ and $S_1\pen$ for $n$ even. Elsewhere, $\rho(x)$ closely follows $r(x)^{-1}$ (up to some small reparameterization). Considering the various regions separately, we first observe that for $D\subset S_1[p]$,
\[
C(c_1) {f(x)\rho(x)} || v: C^{2, \beta}(D,\chi,f)||^2 \leq  C||v:C^{2,\beta}(D,\chi)||^2 \leq \outau^{\alpha-1} ||v||^2_{2,\beta, \gamma}
\]for any $0<\alpha<1$ and $\outau$ chosen small enough. The same inequality holds for $S_1\pen$ when $n$ is even. Now consider that for $n$ odd, $\rho(x) \leq C \outau^{-1}$ and $f =1$. Thus for $D \subset S_1\pen$ with $n$ odd, and $0<\alpha<1$,
\begin{align*}
C(c_1) {f(x)\rho(x)} || v: C^{2, \beta}(D,\chi,f)||^2 &\leq \frac {C}{\outau}||v:C^{2,\beta}(D,\chi)||^2 
\\&\leq \frac {C}{ \outau}\outau^{2\alpha-2\gamma n}||v:C^{2,\beta}(S_1\pen,\chi)||^2 \leq \outau^{\alpha-1} ||v||^2_{2,\beta, \gamma}.
\end{align*}
Finally, we consider $D \subset \Lambda\penp$. Again, when $n$ is even or odd, the problem changes. For $n$ even, both $f$ and $\rho$ are decaying as functions of $\underline x$ and thus $f(x)\rho(x) \leq C \outau^{-1}$ for all $x \in \Lambda \penp$ for $n$ even. But in this case
\begin{align*}
C(c_1) {f(x)\rho(x)} || v: C^{2, \beta}(D,\chi,f)||^2 &\leq \frac{ C}{ \outau}||v:C^{2,\beta}(D,\chi, e^{-\gamma \underline x})||^2 
\\&\leq \frac {C }{\outau}\outau^{2\alpha-2\gamma (n-1)}||v:C^{2,\beta}(\Lambda\penp,\chi, e^{-\gamma \underline x})||^2 \leq \outau^{\alpha-1} ||v||^2_{2,\beta, \gamma}.
\end{align*}(Notice here we've been helped because $n \geq 2$.)
If $n$ is odd, observe that while $f$ is decaying in $\underline x$, $\rho$ grows. But we note 
\[
\frac{\partial (f\rho)}{\partial \underline {x}} = \left(-\gamma - \frac{P_{\taue}}{P_{\uthte}} w'\right) f\cdot \rho.
\]
For $0<\underline x \leq \ell_\Lambda$, the control on the ratio $\frac{P_{\taue}}{P_{\uthte}}$ (coming from \eqref{diffeodifference}) and the fact that $-1\leq w' <0$ implies $f\rho$ is decreasing as a function of $\underline x$. Thus, $f(x)\rho(x) \leq C(b)$ on all of $\Lambda\penp$ for $n$ odd. It follows
\begin{align*}
C(c_1) {f(x)\rho(x)} || v: C^{2, \beta}(D,\chi,f)||^2 &\leq  C||v:C^{2,\beta}(D,\chi, e^{-\gamma \underline x})||^2 
\\&\leq \outau^{\alpha -1-2\gamma (n-1)}||v:C^{2,\beta}(\Lambda\penp,\chi, e^{-\gamma \underline x})||^2 \leq \outau^{\alpha-1} ||v||^2_{2,\beta, \gamma}.
\end{align*}
\end{proof}

\section{Dislocation at the Central Spheres}\label{appendixb}

For $e \in E(\Gamma)\cup R(\Gamma)$, let $f^+[e]:M[e] \cap\left( [a,a+3] \times \Ss^1\right)\to \Real$ and for $e \in E(\Gamma)$ let $f^-[e]: M[e] \cap\left(  [\RH -(a+3),\RH -a] \times \Ss^1\right)\to \Real$ such that on these regions 
\addtocounter{theorem}{3}
\begin{align}
\label{one}\UUU[e]^{-1}\circ\hYtdz\circ D^+ &=\left(Y_0 + \boldsymbol \zeta\ppe\right)_{f^+[e]}\\
\UUU[e]^{-1}\circ\hYtdz \circ D^-& =\left(Y_0 + \boldsymbol \zeta\pme\right)_{f^-[e]}\\
\UUU[e]^{-1}\circ\hYtdz\circ D &=\left(Y_0 + \boldsymbol \zeta\ppe\right)_{f^+[e]} 
\end{align}where $D, D^\pm$ are small perturbations of the identity map. Such functions exist, for $\outau$ small, by Proposition \ref{quadboundsunscaled}. We prove important estimates for $f^+[e]$, $e \in E(\Gamma)$ and note that the same estimates hold for the other two types of functions once we account for appropriate changes to the domain and notation. Notice that in the lemma below we use the convention of Section \ref{GeoP} and let $C$ be a constant that may depend on $b, \beta, \beta', \gamma, \gamma'$.

\addtocounter{equation}{1}
\begin{lemma}\label{quadflemma}
For $f^+[e]$ and $e \in E(\Gamma)$ as described above, we have the following:
\begin{enumerate}
\item \label{qf1}$f^+[e]= 0$ on $M[e] \cap \left([a,a+1]\times \Ss^1\right)$.
\item \label{qf2}$||f^+[e]:C^{2,\beta}(M[e] \cap \left([a,a+3] \times \Ss^1\right),\chi)||\leq C|| \boldsymbol\zeta||$.
\item \label{qf3}$||\mathcal L_{\chi} f^+[e] - \rho^{-2} H_{dislocation}[e]:C^{0,\beta}(M[e] \cap \left([a,a+3] \times \Ss^1\right),\chi)||\leq C|| \boldsymbol\zeta||^2$.
\item \label{qf4}For $\cf_i$ as described in \eqref{cfeq}, 
\[||f^+[e](x) + \pi\sum_{i=1}^3 \zeta_i\ppe\cf_i(x):C^{1,\beta}(M[e] \cap \left([a+2,a+3]\times \Ss^1\right), \chi)||\leq C|| \boldsymbol\zeta||^2.\]
\end{enumerate}
\end{lemma}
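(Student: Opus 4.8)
The plan is to set up the graph function $f^+[e]$ explicitly from Proposition \ref{quadboundsunscaled}, then read off each of the four claims by comparing $\hYtdz$ with the model immersion $Y_0 + \boldsymbol\zeta\ppe$ on the relevant cylindrical annulus. First I would recall the definition of $Y_{edge}$ (Definition \ref{DelEmbDef}): on $M[e]\cap([a,a+1]\times\Ss^1)$ the immersion $\UUU[e]^{-1}\circ\hYtdz$ agrees \emph{identically} with $Y_0 + \boldsymbol\zeta\ppe$ (the $\psi_{dislocation^+}$ cutoff is $\equiv 1$ there by the properties of $\psi[a+2,a+1]$), so since $f^+[e]$ is the unique small normal graph function realizing \eqref{one} and the zero function works, uniqueness gives $f^+[e]\equiv 0$ there; this is item \eqref{qf1}. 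For item \eqref{qf2}, the displacement between $\UUU[e]^{-1}\circ\hYtdz$ and $Y_0 + \boldsymbol\zeta\ppe$ on $[a,a+3]\times\Ss^1$ is governed by Proposition \ref{geopropcentral} (with $b$ replaced by $a+3$, a legitimate small constant), which bounds it by $C(|\boldsymbol\zeta^+| + |\tau|)$ in $C^k(\chi)$; but on this short fixed-scale region we can do better by noting that when $\boldsymbol\zeta\ppe = 0$ the two immersions coincide on $[a,a+2]\times\Ss^1$ and differ only by the $Y_\tau$-vs-$Y_0$ transition on $[a+2,a+3]\times\Ss^1$ — wait, actually the cleanest route is: the difference $\UUU[e]^{-1}\circ\hYtdz - (Y_0+\boldsymbol\zeta\ppe)$ is supported on $[a+1,a+3]\times\Ss^1$ (since $\psi_{dislocation^+}$ is $\equiv 1$ for $t\le a+1$ and $\equiv 0$ for $t\ge a+2$, while $\psi_{gluing^+}$ only activates for $t\ge a+3$), and on $[a+1,a+2]\times\Ss^1$ it equals $-(1-\psi_{dislocation^+}(t))\boldsymbol\zeta\ppe$, while on $[a+2,a+3]$ it is controlled by $|\tau|$, so the $C^{2,\beta}$-norm of the difference is $\le C(|\boldsymbol\zeta\ppe| + |\tau|) \le C\|\boldsymbol\zeta\|$ using the remark that $|\tau_0(e)| \le C\tau$ and the restriction \eqref{zetarestriction}; then the quadratic estimate of Proposition \ref{quadboundsunscaled} (applied after rescaling the target by a fixed constant, as the region has fixed $\chi$-scale) converts this position-difference bound into the graph-function bound $\|f^+[e]:C^{2,\beta}\| \le C\|\boldsymbol\zeta\|$.

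For item \eqref{qf3} I would invoke the second inequality of Proposition \ref{quadboundsunscaled}: with $X := Y_0 + \boldsymbol\zeta\ppe$ (mean curvature $H_X \equiv 1$) and $X_{f^+[e]} = \UUU[e]^{-1}\circ\hYtdz$ (mean curvature $\Htdz$, which is rotation- and translation-invariant so equals the mean curvature of $\hYtdz$), the proposition gives
\[
\|\,\Htdz - 1 - \mathcal L_{X^*g} f^+[e]\,:C^{0,\beta}\| \le C\|f^+[e]:C^{2,\beta}\|^2 \le C\|\boldsymbol\zeta\|^2.
\]
Then on the support in question $\Htdz - 1$ is exactly $H_{dislocation}[e]$ by the definition of $H_{dislocation}[e]$ (its support is $([a,a+2]\cup\cdots)\times\Ss^1$, and within $[a,a+3]\times\Ss^1$ the gluing error vanishes), and $\mathcal L_{X^*g} = \mathcal L_g$ in the induced metric; converting to the $\chi$ metric costs a factor $\rho^{-2}$ and, crucially, $\rho$ and $|A|$ are both uniformly bounded above and below on this fixed-scale spherical region (item \eqref{nc2} of Lemma \ref{metriccomparisons}), so $\|\mathcal L_\chi f^+[e] - \rho^{-2}H_{dislocation}[e]\|_{C^{0,\beta}(\chi)} \le C\|\boldsymbol\zeta\|^2$. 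Here one also uses that the difference $X^*g$ vs $g_0$ and the metric equivalences are all controlled since $\boldsymbol\zeta\ppe$ is small, which is needed to invoke the proposition's hypotheses \eqref{quadconditions}.

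For item \eqref{qf4}, the point is to identify the \emph{linear} part of $f^+[e]$ on $[a+2,a+3]\times\Ss^1$, where $\hYtdz$ has already transited back to the undislocated unit sphere centered at the origin, i.e. $\UUU[e]^{-1}\circ\hYtdz = Y_0$ there (before the gluing transition, which starts at $a+3$). So on this annulus we are writing $Y_0$ as a normal graph over $Y_0 + \boldsymbol\zeta\ppe$; the first-order Taylor expansion of the exponential map gives $f^+[e] = -\langle \boldsymbol\zeta\ppe, \nu_{Y_0}\rangle + O(|\boldsymbol\zeta\ppe|^2)$, and since $\cf_i$ was defined in \eqref{cfeq} precisely as $\frac1\pi \Be_i\cdot(\RRR'[e]^{-1}\Ntdz)$ — which on this region equals $\frac1\pi \Be_i\cdot\nu_{Y_0}$ — we get $f^+[e] + \pi\sum_i \zeta_i\ppe\cf_i = O(\|\boldsymbol\zeta\|^2)$ in $C^{1,\beta}(\chi)$, with the quadratic remainder again controlled by the first estimate of Proposition \ref{quadboundsunscaled} (the $C^{1,\beta}$ bound $\|X_\phi - X - \phi\nu\| \le C\|\phi\|^2$), applied with the roles of $X$ and $X_\phi$ arranged so that $X = Y_0+\boldsymbol\zeta\ppe$, $\phi = f^+[e]$, and noting $\|\phi\|^2 \le C\|\boldsymbol\zeta\|^2$ from \eqref{qf2}. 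The main obstacle I anticipate is the bookkeeping in item \eqref{qf4}: one must be careful that $\cf_i$ is defined via $\Ntdz$ (the normal of the \emph{actual} immersion, dislocated) rather than via $\nu_{Y_0}$, but on $[a+2,a+3]\times\Ss^1$ the actual immersion \emph{is} $Y_0$ up to the rigid motion $\UUU[e]$, so $\RRR'[e]^{-1}\Ntdz = \nu_{Y_0}$ exactly there — this coincidence is exactly why the statement is clean, and verifying it requires tracking precisely which cutoff functions are active on each sub-annulus $[a,a+1]$, $[a+1,a+2]$, $[a+2,a+3]$. A secondary technical point is ensuring the hypotheses \eqref{quadconditions} of Proposition \ref{quadboundsunscaled} hold after the fixed rescaling of the target; this follows from the uniform geometry of a unit sphere piece and the smallness of $\boldsymbol\zeta$, exactly as in the proof of Proposition \ref{localquad}.
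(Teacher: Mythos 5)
Your overall route (tracking which cutoffs are active on each sub-annulus, then using the linear and quadratic estimates of Proposition \ref{quadboundsunscaled} to pass between the position difference, the graph function, and the mean curvature, and identifying $\UUU[e]^{-1}\Ntdz$ with $\pi(\cf_1,\cf_2,\cf_3)$ on $[a+2,a+3]\times\Ss^1$ for item \eqref{qf4}) is essentially the paper's proof. However, your justification of item \eqref{qf2} contains a genuine error at exactly the point where the content of the lemma lies. You claim that on $[a+2,a+3]\times\Ss^1$ the difference $\UUU[e]^{-1}\circ\hYtdz-(Y_0+\boldsymbol\zeta\ppe)$ is ``controlled by $|\tau|$'' and then absorb the resulting $C(|\boldsymbol\zeta\ppe|+|\tau|)$ into $C\|\boldsymbol\zeta\|$ by invoking \eqref{zetarestriction}. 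That inequality goes the wrong way: \eqref{zetarestriction} says $\|\boldsymbol\zeta\|\leq \underline C\outau$, so it gives no bound of $|\tau|$ by $C\|\boldsymbol\zeta\|$, and indeed $\|\boldsymbol\zeta\|$ may be far smaller than $\outau$ (there is only an upper bound on $\boldsymbol\zeta$ in the construction and in the fixed-point argument). As written your argument only yields $\|f^+[e]\|\leq C\outau$, which is strictly weaker than the stated $C\|\boldsymbol\zeta\|$ and would not suffice for the later use in Proposition \ref{prescribequad}, where all dislocation quantities must scale with $\|\boldsymbol\zeta\|$; it would also degrade your items \eqref{qf3} and \eqref{qf4}, which you derive from \eqref{qf2}.

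The fix is the observation you almost made but then abandoned: since $\psi_{gluing^+}=\psi[a+3,a+4]$ vanishes on a neighborhood of $a+3$, no $Y_{\taue}$ (hence no $\tau$-dependent) term appears anywhere on $[a,a+3]\times\Ss^1$; there $\UUU[e]^{-1}\circ\hYtdz=Y_0+\psi_{dislocation^+}\,\boldsymbol\zeta\ppe$ exactly, so the difference from $Y_0+\boldsymbol\zeta\ppe$ is $(\psi_{dislocation^+}-1)\boldsymbol\zeta\ppe$, bounded in every $C^k$ norm by $C|\boldsymbol\zeta\ppe|\leq C\|\boldsymbol\zeta\|$ with no $\tau$ contribution -- this is precisely why the lemma's domain stops at $a+3$. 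With that correction the rest of your argument goes through and matches the paper, up to two smaller points you gloss over: the defining relation \eqref{one} involves the reparameterization $D^+$, so Proposition \ref{quadboundsunscaled} first gives control of $\mathcal L_{g_0}f^+[e]-\rho^{-2}H_{dislocation}[e]\circ D^+$ and one must remove the $\circ D^+$ (costing only $C\|\boldsymbol\zeta\|^2$ since $D^+$ is $O(\|\boldsymbol\zeta\|)$-close to the identity and $H_{dislocation}[e]$ is $O(\|\boldsymbol\zeta\|)$); and passing from $\mathcal L_{g_0}=\Delta_{\Ss^2}+2$ to $\mathcal L_\chi$ needs the $O(\|\boldsymbol\zeta\|)$-closeness of the metrics and operators (a modification of Proposition \ref{geopropcentral}), not merely uniform two-sided bounds on $\rho$ and $|A|$, in order to keep that conversion error quadratic in $\|\boldsymbol\zeta\|$.
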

\begin{proof}First observe that on the domain of interest $$\UUU[e]^{-1}\circ\hYtdz= \underline Y_{edge}[\uthte,\taue,l(e),{\boldsymbol \zeta}\ppe,{\boldsymbol \zeta}\pme].$$
Item \eqref{qf1} follows from the definition of the immersion and \eqref{qf2} follows from Proposition \ref{geopropcentral} once we note the domain of interest and the uniform equivalence of $g,\chi$ on this domain. The local estimates of Proposition \ref{quadboundsunscaled} and the $C^{2,\beta}$ bound on $f^+[e]$ then imply
\[||\mathcal L_{g_0}f^+[e] - \rho^{-2}H_{dislocation}[e] \circ D^+:C^{0,\beta}(([a,a+3]\times \Ss^1) \cap M[e], \chi)||\leq C||\boldsymbol \zeta||^2.\]
Here $\mathcal L_{g_0} := \Delta_{\Ss^2} +2$.
We prove \eqref{qf3} by applying an appropriate modification Proposition \ref{geopropcentral}. Finally, recalling the linear bound given in Proposition \ref{quadboundsunscaled}, and substituting $Y_0 + \boldsymbol \zeta\ppe:=Y[e]$ for $X$ we observe
\[||f^+[e] N_{Y[e]} + \boldsymbol \zeta\ppe: C^{1,\beta}(([a+2,a+3]\times \Ss^1)\cap M[e], \chi)|| \leq C||\boldsymbol \zeta||^2.\] 
 Again using a modification of Proposition \ref{geopropcentral} and the fact that on $\left([a+2,a+3] \times \Ss^1\right) \cap M[e]$, $\UUU[e]^{-1}\Ntdz=\pi  (\cf_1,\cf_2,\cf_3) $,
we prove \eqref{qf4}.
\end{proof}
\addtocounter{equation}{1}
\begin{definition}\label{underlinef}For ease of notation, let
\begin{align*}
S^{x}[p]:=M[p]  &\cup_{\{j|p=p^+[e_j]\}}\left(M[e_j] \cap [a,x] \times \Ss^1\right)\\& \cup_{\{j|p=p^-[e_j]\}} \left(M[e_j] \cap [\RHj -x,\RHj -a]\times \Ss^1\right).
\end{align*}
For each $p \in V(\Gamma)$, and $\{e_j\}=E_p$, let 
\[
\underline f[p]: S^{a+3}[p]\to \Real
\]
such that
\begin{equation*}
\underline f[p](x)= \left\{\begin{array}{ll}
0,& \text{if } x \in M[p],\\
f^+[e_j](x),& \text{if }  p=p^+[e_j],x \in M[e_j]\cap [a,a+3] \times \Ss^1,\\
f^-[e_j](x),&\text{if } p=p^-[e_j],x\in M[e_j] \cap[\RHj -(a+3),\RHj -a]\times \Ss^1.
\end{array}\right.
\end{equation*}
\end{definition}
The definitions of $\underline f[p]$ and $H_{dislocation}$ immediately imply the following corollary.
\addtocounter{equation}{1}
\begin{corollary}\label{corundf}
\begin{enumerate}
\item $\underline f[p] =0$ on $S^{a+1}[p]$.
\item $||\mathcal L_\chi \underline f[p]-\rho^{-2}H_{dislocation}:C^{0,\beta}(S^{a+3}[p],\chi)||\leq C|| \boldsymbol\zeta||^2$.
\item if $p=p^+[e]$, $||\underline f[p]+ \pi\sum_{i=1}^3 \zeta_i\ppe\cf_i(x):C^{1,\beta}(M[e] \cap \left([a+2,a+3]\times \Ss^1\right), \chi)||\leq C|| \boldsymbol\zeta||^2$  and
\item if $p=p^-[e]$, 
\[||\underline f[p]+ \pi\sum_{i=1}^3 \zeta_i\ppe\cf_i(x):C^{1,\beta}(M[e] \cap \left([\RH-(a+3),\RH-(a+2)]\times \Ss^1\right), \chi)||\leq C|| \boldsymbol\zeta||^2.
\]
\end{enumerate}
\end{corollary}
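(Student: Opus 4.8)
\textbf{Proof proposal for Corollary \ref{corundf}.}

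The plan is to assemble the four items directly from Lemma \ref{quadflemma} applied separately to each edge or ray $e_j \in E_p$, using the fact that $\underline f[p]$ is built by gluing the locally defined functions $f^\pm[e_j]$ (and $0$ on $M[p]$) together on the disjoint pieces $M[e_j] \cap S^{a+3}[p]$. First I would observe that the first item is immediate: on $S^{a+1}[p]$ the function $\underline f[p]$ restricts to $0$ on $M[p]$ and to $f^+[e_j]$ or $f^-[e_j]$ on the portions $M[e_j] \cap ([a,a+1]\times \Ss^1)$ (or the mirrored range), and by Lemma \ref{quadflemma}\eqref{qf1} these vanish identically there; hence the glued function is smooth and zero on $S^{a+1}[p]$, with no compatibility issue at the seams $M[p]\cap M[e_j]$ because everything is zero in a neighborhood of those seams.

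Next I would handle the second item. The supports of $\mathcal L_\chi \underline f[p]$ and $\rho^{-2}H_{dislocation}$ are contained in $\cup_j \left(M[e_j] \cap ([a+1,a+2]\times \Ss^1)\right)$ (mirrored as appropriate), which are pairwise disjoint across distinct $j$ and disjoint from $M[p]$. On each such piece $\underline f[p]$ agrees with $f^+[e_j]$ (resp.\ $f^-[e_j]$), and $H_{dislocation}$ restricted there is $H_{dislocation}[e_j]$ by the definition of $H_{dislocation} = \sum_e H_{dislocation}[e]$ together with the support conditions. So the $C^{0,\beta}(S^{a+3}[p],\chi)$ estimate reduces to taking a maximum over $j$ of the bounds in Lemma \ref{quadflemma}\eqref{qf3}, and since $|E_p|$ is bounded uniformly (finiteness of $\Gamma$) the constant is absorbed into $C$. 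For the third and fourth items I would likewise restrict to the single piece $M[e] \cap ([a+2,a+3]\times \Ss^1)$ (resp.\ $M[e]\cap([\RH-(a+3),\RH-(a+2)]\times\Ss^1)$ when $p = p^-[e]$), where $\underline f[p]$ equals exactly $f^+[e]$ (resp.\ $f^-[e]$), and quote Lemma \ref{quadflemma}\eqref{qf4} verbatim — one must just note that in the $p=p^-[e]$ case the relevant dislocation vector is still $\boldsymbol\zeta\ppe$ as written (matching the statement of the corollary), which is consistent with how $\underline f[p]$ records $f^-$ on that range once one tracks the definition \eqref{ellprimedefinition} and the placement of $Y_0^-$.

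The routine care here is purely bookkeeping: matching the index ranges $[a,a+3]$ versus $[\RH-(a+3),\RH-a]$, checking that the gluing introduces no error because all the pieces meet only on regions where the functions are zero, and confirming the support identifications for $H_{dislocation}$. I do not expect a genuine obstacle; the only mild subtlety — and the step I would be most careful about — is the sign and labeling convention in item (4) for $p = p^-[e]$, namely verifying that the vector appearing is $\boldsymbol\zeta\ppe$ rather than $\boldsymbol\zeta\pme$, which requires reading off the definition of $\underline f[p]$ on that range together with the fact that, by construction in Lemma \ref{quadflemma}, the estimate there is stated relative to the $\boldsymbol\zeta\ppe$ end (as the corollary's item (3) and (4) both indicate). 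Once that convention is pinned down, the corollary follows immediately from Lemma \ref{quadflemma} and the definitions of $\underline f[p]$ and $H_{dislocation}$.
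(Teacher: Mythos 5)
Your overall route is the same as the paper's: the corollary is treated there as an immediate consequence of the definitions of $\underline f[p]$ and $H_{dislocation}$ together with Lemma \ref{quadflemma} applied edge by edge, exactly the gluing/support bookkeeping you describe. Items (1)--(3) are fine: the pieces meet only where everything vanishes, the supports of $\mathcal L_\chi \underline f[p]$ and $\rho^{-2}H_{dislocation}$ decompose over the disjoint annuli $M[e_j]\cap([a+1,a+2]\times\Ss^1)$ (and mirrored ranges), and the finiteness of $E_p$ absorbs the maximum over $j$ into $C$.

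The one place your argument goes wrong is the resolution of the labeling in item (4). On $M[e]\cap([\RH-(a+3),\RH-a]\times\Ss^1)$ the function $\underline f[p]=f^-[e]$ is by definition the graph function over $Y_0^-+\boldsymbol\zeta\pme$, so the analogue of Lemma \ref{quadflemma}\eqref{qf4} at that end (obtained, as the paper says, by "appropriate changes to the domain and notation") necessarily involves $\zeta_i\pme$, not $\zeta_i\ppe$; nothing about \eqref{ellprimedefinition} or the placement of $Y_0^-$ changes which dislocation vector the graph is taken over. If one insisted on the literal statement with $\zeta_i\ppe$, it would require $|\boldsymbol\zeta\ppe-\boldsymbol\zeta\pme|\leq C\|\boldsymbol\zeta\|^2$, which is false in general. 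The printed $\ppe$ in item (4) is evidently a typo: in the proof of Proposition \ref{prescribequad} the corollary is used with $\zeta_i[p,e_j]$ relative to the attachment, i.e.\ with $\boldsymbol\zeta\pme$ when $p=p^-[e_j]$. So your "careful step" should have concluded that the estimate holds with $\pme$ (and the statement corrected accordingly), rather than defending the $\ppe$ version.
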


\bibliographystyle{amsplain}
\bibliography{Main_Biblio}
\end{document}